\newcounter{TmpEnumi}
\newcounter{RsvEnumi}
\numberwithin{equation}{section}
\def\today{\number\day\space\ifcase\month\or   January\or February\or
   March\or April\or May\or June\or   July\or August\or September\or
   October\or November\or December\fi\   \number\year}
\theoremstyle{definition}
\newtheorem{thm}{Theorem}[section]
\newtheorem{lem}[thm]{Lemma}
\newtheorem{prp}[thm]{Proposition}
\newtheorem{dfn}[thm]{Definition}
\newtheorem{cor}[thm]{Corollary}
\newtheorem{rmk}[thm]{Remark}
\newtheorem{ntn}[thm]{Notation}
\newtheorem{exa}[thm]{Example}
\newtheorem{pbm}[thm]{Problem}
\newtheorem{qst}[thm]{Question}
\newcommand{\af}{\alpha}
\newcommand{\bt}{\beta}
\newcommand{\gm}{\gamma}
\newcommand{\dt}{\delta}
\newcommand{\ep}{\varepsilon}
\newcommand{\zt}{\zeta}
\newcommand{\et}{\eta}
\newcommand{\io}{\iota}
\newcommand{\ld}{\lambda}
\newcommand{\sm}{\sigma}
\newcommand{\kp}{\kappa}
\newcommand{\ph}{\varphi}
\newcommand{\ps}{\psi}
\newcommand{\rh}{\rho}
\newcommand{\om}{\omega}
\newcommand{\ta}{\tau}
\newcommand{\Dt}{\Delta}
\newcommand{\Ld}{\Lambda}
\newcommand{\Om}{\Omega}
\newcommand{\Q}{{\mathbb{Q}}}
\newcommand{\C}{{\mathbb{C}}}
\newcommand{\N}{{\mathbb{Z}}_{> 0}}
\newcommand{\Nz}{{\mathbb{Z}}_{\geq 0}}
\newcommand{\inv}{{\mathrm{inv}}}
\newcommand{\id}{{\mathrm{id}}}
\newcommand{\diag}{{\mathrm{diag}}}
\newcommand{\sgn}{{\mathrm{sgn}}}
\newcommand{\card}{{\mathrm{card}}}
\newcommand{\dirlim}{\varinjlim}
\newcommand{\limi}[1]{\lim_{{#1} \to \infty}}
\newcommand{\andeqn}{\,\,\,\,\,\, {\mbox{and}} \,\,\,\,\,\,}
\newcommand{\ts}[1]{{\textstyle{#1}}}
\newcommand{\ds}[1]{{\displaystyle{#1}}}
\newcommand{\ssum}[1]{{\ts{ {\ds{\sum}}_{#1} }}}
\newcommand{\wolog}{without loss of generality}
\newcommand{\Wolog}{Without loss of generality}
\newcommand{\tfae}{the following are equivalent}
\newcommand{\ifo}{if and only if}
\newcommand{\ca}{C*-algebra}
\newcommand{\hm}{homomorphism}
\newcommand{\fd}{finite dimensional}
\newcommand{\ct}{continuous}
\newcommand{\rpn}{representation}
\newcommand{\msp}{measure space}
\newcommand{\sfm}{$\sm$-finite measure space}
\newcommand{\XBM}{(X, {\mathcal{B}}, \mu)}
\newcommand{\YCN}{(Y, {\mathcal{C}}, \nu)}
\newcommand{\cB}{{\mathcal{B}}}
\newcommand{\cC}{{\mathcal{C}}}
\newcommand{\LLp}{L (L^p (X, \mu))}
\newcommand{\OP}[2]{{\mathcal{O}}_{#1}^{#2}}
\newcommand{\MP}[2]{M_{#1}^{#2}}
\newcommand{\ran}{{\operatorname{ran}}}
\newcommand{\E}{\varnothing}
\title[Amenability]{Isomorphism, nonisomorphism, and amenability
  of $L^p$~UHF algebras}
\author{N.~Christopher Phillips}
\date{24~September 2013}
\address{Department of Mathematics, University  of Oregon,
       Eugene OR 97403-1222, USA.}
\email[]{ncp@darkwing.uoregon.edu}
\subjclass[2010]{Primary 46H20; Secondary 46H05, 47L10.}
\thanks{This material is based upon work supported by the
  US National Science Foundation under
  Grant DMS-1101742.}
\begin{document}

\begin{abstract}
In a previous paper,
we introduced $L^p$~UHF algebras for $p \in [1, \infty).$
We concentrated on the spatial $L^p$~UHF algebras,
which are classified up to isometric isomorphism
by $p$ and the scaled ordered $K_0$-group.
In this paper,
we concentrate on a larger class, the $L^p$~UHF algebras
of tensor product type constructed using diagonal similarities.
Such an algebra is still simple and has the same K-theory
as the corresponding spatial $L^p$~UHF algebra.
For each choice of $p$ and the K-theory,
we provide uncountably many nonisomorphic such algebras.
We further characterize the spatial algebras among them.
In particular, if $A$ is one of these algebras,
the following are equivalent:
\begin{itemize}
\item
$A$ is isomorphic (not necessarily isometrically)
to a spatial $L^p$~UHF algebra.
\item
$A$ is amenable as a Banach algebra.
\item
$A$ has approximately inner tensor flip.
\end{itemize}
These conditions are also equivalent to a natural numerical condition
defined in terms of the ingredients used to construct the algebra.
\end{abstract}

\maketitle

\indent
In~\cite{PhLp2a},
we introduced analogs of UHF \ca{s}
which act on $L^p$~spaces instead of on Hilbert space.
The purpose of this paper is to prove further results about such algebras.
For each $p \in [1, \infty)$
and choice of K-theory,
we prove the existence of uncountably many such algebras
with the given K-theory and which are not isomorphic,
even when the isomorphisms are not required to be isometric.
We give several conditions which characterize
the ``standard'' examples of~\cite{PhLp2a} within a larger class,
including amenability and approximate innerness of the tensor flip.

In~\cite{PhLp2a},
we were primarily interested in a particular family
of examples,
the spatial $L^p$~UHF algebras,
which were needed for the proof of simplicity
of the analogs of Cuntz algebras acting on $L^p$~spaces.
For $p = 2,$
a spatial $L^p$~UHF algebra is a UHF \ca.
For general $p \in [1, \infty),$
we showed that~$p,$
together with the scaled ordered $K_0$-group
(equivalently, the supernatural number),
is a complete invariant for both isomorphism
and isometric isomorphism of spatial $L^p$~UHF algebras.
We also proved that spatial $L^p$~UHF algebras
are amenable Banach algebras.
For a larger class of $L^p$~UHF algebras,
we proved simplicity and existence of a unique normalized trace.

Here,
we consider $L^p$~UHF algebras~$A$ of tensor product type,
a subclass of the class for which we proved simplicity in~\cite{PhLp2a}.
Tensor product type
means that there are
a sequence $d = (d (1), \, d (2), \, \ldots )$
in $\{ 2, 3, 4, \ldots \},$
probability spaces $(X_n, {\mathcal{B}}_n, \mu_n),$
and unital \rpn{s} $\rh_n \colon M_{d (n)} \to L (L^p (X_n, \mu_n))$
such that,
with $\XBM$ being the product measure space,
$A$ is the closed subalgebra of the bounded operators
on $L^p (X, \mu) = \bigotimes_{n = 1}^{\infty} L^p (X_n, \mu_n)$
generated by all operators of the form
\[
\rh_{1} (a_1) \otimes \rh_{2} (a_2)
     \otimes \cdots \otimes \rh_{m} (a_{m})
     \otimes 1 \otimes 1 \otimes \cdots
\]
with
\[
m \in \Nz, \,\,\,\,\,\,
a_1 \in M_{d (1)}, \,\,\,\,\,\, a_2 \in M_{d (2)},
  \,\,\,\,\,\, \cdots, \,\,\,\,\,\, a_{m} \in M_{d (m)}.
\]
For our strongest results,
we further require that the representations $\rh_n$
be direct sums of finitely or countably many representations
similar to the identity representation.
That is,
identifying $M_{d (m)}$ with the algebra
of bounded operators on a probability space with $d (m)$ points,
they are direct sums of maps $a \mapsto s a s^{-1}$
for invertible elements $s \in M_{d (m)}.$
We mostly further require that the matrices $s$ all be diagonal.

The K-theory of~$A$ is determined by the formal product
of the numbers $d (n),$
more precisely,
by the formal prime factorization in which the exponent
of a prime~$p$
is the sum of its exponents in the prime factorizations
of the numbers $d (n).$
To avoid the intricacies of K-theory,
we work in terms of this invariant,
called a ``supernatural number''
(Definition~\ref{D_2Y15_SNat} below),
instead.

Our convention is that
isomorphisms of Banach algebras are required to be \ct{}
and have \ct{} inverses,
but they are not required to be isometric.
To specify the more restrictive version,
we refer to an isometric isomorphism,
or say that two Banach algebras are isometrically isomorphic.

For the rest of the introduction,
fix $p \in [1, \infty)$ and a supernatural number~$N.$
For an $L^p$~UHF algebra $A \subset \LLp$
of tensor product type constructed using diagonal similarities
and with the given supernatural number~$N,$
we give (Theorem~\ref{T_3708_Third})
a number of equivalent conditions for isomorphism
with the spatial $L^p$~UHF algebra with supernatural number~$N.$
One of these is approximate innerness of the tensor flip
on the $L^p$~operator tensor product:
there is a bounded net $(v_{\ld})_{\ld \in \Ld}$
in the $L^p$~operator tensor product $A \otimes_p A$
such that $(v_{\ld}^{-1})_{\ld \in \Ld}$ is also bounded
and such that
$\lim_{\ld \in \Ld} v_{\ld} (a \otimes b) v_{\ld}^{-1} = b \otimes a$
for all $a, b \in A.$
(The C*-algebra version of this condition has been studied
in~\cite{EfR}.
It is rare.)

Another condition is numerical:
with $\rh_n$ as in the description above,
it is that $\sum_{n = 1}^{\infty} ( \| \rh_n \| - 1 ) < \infty.$
Moreover, $\| \rh_n \|$ is easily computed in terms of the similarities
used in the construction of~$A.$

Two further equivalent conditions are amenability
and symmetric amenability as a Banach algebra.
See Proposition~\ref{P_2Y25_ChrAm}
and the preceding discussion for more on these conditions.
Recall that a \ca{} is amenable \ifo{} it is nuclear.
(See Corollary~2 of~\cite{Cns} and Theorem~3.1 of~\cite{Haa}.)
In Theorem~2 of~\cite{Oz},
it is shown that a unital nuclear \ca{} is symmetrically amenable
\ifo{} every nonzero quotient of $A$ has a tracial state.

A fifth condition is that $A$ be similar to a spatial $L^p$~UHF algebra,
that is, that there be an invertible element $v \in \LLp$
such that $v A v^{-1}$
is a spatial $L^p$~UHF subalgebra of $\LLp.$
When $p = 2,$
our results imply that if $A$ is amenable
then it is in fact similar to a \ca.
In this case,
we can omit the requirement that the similarities
in the construction of our algebra be diagonal.
A question open for some time
(see Problem~30 in the ``Open Problems'' chapter of~\cite{Rnd})
asks whether an amenable closed subalgebra of the bounded operators
on a Hilbert space is similar to a \ca.
Little seems to be known.
Two main positive results are
for subalgebras of $K (H)$
(see the last paragraph of~\cite{Gfd})
and for commutative subalgebras of finite factors
(see~\cite{Ch}).
A claimed result for singly generated operator algebras~\cite{FFM1}
has been retracted~\cite{FFM2}.
If $A \subset L (H)$ is a closed amenable subalgebra,
then $A$ is already a \ca{}
if $A$ is generated by elements which are normal in $L (H)$
(\cite{CL})
or if $A$ is unital and $1$-amenable
(Theorem 7.4.18(2) of~\cite{BlLM}).
The answer is negative in the inseparable case~\cite{FrOz}.
The class of examples for which we give a positive answer,
the $L^2$~UHF algebras
of tensor product type constructed using similarities,
is small
(although it includes uncountably many isomorphism types
for every supernatural number~$N$),
but is quite different from any other classes of examples
for which the question was previously known
to have a positive answer.
The fact that the theorem holds for any $p \in [1, \infty)$
when the similarities are diagonal
suggests that there might be an interesting
$L^p$~analog of this question.

We have partial results in the same direction
for general $L^p$~UHF algebras of tensor product type.
We have no counterexamples to show that the results described above
fail in this broader class.
However, we also do not know how to deduce
anything from amenability
of a general $L^p$~UHF algebras of tensor product type,
or even from the stronger condition of symmetric amenability.
We do have a condition
related to approximate innerness of the tensor flip,
and a numerical condition involving norms of \rpn{s},
which for algebras in this class
imply isomorphism to the corresponding spatial algebra.

We further prove (Theorem~\ref{T_3326_Uctbl}) that
for every $p \in [1, \infty)$ and supernatural number~$N,$
there are uncountably many pairwise nonisomorphic
$L^p$~UHF algebras of tensor product type
constructed using diagonal similarities.
In particular, there are uncountably many nonisomorphic
such algebras which are not amenable.

This paper is organized as follows.
In Section~\ref{Sec_TPT},
we recall some material from~\cite{PhLp2a},
in particular,
the construction of $L^p$~UHF algebras of tensor product type.
In Section~\ref{Sec_Sptl},
we discuss amenability, symmetric amenability,
approximate innerness of the tensor flip,
and approximate innerness of the related tensor half flip.
For $L^p$~UHF algebras of tensor product type,
we prove the implications we can between these conditions,
several related conditions,
and isomorphism to the spatial $L^p$~UHF algebra.

In Section~\ref{Sec_ClEx},
we specialize to $L^p$~UHF algebras of tensor product type
constructed using systems of similarities.
The new feature is that the norms of the \rpn{s} involved are
more computable.
We prove a number of lemmas which will be used
in the remaining two sections.
Although we do not formally state it,
at the end we discuss a strengthening of
the main result of Section~\ref{Sec_Sptl}.

In Section~\ref{Sec_Amen} we further specialize to diagonal similarities.
One can then get good information on the norms of matrix units.
We prove the equivalence of a number of conditions
(some of them described above)
to isomorphism with the corresponding spatial $L^p$~UHF algebra.
Section~\ref{Sec_ManyNI} gives the construction of uncountably
many nonisomorphic algebras in this class.

We are grateful to
Volker Runde
for useful email discussions and for providing references,
and to Narutaka Ozawa for suggesting that we consider amenability
of our algebras.
Some of this work was done during a visit to
Tokyo University during December 2012.
We are grateful to that institution for its hospitality.

\section{$L^p$~UHF algebras of tensor product type}\label{Sec_TPT}

\indent
In this section,
we recall the construction
of $L^p$~UHF algebras of (infinite) tensor product type
(Example~3.8 of~\cite{PhLp2a}),
and
several other results and definitions of~\cite{PhLp2a}.
We start by recalling supernatural numbers,
which are the elementary form
of the isomorphism invariant for spatial $L^p$~UHF algebras.

\begin{dfn}[Definition~3.3 of~\cite{PhLp2a}]\label{D_2Y15_SNat}
Let $P$ be the set of prime numbers.
A {\emph{supernatural number}} is a function
$N \colon P \to \N \cup \{ \infty \}$
such that $\sum_{t \in P} N (t) = \infty.$

Let $d = (d (1), \, d (2), \, \ldots )$
be a sequence in $\{ 2, 3, 4, \ldots \}.$
We define
\[
r_d (n) = d (1) d (2) \cdots d (n)
\]
for $n \in \Nz.$
(Thus, $r_d (0) = 1.$)
We then define the {\emph{supernatural number associated with~$d$}}
to be the function
$N_d \colon P \to \Nz \cup \{ \infty \}$
given by
\[
N_d (t) = \sup \big( \big\{ k \in \Nz \colon
        {\mbox{there is $n \in \Nz$ such that $t^k$ divides $r_d (n)$}}
              \big\} \big).
\]
\end{dfn}

\begin{dfn}[Definition~3.4 of~\cite{PhLp2a}]\label{D_2Y14_pUHFTypeN-2}
Let $\XBM$ be a \sfm,
let $p \in [1, \infty),$
and let $A \subset \LLp$ be a unital subalgebra.
Let $N$ be a supernatural number.
We say that $A$ is an
{\emph{$L^p$~UHF algebra of type~$N$}}
if there exist a sequence $d$ as in Definition~\ref{D_2Y15_SNat}
with $N_d = N,$
unital subalgebras $D_0 \subset D_1 \subset \cdots \subset A,$
and algebraic isomorphisms $\sm_n \colon M_{r_d (n)} \to D_n,$
such that $A = {\overline{\bigcup_{n = 0}^{\infty} D_n}}.$
\end{dfn}

\begin{ntn}\label{N_3730_Mp}
For $d \in \N$ and $p \in [1, \infty],$
we let $l^p_d = l^p \big( \{1, 2, \ldots, d \} \big),$
using {\emph{normalized}}
counting measure on $\{1, 2, \ldots, d \},$
that is, the total mass is~$1.$
We further let $\MP{d}{p} = L \big( l_d^p \big)$
with the usual operator norm,
and we algebraically identify $\MP{d}{p}$ with the algebra $M_d$ of
$d \times d$ complex matrices in the standard way.
\end{ntn}

One gets the same normed algebra~$\MP{d}{p}$
if one uses the usual counting measure.
(We make a generalization explicit in Lemma~\ref{L_3903_ChM} below.)

Many articles on Banach spaces use $L_p (X, \mu)$
rather than $L^p (X, \mu),$
and use $l_p^d$ for what we call~$l_d^p.$
Our convention is chosen
to avoid conflict with the standard notation for Leavitt algebras,
which appear briefly here
and play a major role in the related papers
\cite{PhLp1}, \cite{PhLp2a}, and~\cite{PhLp3}.

We now recall the construction of
$L^p$~UHF algebras of tensor product type
(Example~3.8 of~\cite{PhLp2a}).
See~\cite{PhLp2a}
for further details and for justification of the statements made here,
in particular,
for the tensor product decomposition
of $L^p$ of a product of measure spaces.

\begin{exa}[Example~3.8 of~\cite{PhLp2a}]\label{E_2Y14_LpUHF}
Let $p \in [1, \infty).$
We take ${\mathbb{N}} = \N.$
For each $n \in {\mathbb{N}},$
let $(X_n, {\mathcal{B}}_n, \mu_n)$ be a probability space,
let $d (n) \in \{ 2, 3, \ldots \},$
and let $\rh_n \colon M_{d (n)} \to L (L^p (X_n, \mu_n))$
be a representation
(unital, by our conventions).

For every subset $S \subset {\mathbb{N}},$
let $(X_S, {\mathcal{B}}_S, \mu_S)$ be the product measure space
of the spaces $(X_n, {\mathcal{B}}_n, \mu_n)$ for $n \in S.$
We let $1_S$ denote the identity operator on $L^p (X_S, \mu_S).$
For $S \subset {\mathbb{N}}$ and $n \in \Nz$ we take
\[
S_{\leq n} = S \cap \{ 1, 2, \ldots, n \}
\andeqn
S_{> n} = S \cap \{ n + 1, \, n + 2, \ldots \}.
\]
We make the identification
\begin{equation}\label{Eq_2Y14_2Fact}
L^p (X_S, \mu_S)
  = L^p (X_{S_{\leq n}}, \, \mu_{S_{\leq n}})
   \otimes_p L^p (X_{S_{> n}}, \, \mu_{S_{> n}}).
\end{equation}

Suppose now that $S$ is finite.
Set $l (S) = \card (S),$
and
write
\[
S = \{ m_{S, 1}, \, m_{S, 2}, \, \ldots, m_{S, l (S)} \}
\]
with $m_{S, 1} < m_{S, 2} < \cdots < m_{S, l (S)}.$
We make the identification
\[
L^p (X_S, \mu_S)
 = L^p (X_{m_{S, 1}}, \mu_{m_{S, 1}})
   \otimes_p L^p (X_{m_{S, 2}}, \mu_{m_{S, 2}})
   \otimes_p \cdots
   \otimes_p L^p (X_{m_{S, l (S)}}, \mu_{m_{S, l (S)}}).
\]
Set
\[
d (S) = \prod_{j = 1}^{l (S)} d (m_{S, j})
\andeqn
M_S = \bigotimes_{j = 1}^{l (S)} M_{d (m_{S, j})}
    \subset L^p (X_S, \mu_S).
\]
We take $d (\E) = 1$ and $M_{\E} = \C.$
Then $M_S \cong M_{d (S)}.$
Further let $\rh_S \colon M_S \to L (L^p (X_S, \mu_S))$
be the unique representation such that
for
\[
a_1 \in M_{d (m_{S, 1})}, \, \, a_2 \in M_{d (m_{S, 2})},
  \, \, \cdots, \, \, a_{l (S)} \in M_{d (m_{S, l (S)})},
\]
we have
\[
\rh_S (a_1 \otimes a_2 \otimes \cdots \otimes a_{l (S)})
  = \rh_{d (m_{S, 1})} (a_1) \otimes \rh_{d (m_{S, 2})} (a_2)
     \otimes \cdots \otimes \rh_{d (m_{S, l (S)})} (a_{l (S)}).
\]

For finite sets $S \subset T \subset {\mathbb{N}},$
there is an obvious \hm{} $\ph_{T, S} \colon M_S \to M_T,$
obtained by filling in a tensor factor of~$1$ for every element
of $T \setminus S.$
We then define
$\rh_{T, S} = \rh_T \circ \ph_{T, S} \colon
   M_S \to L (L^p (X_T, \mu_T)).$
When $S$ is finite but $T$ is not,
we define a \rpn{} of $M_S$
on $L^p (X_T, \mu_T)$ as follows.
Choose some $n \geq \sup (S),$
and, following~(\ref{Eq_2Y14_2Fact})
and
Theorem~2.16(3) of~\cite{PhLp1},
for $a \in M_S$ set
\[
\rh_{T, S} (a) = \rh_{T_{\leq n}, S} (a) \otimes 1_{T_{> n}}
  \in L (L^p (X_T, \mu_T)).
\]

We equip $M_S$ with the norm
$\| a \| = \| \rh_S (a) \|$
for all $a \in M_S.$
Then the maps $\ph_{T, S},$
for $S \subset T \subset {\mathbb{N}}$ finite,
and $\rh_{T, S},$
for $S \subset T \subset {\mathbb{N}}$ with $S$ finite,
are all isometric.
For $S \subset {\mathbb{N}}$ finite,
we now define $A_S \subset \LLp$
by $A_S = \rh_{{\mathbb{N}}, S} (M_S),$
and for $S \subset {\mathbb{N}}$ infinite,
we set
$A_S = {\overline{\bigcup_{n = 0}^{\infty} A_{S_{\leq n}} }}.$
In a similar way, we define $A_{T, S} \subset L (L^p (X_T, \mu_T)),$
for arbitrary $S, T$ with $S \subset T \subset {\mathbb{N}}.$

The algebra $A = A_{{\mathbb{N}}}$
is an $L^p$~UHF algebra of type~$N_d$
in the sense of Definition~\ref{D_2Y14_pUHFTypeN-2}.
When the ingredients used in its construction need to be specified,
we set $d = (d (1), d (2), \ldots)$
and $\rh = (\rh_1, \rh_2, \ldots),$
and write $A (d, \rh).$
We also take $\sm_n \colon M_{r_d (n)} \to A$
to be $\sm_n = \rh_{{\mathbb{N}}, {\mathbb{N}}_{\leq n} }.$
\end{exa}

The basic representations of $\MP{d}{p}$ are the
spatial \rpn{s}.
For the purposes of this paper,
we use the following condition for a \rpn{} to be spatial.

\begin{prp}\label{P_3910_SpMd}
Let $p \in [1, \infty),$
let $d \in \N,$
let $\XBM$ be a \sfm,
and let $\rh \colon \MP{d}{p} \to \LLp$
be a unital \hm.
Let $(e_{j, k})_{j, k = 1, 2, \ldots, d}$
be the standard system of matrix units for~$\MP{d}{p}.$
Then $\rh$ is a spatial \rpn{}
\ifo{} there is a measurable partition
$X = \coprod_{j = 1}^d X_j$
such that for $j, k = 1, 2, \ldots, d$ the operator
$\rh (e_{j, k})$ is a spatial partial isometry,
in the sense of Definition~6.4 of~\cite{PhLp1},
with domain support~$X_k$
and range support~$X_j.$
\end{prp}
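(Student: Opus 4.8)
The plan is to reduce everything to the structure theory of spatial representations of $\MP{d}{p}$ developed in \cite{PhLp1}, via the matrix units. For the forward direction, suppose $\rh$ is spatial. By definition of a spatial representation of $\MP{d}{p}$, there is a measure space isomorphism identifying $(X,\mu)$ (up to rescaling, using Lemma~\ref{L_3903_ChM} to pass to the normalized counting measure setting on a $d$-point space tensored with a residual space) so that $\rh$ is the canonical amplification of the identity representation. Under this identification the standard matrix units $e_{j,k}$ map to the block partial isometries, which are manifestly spatial partial isometries with domain support the $k$-th block and range support the $j$-th block; setting $X_j$ to be the $j$-th block gives the required partition. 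Alternatively, and more in the spirit of a self-contained argument, I would take $X_j$ to be the range support of the idempotent $\rh(e_{j,j})$: since $e_{1,1},\dots,e_{d,d}$ are mutually orthogonal idempotents summing to $1$, the $\rh(e_{j,j})$ are mutually orthogonal idempotents summing to $1_{\LLp}$, and a spatial idempotent on an $L^p$ space is (by the $L^p$-space structure theory, e.g. the description of hermitian idempotents in \cite{PhLp1}) multiplication by the characteristic function of a measurable set; mutual orthogonality and completeness of the sum then force these sets to partition $X$. That $\rh(e_{j,k})$ is a spatial partial isometry with the stated domain and range supports follows because $e_{j,k} = e_{j,j} e_{j,k} e_{k,k}$ and a spatial representation sends spatial partial isometries to spatial partial isometries with supports tracked accordingly.

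For the converse, suppose such a partition $X = \coprod_{j=1}^d X_j$ exists with each $\rh(e_{j,k})$ a spatial partial isometry with domain support $X_k$ and range support $X_j$. I would first observe that, since $\rh(e_{j,j})$ is a spatial partial isometry with domain and range support both equal to $X_j$, it must be the idempotent given by multiplication by $1_{X_j}$; hence the restriction of $L^p(X,\mu)$ decomposes as $L^p(X,\mu) = \bigoplus_{j=1}^d L^p(X_j, \mu|_{X_j})$. The spatial partial isometries $\rh(e_{j,1})$ for $j=1,\dots,d$ then give, via the theory of spatial partial isometries in \cite{PhLp1}, isometric isomorphisms (after rescaling measures) between $L^p(X_1,\mu|_{X_1})$ and each $L^p(X_j,\mu|_{X_j})$, compatibly with the way $\rh(e_{j,k}) = \rh(e_{j,1})\rh(e_{k,1})^{-1}$ on the relevant block (using the spatial partial isometry composition/adjoint rules). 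Writing $L^p(X,\mu) \cong l^p_d \otimes_p L^p(X_1, \mu|_{X_1})$ using these identifications, one checks directly that $\rh$ is carried to the representation $a \mapsto a \otimes 1$, i.e. the standard spatial amplification of the identity representation of $\MP{d}{p}$ on $L^p(X_1,\mu|_{X_1})$; this is exactly a spatial representation in the sense of \cite{PhLp1}.

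The main obstacle I anticipate is bookkeeping the rescaling of measures and the ``reference'' normalized counting measure on $\{1,\dots,d\}$: a spatial partial isometry in the sense of Definition~6.4 of \cite{PhLp1} intertwines measures only up to a positive scalar (a Radon–Nikodym factor), so the isometric identification $L^p(X_j) \cong L^p(X_1)$ provided by $\rh(e_{j,1})$ a priori involves scalars $c_j > 0$, and one must verify these scalars are all equal (equivalently, that they are forced to be $1$ by the normalization in Notation~\ref{N_3730_Mp}) — this uses that $e_{j,1}e_{1,j} = e_{j,j}$, which pins down $c_j c_j^{-1}$ type relations, together with the trace/normalization convention, so that the assembled map is genuinely the standard spatial amplification and not a diagonal-similarity-twisted version of it. The other routine-but-nontrivial point is citing the correct facts from \cite{PhLp1}: that spatial idempotents on $L^p$ spaces are multiplication operators, that spatial partial isometries compose and ``invert'' in the expected way, and that the standard amplification $a \mapsto a \otimes 1$ fits the Definition~6.4-style definition of spatial representation used there. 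Once those citations are in place, the argument is a direct matrix-unit computation in both directions.
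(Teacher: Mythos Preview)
Your approach is correct and is essentially the ``direct'' argument the paper alludes to but does not spell out: the paper's own proof is just a citation to Theorem~7.2 of~\cite{PhLp1} for $p \neq 2$, together with the remark that the direct argument works for all~$p$. So you are doing more work than the paper does, but along the line the paper itself says is available.

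One comment on your anticipated obstacle: your worry about the scalars $c_j$ is unfounded. A spatial partial isometry in the sense of Definition~6.4 of~\cite{PhLp1} already incorporates the Radon--Nikodym factor into the operator, so that it is an honest isometry from $L^p$ of its domain support onto $L^p$ of its range support. Thus $\rh(e_{j,1})$ gives an isometric isomorphism $L^p(X_1, \mu|_{X_1}) \to L^p(X_j, \mu|_{X_j})$ with no leftover scalar, and the identification $L^p(X,\mu) \cong l^p_d \otimes_p L^p(X_1, \mu|_{X_1})$ goes through without any normalization juggling. (Similarly, your argument that $\rh(e_{j,j})$ is multiplication by $\chi_{X_j}$ is fine: a spatial partial isometry with equal domain and range support which is idempotent must have trivial spatial realization and trivial phase factor.) With that clarification, your matrix-unit bookkeeping is exactly what is needed.
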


\begin{proof}
For $p \neq 2,$
this statement easily follows by combining several parts
of Theorem~7.2 of~\cite{PhLp1}.
For $p = 2$
(and in fact for any $p \in [1, \infty)$),
it is also easily proved directly.
\end{proof}

\begin{cor}\label{C_3910_TensSp}
Let $p \in [1, \infty),$
let $k, l \in \N,$
let $\XBM$ and $\YCN$ be \sfm{s},
and let $\rh \colon \MP{k}{p} \to \LLp$
and $\sm \colon \MP{l}{p} \to L (L^p (Y, \nu))$
be spatial \rpn{s}.
Identify $L^p (X, \mu) \otimes_p L^p (Y, \nu)$ with
$L^p (X \times Y, \, \mu \times \nu)$
as in Theorem~2.16 of~\cite{PhLp1}
and $\MP{k}{p} \otimes_p \MP{l}{p}$ with $\MP{k l}{p}$
as in Corollary~1.13 of~\cite{PhLp2a}.
Then
$\rh \otimes \sm \colon \MP{k l}{p}
   \to L \big( L^p (X \times Y, \, \mu \times \nu) \big)$
is a spatial \rpn.
\end{cor}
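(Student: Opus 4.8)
The plan is to reduce the statement to the characterization of spatial representations in Proposition~\ref{P_3910_SpMd}, so that what remains is to show that a tensor product of two spatial partial isometries is again a spatial partial isometry, with domain support and range support equal to the respective products of the two supports. First I would apply Proposition~\ref{P_3910_SpMd} to $\rh$ and to $\sm$ separately, obtaining measurable partitions $X = \coprod_{i = 1}^{k} X_i$ and $Y = \coprod_{r = 1}^{l} Y_r$ such that, writing $(e_{i, i'})$ and $(f_{r, r'})$ for the standard systems of matrix units of $\MP{k}{p}$ and $\MP{l}{p}$, the operator $\rh(e_{i, i'})$ is a spatial partial isometry with domain support $X_{i'}$ and range support $X_i$, and $\sm(f_{r, r'})$ is a spatial partial isometry with domain support $Y_{r'}$ and range support $Y_r$. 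Under the identification of $\MP{k}{p} \otimes_p \MP{l}{p}$ with $\MP{k l}{p}$ from Corollary~1.13 of~\cite{PhLp2a}, the standard system of matrix units of $\MP{k l}{p}$ is carried to the elements $e_{i, i'} \otimes f_{r, r'}$, indexed by pairs; and under the identification of $L^p(X, \mu) \otimes_p L^p(Y, \nu)$ with $L^p(X \times Y, \, \mu \times \nu)$ from Theorem~2.16 of~\cite{PhLp1} one has $(\rh \otimes \sm)(e_{i, i'} \otimes f_{r, r'}) = \rh(e_{i, i'}) \otimes \sm(f_{r, r'})$. I would then take the measurable partition $X \times Y = \coprod_{i, r} (X_i \times Y_r)$, so that by the converse direction of Proposition~\ref{P_3910_SpMd} it suffices to show that each $\rh(e_{i, i'}) \otimes \sm(f_{r, r'})$ is a spatial partial isometry with domain support $X_{i'} \times Y_{r'}$ and range support $X_i \times Y_r$.

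This reduces everything to the general fact that if $s \in \LLp$ and $t \in L(L^p(Y, \nu))$ are spatial partial isometries in the sense of Definition~6.4 of~\cite{PhLp1}, with domain supports $E, E'$ and range supports $F, F'$, then $s \otimes t$ is a spatial partial isometry with domain support $E \times E'$ and range support $F \times F'$. I would prove this by unwinding Definition~6.4: represent $s$ in terms of the data it provides --- a measure-class-preserving correspondence between $E$ and $F$, a modulus-one multiplier, and a Radon--Nikodym factor --- do the same for $t$, and then check, on elementary tensors $\xi \otimes \eta$ (which span a dense subspace), that $s \otimes t$ is described by the product correspondence, the product of the multipliers, and the product of the two Radon--Nikodym factors; the last of these is, by Fubini's theorem, exactly the Radon--Nikodym factor of the product correspondence on the product measure space. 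Throughout one uses Theorem~2.16 of~\cite{PhLp1} to pass between the tensor product and $L^p$ of the product space.

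I expect the main obstacle to be bookkeeping rather than anything conceptual: one must align the two matrix-unit identifications so that the product partition $X_i \times Y_r$ is indexed compatibly with the standard matrix units of $\MP{k l}{p}$, and then match the data of Definition~6.4 for $s \otimes t$ with the product data without normalization errors. Once the notation is in place there is no genuine difficulty; for $p = 2$ the assertion about tensor products of partial isometries and the behavior of their supports is classical, and for general $p$ it is immediate from the explicit description of spatial partial isometries.
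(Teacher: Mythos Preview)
Your approach is correct and is essentially the same as the paper's: reduce to the matrix-unit criterion of Proposition~\ref{P_3910_SpMd} and then use that a tensor product of spatial partial isometries is again a spatial partial isometry with product domain and range supports. The only difference is that the paper does not reprove that last fact from Definition~6.4 as you propose, but simply cites it as Lemma~6.20 of~\cite{PhLp1}; your direct verification would work, but it is unnecessary given that reference.
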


\begin{proof}
By Lemma~6.20 of~\cite{PhLp1},
the tensor product of spatial partial isometries is again
a spatial partial isometry,
with domain and range supports given as the products
of the domain and range supports of the tensor factors.
The result is then immediate from Proposition~\ref{P_3910_SpMd}.
\end{proof}

For $p \in [1, \infty) \setminus \{ 2 \},$
a \rpn{} of $M_d$ on an $L^p$~space
is spatial \ifo{} it is isometric
(see Theorem 7.2(3) of~\cite{PhLp1}),
but this is not true for $p = 2.$

We briefly recall
that a \rpn{} of~$M_d$
has enough isometries
(Definition 2.7(1) of~\cite{PhLp2a})
if there is an irreducibly acting subgroup of its invertible
group $\inv (M_d)$
whose images are isometries,
that it locally has enough isometries
(Definition 2.7(2) of~\cite{PhLp2a})
if it is a direct sum
(in the sense of Definition~\ref{D_3406_pSumDfn} below)
of \rpn{s}
with enough isometries
(presumably using different subgroups for the different summands),
and that it dominates the spatial \rpn{}
(Definition 2.7(3) of~\cite{PhLp2a})
if in addition one of the summands is spatial.

\begin{dfn}[Definition~3.9 of~\cite{PhLp2a}]\label{D_2Y15_ITP}
Let $\XBM$ be a \sfm,
let $p \in [1, \infty),$
and let $A \subset \LLp$ be a unital subalgebra.
\begin{enumerate}
\item\label{D_2Y15_ITP_Basic}
We say that $A$ is an
{\emph{$L^p$~UHF algebra of tensor product type}}
if there exist $d$ and $\rh = (\rh_1, \rh_2, \ldots)$
as in Example~\ref{E_2Y14_LpUHF}
such that $A$ is isometrically isomorphic to $A (d, \rh).$
\item\label{D_2Y15_ITP_Spatial}
We say that $A$ is a {\emph{spatial $L^p$~UHF algebra}}
if in addition it is possible to choose
each \rpn{} $\rh_n$ to be spatial in the sense
of Definition~7.1 of~\cite{PhLp1}.
\item\label{D_2Y15_ITP_LEI}
We say that $A$ {\emph{locally has enough isometries}}
if it is possible to choose $d$ and $\rh$
as in~(\ref{D_2Y15_ITP_Basic})
such that, in addition,
$\rh_n$ locally has enough isometries,
in the sense of
Definition 2.7(2) of~\cite{PhLp2a},
for all $n \in \N.$
\item\label{D_2Y15_DomSpatial}
If $A$ locally has enough isometries,
we further say that $A$
{\emph{dominates the spatial representation}}
if it is possible to choose $d$ and $\rh$
as in~(\ref{D_2Y15_ITP_LEI})
such that, in addition,
for all $n \in \N$ the representation
$\rh_n$ dominates the spatial representation
in the sense of
Definition 2.7(3) of~\cite{PhLp2a}.
\end{enumerate}
\end{dfn}

In particular,
if we take the infinite tensor product of
the algebras $\MP{d (n)}{p},$
represented as in Notation~\ref{N_3730_Mp},
the resulting $L^p$~UHF algebra of tensor product type is spatial.

\begin{thm}\label{T_3729_EU}
Let $p \in [1, \infty)$
and let $N$ be a supernatural number.
Then there exists a spatial $L^p$~UHF algebra~$A$ of type~$N.$
It is unique up to isometric isomorphism.
It is of tensor product type,
in fact isometrically isomorphic
to $A (d, \rh)$ as in Example~\ref{E_2Y14_LpUHF}
for any sequence $d$ with $N_d = N$
and any sequence $\rh$ consisting of spatial \rpn{s}.
Moreover, for any \sfm{} $(Z, {\mathcal{D}}, \ld)$
and any closed unital subalgebra
$D \subset L (L^p (Z, \ld)),$
any two choices of spatial direct system of type~$N$
(as in Definition~3.5 of~\cite{PhLp2a}),
with any unital isometric representations
of the direct limits on $L^p$~spaces,
give isometrically isomorphic $L^p$~tensor products
$A \otimes_p D.$
\end{thm}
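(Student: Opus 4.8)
The plan is to establish the theorem in three stages: existence, uniqueness up to isometric isomorphism, and the tensor‑product uniqueness statement at the end. For existence, given a supernatural number $N$, choose any sequence $d$ with $N_d = N$ (such a sequence exists by unwinding Definition~\ref{D_2Y15_SNat}: list the prime powers dividing $N$ in some order, using repetitions to account for infinite exponents) and for each $n$ let $\rh_n \colon \MP{d(n)}{p} \to L(l^p_{d(n)})$ be the identity representation of Notation~\ref{N_3730_Mp}, which is spatial. Then $A(d, \rh)$ from Example~\ref{E_2Y14_LpUHF} is an $L^p$~UHF algebra of type~$N$ and of tensor product type, and it is spatial by Definition~\ref{D_2Y15_ITP}\,(\ref{D_2Y15_ITP_Spatial}); this is essentially the remark immediately preceding the theorem.

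For uniqueness, the key point is that the building‑block maps are rigid: by Proposition~\ref{P_3910_SpMd}, a spatial representation of $\MP{d}{p}$ is determined up to spatial conjugacy by the measurable partition $X = \coprod_{j=1}^d X_j$ into domain/range supports of the matrix units, and by Corollary~\ref{C_3910_TensSp} tensor products of spatial representations are again spatial with the expected supports. So given two spatial $L^p$~UHF algebras $A = A(d, \rh)$ and $B = A(e, \sm)$ of type~$N$, I would first pass (using that $N_d = N_e = N$) to a common refinement of the two sequences $r_d(\cdot)$ and $r_e(\cdot)$ along a subsequence, matching up the finite‑dimensional stages $D_n \cong M_{r_d(n)}$ with the corresponding stages of~$B$. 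At each stage the inclusion $\MP{r_d(n)}{p} \hookrightarrow \MP{r_d(n+1)}{p}$ is the standard unital one (filling in a tensor factor of~$1$, as in Example~\ref{E_2Y14_LpUHF}), and the representation of each stage is spatial; Proposition~\ref{P_3910_SpMd} then gives isometric isomorphisms $D_n^A \to D_n^B$ which can be chosen compatibly with the inclusions because the spatial structure (the partitions) is canonical. Taking the closure of the union gives an isometric isomorphism $A \to B$. Combined with existence, this also yields the claim that $A(d,\rh)$ is isometrically isomorphic to $A(d',\rh')$ for \emph{any} $d'$ with $N_{d'} = N$ and \emph{any} spatial~$\rh'$.

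For the final sentence, note that any spatial direct system of type~$N$ has direct limit isometrically isomorphic to the spatial $L^p$~UHF algebra~$A$ of type~$N$ just constructed (this is the uniqueness already proved, rephrased in terms of Definition~3.5 of~\cite{PhLp2a}). Given a \sfm{} $(Z, {\mathcal{D}}, \ld)$, a closed unital subalgebra $D \subset L(L^p(Z,\ld))$, and two choices of spatial direct system with unital isometric representations of the direct limits, both direct limits are isometrically isomorphic to~$A$; I would then need to check that the two induced isometric representations of~$A$, when tensored with the given representation of~$D$, yield isometrically isomorphic $L^p$~operator tensor products $A \otimes_p D$. The mechanism is again Proposition~\ref{P_3910_SpMd} and Corollary~\ref{C_3910_TensSp}: tensoring the stagewise spatial matrix units with the identity on $L^p(Z,\ld)$ preserves spatiality, so the two systems of operators on $L^p(X,\mu) \otimes_p L^p(Z,\ld)$ generating the respective copies of $A \otimes_p D$ are spatially conjugate stage by stage, compatibly with the inclusions, and passing to closures gives the desired isometric isomorphism.

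\textbf{Main obstacle.} The delicate step is the \emph{compatibility} in the uniqueness argument: Proposition~\ref{P_3910_SpMd} produces an isometric isomorphism at each stage, but one must choose these so that the square with the connecting inclusions commutes exactly, not merely up to an automorphism of the smaller matrix algebra. Here the point is that the partition data are genuinely canonical and multiplicative under the tensor inclusions of Example~\ref{E_2Y14_LpUHF}, so the spatial partial isometries implementing the isomorphisms can be built from the same underlying measure‑space identifications; but making this bookkeeping precise — especially keeping track of how the refinement of the two supernatural‑number sequences interacts with the stagewise partitions — is the part that requires care. The analogous compatibility for the $A \otimes_p D$ statement is then formally the same, with an extra inert tensor factor of $L^p(Z,\ld)$ carried along.
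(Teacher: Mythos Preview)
Your proposal heads in the right direction but largely attempts to reprove from scratch what the paper simply imports from~\cite{PhLp2a}. The paper's proof is almost entirely by citation: existence, uniqueness, and the tensor product type claim are Theorem~3.10 of~\cite{PhLp2a}; the fact that $A(d,\rh)$ with spatial $\rh$ is itself spatial follows from Definition~3.5 of~\cite{PhLp2a} together with Corollary~\ref{C_3910_TensSp}; and the final tensor product statement is obtained by writing $A\otimes_p D = \overline{\bigcup_n A_n\otimes_p D} \cong \dirlim_n A_n\otimes_p D$ (and likewise for a second spatial direct system) and then invoking Theorem~3.7 of~\cite{PhLp2a}, which says precisely that such direct limits are isometrically isomorphic. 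So the ``main obstacle'' you identify---making the stagewise isomorphisms compatible with the connecting maps---is exactly the work packaged into Theorems~3.7 and~3.10 of~\cite{PhLp2a}, and the present paper does not redo it.

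There is also a misattribution in your sketch. Proposition~\ref{P_3910_SpMd} does not produce isometric isomorphisms between two spatial representations; it only characterizes spatiality in terms of spatial partial isometries. The reason the finite stages $D_n^A$ and $D_n^B$ carry the same norm is that spatial representations of $\MP{d}{p}$ are isometric (Theorem~7.2 of~\cite{PhLp1} for $p\neq 2$; for $p=2$ this is the uniqueness of the C*-norm), not Proposition~\ref{P_3910_SpMd}. And once you have isometric stages, the intertwining across different sequences $d,e$ with $N_d=N_e$ still needs an argument showing the connecting maps match up---your ``common refinement'' remark is the right idea, but carrying it out is essentially the content of the cited classification theorem in~\cite{PhLp2a}. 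In short, your plan is a plausible outline of how one might prove those cited results, but it is not the proof the paper gives, and the step you flag as delicate is genuinely the nontrivial part you have not completed.
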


\begin{proof}
For all but the last two sentences, see Theorem 3.10 of~\cite{PhLp2a}.
For the second last sentence,
the only additional fact needed is that
$A (d, \rh)$ is a spatial $L^p$~UHF algebra.
This follows from Definition~3.5 of~\cite{PhLp2a}
and Corollary~\ref{C_3910_TensSp}.

We prove the last statement.
Let $\XBM$ and $\YCN$ be \sfm{s}.
Let $A \subset \LLp$ and $B \subset L (L^p (Y, \nu))$
be spatial $L^p$~UHF algebras of type~$N.$
Let $A_0 \subset A_1 \subset \cdots \subset A$
and $B_0 \subset B_1 \subset \cdots \subset B$
be subalgebras such that,
with $\ph_{n, m} \colon A_m \to A_n$
and $\ps_{n, m} \colon B_m \to B_n$
being the inclusion maps for $m \leq n,$
the systems
$\big( (A_n)_{n \in \Nz}, \, ( \ph_{n, m} )_{m \leq n} \big)$
and $\big( (B_n)_{n \in \Nz}, \, ( \ps_{n, m} )_{m \leq n} \big)$
are spatial direct systems of type~$N,$
and such that $A = {\overline{\bigcup_{n = 0}^{\infty} A_n}}$
and $B = {\overline{\bigcup_{n = 0}^{\infty} B_n}}.$
Then
\[
A \otimes_p D
 = {\overline{\bigcup_{n = 0}^{\infty} A_n \otimes_p D}}
 \cong \dirlim A_n \otimes_p D
\andeqn
B \otimes_p D
 = {\overline{\bigcup_{n = 0}^{\infty} B_n \otimes_p D}}
 \cong \dirlim B_n \otimes_p D,
\]
with the isomorphisms being isometric.
Theorem~3.7 of~\cite{PhLp2a}
shows that the direct limits
are isometrically isomorphic.
\end{proof}

\section{Spatial $L^p$~UHF algebras}\label{Sec_Sptl}

\indent
In this section,
we consider $L^p$~UHF algebras
of tensor product type
for a fixed value of~$p$ and a fixed supernatural number.
We give some conditions which imply that
such an algebra is isomorphic to the corresponding spatial algebra.
We also prove that the spatial $L^p$~UHF algebras
are symmetrically amenable,
hence amenable in the sense of Banach algebras,
and that they have approximately inner tensor flip.
The machinery we develop
will be used for the more special classes
considered in later sections.

We will need the projective tensor product of Banach spaces.
Many of its properties are given in Section B.2.2 of
Appendix~B in~\cite{Rnd}.
There are few proofs, but the omitted proofs are easy.

\begin{ntn}\label{N_2Y25_ProjTP}
Let $E$ and $F$ be Banach spaces.
We denote by $E {\widehat{\otimes}} F$
the projective tensor product of Definition B.2.10 of~\cite{Rnd},
and similarly for more than two factors.
When the norm on this space must be made explicit to avoid confusion,
we denote it by $\| \cdot \|_{\pi}.$
(It is given on an element $\mu$ in the
algebraic tensor product $E \otimes_{\mathrm{alg}} F$
by the infimum of all sums
$\sum_{k = 1}^n \| \xi_k \| \cdot \| \et_k \|$
for which $\xi_1, \xi_2, \ldots, \xi_n \in E$
and $\et_1, \et_2, \ldots, \et_n \in F$
satisfy $\sum_{k = 1}^n \xi_k \otimes \et_k = \mu.$)
If $E_1,$ $E_2,$ $F_1,$ and $F_2$ are Banach spaces,
$a_1 \in L \big( E_1, F_1),$ and $a_2 \in L (E_2, F_2 \big),$
we write $a {\widehat{\otimes}} b$
for the tensor product map
in
$L \big( E_1 {\widehat{\otimes}} E_2, \,
           F_1 {\widehat{\otimes}} F_2 \big).$
\end{ntn}

The projective tensor product is the maximal tensor product,
in the sense that there is a contractive linear map
from $E {\widehat{\otimes}} F$
to the completion of $E \otimes_{\mathrm{alg}} F$ 
in any other cross norm.
(See Exercise B.2.9 of~\cite{Rnd}.)
The projective tensor product of Banach algebras $A$ and $B$
is a Banach algebra (Exercise B.2.14 of~\cite{Rnd}),
in which, by the definition of a cross norm,
$\big\| a {\widehat{\otimes}} b \big\| = \| a \| \cdot \| b \|$
for $a \in A$ and $b \in B.$

The main purpose of the following lemma
is to establish notation for the maps in the statement.

\begin{lem}\label{L_2Y25_PjTMaps}
Let $A$ be a Banach algebra.
Then there are unique contractive linear maps
$\Dt_A \colon A {\widehat{\otimes}} A \to A$
and
$\Dt_A^{\mathrm{op}} \colon A {\widehat{\otimes}} A \to A$
such that $\Dt_A (a \otimes b) = a b$ and
$\Dt_A^{\mathrm{op}} (a \otimes b) = b a$ for all $a, b \in A.$
\end{lem}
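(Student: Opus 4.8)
The plan is to deduce both maps directly from the universal property of the projective tensor product, so the argument is essentially immediate. First I would note that the multiplication map $m \colon A \times A \to A$ given by $m(a, b) = ab$ is bilinear and satisfies $\| m(a,b) \| = \| ab \| \le \| a \| \cdot \| b \|$ by the defining inequality for a Banach algebra. The projective tensor norm is characterized (as recalled in Notation~\ref{N_2Y25_ProjTP}) by the property that such a bilinear map, bounded by $1$ in this sense, extends to a linear map on $A \otimes_{\mathrm{alg}} A$ that is contractive for $\| \cdot \|_{\pi}$: concretely, for $\mu = \sum_{k = 1}^n a_k \otimes b_k$ one has $\bigl\| \sum_{k = 1}^n a_k b_k \bigr\| \le \sum_{k = 1}^n \| a_k \| \cdot \| b_k \|$, and taking the infimum over all representations of $\mu$ shows that $\mu \mapsto \sum_{k = 1}^n a_k b_k$ is a contraction $A \otimes_{\mathrm{alg}} A \to A$. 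This then extends uniquely by continuity to a contractive linear map $\Dt_A \colon A {\widehat{\otimes}} A \to A$ with $\Dt_A (a \otimes b) = ab$.

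For $\Dt_A^{\mathrm{op}}$ I would run the identical argument with the bilinear map $(a, b) \mapsto ba$, which is likewise bounded by $1$ since $\| ba \| \le \| b \| \cdot \| a \|$, obtaining a contractive linear map $\Dt_A^{\mathrm{op}} \colon A {\widehat{\otimes}} A \to A$ with $\Dt_A^{\mathrm{op}} (a \otimes b) = ba$.

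Uniqueness is then clear: the elementary tensors $a \otimes b$ span $A \otimes_{\mathrm{alg}} A$, which is dense in $A {\widehat{\otimes}} A$, so any two continuous linear maps out of $A {\widehat{\otimes}} A$ agreeing on all such elements coincide. There is no real obstacle here; the only point worth stating carefully is that one must invoke the universal property in the form that also yields contractivity rather than mere boundedness, which is exactly how the projective norm is defined via the infimum formula above.
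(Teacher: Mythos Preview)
Your proof is correct and is exactly the standard argument the paper has in mind; the paper's own proof simply reads ``This is immediate from the standard properties of the projective tensor product,'' and you have spelled out those standard properties explicitly.
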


\begin{proof}
This is immediate from the standard properties
of the projective tensor product.
\end{proof}

\begin{lem}\label{L_3804_DtFnc}
Let $A$ and $B$ be Banach algebras,
and let $\ph \colon A \to B$ be a \ct{} \hm.
Then
\[
\Dt_B \circ \big( \ph {\widehat{\otimes}} \ph \big)
  = \ph \circ \Dt_A
\andeqn
\Dt_B^{\mathrm{op}} \circ \big( \ph {\widehat{\otimes}} \ph \big)
  = \ph \circ \Dt_A^{\mathrm{op}}.
\]
\end{lem}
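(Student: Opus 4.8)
The plan is to verify the identity on elements of the algebraic tensor product $A \otimes_{\mathrm{alg}} A$ and then invoke continuity and density. First I would observe that $A \otimes_{\mathrm{alg}} A$ is dense in $A \widehat{\otimes} A$ (by the very definition of the projective tensor product as a completion), and that all four composite maps appearing in the statement, namely $\Dt_B \circ (\ph \widehat{\otimes} \ph)$, $\ph \circ \Dt_A$, $\Dt_B^{\mathrm{op}} \circ (\ph \widehat{\otimes} \ph)$, and $\ph \circ \Dt_A^{\mathrm{op}}$, are continuous: $\Dt_A, \Dt_A^{\mathrm{op}}, \Dt_B, \Dt_B^{\mathrm{op}}$ are contractive by Lemma~\ref{L_2Y25_PjTMaps}, $\ph$ is continuous by hypothesis, and $\ph \widehat{\otimes} \ph$ is continuous because it is a tensor product of bounded maps (with norm $\|\ph\|^2$). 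A continuous linear map is determined by its values on a dense subspace, so it suffices to check the identities on simple tensors $a \otimes b$ with $a, b \in A$.

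On a simple tensor the computation is immediate from the defining formulas. For the first identity,
\[
\Dt_B \big( (\ph \widehat{\otimes} \ph)(a \otimes b) \big)
 = \Dt_B \big( \ph(a) \otimes \ph(b) \big)
 = \ph(a) \ph(b)
 = \ph(ab)
 = \ph \big( \Dt_A (a \otimes b) \big),
\]
where the second-to-last step uses that $\ph$ is a homomorphism. The opposite case is the same computation with the order of the product reversed:
\[
\Dt_B^{\mathrm{op}} \big( (\ph \widehat{\otimes} \ph)(a \otimes b) \big)
 = \Dt_B^{\mathrm{op}} \big( \ph(a) \otimes \ph(b) \big)
 = \ph(b) \ph(a)
 = \ph(ba)
 = \ph \big( \Dt_A^{\mathrm{op}} (a \otimes b) \big).
\]
By linearity both identities then hold on all of $A \otimes_{\mathrm{alg}} A$, and by the density and continuity remarks above they extend to all of $A \widehat{\otimes} A$.

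There is no real obstacle here; the statement is a formal consequence of functoriality of the projective tensor product together with the fact that $\ph$ respects multiplication. The only point requiring any care is confirming that $\ph \widehat{\otimes} \ph$ is a well-defined bounded operator on the projective tensor products, which is one of the standard properties recorded in Section B.2.2 of~\cite{Rnd} and is in any case elementary. Accordingly, I expect the written proof to consist of little more than the two displayed line computations above preceded by the sentence "Both sides are continuous linear maps agreeing on simple tensors, hence on the dense subspace $A \otimes_{\mathrm{alg}} A$, hence everywhere."
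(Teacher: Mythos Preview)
Your proposal is correct and is exactly the argument the paper has in mind; the paper's own proof is the single sentence ``Use the relation $\ph(xy) = \ph(x)\ph(y)$ for $x, y \in A$,'' and your write-up simply unpacks this into the obvious density-plus-continuity argument.
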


\begin{proof}
Use the relation $\ph (x y) = \ph (x) \ph (y)$ for $x, y \in A.$
\end{proof}

The definition of an amenable Banach algebra is given in
Definition 2.1.9 of~\cite{Rnd};
see Theorem 2.2.4 of~\cite{Rnd}
for two standard equivalent conditions.
We will also use symmetric amenability
(introduced in~\cite{Jh0}).
The following characterizations for unital Banach algebras
are convenient here.

\begin{prp}\label{P_2Y25_ChrAm}
Let $A$ be a unital Banach algebra.
Then:
\begin{enumerate}
\item\label{P_2Y25_ChrAm_Amen}
$A$ is amenable \ifo{} there is a
bounded net $(m_{\ld})_{\ld \in \Ld}$
in $A {\widehat{\otimes}} A$
such that
%
\[
\lim_{\ld \in \Ld}
    \big( (a \otimes 1) m_{\ld} - m_{\ld} (1 \otimes a) \big)
  = 0
\]
%
for all $a \in A$
and such that $\lim_{\ld \in \Ld} \Dt_A (m_{\ld}) = 1.$
\item\label{P_2Y25_ChrAm_Sym}
$A$ is symmetrically amenable \ifo{} there is a bounded net
$(m_{\ld})_{\ld \in \Ld}$
in $A {\widehat{\otimes}} A$
such that
%
\[
\lim_{\ld \in \Ld}
    \big( (a \otimes b) m_{\ld} - m_{\ld} (b \otimes a) \big)
  = 0
\]
%
for all $a, b \in A$
and such that
$\lim_{\ld \in \Ld} \Dt_A (m_{\ld})
 = \lim_{\ld \in \Ld} \Dt_A^{\mathrm{op}} (m_{\ld}) = 1.$
\end{enumerate}
\end{prp}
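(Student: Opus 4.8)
The plan is to deduce both equivalences from the standard module-theoretic characterizations of amenability and symmetric amenability for unital Banach algebras, using the universal property of the projective tensor product to convert between bounded approximate diagonals and the bounded nets in $A \widehat{\otimes} A$ described in the statement. Recall (Theorem~2.2.4 of~\cite{Rnd} and the corresponding result in~\cite{Jh0}) that a unital Banach algebra $A$ is amenable if and only if it has a \emph{bounded approximate diagonal}: a bounded net $(m_\ld)_{\ld \in \Ld}$ in $A \widehat{\otimes} A$ with $a \cdot m_\ld - m_\ld \cdot a \to 0$ for all $a$ and $\Dt_A(m_\ld)\, a \to a$ for all $a$; for a unital algebra the latter condition is equivalent to $\Dt_A(m_\ld) \to 1$ because the $A$-bimodule action of $A$ on itself is unital and the net $(\Dt_A(m_\ld))$ is bounded, so $\Dt_A(m_\ld) = \Dt_A(m_\ld)\cdot 1 \to 1$. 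Here the left action on $A \widehat{\otimes} A$ is $a \cdot (x \otimes y) = (ax)\otimes y = (a \otimes 1)(x \otimes y)$ and the right action is $(x \otimes y) \cdot a = x \otimes (ya) = (x\otimes y)(1 \otimes a)$, which identifies the bimodule commutator condition with the displayed condition $(a \otimes 1) m_\ld - m_\ld (1 \otimes a) \to 0$. This gives part~(\ref{P_2Y25_ChrAm_Amen}) essentially by unwinding definitions; the only point needing a line of justification is the passage between $\Dt_A(m_\ld)\, a \to a$ and $\Dt_A(m_\ld) \to 1$, for which unitality and boundedness suffice.

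For part~(\ref{P_2Y25_ChrAm_Sym}), the relevant characterization from~\cite{Jh0} is that a Banach algebra is symmetrically amenable if and only if it admits a bounded approximate diagonal $(m_\ld)$ that is moreover \emph{symmetric} in an appropriate sense — concretely, one can take a bounded net in $A \widehat{\otimes} A$ satisfying $(a \otimes b) m_\ld - m_\ld (b \otimes a) \to 0$ for all $a, b \in A$ together with $\Dt_A(m_\ld) \to 1$; the extra condition $\Dt_A^{\mathrm{op}}(m_\ld) \to 1$ is then automatic in the unital case, but it is cleaner to record it explicitly, and one checks it from the flip-commutator condition applied with $b = 1$: $(a \otimes 1) m_\ld - m_\ld (1 \otimes a) \to 0$ forces, after applying the contractive map $\Dt_A^{\mathrm{op}}$ and using Lemma~\ref{L_2Y25_PjTMaps} together with $\Dt_A^{\mathrm{op}}((a\otimes 1)\mu) = \Dt_A^{\mathrm{op}}(\mu) a$ — wait, more carefully, $\Dt_A^{\mathrm{op}}$ applied to the commutator gives $\Dt_A^{\mathrm{op}}(m_\ld)\cdot$ something; the symmetric approximate diagonal axioms from~\cite{Jh0} are designed precisely so that both $\Dt_A(m_\ld)$ and $\Dt_A^{\mathrm{op}}(m_\ld)$ converge to a two-sided identity, which in the unital case is $1$. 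So the proof is again a matter of matching the displayed conditions to the definition in~\cite{Jh0} and checking that, for $A$ unital, ``approximate identity for the bimodule'' can be replaced by ``converges to $1$''.

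Concretely I would carry out the following steps. First, recall the bimodule structure: for $\mu \in A \widehat{\otimes} A$ and $a \in A$, set $a \cdot \mu = (a \otimes 1)\mu$ and $\mu \cdot a = \mu (1 \otimes a)$, and note $\Dt_A$ is an $A$-bimodule map from $A \widehat{\otimes} A$ (with this structure) to $A$ by Lemma~\ref{L_2Y25_PjTMaps} and the computation $\Dt_A((a\otimes 1)\mu (1\otimes b)) = a\, \Dt_A(\mu)\, b$ on elementary tensors, extended by continuity. Second, invoke Theorem~2.2.4 of~\cite{Rnd}: $A$ amenable $\iff$ there is a bounded net $(m_\ld)$ in $A \widehat{\otimes} A$ with $a\cdot m_\ld - m_\ld \cdot a \to 0$ and $\Dt_A(m_\ld)\cdot a \to a$ for all $a$. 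Third, observe that for unital $A$ the condition $\Dt_A(m_\ld)\cdot a \to a$ for all $a$ is equivalent to $\Dt_A(m_\ld) \to 1$: one direction is taking $a = 1$; the converse multiplies the limit by $a$. This yields~(\ref{P_2Y25_ChrAm_Amen}). Fourth, for~(\ref{P_2Y25_ChrAm_Sym}), replace the bimodule $A \widehat{\otimes} A$ with the structure $(a \otimes b) m (c \otimes d)$ coming from multiplication on $A \widehat{\otimes} A$ and the ``flipped'' diagonal condition, cite the characterization of symmetric amenability via symmetric approximate diagonals from~\cite{Jh0}, and again use unitality to upgrade the approximate-identity conditions on $\Dt_A(m_\ld)$ and $\Dt_A^{\mathrm{op}}(m_\ld)$ to convergence to $1$.

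The main obstacle is bookkeeping rather than mathematics: I must make sure the left/right module actions and the order of multiplication in $A \widehat{\otimes} A$ are set up so that the commutator conditions in~\cite{Rnd} and~\cite{Jh0} match, \emph{verbatim}, the two displayed equations — in particular keeping straight whether ``$(a \otimes 1) m_\ld$'' means the module action or multiplication in the algebra $A \widehat{\otimes} A$ (for $A$ unital these coincide, which is exactly why the unital case is clean), and whether $\Dt_A^{\mathrm{op}}$ or $\Dt_A$ appears on each side. Once the conventions are aligned, no estimate beyond ``multiply a convergent bounded net by a fixed element'' is needed, so the proof is short.
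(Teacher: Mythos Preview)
Your approach is correct and matches the paper's own proof: both parts are obtained by citing the standard characterizations (Theorem~2.2.4 of~\cite{Rnd} for amenability, Proposition~2.2 of~\cite{Jh0} for symmetric amenability) and then using unitality to rewrite the bimodule actions as multiplication in $A\widehat{\otimes}A$ and to replace the approximate-identity condition on $\Dt_A(m_\ld)$ by convergence to~$1$. The paper also notes, as you implicitly do, that the symmetric commutation relation in~\cite{Jh0} is stated separately for $a\otimes 1$ and $1\otimes b$ but is equivalent to the combined $(a\otimes b)$ form.
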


\begin{proof}
Part~(\ref{P_2Y25_ChrAm_Amen}) is Theorem 2.2.4 of~\cite{Rnd}.
We have taken advantage of the fact that $A$ is unital
to rewrite the definition of the module structures
on $A {\widehat{\otimes}} A$ used there in terms of the multiplication
in $A {\widehat{\otimes}} A$
and to simplify the condition involving~$\Dt_A.$

With the same modifications,
Part~(\ref{P_2Y25_ChrAm_Sym}) follows from
Proposition~2.2 of~\cite{Jh0}.
(In~\cite{Jh0},
the commutation relation in~(\ref{P_2Y25_ChrAm_Sym})
is stated separately for $a \otimes 1$ and $1 \otimes b,$
but the combination of those is clearly equivalent to our condition.)
\end{proof}

We will need the diagonal in $M_d$ from
Example 2.2.3 of~\cite{Rnd}
(used in Example 2.3.16 of~\cite{Rnd}),
and called ${\mathbf{m}}$ there.

\begin{lem}\label{L_2Y25_DiagMn}
Let $d \in \N,$
let $(e_{j, k})_{j, k = 1, 2, \ldots, d}$
be the standard system of matrix units for~$M_d,$
and let $y_d \in M_d \otimes M_d$ be given by
$y_d = \frac{1}{d} \sum_{r, s = 1}^d e_{r, s} \otimes e_{s, r}.$
Then:
\begin{enumerate}
\item\label{L_2Y25_DiagMn_Sq1}
$y_d^2 = d^{-2} \cdot 1.$
\item\label{L_2Y25_DiagMn_Conj}
$y_d (a \otimes b) y_d^{-1} = b \otimes a$ for all $a, b \in M_d.$
\item\label{L_2Y25_DiagMn_Dt}
$\Dt_{M_d} (y_d) = \Dt_{M_d}^{\mathrm{op}} (y_d) = 1.$
\item\label{L_2Y25_DiagMn_FromGp}
For any finite subgroup $G \subset \inv (M_d)$
whose natural action on $\C^d$ is irreducible,
we have
\[
y_d = \frac{1}{\card (G)} \sum_{g \in G} g \otimes g^{-1}.
\]
\item\label{L_2Y25_DiagMn_Uniq}
If $x \in M_d$ satisfies
$x (a \otimes b) = (b \otimes a) x$ for all $a, b \in M_d$
and $\Dt_{M_d} (x) = 1,$
then $x = y_d.$
\item\label{L_2Y25_DiagMn_Hom}
Let $\ph \colon M_d \to M_d$ be an automorphism.
Then $( \ph \otimes \ph) (y_d) = y_d.$
\item\label{L_2Y25_DiagMn_Norm}
For every $p \in [1, \infty),$
we have $\| y_d \|_{\pi} = 1$
in $\MP{d}{p} {\widehat{\otimes}} \MP{d}{p}.$
\item\label{L_2Y25_DiagMn_pNm}
For every $p \in [1, \infty),$
we have $\| y_d \|_p = d^{- 1}$
in $\MP{d}{p} \otimes_p \MP{d}{p}.$
\item\label{L_2Y25_DiagMn_d1d2}
Let $d_1, d_2 \in \N.$
Let
\[
\ph \colon (M_{d_1} \otimes M_{d_2}) \otimes (M_{d_1} \otimes M_{d_2})
    \to (M_{d_1} \otimes M_{d_1}) \otimes (M_{d_2} \otimes M_{d_2})
\]
be the \hm{} determined by
\[
\ph (a_1 \otimes a_2 \otimes b_1 \otimes b_2)
  = a_1 \otimes b_1 \otimes a_2 \otimes b_2
\]
for $a_1, b_1 \in M_{d_1}$ and $a_2, b_2 \in M_{d_2}.$
Then $\ph ( y_{d_1 d_2} ) = y_{d_1} \otimes y_{d_2}.$
\end{enumerate}
\end{lem}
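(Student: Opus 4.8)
The plan is to verify the identity by a direct computation with the standard matrix units, tracking the four tensor legs through the identification $M_{d_1} \otimes M_{d_2} \cong M_{d_1 d_2}$. Recall that the ``standard'' identification $M_{d_1} \otimes M_{d_2} \cong M_{d_1 d_2}$ is the one carrying the system $\big( e^{(d_1)}_{i_1, j_1} \otimes e^{(d_2)}_{i_2, j_2} \big)$ to the standard system of matrix units $\big( e^{(d_1 d_2)}_{(i_1, i_2), (j_1, j_2)} \big)$ of $M_{d_1 d_2}$, where one indexes the basis of $\C^{d_1 d_2}$ by pairs $(i_1, i_2) \in \{ 1, \ldots, d_1 \} \times \{ 1, \ldots, d_2 \}$ (say lexicographically). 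With this identification fixed, I would first rewrite
\[
y_{d_1 d_2}
 = \frac{1}{d_1 d_2}
   \sum_{(r_1, r_2), \, (s_1, s_2)}
     \big( e^{(d_1)}_{r_1, s_1} \otimes e^{(d_2)}_{r_2, s_2} \big)
     \otimes
     \big( e^{(d_1)}_{s_1, r_1} \otimes e^{(d_2)}_{s_2, r_2} \big),
\]
an element of $(M_{d_1} \otimes M_{d_2}) \otimes (M_{d_1} \otimes M_{d_2})$ displayed as $\sum a_1 \otimes a_2 \otimes b_1 \otimes b_2$ with $a_1 = e^{(d_1)}_{r_1, s_1}$, $a_2 = e^{(d_2)}_{r_2, s_2}$, $b_1 = e^{(d_1)}_{s_1, r_1}$, $b_2 = e^{(d_2)}_{s_2, r_2}$.

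Next I would apply $\ph$ term by term, using that $\ph$ swaps the second and third tensor legs, to get
\[
\ph ( y_{d_1 d_2} )
 = \frac{1}{d_1 d_2}
   \sum_{(r_1, r_2), \, (s_1, s_2)}
     \big( e^{(d_1)}_{r_1, s_1} \otimes e^{(d_1)}_{s_1, r_1} \big)
     \otimes
     \big( e^{(d_2)}_{r_2, s_2} \otimes e^{(d_2)}_{s_2, r_2} \big).
\]
The double sum now factors over the index pairs $(r_1, s_1)$ and $(r_2, s_2)$, and recognizing each factor from the definition of $y_{d_i}$ in Lemma~\ref{L_2Y25_DiagMn} yields $\ph ( y_{d_1 d_2} ) = y_{d_1} \otimes y_{d_2}$.

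There is no genuine obstacle here; the only point requiring care is the bookkeeping of the four tensor factors together with the precise choice of the identification $M_{d_1} \otimes M_{d_2} \cong M_{d_1 d_2}$ — once one pins down that this identification sends the tensor product of the two matrix unit systems to the standard matrix unit system of $M_{d_1 d_2}$, every step is a formal rewriting. If one prefers to avoid matrix units altogether, an alternative route combines part~(\ref{L_2Y25_DiagMn_FromGp}) with a product of groups: choose finite subgroups $G_i \subset \inv (M_{d_i})$ acting irreducibly on $\C^{d_i}$ for $i = 1, 2$, observe that $G = \{ g_1 \otimes g_2 : g_1 \in G_1, \, g_2 \in G_2 \}$ is a finite subgroup of $\inv (M_{d_1} \otimes M_{d_2}) = \inv (M_{d_1 d_2})$ whose action on $\C^{d_1} \otimes \C^{d_2} = \C^{d_1 d_2}$ is irreducible (its commutant is $\C 1 \otimes \C 1$), write $y_{d_1 d_2} = \frac{1}{\card (G_1) \card (G_2)} \sum_{g_1, g_2} (g_1 \otimes g_2) \otimes (g_1^{-1} \otimes g_2^{-1})$ by part~(\ref{L_2Y25_DiagMn_FromGp}), apply $\ph$, and invoke part~(\ref{L_2Y25_DiagMn_FromGp}) once more on each of the two resulting factors.
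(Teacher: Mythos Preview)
Your direct matrix-unit computation is correct, as is the alternative via part~(\ref{L_2Y25_DiagMn_FromGp}). However, the paper takes a different route: it proves part~(\ref{L_2Y25_DiagMn_d1d2}) by verifying that $\ph^{-1}(y_{d_1} \otimes y_{d_2})$ satisfies the two hypotheses of the uniqueness statement in part~(\ref{L_2Y25_DiagMn_Uniq}) for $d = d_1 d_2$. The intertwining condition reduces (after applying the algebra isomorphism $\ph$) to part~(\ref{L_2Y25_DiagMn_Conj}) in each factor, and the condition $\Dt_{M_{d_1 d_2}} = 1$ follows from $\Dt_{M_{d_1 d_2}} \circ \ph^{-1} = \Dt_{M_{d_1}} \otimes \Dt_{M_{d_2}}$ together with part~(\ref{L_2Y25_DiagMn_Dt}). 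Your approach has the virtue of being completely self-contained and explicit, while the paper's approach leverages the machinery already built and avoids any index bookkeeping; neither is materially shorter once the earlier parts are in hand.
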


\begin{proof}
Parts (\ref{L_2Y25_DiagMn_Sq1})
and~(\ref{L_2Y25_DiagMn_Dt})
are computations.
Part~(\ref{L_2Y25_DiagMn_Sq1})
shows that $y_d$ is invertible.
Part~(\ref{L_2Y25_DiagMn_Conj}) is then also a computation,
best done by taking both $a$ and $b$ to be standard matrix units.
For~(\ref{L_2Y25_DiagMn_Uniq}),
let $x$ be as there.
Then $y_d^{-1} x$ commutes with every element of $M_d \otimes M_d$
by~(\ref{L_2Y25_DiagMn_Conj}).
Since this algebra has trivial center,
it follows that there is $\ld \in \C$ such that $x = \ld y_d.$
Then $\ld = \ld \Dt_{M_d} (y_d) = \Dt_{M_d} (x) = 1.$
To prove~(\ref{L_2Y25_DiagMn_Hom}),
check that $( \ph \otimes \ph) (y_d)$ satisfies the conditions on~$x$
in~(\ref{L_2Y25_DiagMn_Uniq}).
(For the second condition,
one will need Lemma~\ref{L_3804_DtFnc}.)

Now let $G$ be as in part~(\ref{L_2Y25_DiagMn_FromGp}).
Set $x = \sum_{g \in G} g \otimes g^{-1}.$
Then, using~(\ref{L_2Y25_DiagMn_Conj}) at the second step
and the fact that $G$ is a group at the third step,
we have
\begin{equation}\label{Eq_2Z18xyd}
y_d x = \sum_{g \in G} y_d (g \otimes g^{-1})
      = \sum_{g \in G} (g^{-1} \otimes g) y_d
      = x y_d.
\end{equation}
We regard $G \times G$ as a subgroup of $\inv (M_d \otimes M_d)$
via $(h, k) \mapsto h \otimes k$ for $h, k \in G.$
Let $h, k \in G.$
Then, using the change of variables $g \mapsto k g^{-1} h^{-1}$
at the third step and~(\ref{Eq_2Z18xyd}) at the last step,
we have
\begin{align*}
y_d x (h \otimes k)
& = \sum_{g \in G} y_d (g h \otimes g^{-1} k)
\\
& = \sum_{g \in G} (g^{-1} k \otimes g h) y_d
  = \sum_{g \in G} (h g \otimes k g^{-1}) y_d
  = (h \otimes k) x y_d
  = (h \otimes k) y_d x.
\end{align*}
The natural action of $G \times G$ on $\C^d \otimes \C^d$
is irreducible by
Corollary~2.9 of~\cite{PhLp2a}.
Therefore $y_d x$ is a scalar.
So $x = y_d (y_d x)$ is a scalar multiple of~$y_d.$
Computing $\Dt (x) = \card (G) \cdot 1,$
we get $x = \card (G) y_d.$

We now prove part~(\ref{L_2Y25_DiagMn_Norm}).
Since $\Dt_{M_d} (y_d) = 1$ and $\| \Dt_{M_d} \| \leq 1,$
it is clear that $\| y_d \|_{\pi} \geq 1.$
For the reverse inequality,
let $G \subset \inv (M_d)$ be the group of signed permutation matrices.
Using
Lemma~2.11 of~\cite{PhLp2a}
and part~(\ref{L_2Y25_DiagMn_FromGp}),
we get
\[
\| y_d \|_{\pi}
  = \Bigg\| \frac{1}{\card (G)}
            \sum_{g \in G} g \otimes g^{-1} \Bigg\|_{\pi}
  \leq \frac{1}{\card (G)} \sum_{g \in G} \| g \| \cdot \| g^{-1} \|
  = 1.
\]
This proves~(\ref{L_2Y25_DiagMn_Norm}).
For~(\ref{L_2Y25_DiagMn_pNm}),
we identify $\MP{d}{p} \otimes_p \MP{d}{p} = \MP{d^2}{p}$
following
Corollary~1.13 of~\cite{PhLp2a}.
Then $d \cdot y_d$ becomes a permutation matrix,
so that $\| d \cdot y_d \| = 1.$

Part~(\ref{L_2Y25_DiagMn_d1d2}) follows easily by verifying
the conditions in part~(\ref{L_2Y25_DiagMn_Uniq}).
\end{proof}

\begin{dfn}\label{D_2Y25_AIFlip}
Let $\XBM$ be a \sfm,
let $p \in [1, \infty),$
and let $A \subset \LLp$ be a unital subalgebra.
\begin{enumerate}
\item\label{D_2Y25_AIFlip_HF}
We say that $A$ has {\emph{approximately inner $L^p$-tensor half flip}}
(with constant~$M$)
if there exists a net
$(v_{\ld})_{\ld \in \Ld}$ in $\inv (A \otimes_p A)$
such that $\| v_{\ld} \| \leq M$
and $\big\| v_{\ld}^{-1} \big\| \leq M$
for all $\ld \in \Ld,$
and such that
$\lim_{\ld \in \Ld} v_{\ld} (a \otimes 1) v_{\ld}^{-1} = 1 \otimes a$
for all $a \in A.$
\item\label{D_2Y25_AIFlip_Full}
We say that $A$ has
{\emph{approximately inner $L^p$-tensor flip}}
(with constant~$M$)
if there exists a net $(v_{\ld})_{\ld \in \Ld}$
in $\inv (A \otimes_p A)$
such that $\| v_{\ld} \| \leq M$
and $\big\| v_{\ld}^{-1} \big\| \leq M$
for all $\ld \in \Ld,$
and such that
$\lim_{\ld \in \Ld} v_{\ld} (a \otimes b) v_{\ld}^{-1} = b \otimes a$
for all $a, b \in A.$
\end{enumerate}
\end{dfn}

We caution that, except for \ca{s} in the case $p = 2,$
the conditions in Definition~\ref{D_2Y25_AIFlip}
presumably depend on how the algebra $A$ is represented
on an $L^p$-space,
and are not intrinsic to~$A$.
It may well turn out that if they hold for some algebra~$A,$
and if $\ph \colon A \to B$ is an isomorphism
such that $\ph$ and $\ph^{-1}$ are completely bounded
in a suitable $L^p$~operator sense,
then they hold for~$B.$
(Also see Proposition~\ref{P_3730_FlipIso} below.)
Pursuing this idea is beyond the scope of this paper.
As an easily accessible substitute for completely bounded isomorphism,
we use the condition that there be an isomorphism
$\ps \colon A \otimes_p A \to B \otimes_p A$
such that $\ps (a_1 \otimes a_2) = \ph (a_1) \otimes a_2$
for all $a_1, a_2 \in A.$

The conditions in Definition~\ref{D_2Y25_AIFlip} are very strong.
See~\cite{EfR}
for the severe restrictions that having an approximately inner
C*~tensor flip places on a \ca;
the machinery used there
has been greatly extended since.

We are interested these conditions
because of the following consequence.
The hypotheses are strong because we do not yet have a proper general theory
of tensor products of algebras on $L^p$~spaces.

\begin{thm}\label{T_2Y25_AIPHF}
Let $\XBM$ and $\YCN$ be \sfm{s},
let $p \in [1, \infty),$
and let $A \subset \LLp$ and $B \subset L (L^p (Y, \nu) )$
be closed unital subalgebras
such that $A$ has approximately inner $L^p$-tensor half flip.
Let $\ph \colon A \to B$ be a unital \hm,
and suppose that there is a \ct{} \hm{}
$\ps \colon A \otimes_p A \to B \otimes_p A$
such that $\ps (a_1 \otimes a_2) = \ph (a_1) \otimes a_2$
for all $a_1, a_2 \in A.$
Then $\ph$ is injective and bounded below,
the range of $\ph$ is closed,
and $\ph$ is a Banach algebra isomorphism from $A$ to its range.
\end{thm}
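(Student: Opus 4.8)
The plan is to use the approximately inner $L^p$-tensor half flip to ``reflect'' the representation of $A$ on the first tensor factor onto the second factor, where we have better control because $\ps$ acts as $\ph$ there but as the identity on the second copy of $A$. Fix a net $(v_\ld)_{\ld \in \Ld}$ in $\inv(A \otimes_p A)$ implementing the approximately inner half flip with constant~$M,$ so that $\|v_\ld\| \le M,$ $\|v_\ld^{-1}\| \le M,$ and $\lim_{\ld} v_\ld (a \otimes 1) v_\ld^{-1} = 1 \otimes a$ for all $a \in A.$ Apply the \ct{} \hm{} $\ps \colon A \otimes_p A \to B \otimes_p A$ to this relation. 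Since $\ps(a \otimes 1) = \ph(a) \otimes 1$ and $\ps(1 \otimes a) = 1 \otimes a,$ and since $\ps$ carries invertibles to invertibles and is multiplicative, setting $w_\ld = \ps(v_\ld) \in \inv(B \otimes_p A)$ we obtain
\[
\lim_{\ld \in \Ld} w_\ld \big( \ph(a) \otimes 1 \big) w_\ld^{-1} = 1 \otimes a
\qquad (a \in A),
\]
with $\|w_\ld\| \le \|\ps\| \cdot M$ and $\|w_\ld^{-1}\| \le \|\ps\| \cdot M,$ since $w_\ld^{-1} = \ps(v_\ld^{-1}).$ Call this common bound~$M'.$

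Next I would read off the lower bound on~$\ph.$ For $a \in A,$ the element $w_\ld(\ph(a) \otimes 1)w_\ld^{-1}$ has norm at most $(M')^2 \|\ph(a) \otimes 1\| = (M')^2 \|\ph(a)\|$ (using that $\otimes_p$ is a cross norm, so $\|\ph(a) \otimes 1\| = \|\ph(a)\| \cdot \|1\|$). Taking the limit over~$\ld$ and using that the norm is \ct, we get $\|1 \otimes a\| \le (M')^2 \|\ph(a)\|.$ But $\|1 \otimes a\| = \|a\|$ again because the $L^p$~tensor norm is a cross norm and the first tensor factor contributes $\|1\| = 1.$ Hence
\[
\|a\| \le (M')^2 \, \|\ph(a)\| \qquad (a \in A).
\]
This single inequality does all the remaining work: it shows $\ph$ is injective (if $\ph(a) = 0$ then $a = 0$), bounded below (with constant $(M')^{-2}$), and has closed range (a bounded-below bounded linear map between Banach spaces is a topological embedding, so its image is complete, hence closed in~$B$). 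Combined with the hypothesis that $\ph$ is a \ct{} \hm, the corestriction $\ph \colon A \to \ph(A)$ is a continuous bijective algebra homomorphism that is bounded below, hence has \ct{} inverse; so it is a Banach algebra isomorphism onto its range, as required.

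The only point needing a little care is the passage from the norm convergence $\lim_\ld w_\ld(\ph(a)\otimes 1)w_\ld^{-1} = 1 \otimes a$ to the estimate $\|1 \otimes a\| \le (M')^2\|\ph(a)\|$: one wants the inequality $\|w_\ld x w_\ld^{-1}\| \le \|w_\ld\|\,\|x\|\,\|w_\ld^{-1}\|$ for the fixed element $x = \ph(a)\otimes 1,$ valid for each~$\ld,$ and then lets $\ld$ run to infinity, using only that $y \mapsto \|y\|$ is \ct{} on $B \otimes_p A.$ I expect this to be essentially the whole difficulty of the argument — everything else is the formal bookkeeping of applying the \hm{} $\ps$ to the defining relation and invoking that $\otimes_p$ is a cross norm so that tensoring with $1$ on either side is isometric. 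No deeper structure of $A,$ $B,$ or the $L^p$~spaces is needed beyond these two facts and the standard open-mapping-type consequence that a bounded-below \ct{} algebra isomorphism onto its (closed) range is a Banach algebra isomorphism.
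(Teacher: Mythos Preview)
Your proof is correct and follows essentially the same route as the paper: apply $\ps$ to the half-flip relation, bound $\|1\otimes a\|$ by $(\|\ps\|M)^2\|\ph(a)\otimes 1\|$, and use that the $L^p$ tensor norm is a cross norm to conclude $\|a\|\le(\|\ps\|M)^2\|\ph(a)\|$. The paper is terser about deducing closed range and the isomorphism statement from the single lower bound, but the argument is the same.
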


\begin{proof}
It suffices to find $C > 0$ such that
$\| \ph (a) \| \geq C \| a \|$ for all $a \in A.$
By hypothesis,
there are $M \in [1, \infty)$
and a net $(v_{\ld})_{\ld \in \Ld}$ in $\inv (A \otimes_p A)$
satisfying the conditions
in Definition~\ref{D_2Y25_AIFlip}(\ref{D_2Y25_AIFlip_HF}).
Set $C = M^{-2} \| \ps \|^{-2}.$
For all $a \in A,$
we have
\[
\lim_{\ld \in \Ld}
   \ps (v_{\ld}) (\ph (a) \otimes 1_A) \ps \big( v_{\ld}^{-1} \big)
  = \ps \left( \lim_{\ld \in \Ld} v_{\ld} (a \otimes 1_A) v_{\ld}^{-1}
                  \right)
  = \ps (1_A \otimes a)
  = 1_B \otimes a.
\]
Therefore
\[
\| a \|
 = \| 1_B \otimes a \|
 \leq (\| \ps \| M) \cdot \| \ph (a) \otimes 1 \| \cdot (\| \ps \| M)
 = M^2 \| \ps \|^2 \| \ph (a) \|.
\]
That is,
$\| \ph (a) \| \geq C \| a \|.$
\end{proof}

We give two permanence properties
for approximately inner $L^p$-tensor (half) flip.

\begin{prp}\label{P_3730_FlipTP}
Let $\XBM$ and $\YCN$ be \sfm{s},
let $p \in [1, \infty),$
and let $A \subset \LLp$ and $B \subset L (L^p (Y, \nu) )$
be closed unital subalgebras.
Suppose that $A$ and $B$
have approximately inner $L^p$-tensor (half) flip.
Then the same is true of $A \otimes_p B.$
\end{prp}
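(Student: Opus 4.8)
The plan is to construct, from nets witnessing the approximately inner $L^p$-tensor (half) flip for $A$ and for $B$, a net witnessing it for $A \otimes_p B$. First I would fix nets $(v_\ld)_{\ld \in \Ld}$ in $\inv(A \otimes_p A)$ and $(w_\mu)_{\mu \in M}$ in $\inv(B \otimes_p B)$, bounded by some constant $M$ along with their inverses, such that $v_\ld (a \otimes a') v_\ld^{-1} \to a' \otimes a$ and $w_\mu (b \otimes b') w_\mu^{-1} \to b' \otimes b$. The key structural observation is that $(A \otimes_p B) \otimes_p (A \otimes_p B)$ can be identified, via a ``middle four interchange'' isometric isomorphism $\Ph$ coming from associativity and commutativity of the $L^p$~operator tensor product (see Theorem~2.16 of~\cite{PhLp1}), with $(A \otimes_p A) \otimes_p (B \otimes_p B)$; under this identification, the flip on $A \otimes_p B$ corresponds to the tensor product of the flip on $A \otimes_p A$ with the flip on $B \otimes_p B$. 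Hence I would set $u_{(\ld, \mu)} = \Ph^{-1}(v_\ld \otimes w_\mu)$, indexed by the product directed set $\Ld \times M$.

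Next I would verify the three required properties of $(u_{(\ld,\mu)})$. Boundedness of $u_{(\ld,\mu)}$ and of $u_{(\ld,\mu)}^{-1} = \Ph^{-1}(v_\ld^{-1} \otimes w_\mu^{-1})$ by $M^2$ is immediate since $\Ph$ is isometric and the $L^p$~operator tensor norm is submultiplicative on elementary tensors. For the convergence statement, it suffices by density and a standard $\ep/3$ argument (using the uniform bound on the $u$'s and their inverses) to check it on elements of the form $(a \otimes b) \otimes (a' \otimes b')$ with $a, a' \in A$ and $b, b' \in B$; applying $\Ph$, conjugating by $v_\ld \otimes w_\mu$, and applying $\Ph^{-1}$, one computes that the image converges to $(a' \otimes b') \otimes (a \otimes b)$, which under $\Ph$ is exactly the flip of $(a \otimes b) \otimes (a' \otimes b')$. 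In the half-flip case one runs the same argument but only needs the identity in the second slot, i.e. one checks convergence on $(a \otimes b) \otimes 1$ and obtains $1 \otimes (a \otimes b)$, using the half-flip nets for $A$ and $B$.

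The main obstacle I expect is justifying the ``middle four interchange'' isometric isomorphism $\Ph$ at the level of the $L^p$~operator tensor products $A \otimes_p B$ as subalgebras of operators on $L^p(X \times Y, \mu \times \nu)$, and checking that conjugation by $v_\ld \otimes w_\mu$ transported through $\Ph$ really does implement the componentwise flips. This is essentially bookkeeping with the spatial tensor product identifications already set up in Example~\ref{E_2Y14_LpUHF} and in~\cite{PhLp1}, but it must be done carefully because, unlike in the Hilbert space case, the $L^p$~operator tensor product is only defined spatially and one cannot appeal to a universal property. I would phrase $\Ph$ concretely as the conjugation by the spatial isometric isomorphism $L^p(X \times Y \times X \times Y) \to L^p(X \times X \times Y \times Y)$ induced by the coordinate permutation, note that it carries $(A \otimes_p B) \otimes_p (A \otimes_p B)$ onto $(A \otimes_p A) \otimes_p (B \otimes_p B)$ and is therefore an isometric isomorphism between these Banach algebras, and then the flip computation reduces to the trivial identity in the symmetric group on four letters. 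Everything else is the routine $\ep/3$ density argument already used in the proof of Theorem~\ref{T_2Y25_AIPHF}.
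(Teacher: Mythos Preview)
Your proposal is correct and is essentially the same approach as the paper's own proof, which also takes the product-indexed net $(v_{\ld_1} \otimes w_{\ld_2})_{(\ld_1, \ld_2) \in \Ld_1 \times \Ld_2}$ and invokes the tensor product norm identity from~\cite{PhLp1} to verify the conditions. Your version spells out in more detail the middle-four-interchange isometry and the $\ep/3$ density argument that the paper leaves implicit in the phrase ``one checks,'' but the underlying argument is the same.
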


\begin{proof}
Let $(v_{\ld})_{\ld \in \Ld_1}$
and $(w_{\ld})_{\ld \in \Ld_2}$
be nets as in the appropriate part of Definition~\ref{D_2Y25_AIFlip}
for $A$ and~$B.$
Using Theorem~2.6(5) of~\cite{PhLp1},
one checks that the net
$(v_{\ld_1} \otimes w_{\ld_2})_{(\ld_1, \ld_2) \in \Ld_1 \times \Ld_2}$
satisfies the condition
in the appropriate part of Definition~\ref{D_2Y25_AIFlip}
for $A \otimes B.$
\end{proof}

\begin{prp}\label{P_3730_FlipIso}
Let $\XBM$ and $\YCN$ be \sfm{s},
let $p \in [1, \infty),$
and let $A \subset \LLp$ and $B \subset L (L^p (Y, \nu) )$
be closed unital subalgebras.
Suppose that
there is a \ct{} \hm{} $\ph \colon A \to B$
with dense range
such that the algebraic tensor product of two copies of~$\ph$
extends to a \ct{} \hm{}
$\ph \otimes_p \ph \colon A \otimes_p A \to B \otimes_p B.$
If $A$ has approximately inner $L^p$-tensor (half) flip,
then so does~$B.$
\end{prp}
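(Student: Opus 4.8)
The plan is to push forward a net witnessing the approximately inner $L^p$-tensor (half) flip for $A$ through the homomorphism $\ph \otimes_p \ph$ to obtain such a net for $B$. Let $(v_{\ld})_{\ld \in \Ld}$ be a net in $\inv (A \otimes_p A)$ as in the relevant part of Definition~\ref{D_2Y25_AIFlip}, with constant $M$. Set $w_{\ld} = (\ph \otimes_p \ph)(v_{\ld}) \in B \otimes_p B$. First I would check that each $w_{\ld}$ is invertible with $w_{\ld}^{-1} = (\ph \otimes_p \ph)(v_{\ld}^{-1})$: since $\ph \otimes_p \ph$ is a homomorphism and $\ph$ is unital (so $\ph \otimes_p \ph$ is unital), applying it to the relations $v_{\ld} v_{\ld}^{-1} = v_{\ld}^{-1} v_{\ld} = 1$ gives $w_{\ld} (\ph \otimes_p \ph)(v_{\ld}^{-1}) = (\ph \otimes_p \ph)(v_{\ld}^{-1}) w_{\ld} = 1$. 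Next I would bound the net: $\| w_{\ld} \| \le \| \ph \otimes_p \ph \| \cdot \| v_{\ld} \| \le \| \ph \otimes_p \ph \| \cdot M$, and similarly $\| w_{\ld}^{-1} \| \le \| \ph \otimes_p \ph \| \cdot M$, so $(w_{\ld})$ is bounded with a uniform bound on the inverses as well.

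It remains to verify the convergence condition. For $a, b$ in the dense range of $\ph$, write $a = \ph(x)$, $b = \ph(y)$ with $x, y \in A$; then, since $\ph \otimes_p \ph$ is a continuous homomorphism intertwining the obvious tensor-factor maps,
\[
w_{\ld} (a \otimes b) w_{\ld}^{-1}
 = (\ph \otimes_p \ph)\big( v_{\ld} (x \otimes y) v_{\ld}^{-1} \big)
 \longrightarrow (\ph \otimes_p \ph)(y \otimes x) = b \otimes a,
\]
using continuity of $\ph \otimes_p \ph$ at the last limit step. For the half-flip version one uses instead $w_{\ld}(a \otimes 1_B) w_{\ld}^{-1} = (\ph \otimes_p \ph)(v_{\ld}(x \otimes 1_A) v_{\ld}^{-1}) \to (\ph \otimes_p \ph)(1_A \otimes x) = 1_B \otimes x = 1_B \otimes a$, noting $\ph \otimes_p \ph$ is unital. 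This establishes the required limit for all $a, b$ in $\ph(A)$.

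Finally I would promote this from the dense subset $\ph(A)$ to all of $B \otimes_p B$. Fix $a, b \in B$ and $\ep > 0$. Since the net $(w_{\ld})$ and the net $(w_{\ld}^{-1})$ are bounded by $\| \ph \otimes_p \ph \| M$, the maps $c \mapsto w_{\ld} c\, w_{\ld}^{-1}$ on $B \otimes_p B$ are uniformly bounded in $\ld$ (by $\| \ph \otimes_p \ph \|^2 M^2$), and the flip $c \mapsto$ (flip of $c$) is an isometry on $B \otimes_p B$; so a standard $\tfrac{\ep}{3}$-argument — approximate $a$ and $b$ by elements of $\ph(A)$ in norm, control the resulting errors in $w_{\ld}(a \otimes b) w_{\ld}^{-1}$ and in $b \otimes a$ uniformly, and invoke the convergence already proved on $\ph(A)$ — shows $\lim_{\ld} w_{\ld}(a \otimes b) w_{\ld}^{-1} = b \otimes a$. (The analogous argument handles the half-flip case.) The main obstacle, such as it is, is purely bookkeeping: one must be careful that $\ph \otimes_p \ph$ is honestly unital and multiplicative on $A \otimes_p A$ so that it carries invertibles to invertibles and intertwines the tensor flips, and that the density/boundedness estimate is applied to both $w_\ld$ and $w_\ld^{-1}$; there is no deeper difficulty.
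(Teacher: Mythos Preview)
Your proof is correct and follows essentially the same approach as the paper's: push the net $(v_\ld)$ forward via $\ph \otimes_p \ph$, verify the flip condition on the dense range $\ph(A)$, and extend to all of $B$ by an $\ep/3$ argument using boundedness of $(w_\ld)$ and $(w_\ld^{-1})$. You have simply spelled out the details (invertibility of $w_\ld$, the explicit bounds) more fully than the paper does.
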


\begin{proof}
We give the proof for the tensor flip;
the proof for the tensor half flip is the same.
Let $(v_{\ld})_{\ld \in \Ld}$
be a net as in
Definition~\ref{D_2Y25_AIFlip}(\ref{D_2Y25_AIFlip_Full}).
For $\ld \in \Ld,$
set $w_{\ld} = (\ph \otimes_p \ph) (v_{\ld}).$
Then $(w_{\ld})_{\ld \in \Ld}$
and $( w_{\ld}^{-1} )_{\ld \in \Ld}$
are bounded nets in $B \otimes_p B.$
We have $w_{\ld} (a \otimes b) w_{\ld}^{-1} \to b \otimes a$
for all $a, b \in \ph (A).$
Density of $\ph (A)$
and boundedness of $(w_{\ld})_{\ld \in \Ld}$
and $( w_{\ld}^{-1} )_{\ld \in \Ld}$
allows us to use an $\frac{\ep}{3}$~argument
to conclude that
$w_{\ld} (a \otimes b) w_{\ld}^{-1} \to b \otimes a$
for all $a, b \in B.$
\end{proof}

\begin{prp}\label{P_2Y26_GetAIF}
Let $d$ be a sequence in $\{ 2, 3, 4, \ldots \}.$
Let $r_d$ be as in Definition~\ref{D_2Y15_SNat}.
Let $A$ be a unital Banach algebra,
let $D_0 \subset D_1 \subset \cdots \subset A$ be an increasing sequence
of unital subalgebras
such that $A = {\overline{\bigcup_{n = 0}^{\infty} D_n}},$
and for $n \in \N$
let $\sm_n \colon M_{r_d (n)} \to A$
be a unital \hm{} whose range is~$D_n.$
\begin{enumerate}
\item\label{P_2Y26_GetAIF_Amen}
Suppose that,
in $A {\widehat{\otimes}} A,$
we have
$\sup_{n \in \N}
   \big\| \big( \sm_n {\widehat{\otimes}} \sm_n \big)
          (y_{r_d (n)}) \big\|_{\pi}
 < \infty.$
Then $A$ is symmetrically amenable.
\item\label{P_2Y26_GetAIF_AIF}
Let $p \in [1, \infty),$
let $\XBM$ be a \sfm,
and suppose that $A \subset \LLp$
and that,
in $A \otimes_p A,$
we have
$\sup_{n \in \N}
   r_d (n) \big\| (\sm_n \otimes_p \sm_n) (y_{r_d (n)}) \big\| < \infty.$
Then $A$ has approximately inner $L^p$-tensor flip.
\end{enumerate}
\end{prp}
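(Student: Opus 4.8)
The plan is to produce the required net in each case by taking the elements $(\sm_n \widehat{\otimes} \sm_n)(y_{r_d(n)})$, respectively $(\sm_n \otimes_p \sm_n)(y_{r_d(n)})$, directly as the net, indexed by $n \in \N$. The idea is that $y_{r_d(n)}$ is the canonical diagonal in $M_{r_d(n)} \otimes M_{r_d(n)}$, it implements the tensor flip on $M_{r_d(n)}$ by Lemma~\ref{L_2Y25_DiagMn}(\ref{L_2Y25_DiagMn_Conj}), and it satisfies $\Dt_{M_{r_d(n)}}(y_{r_d(n)}) = \Dt_{M_{r_d(n)}}^{\mathrm{op}}(y_{r_d(n)}) = 1$ by part~(\ref{L_2Y25_DiagMn_Dt}). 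Applying the homomorphisms $\sm_n$ transports these algebraic identities into $A$, and the hypotheses are precisely the uniform boundedness conditions needed to extract a net in the relevant sense.

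For part~(\ref{P_2Y26_GetAIF_Amen}), set $m_n = (\sm_n \widehat{\otimes} \sm_n)(y_{r_d(n)}) \in A \widehat{\otimes} A$. The hypothesis says $(m_n)_{n \in \N}$ is bounded. First I would check the commutation condition of Proposition~\ref{P_2Y25_ChrAm}(\ref{P_2Y25_ChrAm_Sym}): for $a, b$ in the range $D_m = \sm_m(M_{r_d(m)})$, write $a = \sm_m(a')$, $b = \sm_m(b')$, and for $n \geq m$ use that $\sm_n$ restricted to the image of $M_{r_d(m)}$ agrees with $\sm_m$ (this uses that the $D_n$ are increasing and $\sm_n, \sm_m$ are unital homomorphisms onto $D_n, D_m$; some care with the inclusion $M_{r_d(m)} \hookrightarrow M_{r_d(n)}$ is needed, but the matrix units map correctly). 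Then $(a \otimes b) m_n = m_n (b \otimes a)$ holds \emph{exactly} for $n \geq m$, by applying $\sm_n \widehat{\otimes} \sm_n$ to the identity $(a' \otimes b') y_{r_d(n)} = y_{r_d(n)} (b' \otimes a')$ coming from Lemma~\ref{L_2Y25_DiagMn}(\ref{L_2Y25_DiagMn_Conj}) (after absorbing the inclusion into $M_{r_d(n)}$, where the flip still works since the embedding $M_{r_d(m)} \hookrightarrow M_{r_d(n)}$ is unital and the diagonals are compatible by part~(\ref{L_2Y25_DiagMn_d1d2})). For general $a, b \in A$ one approximates by elements of $\bigcup_n D_n$ and uses boundedness of $(m_n)$ in an $\frac{\ep}{3}$-argument. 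Finally, $\Dt_A(m_n) = \sm_n(\Dt_{M_{r_d(n)}}(y_{r_d(n)})) = \sm_n(1) = 1$ for all $n$, using Lemma~\ref{L_3804_DtFnc}, and similarly for $\Dt_A^{\mathrm{op}}$; so both limit conditions hold on the nose. This verifies symmetric amenability.

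Part~(\ref{P_2Y26_GetAIF_AIF}) is parallel but now in the $L^p$~operator tensor product $A \otimes_p A$, and the relevant target is Definition~\ref{D_2Y25_AIFlip}(\ref{D_2Y25_AIFlip_Full}). Set $v_n = (\sm_n \otimes_p \sm_n)(y_{r_d(n)})$. By Lemma~\ref{L_2Y25_DiagMn}(\ref{L_2Y25_DiagMn_Sq1}), $y_{r_d(n)}^2 = r_d(n)^{-2}\cdot 1$, so $v_n$ is invertible with $v_n^{-1} = r_d(n)^2 \, v_n$, hence $\|v_n^{-1}\| = r_d(n)^2 \|v_n\| = r_d(n) \cdot \big( r_d(n)\|v_n\|\big)$; the hypothesis $\sup_n r_d(n)\|v_n\| < \infty$ together with $\|y_{r_d(n)}\|_p = r_d(n)^{-1}$ from part~(\ref{L_2Y25_DiagMn_pNm}) — wait, that gives $\|v_n\| \le \|y_{r_d(n)}\|_p$ only if $\sm_n \otimes_p \sm_n$ is contractive, which is not assumed; so one really does need to keep \emph{both} $\sup_n \|v_n\|$ and $\sup_n \|v_n^{-1}\|$ bounded and the single hypothesis $\sup_n r_d(n)\|v_n\| < \infty$ does not obviously give the former. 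I would handle this by noting $\|v_n\| \ge \|\Dt_A(v_n)\| / \|\Dt_A\|$-type lower bounds are the wrong direction; instead observe $1 = \|\Dt_{M_{r_d(n)}}(y_{r_d(n)})\|$ forces (via the contractive $\Dt$ on the \emph{projective} product, not $\otimes_p$) nothing directly, so the clean route is: from $v_n^{-1} = r_d(n)^2 v_n$ we get $\|v_n\|\,\|v_n^{-1}\| = r_d(n)^2\|v_n\|^2 = \big(r_d(n)\|v_n\|\big)^2$, and also $1 = \|v_n v_n^{-1}\| \le \|v_n\|\|v_n^{-1}\|$, which combined with $\sup_n r_d(n)\|v_n\| =: M < \infty$ gives $\|v_n\|\|v_n^{-1}\| \le M^2$ and hence $\|v_n\| \le M^2/\|v_n^{-1}\|$; this is circular, so in fact the bound $\|v_n\| \le M$ must come from $r_d(n) \ge 2$, giving $\|v_n\| \le \tfrac12 M$, and then $\|v_n^{-1}\| = r_d(n)^2\|v_n\| = r_d(n)\cdot r_d(n)\|v_n\| \le r_d(n) M$ — which is \emph{not} bounded. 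So the correct reading must be that the hypothesis already packages both: since $\|v_n^{-1}\| = r_d(n)^2\|v_n\|$ and we want both bounded, we need $r_d(n)\|v_n\|$ bounded \emph{and} $r_d(n)\|v_n\| \ge \|v_n^{-1}\|/r_d(n)$... The resolution is simply that $\|v_n^{-1}\| = r_d(n)^2 \|v_n\| = r_d(n)\cdot(r_d(n)\|v_n\|) \le r_d(n) M$ is false as a \emph{uniform} bound unless one also has $\|v_n\| \le M/r_d(n)$, i.e. the hypothesis should be read as controlling $\max(\|v_n\|, \|v_n^{-1}\|)$ via $r_d(n)\|v_n\|$ together with $r_d(n)\ge 2$: indeed $\|v_n^{-1}\| = r_d(n)\cdot r_d(n)\|v_n\|$ and separately $\|v_n^{-1}\| \cdot \|v_n\| \ge 1$ forces $r_d(n)^2\|v_n\|^2 \ge 1$, so $r_d(n)\|v_n\| \ge 1$; hence if $r_d(n)\|v_n\| \le M$ then $\|v_n\| \le M/r_d(n) \le M/2$ and $\|v_n^{-1}\| = r_d(n)^2\|v_n\| = r_d(n)\cdot r_d(n)\|v_n\|$, and I claim $r_d(n)\|v_n\|$ is also \emph{bounded below away from}... this still does not bound $\|v_n^{-1}\|$. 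The honest conclusion is that the intended argument must be: $v_n^{-1} = r_d(n)^2 v_n$ is wrong; rather by part~(\ref{L_2Y25_DiagMn_Sq1}) we have $(r_d(n) v_n)^2 = 1$ so $r_d(n) v_n$ is its own inverse, giving $v_n^{-1} = r_d(n)^{2} v_n$ — no: $(r_d(n)y_{r_d(n)})^2 = r_d(n)^2 \cdot r_d(n)^{-2} = 1$, so $(r_d(n)v_n)^{-1} = r_d(n) v_n$, i.e. $v_n^{-1} = r_d(n)^2 v_n$, confirming the earlier formula. So $\|v_n^{-1}\| \le r_d(n)^2 \|v_n\| = r_d(n) \cdot (r_d(n)\|v_n\|)$ and this is the step where the hypothesis as literally stated seems too weak; I would therefore, in writing this up, either additionally invoke that $\sm_n \otimes_p \sm_n$ applied to a matrix whose $p$-norm is $r_d(n)^{-1}$ cannot blow up too much (which would need a completely-bounded-type estimate not available here) or — more likely the paper's actual argument — simply observe that in the applications $\sm_n \otimes_p \sm_n$ is forward isometric on matrix units, making $\|v_n\| \le r_d(n)^{-1}\cdot(\text{const})$ automatic, so that $r_d(n)\|v_n\|$ bounded \emph{is} equivalent to $\max(\|v_n\|,\|v_n^{-1}\|)$ bounded. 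Setting this subtlety aside, once $(v_n)$ and $(v_n^{-1})$ are bounded, the conjugation limit is immediate: for $a,b \in \bigcup_m D_m$ one gets $v_n(a\otimes b)v_n^{-1} = b\otimes a$ \emph{exactly} for all large $n$, by applying $\sm_n\otimes_p\sm_n$ to Lemma~\ref{L_2Y25_DiagMn}(\ref{L_2Y25_DiagMn_Conj}) for $M_{r_d(n)}$ (handling the inclusion $M_{r_d(m)}\hookrightarrow M_{r_d(n)}$ as in part~(\ref{L_2Y25_DiagMn_d1d2})), and then an $\frac{\ep}{3}$-argument using boundedness extends this to all $a,b\in A$. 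This gives approximately inner $L^p$-tensor flip with the net indexed by $n$.

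The main obstacle, as the discussion above exposes, is making the passage from the single numerical hypothesis to the genuine two-sided boundedness $\|v_n\|\le M$, $\|v_n^{-1}\|\le M$ required by Definition~\ref{D_2Y25_AIFlip}: because $v_n^{-1}=r_d(n)^2 v_n$, controlling $r_d(n)\|v_n\|$ controls $\|v_n\|$ but a priori only gives $\|v_n^{-1}\| \le r_d(n)\cdot(r_d(n)\|v_n\|)$. I expect the resolution is that one also needs the lower bound $r_d(n)\|v_n\|\ge 1$ (from $\|v_n\|\|v_n^{-1}\|\ge 1$ and $\|v_n^{-1}\|=r_d(n)^2\|v_n\|$), which does \emph{not} suffice, so the proposition as used must implicitly rely on the constructions of later sections where $\sm_n\otimes_p\sm_n$ is forward isometric; in the write-up I would make this dependence explicit or restate the hypothesis as boundedness of $\max(\|v_n\|,\|v_n^{-1}\|)$. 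The analytic content of both parts is otherwise entirely routine: an $\frac{\ep}{3}$-approximation off the dense union $\bigcup_n D_n$, with the exact algebraic identities on each $D_m$ supplied by Lemma~\ref{L_2Y25_DiagMn} and Lemma~\ref{L_3804_DtFnc}.
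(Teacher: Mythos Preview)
Your treatment of part~(\ref{P_2Y26_GetAIF_Amen}) is correct and matches the paper's proof: take $m_n = (\sm_n \widehat{\otimes} \sm_n)(y_{r_d(n)})$, use Lemma~\ref{L_2Y25_DiagMn}(\ref{L_2Y25_DiagMn_Dt}) with Lemma~\ref{L_3804_DtFnc} for the $\Dt$-conditions, Lemma~\ref{L_2Y25_DiagMn}(\ref{L_2Y25_DiagMn_Conj}) for exact commutation on each $D_n$, and density plus boundedness for the extension to all of~$A$.

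For part~(\ref{P_2Y26_GetAIF_AIF}) you have the right idea but the wrong normalization, and this is precisely what generates your long struggle with the two-sided bound. You set $v_n = (\sm_n \otimes_p \sm_n)(y_{r_d(n)})$ and then try to control $\|v_n^{-1}\| = r_d(n)^2 \|v_n\|$, which indeed cannot be done from the hypothesis alone. The fix is the rescaling you actually wrote down and then abandoned: take instead
\[
v_n = r_d(n)\,(\sm_n \otimes_p \sm_n)(y_{r_d(n)}).
\]
You observed that $(r_d(n) y_{r_d(n)})^2 = 1$, hence $v_n^{-1} = v_n$. So $\|v_n^{-1}\| = \|v_n\| = r_d(n)\,\|(\sm_n \otimes_p \sm_n)(y_{r_d(n)})\|$, and the hypothesis is \emph{exactly} the statement that this quantity is bounded. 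Both nets $(v_n)$ and $(v_n^{-1})$ are therefore bounded by the same constant, with no appeal to forward isometry, contractivity of $\sm_n \otimes_p \sm_n$, or any later-section structure. This is what the paper does. The rest of your argument (exact flip on $D_n$ via Lemma~\ref{L_2Y25_DiagMn}(\ref{L_2Y25_DiagMn_Conj}), then density and an $\tfrac{\ep}{3}$-argument) goes through unchanged with this $v_n$.
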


\begin{proof}
We prove~(\ref{P_2Y26_GetAIF_Amen}),
by verifying the conditions in
Proposition~\ref{P_2Y25_ChrAm}(\ref{P_2Y25_ChrAm_Sym}).
Set $z_n = \big( \sm_n {\widehat{\otimes}} \sm_n \big) (y_{r_d (n)})$
for $n \in \N.$
The hypotheses imply that $(z_n)_{n \in \N}$ is bounded
in $A {\widehat{\otimes}} A.$
Lemma~\ref{L_2Y25_DiagMn}(\ref{L_2Y25_DiagMn_Dt})
and Lemma~\ref{L_3804_DtFnc}
imply that $\Dt_A (z_n) = \Dt_A^{\mathrm{op}} (z_n) = 1.$

It follows from Lemma~\ref{L_2Y25_DiagMn}(\ref{L_2Y25_DiagMn_Conj})
that $z_n (a \otimes b) = (b \otimes a) z_n$
for all $a, b \in D_n.$
Therefore
\begin{equation}\label{Eq_2Y26_Lim}
\limi{n} \big[ z_n (a \otimes b) - (b \otimes a) z_n \big] = 0
\end{equation}
for all
$a, b \in \bigcup_{n = 0}^{\infty} D_n.$
Since $(z_n)_{n \in \N}$ is bounded
and $\bigcup_{n = 0}^{\infty} D_n$
is dense in~$A,$
it follows that~(\ref{Eq_2Y26_Lim})
holds for all $a, b \in A.$
This completes the proof.

For~(\ref{P_2Y26_GetAIF_AIF}),
we verify the condition in
Definition~\ref{D_2Y25_AIFlip}(\ref{D_2Y25_AIFlip_Full}).
Set
\[
v_n = r_d (n) (\sm_n \otimes_p \sm_n) (y_{r_d (n)})
\]
for $n \in \N.$
By hypothesis, $(v_n)_{n \in \N}$ is bounded
in $A \otimes_p A.$
Lemma~\ref{L_2Y25_DiagMn}(\ref{L_2Y25_DiagMn_Sq1})
implies that $v_n^{-1} = v_n$ for all $n \in \N,$
so $( v_n^{-1} )_{n \in \N}$ is also bounded.
It follows from Lemma~\ref{L_2Y25_DiagMn}(\ref{L_2Y25_DiagMn_Conj})
that $v_n (a \otimes b) v_n^{-1} = b \otimes a$
for all $a, b \in D_n.$
Arguing as at the end of the proof of~(\ref{P_2Y26_GetAIF_Amen}),
we get $\limi{n} v_n (a \otimes b) v_n^{-1} = b \otimes a$
for all $a, b \in A.$
\end{proof}

The construction in Example~\ref{E_2Y14_LpUHF}
requires probability measures,
so that infinite products make sense.
This causes problem when passing to subspaces.
We state for reference the solution we usually use.

\begin{lem}\label{L_3903_ChM}
Let $\XBM$ be a \msp,
let $p \in [1, \infty),$
and let $\af \in (0, \infty).$
Then $L^p (X, \mu)$ and $L^p (X, \af \mu)$
are equal as vector spaces,
but with
$\| \xi \|_{L^p (X, \af \mu)} = \af^{1 / p} \| \xi \|_{L^p (X, \mu)}$
for $\xi \in L^p (X, \mu).$
Moreover, the identity map from $\LLp$
to $L (L^p (X, \af \mu))$ is an isometric isomorphism.
\end{lem}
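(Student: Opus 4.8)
The plan is to prove both assertions by direct computation from the definition of the $L^p$-norm, the only point requiring a moment's care being that the two spaces coincide as sets of equivalence classes, not merely as sets of functions.

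First I would observe that, since $\af \in (0, \infty),$ a subset of $X$ is $\mu$-null if and only if it is $(\af\mu)$-null; hence the relation of equality almost everywhere is the same for $\mu$ and for $\af\mu.$ For a measurable function $\xi$ on $X$ one has $\int_X |\xi|^p \, d(\af\mu) = \af \int_X |\xi|^p \, d\mu$ as an identity in $[0, \infty],$ so $\xi$ is $p$-integrable for $\mu$ if and only if it is for $\af\mu.$ Thus $L^p (X, \mu)$ and $L^p (X, \af\mu)$ are the quotient of one and the same space of $p$-integrable measurable functions by one and the same equivalence relation, that is, they are literally equal as vector spaces; and taking $p$-th roots in that identity gives $\| \xi \|_{L^p (X, \af\mu)} = \af^{1/p} \| \xi \|_{L^p (X, \mu)}.$ This is the first assertion.

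For the ``moreover'' clause I would set $u = \af^{-1/p} j,$ where $j$ denotes the identity map on the common underlying vector space regarded as a linear bijection $L^p (X, \mu) \to L^p (X, \af\mu)$; by the first part, $u$ is an isometric isomorphism of normed spaces. The identity map $\LLp \to L (L^p (X, \af\mu))$ is the map $T \mapsto j T j^{-1},$ once an operator on either space is viewed as the same linear self-map of the common vector space, and since $j = \af^{1/p} u$ and $j^{-1} = \af^{-1/p} u^{-1}$ the two scaling constants cancel, giving $j T j^{-1} = u T u^{-1}$; as $u$ is an isometric isomorphism this map is norm-preserving. It is visibly multiplicative and unital, hence an isometric algebra isomorphism.

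I expect no real obstacle here; the one thing not to overlook is the remark about null sets, which is exactly what makes ``equal as vector spaces'' a literal statement rather than merely a canonical identification, and it is the reason the change-of-measure passage is harmless when it is later applied to subspaces.
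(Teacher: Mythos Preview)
Your argument is correct and is precisely the direct verification the paper has in mind; the paper's own proof is the single sentence ``All statements in the lemma are immediate.'' Your extra care about null sets and the scaling factor $u = \af^{-1/p} j$ is a clean way to make explicit why the operator-norm identification is isometric, but there is no substantive difference in approach.
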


\begin{proof}
All statements in the lemma are immediate.
\end{proof}

\begin{dfn}\label{D_3907_SubSys}
Let the notation be as in Example~\ref{E_2Y14_LpUHF}.
A {\emph{subsystem}} of $(d, \rh)$
consists of subspaces $Z_n \subset X_n$
such that $\mu_n (Z_n) > 0$
and $L^p (Z_n, \mu_n)$
is an invariant subspace for $\rh_n$ for all $n \in \N.$

For $n \in \N,$
set $\ld_n = \mu_n (Z_n)^{-1} \mu_n |_{Z_n}$
and let $\io_n \colon L (L^p (Z_n, \mu_n)) \to L (L^p (Z_n, \ld_n))$
be the isometric isomorphism of Lemma~\ref{L_3903_ChM}.
The {\emph{$L^p$~UHF algebra of the subsystem $(Z_n)_{n \in \N}$}}
is the one constructed as in Example~\ref{E_2Y14_LpUHF}
using the same sequence~$d$
and the sequence $\gm$ of \rpn{s} given by
$\gm_n (x) = \io_n \big( \rh_n (x) |_{L^p (Z_n, \mu_n)} \big)$
for $n \in \N$ and $x \in M_{d (n)}.$
\end{dfn}

\begin{lem}\label{L_3907_MapToSub}
Let the notation be as in
Definition~\ref{D_3907_SubSys}
and Example~\ref{E_2Y14_LpUHF}.
Set $A = A (d, \rh)$ and $B = A (d, \gm).$
Let $\sm_n \colon M_{r_d (n)} \to A$
be as at the end of Example~\ref{E_2Y14_LpUHF},
and let $\ta_n \colon M_{r_d (n)} \to B$
be the analogous map for $(d, \gm).$
Then there is a unique contractive unital \hm{}
$\kp \colon A \to B$ such that $\kp \circ \sm_n = \ta_n$
for all $n \in \N.$
Moreover,
$\kp$ has the following properties:
\begin{enumerate}
\item\label{3907_MapToSub_DR}
$\kp$ has dense range.
\item\label{3907_MapToSub_TP}
Whenever $\YCN$ is a \sfm{}
and $D \subset L (L^p (Y, {\mathcal{C}}, \nu) )$
is a closed unital subalgebra,
there is a contractive \hm{}
$\kp_D \colon A \otimes_p D \to B \otimes_p D$
such that $\kp_D (a \otimes x) = \kp (a) \otimes x$
for all $a \in A$ and $x \in D.$
\end{enumerate}
\end{lem}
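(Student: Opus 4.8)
The plan is to build $\kp$ as a direct limit of the finite-stage maps and then verify the two extra properties. First I would work at each finite stage $n$. Recall from Example~\ref{E_2Y14_LpUHF} that $\sm_n = \rh_{{\mathbb{N}}, \, {\mathbb{N}}_{\leq n}}$, so $\sm_n$ is an isometric representation of $M_{r_d(n)} \cong M_{{\mathbb{N}}_{\leq n}}$ on $L^p(X, \mu)$, and similarly $\ta_n$ is an isometric representation on $L^p(Z, \ld)$ where $Z = \prod_{k} Z_k$ with the renormalized product measure $\ld = \prod_k \ld_k$. The key observation is that $L^p(Z_{\leq n}, \ld_{\leq n})$ is, up to the isometric rescaling of Lemma~\ref{L_3903_ChM}, an invariant subspace of $L^p(X_{\leq n}, \mu_{\leq n})$ for $\rh_{{\mathbb{N}}_{\leq n}, \, {\mathbb{N}}_{\leq n}}$: indeed it is $\bigotimes_{k \leq n} L^p(Z_k, \mu_k)$, each tensor factor being $\rh_k$-invariant by the definition of a subsystem, and restriction commutes with the tensor product of representations. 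Writing $e_{j,k}^{(n)}$ for the standard matrix units of $M_{r_d(n)}$, the compression map $T \mapsto P_n T P_n$ (where $P_n$ is the idempotent onto $L^p(Z_{\leq n}, \mu_{\leq n})$) sends $\sm_n(e_{j,k}^{(n)})$ to an operator that, after applying the rescaling isometry $\io_{{\mathbb{N}}_{\leq n}}$, equals $\ta_n(e_{j,k}^{(n)})$; since both sides are matrix units for isometrically-represented copies of $M_{r_d(n)}$, this already forces $\| \ta_n(a) \| \le \| \sm_n(a) \|$ for all $a$, in fact with equality because both representations are isometric. Hence there is a well-defined unital \hm{} $\kp_n \colon \sm_n(M_{r_d(n)}) \to \ta_n(M_{r_d(n)})$, namely $\kp_n = \ta_n \circ \sm_n^{-1}$, and it is isometric.

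Second, I would check these $\kp_n$ are compatible with the inclusions $D_n \subset D_{n+1}$ in $A$ and the corresponding inclusions in $B$. This reduces to the commuting-square statement $\kp_{n+1} \circ \sm_{n+1} \circ \ph_{{\mathbb{N}}_{\leq n+1}, \, {\mathbb{N}}_{\leq n}} = \ta_{n+1} \circ \ph_{{\mathbb{N}}_{\leq n+1}, \, {\mathbb{N}}_{\leq n}}$, which is immediate from $\kp_{n+1} \circ \sm_{n+1} = \ta_{n+1}$ and the definitions. Thus the $\kp_n$ glue to a unital \hm{} on the dense subalgebra $\bigcup_n D_n$ of $A$, contractive (indeed isometric) on each $D_n$, hence isometric on $\bigcup_n D_n$, so it extends uniquely to a contractive unital \hm{} $\kp \colon A \to B$ with $\kp \circ \sm_n = \ta_n$. (In fact $\kp$ is isometric, but only the contractive statement is asserted.) Uniqueness is clear since $\bigcup_n D_n$ is dense. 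Property~(\ref{3907_MapToSub_DR}), dense range, is then also clear: $\ta_n(M_{r_d(n)}) \subset \ran(\kp)$ for all $n$, and $\bigcup_n \ta_n(M_{r_d(n)})$ is dense in $B$ by construction of $B = A(d, \gm)$.

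For property~(\ref{3907_MapToSub_TP}), I would do the same finite-stage argument one tensor factor at a time: for a \sfm{} $\YCN$ and a closed unital $D \subset L(L^p(Y, \nu))$, form $\kp_n \otimes \id_D \colon \sm_n(M_{r_d(n)}) \otimes_p D \to \ta_n(M_{r_d(n)}) \otimes_p D$. This is contractive as a tensor product of contractive maps between subalgebras of $L^p$ operator algebras, using Theorem~2.6(5) of~\cite{PhLp1} (the same ingredient invoked in the proof of Proposition~\ref{P_3730_FlipTP}); the maps are compatible as $n$ varies for the same reason as before; and $\bigcup_n \sm_n(M_{r_d(n)}) \otimes_{\mathrm{alg}} D$ is dense in $A \otimes_p D$. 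So they assemble to a contractive \hm{} $\kp_D$ with $\kp_D(a \otimes x) = \kp(a) \otimes x$. The step I expect to carry the real content is the first one: identifying $L^p(Z_{\leq n}, \mu_{\leq n})$ as an invariant subspace and checking that compression followed by rescaling really does intertwine $\sm_n$ with $\ta_n$ on matrix units — everything after that is formal direct-limit and density bookkeeping, and the tensor-factor statement in~(\ref{3907_MapToSub_TP}) only adds the observation that the compressing idempotent $P_n \otimes \id$ behaves well, which is Theorem~2.6(5) of~\cite{PhLp1}.
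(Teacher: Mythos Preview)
Your overall strategy---restrict to the invariant subspace at each finite stage, check compatibility, pass to the direct limit---is exactly the paper's approach, and the argument for existence, uniqueness, and dense range of $\kp$ is fine.

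However, your claim that $\kp_n$ (and hence $\kp$) is \emph{isometric} is false, and the reasoning ``both representations are isometric'' hides a mismatch of norms. The map $\sm_n$ is isometric when $M_{r_d(n)}$ carries the norm $\|a\| = \|\rh_{\mathbb{N}_{\leq n}}(a)\|$, while $\ta_n$ is isometric for the norm $\|a\| = \|\gm_{\mathbb{N}_{\leq n}}(a)\|$; these are different norms in general, and restriction to an invariant subspace only gives $\|\ta_n(a)\| \leq \|\sm_n(a)\|$. Were $\kp$ always isometric, it would be an isometric isomorphism (it has dense range), making every $L^p$~UHF algebra of tensor product type dominating the spatial representation isometrically isomorphic to the spatial one---contradicting the whole point of the paper. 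This error does not break the proof of the lemma, since only contractivity is claimed, but you should drop the isometry assertion.

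For part~(\ref{3907_MapToSub_TP}), your first justification (``contractive as a tensor product of contractive maps, using Theorem~2.6(5) of~\cite{PhLp1}'') is not adequate: there is no general theorem saying $\phi \otimes_p \id_D$ is contractive whenever $\phi$ is, and Theorem~2.6(5) concerns norms of elementary tensors of operators, not functoriality of maps. The paper explicitly notes after the statement that the subsystem condition (not just contractivity of $\kp$) is what makes~(\ref{3907_MapToSub_TP}) work. Your second justification---that $\kp_{D,n}$ is compression by $P_n \otimes \id$ to the invariant subspace $L^p(Z_{\mathbb{N}_{\leq n}} \times Y)$, hence contractive---is the correct one, and is precisely what the paper does. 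Lead with that argument and discard the other.
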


To get the map~$\kp,$
we really only need $\| \ta_n (x) \| \leq \| \sm_n (x) \|$
for all $n \in \Nz$ and all $x \in M_{r_d (n)}.$
We need the subsystem condition to get~(\ref{3907_MapToSub_TP}).

\begin{proof}[Proof of Lemma~\ref{L_3907_MapToSub}]
We construct the \hm{} $\kp_D$
in~(\ref{3907_MapToSub_TP})
for arbitrary~$D.$
We then get $\kp$ by taking $D = \C.$

We continue to follow the notation
of Definition~\ref{D_3907_SubSys}
and Example~\ref{E_2Y14_LpUHF}.
In particular,
for $S \subset {\mathbb{N}}$ finite,
we let $X_S$ and its measure $\mu_S$
be as in Example~\ref{E_2Y14_LpUHF}.
We abbreviate $A_{ {\mathbb{N}}_{\leq n}}$
to $A_n,$
and set
$\af_n = \rh_{ {\mathbb{N}}_{\leq n}, {\mathbb{N}}_{\leq n + 1}}
  \colon A_n \to A_{n + 1},$
so that $\af_n$ is isometric and $A$ is isometrically
isomorphic to $\dirlim_n A_n.$
Analogously,
define $Z_S = \prod_{k \in S} Z_k,$
let $\ld_S$ be the corresponding product measure,
set $B_n = B_{ {\mathbb{N}}_{\leq n}},$
and set
$\bt_n = \gm_{ {\mathbb{N}}_{\leq n}, {\mathbb{N}}_{\leq n + 1}},$
giving $B = \dirlim_n B_n.$
Set $c_S = \prod_{k \in S} \mu_k (Z_k)^{-1},$
so that $\ld_S \times \nu = c_S \mu_S \times \nu.$
Let
\[
\io_S \colon L ( L^p (Z_S \times Y, \, \mu_S \times \nu) )
   \to L ( L^p (Z_S \times Y, \, \ld_S \times \nu) )
\]
be the identification of Lemma~\ref{L_3903_ChM},
and define
$\kp_{D, n} \colon A_{n} \otimes_p D
               \to B_{n} \otimes_p D$
by
\[
\kp_{D, n} (a)
 = \io_{{\mathbb{N}}_{\leq n}}
    \big( a |_{L^p (Z_{{\mathbb{N}}_{\leq n}} \times Y, \,
       \mu_{{\mathbb{N}}_{\leq n}} \times \nu)} \big)
\]
for $a \in A_n \otimes_p D.$

One checks immediately that $\kp_{D, n}$ is contractive
for all $n \in \Nz.$
Since $A_{n}$ is \fd,
$\kp_{D, n}$ is also bijective.
Moreover,
the following diagram
(not including the dotted arrow) now clearly commutes:
\[
\xymatrix{
A_{0} \otimes_p D \ar[rr]^{\af_{0} \otimes_p \id_D} \ar[d]_{\kp_{D, 0}}
  & & A_{1} \otimes_p D \ar[rr]^{\af_{1} \otimes_p \id_D}
             \ar[d]_{\kp_{D, 1}}
  & & A_{2} \otimes_p D \ar[r] 
             \ar[d]_{\kp_{D, 2}}
  & \cdots \ar[r]
  & A \otimes_p D \ar@{-->}[d]^{\kp_D} \\
B_0 \otimes_p D \ar[rr]_{\bt_{0} \otimes_p \id_D}
  & & B_1 \otimes_p D \ar[rr]_{\bt_{1} \otimes_p \id_D}
  & & B_2 \otimes_p D \ar[r] 
  & \cdots \ar[r] & B \otimes_p D.
}
\]
For $n \in \N,$
there is a \hm{}
$A_{n} \otimes_p D \to A \otimes_p D$
given by
$a \otimes x \mapsto a \otimes 1_{{\mathbb{N}}_{\leq n} } \otimes x$
for $a \in A_n$ and $x \in D.$
Except for the fact that the domain
is now taken to be normed,
this \hm{} is just $\sm_n \otimes \id_D.$
It is isometric by Theorem~2.16(5) of~\cite{PhLp1}.
Thus we can identify $A \otimes_p D$
with $\dirlim_n A_{n} \otimes_p D.$
Similarly we can identify $B \otimes_p D$
with $\dirlim_n B_{n} \otimes_p D,$
using the maps which
become, after forgetting the norms, $\ta_n \otimes \id_D.$
Commutativity of the part of diagram with solid arrows,
contractivity of $\kp_{D, n},$
and the fact that $\kp_{D, n}$ becomes the identity
when the norms are forgotten,
therefore imply the existence of a contractive
\hm{} $\kp_D \colon A \otimes_p D \to B \otimes_p D$
such that
$\kp_D \circ ( \sm_n \otimes \id_D ) = \ta_n \otimes \id_D$
for all $n \in \N.$
Its range is dense because it contains
$\bigcup_{n = 0}^{\infty} B_n \otimes D.$
\end{proof}

\begin{cor}\label{L_2Z17New}
Let $p \in [1, \infty),$
and let $A$ be an $L^p$~UHF algebra
of tensor product type which dominates the spatial representation
in the sense of Definition~\ref{D_2Y15_ITP}(\ref{D_2Y15_DomSpatial}).
Let $B$ be the spatial $L^p$~UHF algebra whose supernatural number~$N$
is the same as that of~$A,$
as in Theorem~\ref{T_3729_EU}.
Then there exists a contractive unital \hm{} $\kp \colon A \to B$
with the following properties:
\begin{enumerate}
\item\label{2Z17New_DR}
$\kp$ has dense range.
\item\label{2Z17New_Inv}
Let $\sm_n \colon M_{r_d (n)} \to A$
be as at the end of Example~\ref{E_2Y14_LpUHF},
and identify $M_{r_d (n)}$ with $\MP{r_d (n)}{p}.$
Then $\kp \circ \sm_n$ is isometric for all $n \in \Nz.$
\item\label{2Z17New_TP}
Whenever $\YCN$ is a \sfm{}
and $D \subset L (L^p (Y, {\mathcal{C}}, \nu) )$
is a closed unital subalgebra,
there is a \ct{} \hm{}
$\kp_D \colon A \otimes_p D \to B \otimes_p D$
such that $\kp_D (a \otimes x) = \kp (a) \otimes x$
for all $a \in A$ and $x \in D.$
\end{enumerate}
\end{cor}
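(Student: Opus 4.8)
The plan is to realize $B$ as the $L^p$~UHF algebra of a suitable subsystem of $(d, \rh)$, so that the \hm{} $\kp$ is the one supplied by Lemma~\ref{L_3907_MapToSub}. First I would invoke Definition~\ref{D_2Y15_ITP}(\ref{D_2Y15_DomSpatial}) to choose $d$ and $\rh = (\rh_1, \rh_2, \ldots)$ as in Example~\ref{E_2Y14_LpUHF} with $N_d = N$, with $A$ isometrically isomorphic to $A (d, \rh)$, and with each $\rh_n$ dominating the spatial representation in the sense of Definition~2.7(3) of~\cite{PhLp2a}; replacing $A$ by an isometrically isomorphic algebra, I assume $A = A (d, \rh)$. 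For each $n$, the representation $\rh_n$ is a direct sum of finitely or countably many representations of $M_{d (n)}$ with enough isometries, one of which is spatial. Writing the probability space $(X_n, {\mathcal{B}}_n, \mu_n)$ accordingly as the disjoint union of the underlying measure spaces of these summands (possible by the definition of the direct sum of representations), and letting $Z_n \subset X_n$ be the block carrying the spatial summand, I get $\mu_n (Z_n) > 0$ and $L^p (Z_n, \mu_n)$ an invariant subspace for $\rh_n$; that is, $(Z_n)_{n \in \N}$ is a subsystem of $(d, \rh)$ in the sense of Definition~\ref{D_3907_SubSys}.

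Next let $\gm = (\gm_1, \gm_2, \ldots)$ be the sequence of representations of this subsystem and set $B' = A (d, \gm)$. The point to check here is that each $\gm_n$ is spatial in the sense of Definition~7.1 of~\cite{PhLp1}: by construction $\gm_n$ is obtained from the spatial summand of $\rh_n$ by restricting to an invariant $L^p$~subspace and then renormalizing the measure by way of the isometric identification of Lemma~\ref{L_3903_ChM}, and, using the characterization in Proposition~\ref{P_3910_SpMd}, neither operation disturbs the requirement that the images of the standard matrix units be spatial partial isometries with the prescribed domain and range supports. Hence $B'$ is a spatial $L^p$~UHF algebra of type $N_d = N$, so by Theorem~\ref{T_3729_EU} it is isometrically isomorphic to the algebra $B$ of the statement; I identify $B' = B$.

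Now Lemma~\ref{L_3907_MapToSub} applies verbatim and produces a contractive unital \hm{} $\kp \colon A \to B$ with $\kp \circ \sm_n = \ta_n$ (where $\ta_n \colon M_{r_d (n)} \to B$ is the map analogous to $\sm_n$), with dense range, and with the tensor product property; taking $\kp_D$ to be the map of Lemma~\ref{L_3907_MapToSub}(\ref{3907_MapToSub_TP}), this gives~(\ref{2Z17New_DR}) and~(\ref{2Z17New_TP}). For~(\ref{2Z17New_Inv}), when $n \geq 1$ I write $\ta_n = \gm_{{\mathbb{N}}, \, {\mathbb{N}}_{\leq n}}$, so that $\| \ta_n (a) \| = \| \gm_{{\mathbb{N}}_{\leq n}} (a) \|$ because the maps $\gm_{T, S}$ are isometric (Example~\ref{E_2Y14_LpUHF}). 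Since $\gm_{{\mathbb{N}}_{\leq n}}$ is the tensor product $\gm_1 \otimes \cdots \otimes \gm_n$ of spatial representations, iterating Corollary~\ref{C_3910_TensSp} (together with the identification $\MP{d (1)}{p} \otimes_p \cdots \otimes_p \MP{d (n)}{p} = \MP{r_d (n)}{p}$ from Corollary~1.13 of~\cite{PhLp2a}) shows that $\gm_{{\mathbb{N}}_{\leq n}}$ is a spatial representation of $\MP{r_d (n)}{p}$, hence isometric: by Theorem~7.2(3) of~\cite{PhLp1} when $p \neq 2$, and because a spatial representation of $M_{r_d (n)}$ is a $*$-representation when $p = 2$. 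Therefore $\| \kp (\sm_n (a)) \| = \| a \|$ for all $a \in \MP{r_d (n)}{p}$, and the case $n = 0$ is trivial since $\sm_0$ and $\ta_0$ are just the unital inclusions of~$\C$. I expect the one genuinely non-formal step to be the verification that the subsystem representations $\gm_n$ remain spatial, since that is where the hypothesis that $A$ dominates the spatial representation is actually consumed; everything else is an assembly of Lemma~\ref{L_3907_MapToSub}, Theorem~\ref{T_3729_EU}, and Corollary~\ref{C_3910_TensSp}.
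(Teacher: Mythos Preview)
Your argument is correct and follows essentially the same route as the paper: choose the $Z_n$ to carry the spatial summand guaranteed by Definition~\ref{D_2Y15_ITP}(\ref{D_2Y15_DomSpatial}), apply Lemma~\ref{L_3907_MapToSub} to obtain $\kp$ together with properties~(\ref{2Z17New_DR}) and~(\ref{2Z17New_TP}), and then check that $\kp \circ \sm_n = \ta_n$ is spatial (hence isometric) via Corollary~\ref{C_3910_TensSp}. You supply more detail than the paper does on why the rescaling of Lemma~\ref{L_3903_ChM} preserves spatiality and on the $p = 2$ versus $p \neq 2$ split for the implication spatial $\Rightarrow$ isometric; the paper simply asserts that $\kp \circ \sm_n$ is spatial. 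The only point the paper makes slightly more explicit is that, once $B'$ is identified with~$B$, the tensor product $B \otimes_p D$ in part~(\ref{2Z17New_TP}) is unambiguous by the last sentence of Theorem~\ref{T_3729_EU}; your identification $B' = B$ handles this implicitly.
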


\begin{proof}
Adopt the notation of Example~\ref{E_2Y14_LpUHF}.
We apply Lemma~\ref{L_3907_MapToSub},
for $n \in \N$
taking $Z_n \subset X_n$
to be a subspace,
whose existence is guaranteed by
Definition~\ref{D_2Y15_ITP}(\ref{D_2Y15_DomSpatial}),
such that $L^p (Z_n, \mu_n)$
is invariant under $\rh_n$
and $a \mapsto \rh_n (a) |_{L^p (Z_n, \mu_n)}$ is spatial.
Everything except~(\ref{2Z17New_Inv}) is immediate.
In part~(\ref{2Z17New_Inv}),
we have to identify the algebra $B \otimes_p D$
which appears here with the algebra $B \otimes_p D$
in Lemma~\ref{L_3907_MapToSub}.
We first observe that $\kp \circ \sm_n$ is spatial
for all $n \in \Nz.$
Thus, the conclusion is correct
for some unital isometric representation of $B$ on an $L^p$~space.
By the last part of Theorem~\ref{T_3729_EU},
the tensor product $B \otimes_p D$
is the same
for any unital isometric representation of $B$ on an $L^p$~space.
\end{proof}

\begin{lem}\label{L_2Y26_SumProd}
Let $(\af_n)_{n \in \N}$ be a sequence in $[1, \infty).$
Then $\sum_{n = 1}^{\infty} ( \af_n - 1 ) < \infty$
\ifo{}
$\prod_{n = 1}^{\infty} \af_n < \infty.$
\end{lem}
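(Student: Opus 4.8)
The statement is the elementary equivalence $\sum_{n=1}^\infty (\af_n - 1) < \infty \iff \prod_{n=1}^\infty \af_n < \infty$ for a sequence $(\af_n)_{n \in \N}$ in $[1, \infty)$. The plan is to reduce this to the standard theory of infinite products by comparing $\sum (\af_n - 1)$ with $\sum \log \af_n$, using the two-sided estimate valid for $x \geq 1$: $\log x \leq x - 1$ and $x - 1 \leq x \log x$ (equivalently $1 - \tfrac{1}{x} \leq \log x$). From these one gets, for each $n$, the inequality $\log \af_n \leq \af_n - 1 \leq \af_n \log \af_n$. Since convergence of a positive series forces $\af_n \to 1$, the factor $\af_n$ is eventually bounded (say $\af_n \leq 2$ for $n \geq N$), so the two series $\sum (\af_n - 1)$ and $\sum \log \af_n$ converge or diverge together.

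First I would dispose of the degenerate tail behavior: if $\sum (\af_n - 1) = \infty$, I would show $\prod \af_n = \infty$ directly from $\prod_{n=1}^m \af_n = \exp\!\big(\sum_{n=1}^m \log \af_n\big)$ and the bound $\sum_{n=1}^m (\af_n - 1) \leq \sum_{n=1}^m \af_n \log \af_n$; if infinitely many $\af_n \geq 2$ this is trivial, and otherwise $\af_n \log \af_n \leq 2 \log \af_n$ for all large $n$, so $\sum \log \af_n = \infty$ and the partial products diverge. Conversely, if $\sum (\af_n - 1) < \infty$, then $\sum \log \af_n \leq \sum (\af_n - 1) < \infty$, so the partial products $\exp\!\big(\sum_{n=1}^m \log \af_n\big)$ are bounded above, i.e.\ $\prod \af_n < \infty$. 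The reverse implication $\prod \af_n < \infty \implies \sum(\af_n - 1) < \infty$ follows symmetrically, again using $\af_n \to 1$ (which here follows from the partial products being bounded and increasing, hence Cauchy, forcing the ratios $\af_n \to 1$) together with $\af_n - 1 \leq \af_n \log \af_n \leq 2 \log \af_n$ eventually.

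There is no real obstacle here; the only thing to be careful about is that "$\prod_{n=1}^\infty \af_n < \infty$" must be interpreted as the increasing sequence of partial products having a finite limit (which, since each $\af_n \geq 1$, is the only sensible reading), and that one must extract $\af_n \to 1$ before invoking the bound $\af_n - 1 \leq 2\log\af_n$. I expect the cleanest writeup simply records the elementary inequalities $\log x \leq x - 1 \leq x \log x$ for $x \geq 1$, notes that in either hypothesis $\af_n \to 1$ so eventually $1 \leq \af_n \leq 2$, and then reads both directions off the identity $\prod_{n=1}^m \af_n = \exp\big(\sum_{n=1}^m \log \af_n\big)$ together with the comparison $\tfrac12 \sum (\af_n - 1) \leq \sum \log \af_n \leq \sum (\af_n - 1)$ on the tail.
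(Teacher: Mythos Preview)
Your proposal is correct and takes essentially the same approach as the paper: both arguments reduce the equivalence to a two-sided comparison between $\sum(\af_n-1)$ and $\sum\log\af_n$, then read off the result from $\prod_{n=1}^m\af_n=\exp\bigl(\sum_{n=1}^m\log\af_n\bigr)$. The only cosmetic difference is in how boundedness is extracted: the paper disposes of the case $\sup_n\af_n=\infty$ first and then uses concavity of $\bt\mapsto\log(1+\bt)$ on $[0,M]$ to get the lower bound with constant $M^{-1}\log(M+1)$, whereas you derive $\af_n\to 1$ from each hypothesis and use $\af_n-1\leq\af_n\log\af_n\leq 2\log\af_n$ on the tail.
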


\begin{proof}
If
$\sup_{n \in \N} \af_n = \infty,$
then both statements fail.
Otherwise, set $\bt_n = \af_n - 1$ for $n \in \N,$
and set $M = \sup_{n \in \N} \bt_n.$
Then for all $n \in \N$ we have $0 \leq \bt_n \leq M.$
Using concavity of $\bt \mapsto \log (1 + \bt)$
on the interval $[0, M]$
for the first inequality,
we get
\[
M^{-1} \log (M + 1) \bt_n \leq \log (1 + \bt_n) \leq \bt_n.
\]
Therefore $\sum_{n = 1}^{\infty} \bt_n < \infty$
\ifo{} $\sum_{n = 1}^{\infty} \log ( 1 + \bt_n ) < \infty,$
which clearly happens \ifo{}
$\prod_{n = 1}^{\infty} (1 + \bt_n) < \infty.$
\end{proof}

\begin{thm}\label{T_2Z29_First}
Let $p \in [1, \infty).$
Let $A$ be an $L^p$~UHF algebra of tensor product type
(Definition~\ref{D_2Y15_ITP}(\ref{D_2Y15_ITP_Basic})),
and let $d,$ $\rh = (\rh_1, \rh_2, \ldots),$
and $\sm_n \colon M_{r_d (n)} \to A$
be as there.
Assume that $A$ dominates the spatial representation
(Definition~\ref{D_2Y15_ITP}(\ref{D_2Y15_DomSpatial}));
in particular, that $A$ locally has enough isometries
(Definition~\ref{D_2Y15_ITP}(\ref{D_2Y15_ITP_LEI})).
Let $B$ be the spatial $L^p$~UHF algebra whose supernatural number
is the same as that of~$A,$
as in Theorem~\ref{T_3729_EU}.
Then \tfae:
\begin{enumerate}
\item\label{T_2Z29_First_IsoSp}
There exists an isomorphism $\ph \colon A \to B$
such that the algebraic tensor product of two copies of~$\ph$
extends to an isomorphism
$\ph \otimes_p \ph \colon A \otimes_p A \to B \otimes_p B.$
\item\label{T_2Z29_First_ClRange}
$A \otimes_p A$
has approximately inner tensor half flip.
\item\label{T_2Z29_First_AIF}
$A$ has approximately inner $L^p$-tensor flip.
\item\label{T_2Z29_First_Half}
$A$ has approximately inner $L^p$-tensor half flip.
\item\label{T_2Z29_First_SmBdd}
There is a uniform bound on the norms of the \hm{s}
\[
\sm_n \otimes_p \sm_n \colon \MP{r_d (n)^2}{p} \to A \otimes_p A.
\]
\item\label{T_2Z29_First_SumFin}
The \hm{s}
$\rh_n \otimes_p \rh_n \colon \MP{d (n)^2}{p} \to A \otimes_p A$
satisfy
\[
\sum_{n = 1}^{\infty} ( \| \rh_n \otimes_p \rh_n \| - 1 ) < \infty.
\]
\setcounter{TmpEnumi}{\value{enumi}}
\end{enumerate}
Moreover, these conditions imply the following:
\begin{enumerate}
\setcounter{enumi}{\value{TmpEnumi}}
\item\label{T_2Z29_First_Simp}
$A$ is simple.
\item\label{T_2Z29_First_RhBdd}
$\sum_{n = 1}^{\infty} ( \| \rh_n \| - 1 ) < \infty.$
\item\label{T_2Z29_First_SymAmen}
$A$ is symmetrically amenable.
\item\label{T_2Z29_First_Amen}
$A$ is amenable.
\setcounter{RsvEnumi}{\value{enumi}}
\end{enumerate}
\end{thm}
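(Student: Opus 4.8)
The plan is to establish the cycle of equivalences among conditions
(\ref{T_2Z29_First_IsoSp})--(\ref{T_2Z29_First_SumFin})
and then the implications to
(\ref{T_2Z29_First_Simp})--(\ref{T_2Z29_First_Amen}),
using the machinery already set up.
The conceptual hub is the number
$\| \rh_n \otimes_p \rh_n \|$:
I would first compute, via Corollary~\ref{C_3910_TensSp} and the
spatiality hypothesis (so that $\rh_n$ dominates a spatial
representation), that the norm of
$\sm_n \otimes_p \sm_n \colon \MP{r_d (n)^2}{p} \to A \otimes_p A$
equals $\prod_{k = 1}^{n} \| \rh_k \otimes_p \rh_k \|$;
the domination guarantees the lower bound
$\| \sm_n \otimes_p \sm_n \| \ge 1$ and that the factors multiply.
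Then Lemma~\ref{L_2Y26_SumProd} immediately gives the equivalence
(\ref{T_2Z29_First_SmBdd})$\iff$(\ref{T_2Z29_First_SumFin}),
and the same product formula shows
(\ref{T_2Z29_First_SumFin})$\Rightarrow$(\ref{T_2Z29_First_RhBdd})
since $\| \rh_n \| \le \| \rh_n \otimes_p \rh_n \|$ (one may view
$\rh_n$ as a restriction of $\rh_n \otimes_p \rh_n$ to a corner, or
use submultiplicativity with the identity factor).

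Next I would close the loop with the tensor-flip conditions.
The implication (\ref{T_2Z29_First_SmBdd})$\Rightarrow$(\ref{T_2Z29_First_AIF})
follows from Proposition~\ref{P_2Y26_GetAIF}(\ref{P_2Y26_GetAIF_AIF}):
by Lemma~\ref{L_2Y25_DiagMn}(\ref{L_2Y25_DiagMn_pNm}) we have
$\| y_{r_d (n)} \|_p = r_d (n)^{-1}$ in $\MP{r_d (n)^2}{p}$, so
$r_d (n) \| (\sm_n \otimes_p \sm_n)(y_{r_d (n)}) \|
\le \| \sm_n \otimes_p \sm_n \|$,
which is bounded by hypothesis.
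The implications (\ref{T_2Z29_First_AIF})$\Rightarrow$(\ref{T_2Z29_First_Half})
and (\ref{T_2Z29_First_AIF})$\Rightarrow$(\ref{T_2Z29_First_ClRange})
are formal:
a flip net on $A$ restricts to a half-flip net, and for
$A \otimes_p A$ one uses the flip on each factor together with the
coordinate swap, verifying that
$v_\ld \otimes v_\ld$ composed with the swap of the middle two tensor
legs implements the half flip on $(A \otimes_p A) \otimes_p
(A \otimes_p A)$ (the swap is spatial and isometric by
Corollary~\ref{C_3910_TensSp}, since the relevant permutation matrix
is a spatial partial isometry).
For (\ref{T_2Z29_First_ClRange})$\Rightarrow$(\ref{T_2Z29_First_IsoSp})
and (\ref{T_2Z29_First_Half})$\Rightarrow$(\ref{T_2Z29_First_IsoSp}):
let $\kp \colon A \to B$ be the contractive dense-range \hm{} of
Corollary~\ref{L_2Z17New}, which by
Corollary~\ref{L_2Z17New}(\ref{2Z17New_Inv}) is isometric on each
$D_n$, hence isometric, hence a surjective isometric isomorphism.
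Then $\kp$ has an inverse, and $\kp \otimes_p \kp$ exists by
Corollary~\ref{L_2Z17New}(\ref{2Z17New_TP}) taking $D = B$;
one must then argue its invertibility, and here
Theorem~\ref{T_2Y25_AIPHF} applied with $A \otimes_p A$ in place of
$A$ (using approximate innerness of its half flip, condition
(\ref{T_2Z29_First_ClRange})) is exactly the tool that forces
$\kp^{-1} \otimes_p \kp^{-1}$ — or rather the relevant map — to be
bounded below with closed range.
Finally (\ref{T_2Z29_First_IsoSp})$\Rightarrow$(\ref{T_2Z29_First_AIF}):
the spatial algebra $B$ has approximately inner tensor flip by
Proposition~\ref{P_2Y26_GetAIF}(\ref{P_2Y26_GetAIF_AIF})
(applied to $B$, where the norms $\| \rh_n \otimes_p \rh_n \|$ are $1$),
and Proposition~\ref{P_3730_FlipIso} transports this back to $A$
along the isomorphism $\ph^{-1}$, whose tensor square exists by
hypothesis.

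**The main obstacle** I expect is the careful bookkeeping in
(\ref{T_2Z29_First_ClRange})$\Rightarrow$(\ref{T_2Z29_First_IsoSp}):
one has the contractive dense-range map $\kp$ and needs its inverse
to be continuous together with the existence and continuity of
$\kp^{-1} \otimes_p \kp^{-1}$.
The clean route is first to show $\kp$ is an isometric isomorphism
(easy, via Corollary~\ref{L_2Z17New}(\ref{2Z17New_Inv})), then to
construct $\kp \otimes_p \kp$ from Corollary~\ref{L_2Z17New} with
$D = B$ (after identifying $B \otimes_p B$ correctly, which the last
sentence of Theorem~\ref{T_3729_EU} permits), and finally to invoke
Theorem~\ref{T_2Y25_AIPHF} to get that this map is bounded below
with closed — and dense, hence all of $B \otimes_p B$ — range,
so it is invertible; its inverse is then $\ph \otimes_p \ph$ for
$\ph = \kp$.
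The remaining implications to simplicity and (symmetric) amenability
are comparatively routine:
(\ref{T_2Z29_First_Simp}) is known for $L^p$~UHF algebras of tensor
product type from~\cite{PhLp2a} (or follows since $A \cong B$ and $B$
is spatial, hence simple), and
(\ref{T_2Z29_First_SymAmen}), (\ref{T_2Z29_First_Amen}) follow by
combining (\ref{T_2Z29_First_IsoSp})
— which gives a Banach-algebra isomorphism $A \cong B$ —
with the symmetric amenability of $B$, which in turn comes from
Proposition~\ref{P_2Y26_GetAIF}(\ref{P_2Y26_GetAIF_Amen}) using
Lemma~\ref{L_2Y25_DiagMn}(\ref{L_2Y25_DiagMn_Norm}) to bound
$\big\| (\ta_n \widehat{\otimes} \ta_n)(y_{r_d (n)}) \big\|_\pi$
by $1$ for the spatial system defining $B$
(symmetric amenability is preserved under Banach-algebra isomorphism,
and amenability is a consequence of symmetric amenability).
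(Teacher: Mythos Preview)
Your plan contains a genuine error in the implication
(\ref{T_2Z29_First_ClRange})$\Rightarrow$(\ref{T_2Z29_First_IsoSp}).
You assert that Corollary~\ref{L_2Z17New}(\ref{2Z17New_Inv})
makes $\kp$ ``isometric on each $D_n$, hence isometric,
hence a surjective isometric isomorphism.''
This misreads the corollary.
What Corollary~\ref{L_2Z17New}(\ref{2Z17New_Inv}) says is that
$\kp \circ \sm_n \colon \MP{r_d (n)}{p} \to B$ is isometric.
It does \emph{not} say that $\kp|_{D_n}$ is isometric,
because $\sm_n \colon \MP{r_d (n)}{p} \to D_n \subset A$
is in general \emph{not} isometric;
the norm on $D_n$ is the $A$-norm, which may be strictly larger than
the $\MP{r_d (n)}{p}$-norm
(this is precisely what condition~(\ref{T_2Z29_First_SmBdd}) measures).
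If $\kp$ were always isometric, then $A$ would always be isometrically
isomorphic to $B$, and the theorem would be vacuous.
The paper's route is the reverse of yours:
first build ${\overline{\kp}} = \kp \otimes_p \kp \colon
A \otimes_p A \to B \otimes_p B$
via Corollary~\ref{L_2Z17New}(\ref{2Z17New_TP}),
then apply Theorem~\ref{T_2Y25_AIPHF}
(using the half flip on $A \otimes_p A$,
which is condition~(\ref{T_2Z29_First_ClRange}))
to get $\| {\overline{\kp}} (x) \| \geq C \| x \|$;
only then does one deduce that $\kp$ itself is bounded below,
via $\| \kp (a) \| = \| {\overline{\kp}} (a \otimes 1) \|
\geq C \| a \otimes 1 \| = C \| a \|$.

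A secondary issue: for (\ref{T_2Z29_First_SmBdd})
$\Leftrightarrow$ (\ref{T_2Z29_First_SumFin})
you want the equality
$\| \sm_n \otimes_p \sm_n \| = \prod_{k=1}^n \| \rh_k \otimes_p \rh_k \|$.
The lower bound
$\prod_k \| \rh_k \otimes_p \rh_k \| \leq \| \sm_n \otimes_p \sm_n \|$
is what the paper cites (Lemma~1.14 of~\cite{PhLp2a}),
but the upper bound is not clear in this generality:
it would require $\| \ph \otimes_p \ps \| \leq \| \ph \| \cdot \| \ps \|$
for \hm{s} on $L^p$-operator tensor products,
which is a complete-boundedness-type statement
not established here.
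The paper avoids this by proving
(\ref{T_2Z29_First_SmBdd})$\Rightarrow$(\ref{T_2Z29_First_SumFin})
from the lower bound alone,
and closing the cycle via
(\ref{T_2Z29_First_SumFin})$\Rightarrow$(\ref{T_2Z29_First_AIF})
using the tensor decomposition of $y_{r_d(n)}$
from Lemma~\ref{L_2Y25_DiagMn}(\ref{L_2Y25_DiagMn_d1d2})
rather than going back through~(\ref{T_2Z29_First_SmBdd}).
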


\begin{proof}
It is clear that (\ref{T_2Z29_First_AIF})
implies~(\ref{T_2Z29_First_Half}).
That~(\ref{T_2Z29_First_Half}) implies~(\ref{T_2Z29_First_ClRange})
is 
Proposition~\ref{P_3730_FlipTP}.

We show that~(\ref{T_2Z29_First_ClRange})
implies both (\ref{T_2Z29_First_IsoSp}) and~(\ref{T_2Z29_First_SmBdd}).

Let $\kp \colon A \to B$ be as in Corollary~\ref{L_2Z17New}.
Corollary \ref{L_2Z17New}(\ref{2Z17New_TP}) provides \ct{} \hm{s}
\[
\gm_1 \colon A \otimes_p A \to B \otimes_p A
\andeqn
\gm_2 \colon B \otimes_p A \to B \otimes_p B
\]
such that $\gm_1 (a \otimes b) = \kp (a) \otimes b$
for all $a, b \in A$
and $\gm_2 (b \otimes a) = b \otimes \kp (a)$
for all $a \in A$ and $b \in B.$
Then
${\overline{\kp}}
 = \gm_2 \circ \gm_1 \colon A \otimes_p A \to B \otimes_p B$
is a \ct{} \hm{} such that
${\overline{\kp}} (a \otimes b) = \kp (a) \otimes \kp (b)$
for all $a, b \in A.$

We next claim that whenever $\YCN$ is a \sfm{}
and $D \subset L (L^p (Y, {\mathcal{C}}, \nu) )$
is a closed unital subalgebra,
there is a \ct{} \hm{}
$\bt \colon A \otimes_p A \otimes_p D \to B \otimes_p B \otimes_p D$
such that $\bt (a \otimes b \otimes x) = \kp (a) \otimes \kp (b) \otimes x$
for all $a, b \in A$ and $x \in D.$
To prove this,
apply Corollary \ref{L_2Z17New}(\ref{2Z17New_TP})
with $A \otimes_p D$ in place of~$D,$
and apply Corollary \ref{L_2Z17New}(\ref{2Z17New_TP}),
after changing the order of the tensor factors,
with $B \otimes_p D$ in place of~$D,$
to get \ct{} \hm{s}
\[
\bt_1 \colon A \otimes_p A \otimes_p D \to B \otimes_p A \otimes_p D
\andeqn
\bt_2 \colon B \otimes_p A \otimes_p D \to B \otimes_p B \otimes_p D,
\]
such that $\bt_1 (a \otimes b \otimes x) = \kp (a) \otimes b \otimes  x$
for all $a, b \in A$ and $x \in D,$
and such that $\bt_2 (b \otimes a \otimes x) = b \otimes \kp (a) \otimes x$
for all $a \in A,$ and $b \in B,$ and $x \in D.$
Then set $\bt = \bt_2 \circ \bt_1.$
This proves the claim.

Apply the claim with $D = A \otimes_p A,$
obtaining
\[
\bt \colon A \otimes_p A \otimes_p A \otimes_p A
 \to B \otimes_p B \otimes_p A \otimes_p A
\]
such that $\bt (a \otimes x) = {\overline{\kp}} (a) \otimes x$
for all $a, x \in A \otimes_p A.$
Theorem~\ref{T_2Y25_AIPHF}
provides $C > 0$
such that $\| {\overline{\kp}} (a) \| \geq C \| a \|$
for all $a \in A \otimes_p A.$
The map ${\overline{\kp}}$ clearly has dense range.
Therefore ${\overline{\kp}}$ is an isomorphism.
Moreover,
for $a \in A$ we have
\[
\| \kp (a) \|
 = \| \kp (a) \otimes 1_B \|
 = \| {\overline{\kp}} (a \otimes 1_A) \|
 \geq C \| a \otimes 1_A \|
 = C \| a \|.
\]
Since $\kp$ has dense range,
it follows that $\kp$ is also an isomorphism.
Part~(\ref{T_2Z29_First_IsoSp}) is proved.

To get~(\ref{T_2Z29_First_SmBdd}),
observe that the choice of $\kp$
and Corollary \ref{L_2Z17New}(\ref{2Z17New_Inv})
imply that $\kp \circ \sm_n$ is isometric for all~$n \in \Nz.$
Corollary~1.13 of~\cite{PhLp2a}
now implies that
\[
{\overline{\kp}} \circ (\sm_n \otimes \sm_n)
 = (\kp \circ \sm_n) \otimes (\kp \circ \sm_n)
\]
is isometric for all~$n \in \Nz.$
Therefore
$\sup_{n \in \Nz} \| \sm_n \otimes_p \sm_n \| \leq C^{-1},$
which is~(\ref{T_2Z29_First_SmBdd}).

Using Corollary~1.13 of~\cite{PhLp2a}
to see that the maps
$\MP{r_d (n)}{p} \otimes_p \MP{r_d (n)}{p} \to B \otimes_p B$
are isometric,
it follows from
Proposition~\ref{P_2Y26_GetAIF}(\ref{P_2Y26_GetAIF_AIF})
and Lemma~\ref{L_2Y25_DiagMn}(\ref{L_2Y25_DiagMn_pNm})
that $B$ has approximately inner tensor flip.
The implication from~(\ref{T_2Z29_First_IsoSp})
to~(\ref{T_2Z29_First_AIF})
is then Proposition~\ref{P_3730_FlipIso}.

Next assume~(\ref{T_2Z29_First_SmBdd});
we prove~(\ref{T_2Z29_First_SumFin}).
For $n \in \N,$
by suitably permuting tensor factors
and using
Lemma~1.11 of~\cite{PhLp2a} and Corollary 1.13 of~\cite{PhLp2a},
we can make the identification
\[
\MP{r_d (n)^2}{p}
 = \MP{d (1)^2}{p} \otimes_p \MP{d (2)^2}{p} \otimes_p
   \cdots \otimes_p \MP{d (n)^2}{p}.
\]
Then
\[
\prod_{k = 1}^{n} \| \rh_k \otimes_p \rh_k \|
 \leq \| \sm_n \otimes_p \sm_n \|
\]
by
Lemma~1.14 of~\cite{PhLp2a}.
Thus
$\prod_{k = 1}^{\infty} \| \rh_k \otimes_p \rh_k \| < \infty.$
So
$\sum_{n = 1}^{\infty} \big( \| \rh_n \otimes_p \rh_n \| - 1 \big) < \infty$
by Lemma~\ref{L_2Y26_SumProd}.

Next,
we show that~(\ref{T_2Z29_First_SumFin})
implies~(\ref{T_2Z29_First_AIF}).
First, $\prod_{k = 1}^{\infty} \| \rh_k \otimes_p \rh_k \| < \infty$
by Lemma~\ref{L_2Y26_SumProd}.
By Lemma~\ref{L_2Y25_DiagMn}(\ref{L_2Y25_DiagMn_d1d2}),
up to a permutation
of tensor factors
(which is isometric by
Lemma~1.11 of~\cite{PhLp2a}),
for $d_1, d_2 \in \N$
we have $y_{d_1} \otimes y_{d_2} = y_{d_1 d_2}.$
Therefore, using the tensor product decompositions
in Example~\ref{E_2Y14_LpUHF}
on $L^2 (X \times X, \, \mu \times \mu),$
we get
\begin{align*}
r_d (n) (\sm_n \otimes_p \sm_n) (y_{r_d (n)})
& = d (1) (\rh_1 \otimes_p \rh_1) (y_{d (1)})
      \otimes d (2) (\rh_2 \otimes_p \rh_2) (y_{d (2)})
    \\
& \hspace*{5em} {\mbox{}}
      \otimes \cdots \otimes d (n) (\rh_n \otimes_p \rh_n) (y_{d (n)})
      \otimes (1_{{\mathbb{N}}_{\leq n}}
      \otimes 1_{{\mathbb{N}}_{\leq n}}).
\end{align*}
So
Theorem~2.16(5) of~\cite{PhLp1}
and Lemma~\ref{L_2Y25_DiagMn}(\ref{L_2Y25_DiagMn_pNm})
imply that
\[
\sup_{n \in \Nz} r_d (n) \| (\sm_n \otimes_p \sm_n) (y_{r_d (n)}) \|
  \leq \prod_{k = 1}^{\infty} \| \rh_k \otimes_p \rh_k \|
  < \infty.
\]
Thus $A$ has approximately inner $L^p$-tensor flip
by Proposition~\ref{P_2Y26_GetAIF}(\ref{P_2Y26_GetAIF_AIF}).

We have now completed the proof of the equivalence of conditions
(\ref{T_2Z29_First_IsoSp}),
(\ref{T_2Z29_First_ClRange}),
(\ref{T_2Z29_First_AIF}),
(\ref{T_2Z29_First_Half}),
(\ref{T_2Z29_First_SmBdd}),
and (\ref{T_2Z29_First_SumFin}).
It remains to show that these conditions imply
(\ref{T_2Z29_First_Simp}),
(\ref{T_2Z29_First_RhBdd}),
(\ref{T_2Z29_First_SymAmen}),
and~(\ref{T_2Z29_First_Amen}).

That (\ref{T_2Z29_First_IsoSp})
implies~(\ref{T_2Z29_First_Simp})
is
Theorem~3.13 of~\cite{PhLp2a}.

Assume~(\ref{T_2Z29_First_SumFin});
we prove~(\ref{T_2Z29_First_RhBdd}).
We have $\| \rh_n \otimes_p \rh_n \| \geq \| \rh_n \|^2$
by Lemma~1.13 of~\cite{PhLp2a}.
Lemma~\ref{L_2Y26_SumProd}
implies that
$\prod_{n = 1}^{\infty} \| \rh_n \otimes_p \rh_n \| < \infty,$
so $\prod_{n = 1}^{\infty} \| \rh_n \|^2 < \infty,$
whence $\prod_{n = 1}^{\infty} \| \rh_n \| < \infty.$
So $\sum_{n = 1}^{\infty} ( \| \rh_n \| - 1 ) < \infty$
by Lemma~\ref{L_2Y26_SumProd}.

Assume~(\ref{T_2Z29_First_IsoSp});
we prove~(\ref{T_2Z29_First_SymAmen}).
Since symmetric amenability only depends on the isomorphism class
of a Banach algebra,
it is enough to prove symmetric amenability when $A = B.$ 
The maps $\sm_n \colon \MP{r_d (n)}{p} \to B$ are then isometric,
so that the maps
$\sm_n {\widehat{\otimes}} \sm_n \colon
   \MP{r_d (n)}{p} {\widehat{\otimes}} \MP{r_d (n)}{p} \to
    A {\widehat{\otimes}} A$
are contractive.
Now apply Proposition~\ref{P_2Y26_GetAIF}(\ref{P_2Y26_GetAIF_Amen})
and Lemma~\ref{L_2Y25_DiagMn}(\ref{L_2Y25_DiagMn_Norm}).

Condition~(\ref{T_2Z29_First_Amen})
follows from~(\ref{T_2Z29_First_SymAmen}).
\end{proof}

\section{$L^p$~UHF algebras constructed from similarities}\label{Sec_ClEx}

\indent
In this section, we describe and derive the properties of
a special class of $L^p$~UHF algebras of tensor product type.
In Example~\ref{E_2Y14_LpUHF},
for $n \in \N$ we will require that
the \rpn{} $\rh_n \colon M_{d (n)} \to L (L^p (X_n, \mu_n))$
be a finite or countable direct sum of \rpn{s}
of the form $x \mapsto s x s^{-1}$ for invertible elements
$s \in \MP{d (n)}{p}.$
The results of Section~\ref{Sec_Amen}
and Section~\ref{Sec_ManyNI}
are for algebras in this class.

One of the main purposes of this section is to set up notation.
The definition of a system of $d$-similarities
is intended only for local use,
to simplify the description of our construction.

\begin{ntn}\label{N_3406_MatU}
For $d \in \N,$
we let $(e_{j, k})_{j, k = 1, 2, \ldots, d}$
be the standard system of matrix units for~$M_d.$
\end{ntn}

\begin{dfn}\label{D_3406_SysSim}
Let $d \in \N.$
A {\emph{system of $d$-similarities}}
is a triple $S = (I, s, f)$ consisting of
a countable (possibly finite) index set~$I,$
a function $s \colon I \to \inv (M_d),$
and a function $f \colon I \to [0, 1]$
such that:
\begin{enumerate}
\item\label{D_3406_SysSim_One}
$1 \in {\mathrm{ran}} (s).$
\item\label{D_3406_SysSim_Inv}
${\overline{{\mathrm{ran}} (s)}} \subset \inv (M_d).$
\item\label{D_3406_SysSim_Cpt}
${\overline{{\mathrm{ran}} (s)}}$ is compact.
\item\label{D_3406_SysSim_Pos}
$f (i) > 0$ for all $i \in I.$
\item\label{D_3406_SysSim_Sum1}
$\sum_{i \in I} f (i) = 1.$
\setcounter{TmpEnumi}{\value{enumi}}
\end{enumerate}
We say that $s$ is {\emph{diagonal}}
if, in addition:
\begin{enumerate}
\setcounter{enumi}{\value{TmpEnumi}}
\item\label{D_3406_SysSim_Diag}
$s (i)$ is a diagonal matrix for all $i \in I.$
\setcounter{TmpEnumi}{\value{enumi}}
\end{enumerate}
The {\emph{basic system of $d$-similarities}}
is the system $S_0 = (I_0, s_0, f_0)$
given by $I_0 = \{ 0 \},$ $s_0 (0) = 1,$ and $f_0 (0) = 1.$
\end{dfn}

We will associate representations to
systems of $d$-similarities,
but we first need some lemmas.

\begin{lem}\label{L_3317_NConj}
Let $E$ be a Banach space,
and let $s \in L (E)$ be invertible.
Define $\af \colon L (E) \to L (E)$ by $\af (a) = s a s^{-1}$
for all $a \in L (E).$
Then $\| \af \| = \| s \| \cdot \| s^{-1} \|.$
\end{lem}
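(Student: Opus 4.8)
The plan is to prove both inequalities separately. The easy direction is $\|\af\| \leq \|s\| \cdot \|s^{-1}\|$: for any $a \in L(E)$, submultiplicativity of the operator norm gives $\|\af(a)\| = \|s a s^{-1}\| \leq \|s\| \cdot \|a\| \cdot \|s^{-1}\|$, and taking the supremum over $\|a\| \leq 1$ yields the bound. The content is entirely in the reverse inequality $\|\af\| \geq \|s\| \cdot \|s^{-1}\|$.

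For the reverse inequality, the idea is to choose $a$ to be a rank-one operator that nearly achieves both $\|s\|$ and $\|s^{-1}\|$ simultaneously. Fix $\ep > 0$. Choose $\xi \in E$ with $\|\xi\| = 1$ and $\|s\xi\| \geq \|s\| - \ep$, and choose $\et \in E$ with $\|\et\| = 1$ and $\|s^{-1}\et\| \geq \|s^{-1}\| - \ep$. By Hahn--Banach, pick $\om \in E^*$ with $\|\om\| = 1$ and $\om(\et) = 1$. Define $a \in L(E)$ by $a(x) = \om(x)\xi$; then $\|a\| = 1$. Now compute $\af(a)(\et) = s a s^{-1}(\et) = \om(s^{-1}\et)\, s\xi$. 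Actually one must be careful: I want $a$ to act on the vector $s^{-1}\et$ and hit $\xi$ (up to scalar). So instead set $a(x) = \om(x)\,\xi$ where $\om \in E^*$ has $\|\om\| = 1$ and $\om(s^{-1}\et) = \|s^{-1}\et\|$ (again by Hahn--Banach applied to the vector $s^{-1}\et$). Then $\|a\| = \|\om\| \cdot \|\xi\| = 1$, and
\[
\af(a)(\et) = s\big(a(s^{-1}\et)\big) = s\big(\om(s^{-1}\et)\,\xi\big) = \|s^{-1}\et\|\, s\xi,
\]
so $\|\af(a)(\et)\| = \|s^{-1}\et\| \cdot \|s\xi\| \geq (\|s^{-1}\| - \ep)(\|s\| - \ep)$. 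Since $\|\et\| = 1$, this gives $\|\af(a)\| \geq (\|s^{-1}\| - \ep)(\|s\| - \ep)$, hence $\|\af\| \geq (\|s^{-1}\| - \ep)(\|s\| - \ep)$. Letting $\ep \to 0$ finishes the proof.

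I do not anticipate a serious obstacle; the only subtlety is arranging the two Hahn--Banach choices so that the single rank-one operator $a$ witnesses both suprema at once, which is handled by having $a$ send $s^{-1}\et$ (norming $\|s^{-1}\|$) to a multiple of $\xi$ (norming $\|s\|$). One should also note the edge case is vacuous here since $s$ is assumed invertible, so $E \neq \{0\}$ and all the norms are positive, but the argument above does not actually need positivity.
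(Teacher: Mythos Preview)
Your proof is correct and essentially identical to the paper's own argument: both construct a rank-one operator $a = \om(\cdot)\xi$ by choosing a unit vector $\xi$ nearly witnessing $\|s\|$, a unit vector $\et$ nearly witnessing $\|s^{-1}\|$, and a norming functional $\om$ for $s^{-1}\et$ via Hahn--Banach, then evaluate $\af(a)$ at $\et$. The only cosmetic difference is that the paper routes the limit through an auxiliary $\dt$ rather than letting $\ep \to 0$ directly.
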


\begin{proof}
It is clear that $\| \af \| \leq \| s \| \cdot \| s^{-1} \|.$
We prove the reverse inequality.
Let $\ep > 0.$
Choose $\dt > 0$ such that
\[
( \| s \| - \dt) ( \| s^{-1} \| - \dt)
   > \| s \| \cdot \| s^{-1} \| - \ep.
\]
Choose $\et \in E$ such that $\| \et \| = 1$
and $\| s \et \| > \| s \| - \dt.$
Choose $\mu \in E$ such that $\| \mu \| = 1$
and $\| s^{-1} \mu \| > \| s^{-1} \| - \dt.$
Use the Hahn-Banach Theorem to find a \ct{}
linear functional $\om$ on~$E$
such that $\| \om \| = 1$ and $\om ( s^{-1} \mu ) = \| s^{-1} \mu \|.$
Define $a \in L (E)$ by $a \xi = \om (\xi) \et$ for all $\xi \in E.$
Then $\| a \| = 1.$
Also,
\[
s a s^{-1} \mu
 = s ( \om (s^{-1} \mu)) \et
 = \| s^{-1} \mu \| \cdot s \et.
\]
Therefore
\[
\| s a s^{-1} \mu \|
 = \| s^{-1} \mu \| \cdot \| s \et \|
 > ( \| s \| - \dt) ( \| s^{-1} \| - \dt)
 > \| s \| \cdot \| s^{-1} \| - \ep.
\]
So
$\| \af (a) \|
  \geq \| s a s^{-1} \| > \| s \| \cdot \| s^{-1} \| - \ep,$
whence $\| \af \| > \| s \| \cdot \| s^{-1} \| - \ep.$
Since $\ep > 0$ is arbitrary,
it follows that $\| \af \| \geq \| s \| \cdot \| s^{-1} \|.$
\end{proof}

\begin{cor}\label{C_3406_SySiBdd}
Let $d \in \N,$
let $(I, s, f)$ be a system of $d$-similarities,
and let $p \in [1, \infty).$
For $i \in I,$
let $\ps_i \colon M_d \to \MP{d}{p}$
be the representation $\ps_i (a) = s (i) a s (i)^{-1}$
for $a \in M_d.$
Then $\sup_{i \in I} \| \ps_i (a) \|_p$ is finite
for all $a \in M_d.$
\end{cor}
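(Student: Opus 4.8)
The plan is to derive the bound from compactness of $\overline{\ran (s)}$ together with continuity of matrix inversion; only conditions~(\ref{D_3406_SysSim_Inv}) and~(\ref{D_3406_SysSim_Cpt}) of Definition~\ref{D_3406_SysSim} will be used. First I would identify $M_d$ with $\MP{d}{p}$ in the standard way, so that each $\ps_i$ is the map $a \mapsto s (i) a s (i)^{-1}$ of $\MP{d}{p}$ to itself. Lemma~\ref{L_3317_NConj} then gives $\| \ps_i \| = \| s (i) \|_p \cdot \| s (i)^{-1} \|_p$ (alternatively, one can just use the submultiplicative estimate $\| s (i) a s (i)^{-1} \|_p \leq \| s (i) \|_p \| a \|_p \| s (i)^{-1} \|_p$ directly). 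In either case, $\| \ps_i (a) \|_p \leq \| s (i) \|_p \cdot \| s (i)^{-1} \|_p \cdot \| a \|_p$, so it suffices to bound $\| s (i) \|_p$ and $\| s (i)^{-1} \|_p$ uniformly over $i \in I.$

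Set $K = \overline{\ran (s)}.$ The operator norm $t \mapsto \| t \|_p$ is continuous on the finite-dimensional space $M_d,$ hence bounded on the compact set $K$ by condition~(\ref{D_3406_SysSim_Cpt}); call this bound $C_1.$ By condition~(\ref{D_3406_SysSim_Inv}), $K \subset \inv (M_d),$ and inversion $t \mapsto t^{-1}$ is continuous on $\inv (M_d)$ (by Cramer's rule, or by continuity of inversion in the Banach algebra $\MP{d}{p}$), so $\{ t^{-1} \colon t \in K \}$ is again compact and therefore norm-bounded, say by $C_2.$ Since $s (i) \in K$ for every $i \in I,$ we conclude that $\| \ps_i (a) \|_p \leq C_1 C_2 \| a \|_p$ for all $i \in I$ and all $a \in M_d,$ which is in fact slightly stronger than the stated conclusion.

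There is no serious obstacle here: condition~(\ref{D_3406_SysSim_Cpt}) is imposed precisely so that this argument goes through, and the only point worth a moment's care is that continuity of inversion is available because $M_d$ is finite dimensional, so that compactness of $K \subset \inv (M_d)$ forces its image under $t \mapsto t^{-1}$ to be bounded. The conditions~(\ref{D_3406_SysSim_One}), (\ref{D_3406_SysSim_Pos}), and~(\ref{D_3406_SysSim_Sum1}) are not needed for this particular statement.
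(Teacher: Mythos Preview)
Your proof is correct and follows essentially the same approach as the paper: both identify $M_d$ with $\MP{d}{p}$, invoke Lemma~\ref{L_3317_NConj}, and use that $\overline{\ran(s)}$ is a compact subset of $\inv(M_d)$ to bound $\|v\|_p \|v^{-1}\|_p$ uniformly. The paper simply writes $M = \sup_{v \in \overline{\ran(s)}} \|v\|_p \|v^{-1}\|_p$ and observes $M < \infty$ by compactness, whereas you spell out the continuity-of-inversion step more explicitly; but the argument is the same.
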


\begin{proof}
Set
\[
M = \sup \big( \big\{ \| v \|_p \| v^{-1} \|_p \colon
   v \in {\overline{{\operatorname{ran}} (s)}} \big\} \big).
\]
Then $M < \infty$
because ${\overline{{\operatorname{ran}} (s)}}$
is a compact subset of $\inv (M_d).$
Identify $M_d$ with $\MP{d}{p}.$
Then Lemma~\ref{L_3317_NConj} implies
$\sup_{i \in I} \| \ps_i \| \leq M.$
\end{proof}

We need infinite direct sums of \rpn{s} of algebras
on $L^p$~spaces.
(The finite case is given in Lemma 2.14
of~\cite{PhLp1},
and the infinite case is discussed in
Remark~2.15 of~\cite{PhLp1}
and implicitly used in Example~2.14 of~\cite{PhLp2a}.)
Since we work with \sfm{s},
we must restrict to countable direct sums.
We must require $p \neq \infty$
since the infinite direct sum of $L^{\infty}$~spaces
is usually not $L^{\infty}$ of the disjoint union.

\begin{lem}\label{L_3406_pSum}
Let $A$ be a unital complex algebra,
and let $p \in [1, \infty).$
Let $I$ be a countable index set,
and for $i \in I$ let $(X_i, \cB_i, \mu_i)$
be a \sfm\  and
let $\pi_i \colon A \to L (L^p (X_i, \mu_i))$
be a unital \hm.
Assume that $\sup_{i \in I} \| \pi_i (a) \|$ is finite
for every $a \in A.$
Let $E_0$ be the algebraic
direct sum over $i \in I$ of the spaces $L^p (X_i, \mu_i).$
Equip $E_0$
with the norm
\[
\| (\xi_i )_{i \in I} \|
 = \left( \ssum{i \in I} \| \xi_i \|_p^p \right)^{1 / p},
\]
and let $E$ be the completion of $E_0$ in this norm.
Then $E \cong L^p \left( \coprod_{i \in I} X_i \right),$
using the obvious disjoint union measure,
and there is a unique \rpn\  $\pi \colon A \to L (E)$
such that
\[
\pi (a) \big( (\xi_i )_{i \in I} \big)
        = \big( \pi_i (a) \xi_i \big)_{i \in I}
\]
for $a \in A$ and $(\xi_i )_{i \in I} \in E_0.$
Moreover,
$\| \pi (a) \| = \sup_{i \in I} \| \pi_i (a) \|$
for all $a \in A.$
\end{lem}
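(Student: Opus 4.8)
The plan is to prove the two assertions of the lemma separately: first, the purely measure-theoretic identification of $E$ with an $L^p$~space over the disjoint union, and then the construction, uniqueness, and norm formula for $\pi$, which is a routine density-plus-boundedness argument.

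First I would fix the disjoint union measure space. Put $X = \coprod_{i \in I} X_i$, let $\cB$ be the collection of all $S \subset X$ with $S \cap X_i \in \cB_i$ for every $i \in I$, and set $\mu(S) = \sum_{i \in I} \mu_i(S \cap X_i)$. Since $I$ is countable and each $\mu_i$ is $\sigma$-finite, $\mu$ is $\sigma$-finite. A measurable function on $X$ is the same datum as a family $(\xi_i)_{i \in I}$ with each $\xi_i$ measurable on $X_i$, and countable additivity of the integral over the partition $\{ X_i \}$ gives $\int_X |\xi|^p \, d\mu = \sum_{i \in I} \int_{X_i} |\xi_i|^p \, d\mu_i$. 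Thus the finitely supported families, which are precisely the elements of $E_0$, form a dense linear subspace of $L^p(X, \mu)$ (truncating a general $L^p$~function to finitely many coordinates approximates it in norm, its tail going to $0$), and on $E_0$ the given norm agrees with the $L^p$-norm. Passing to completions identifies $E$ isometrically with $L^p(X, \mu)$.

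Next, the operator $\pi(a)$. Write $M_a = \sup_{i \in I} \| \pi_i(a) \|$, finite by hypothesis. For $(\xi_i)_{i \in I} \in E_0$ we have
\[
\big\| \big( \pi_i(a) \xi_i \big)_{i \in I} \big\|^p
 = \sum_{i \in I} \| \pi_i(a) \xi_i \|_p^p
 \leq \sum_{i \in I} \| \pi_i(a) \|^p \| \xi_i \|_p^p
 \leq M_a^p \sum_{i \in I} \| \xi_i \|_p^p ,
\]
so the prescribed formula defines a linear operator on $E_0$ of norm at most $M_a$, which extends uniquely to $\pi(a) \in L(E)$ with $\| \pi(a) \| \leq M_a$; this extension, hence $\pi$ itself, is unique since $E_0$ is dense. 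That $\pi$ is a unital homomorphism follows by checking $\pi(\lambda a + b) = \lambda \pi(a) + \pi(b)$, $\pi(ab) = \pi(a) \pi(b)$, and $\pi(1) = \id_E$ coordinatewise on the dense subspace $E_0$ --- where they hold because each $\pi_i$ is a unital homomorphism --- and then using continuity.

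Finally, for the reverse norm inequality, fix $i \in I$ and $\xi \in L^p(X_i, \mu_i)$, and apply $\pi(a)$ to the element of $E_0$ supported in the $i$-th coordinate with value $\xi$ there: the image has norm $\| \pi_i(a) \xi \|_p$ while the element itself has norm $\| \xi \|_p$, so $\| \pi(a) \| \geq \| \pi_i(a) \|$; taking the supremum over $i$ and combining with the bound already obtained gives $\| \pi(a) \| = M_a$. The only step needing any care is the measure-theoretic identification in the first paragraph --- in particular the $\sigma$-finiteness of $\mu$ and the norm identity --- but this is standard, and everything else is a routine density argument, essentially the infinite analog of Lemma~2.14 of~\cite{PhLp1}.
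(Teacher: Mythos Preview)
Your proof is correct and is exactly the routine argument the paper has in mind; the paper's own proof is simply ``This is immediate.'' You have written out in detail what the author elides.
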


\begin{proof}
This is immediate.
\end{proof}

\begin{dfn}\label{D_3406_pSumDfn}
Let the notation and hypotheses be as in Lemma~\ref{L_3406_pSum}.
We call $E$ the {\emph{$L^p$~direct sum}}
of the spaces $L^p (X_i, \mu_i),$
and we call $\pi$
the {\emph{$L^p$~direct sum}} of
the \rpn{s} $\pi_i$ for $i \in I.$
We write $E = \bigoplus_{i \in I} L^p (X_i, \mu_i)$
and $\pi = \bigoplus_{i \in I} \pi_i,$
letting $p$ be understood.
\end{dfn}

\begin{rmk}\label{R_3913_DSElt}
We can form direct sums of bounded families of elements
as well as of representations.
(Just take $A$ to be the polynomial rang $\C [ x ].$)
Then, for example,
if $a_i \in L (L^p (X_i, \mu_i))$ is invertible for all $i \in I,$
and $\sup_{i \in I} \| a_i \|$ and $\sup_{i \in I} \| a_i^{-1} \|$
are both finite,
then $a = \bigoplus_{i \in I} a_i$ is invertible with inverse
$a^{-1} = \bigoplus_{i \in I} a_i^{-1}.$
Moreover, $\| a \| = \sup_{i \in I} \| a_i \|$
and $\| a^{-1} \| = \sup_{i \in I} \| a_i^{-1} \|.$
(Take $A = \C [ x, x^{-1} ].$)
\end{rmk}

We need to know that the sum of orthogonal spatial partial isometries
is again a spatial partial isometry.
This result is related to those of Section~6 of~\cite{PhLp1},
but does not appear there.
We refer to Section~6 of~\cite{PhLp1} for the terminology.
Here we only need the application to direct sums
(Corollary~\ref{C_3924_DSumSPI}),
but the more general statement will be needed elsewhere.

\begin{lem}\label{L_3924_SumSPI}
Let $\XBM$ and $\YCN$ be \sfm{s},
let $p \in [1, \infty],$
and for $j \in \N$
let $s_j \in L \big( L^p (X, \mu), \, L^p (Y, \nu) \big)$
be a spatial partial isometry with domain support $E_j \subset X,$
with range support $F_j \subset Y,$
with spatial realization~$S_j,$
with phase factor~$g_j,$
and with reverse~$t_j.$
Suppose that the sets $E_j$ are disjoint up to sets of measure zero,
and that the sets $F_j$ are disjoint up to sets of measure zero.
Then there exists a unique spatial partial isometry
$s \in L \big( L^p (X, \mu), \, L^p (Y, \nu) \big)$
with domain support $E = \bigcup_{j = 1}^{\infty} E_j$
and with range support $F = \bigcup_{j = 1}^{\infty} F_j,$
and such that $s |_{L^p (E_j, \mu)} = s_j |_{L^p (E_j, \mu)}$
for all $j \in \N.$
Its spatial realization $S$ is given by
\[
S (B) = \bigcup_{j = 1}^{\infty} S_j (B \cap E_j)
\]
for $B \in \cB$ with $B \subset E.$
Its phase factor $g$ is
given almost everywhere by
$g (y) = g_j (y)$ for $y \in F_j.$
Its reverse $t$ is obtained in the same manner as~$s$
using the $t_j$ in place of the~$s_j.$
Moreover, for all $\xi \in L^p (X, \mu),$
we have
$s \xi = \sum_{j = 1}^{\infty} s_j \xi,$
with weak* convergence for $p = \infty$
and norm convergence for $p \in [1, \infty).$
\end{lem}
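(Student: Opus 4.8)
The plan is to prove the existence and uniqueness of $s$ directly from its prescribed action on the pieces $L^p(E_j,\mu)$, then verify that the resulting operator is a spatial partial isometry by exhibiting its spatial realization, phase factor, and reverse, using the definitions in Section~6 of~\cite{PhLp1}. First I would observe that since the $E_j$ are disjoint up to null sets, we have an isometric identification $L^p(E,\mu) = \bigoplus_{j=1}^\infty L^p(E_j,\mu)$ (in the sense of Definition~\ref{D_3406_pSumDfn}), and similarly $L^p(F,\nu) = \bigoplus_{j=1}^\infty L^p(F_j,\nu)$; this uses $p \in [1,\infty)$ for the norm-convergence statement, while for $p=\infty$ one works with the weak* topology. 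A spatial partial isometry with domain support $E_j$ and range support $F_j$ vanishes on $L^p(X\setminus E_j,\mu)$, so $s_j|_{L^p(E_j,\mu)}$ is an isometric isomorphism onto $L^p(F_j,\nu)$. The operator $s$ is then forced on $L^p(E,\mu)$ by the direct-sum formula $s\big((\xi_j)_j\big) = (s_j\xi_j)_j$ and is forced to be zero on $L^p(X\setminus E,\mu)$; boundedness of the direct sum is automatic because each $s_j|_{L^p(E_j)}$ is isometric, so Lemma~\ref{L_3406_pSum} (applied to elements, as in Remark~\ref{R_3913_DSElt}) gives a well-defined contraction $s$ with $\|s\xi\| = \|P_E\xi\|$, and this also yields the series formula $s\xi = \sum_{j=1}^\infty s_j\xi$.

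Next I would identify the spatial data. By the definition of a spatial partial isometry, each $s_j$ is determined on $L^p(E_j,\mu)$ by a measure-preserving (up to Radon--Nikodym weighting) set transformation $S_j$ from measurable subsets of $E_j$ to measurable subsets of $F_j$ together with a phase factor $g_j \in L^\infty(F_j)$ with $|g_j| = 1$ a.e.; explicitly, $s_j$ acts on indicator functions by $s_j(1_B) = g_j\cdot 1_{S_j(B)}\cdot(\text{appropriate Radon--Nikodym factor})$. I would define $S(B) = \bigcup_{j=1}^\infty S_j(B\cap E_j)$ for measurable $B\subset E$; the disjointness of the $F_j$ guarantees this is a well-defined set transformation with the right measure-class properties, and the countable additivity needed follows from countable additivity of each $S_j$ together with the disjointness. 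I would define $g\in L^\infty(F)$ by $g|_{F_j} = g_j$, which is well-defined a.e.\ by disjointness of the $F_j$ and satisfies $|g|=1$ a.e. Then a direct computation on indicator functions $1_B$ with $B\subset E_j$ shows $s$ has spatial realization $S$ and phase factor $g$ — this reduces to the single-$j$ case because $1_B$ lives in $L^p(E_j,\mu)$. By density of simple functions supported on $E$, this pins down $s$ as the spatial partial isometry with the stated data, and in particular establishes that the stated $S$ and $g$ really are its spatial realization and phase factor.

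Finally, the reverse $t_j$ of $s_j$ is itself a spatial partial isometry with domain support $F_j$ and range support $E_j$, and the $F_j$ (resp.\ $E_j$) are disjoint up to null sets, so the construction just carried out applies verbatim with the roles of $X$ and $Y$ swapped, producing a spatial partial isometry $t$ with domain support $F$, range support $E$, and $t|_{L^p(F_j,\nu)} = t_j|_{L^p(F_j,\nu)}$. That $t$ is the reverse of $s$ follows by checking the reverse relations (the $p$ and $q$ range/domain identities from Section~6 of~\cite{PhLp1}) piecewise on each $F_j$, where they hold because $t_j$ is the reverse of $s_j$, and then invoking uniqueness of the reverse. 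I expect the main obstacle to be bookkeeping rather than conceptual: carefully checking that $S(B) = \bigcup_j S_j(B\cap E_j)$ inherits all the defining properties of a set transformation (especially behavior under countable unions and the null-set conventions) and that the piecewise verification of the reverse relations genuinely glues, given that the definitions in~\cite{PhLp1} are phrased for a single spatial partial isometry. Everything else is a routine reduction to the single-index case via the direct-sum decomposition.
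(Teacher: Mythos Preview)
Your proposal is correct and follows the same approach as the paper, which simply states ``Everything is easy to check directly'' without further detail. Your outline supplies exactly the direct verification the paper omits, via the $L^p$ direct-sum decomposition and piecewise checking of the spatial data.
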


\begin{proof}
Everything is easy to check directly.
\end{proof}

\begin{cor}\label{C_3924_DSumSPI}
Let $p \in [1, \infty],$
and let $I$ be a countable set.
For $i \in I$ let $(X_i, \cB_i, \mu_i)$
and $(Y_i, \cC_i, \nu_i)$ be \sfm{s},
and let $s_i \in L \big( L^p (X_i, \mu_i), \, L^p (Y_i, \nu_i) \big)$
be a spatial partial isometry with domain support $E_i \subset X_i,$
with range support $F_i \subset Y_i,$
with spatial realization~$S_i,$
with phase factor~$g_i,$
and with reverse~$t_i.$
Then $s = \bigoplus_{i \in I} s_i$
(as in Remark~\ref{R_3913_DSElt})
is a spatial partial isometry
with domain support $\coprod_{i \in I} E_i,$
with range support $\coprod_{i \in I} F_i,$
and with reverse $\bigoplus_{i \in I} t_i.$
The phase factor $g$ is determined by
$g |_{E_i} = g_i$ for $i \in I,$
and the spatial realization $S$ is determined by
$S (B) = S_i (B)$ for $B \in \cB_i$ with $B \subset E_i.$
\end{cor}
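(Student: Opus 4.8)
The statement (Corollary~\ref{C_3924_DSumSPI}) is meant to be a direct reduction of Lemma~\ref{L_3924_SumSPI} from a single pair of $\sigma$-finite measure spaces to a countable family of them. So the plan is to form the disjoint unions $X = \coprod_{i \in I} X_i$ and $Y = \coprod_{i \in I} Y_i$ with the obvious disjoint-union measures, regard each $L^p(X_i, \mu_i)$ as the subspace of $L^p(X, \mu)$ of functions supported on $X_i$ (and similarly for $Y$), and then observe that $\bigoplus_{i \in I} s_i$, as constructed in Remark~\ref{R_3913_DSElt}, is exactly the operator $s$ produced by Lemma~\ref{L_3924_SumSPI} applied to the family $(s_i)$ viewed inside $L\big(L^p(X,\mu),\, L^p(Y,\nu)\big)$. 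The domain supports $E_i \subset X_i$ are automatically disjoint in $X$ (they live in different summands), and likewise the $F_i \subset Y_i$ are disjoint in $Y$, so the hypotheses of Lemma~\ref{L_3924_SumSPI} are satisfied.

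First I would recall that $L^p\big(\coprod_{i \in I} X_i\big) \cong \bigoplus_{i \in I} L^p(X_i, \mu_i)$ in the sense of Definition~\ref{D_3406_pSumDfn} (the $L^p$~direct sum), and note that this is the setting in which the direct sum $s = \bigoplus_{i \in I} s_i$ of Remark~\ref{R_3913_DSElt} was defined; since each $s_i$ is a spatial partial isometry, it has norm at most~$1$, as does each reverse $t_i$, so the boundedness hypotheses needed to form the direct sum are trivially met, and $s$ is bounded with $\|s\| \le 1$. Next I would check that the restriction of $s$ to $L^p(E_j, \mu)$ agrees with the restriction of the ``globalized'' $s_j$: this is immediate from the definition of the direct sum operator, since on the $j$-th summand $s$ acts as $s_j$ and kills the other coordinates, while the domain support of $s_j$ inside $X$ is precisely $E_j \subset X_j$. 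Having matched up the data, I would invoke Lemma~\ref{L_3924_SumSPI}: it yields a spatial partial isometry $s$ with domain support $\bigcup_j E_j = \coprod_{i \in I} E_i$, range support $\coprod_{i \in I} F_i$, reverse obtained by the same recipe from the $t_i$ (which is $\bigoplus_{i \in I} t_i$), phase factor $g$ with $g|_{F_i} = g_i$, and spatial realization $S$ with $S(B) = S_i(B \cap E_i)$; since $B \subset E_i$ forces $B \cap E_j = \varnothing$ for $j \ne i$, this collapses to $S(B) = S_i(B)$. That gives every assertion of the corollary.

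The only genuine content beyond bookkeeping is the identification of the reverse of $\bigoplus_i s_i$ with $\bigoplus_i t_i$: one must check that forming reverses commutes with taking the direct sum. This follows because Lemma~\ref{L_3924_SumSPI} explicitly states its reverse $t$ is ``obtained in the same manner as~$s$ using the $t_j$ in place of the~$s_j$,'' and applying the same subspace identification shows that manner of assembling the $t_i$ is exactly the direct-sum construction of Remark~\ref{R_3913_DSElt}. I do not expect any real obstacle here; if I can be said to anticipate a subtle point, it is only making sure the disjoint-union measure on $\coprod_i X_i$ is $\sigma$-finite (it is, since $I$ is countable and each $\mu_i$ is $\sigma$-finite) so that Lemma~\ref{L_3924_SumSPI} legitimately applies. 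The proof is therefore short: set up the disjoint unions, cite Definition~\ref{D_3406_pSumDfn} to identify the direct-sum space, apply Lemma~\ref{L_3924_SumSPI}, and translate the conclusions back.
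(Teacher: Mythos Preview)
Your approach is exactly the paper's: apply Lemma~\ref{L_3924_SumSPI} to the disjoint unions, noting that the disjointness hypothesis is automatic because the $E_i$ (resp.\ $F_i$) live in different summands. The paper's proof is just those two steps stated in one line, so your proposal is correct and simply more explicit than needed.
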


\begin{proof}
Apply Lemma~\ref{L_3924_SumSPI}.
The disjointness condition is trivial.
\end{proof}

\begin{lem}\label{L_3807_SpDSum}
Let the notation and hypotheses be as in Lemma~\ref{L_3406_pSum}.
If $\pi_i$ is spatial for every $i \in I,$
then $\bigoplus_{i \in I} \pi_i$ is spatial.
\end{lem}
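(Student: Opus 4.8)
The plan is to apply Proposition~\ref{P_3910_SpMd} to $\pi = \bigoplus_{i \in I} \pi_i$, using the spatiality of each $\pi_i$ together with Corollary~\ref{C_3924_DSumSPI}. First I would fix the standard system of matrix units $(e_{j,k})_{j,k=1,\ldots,d}$ for $M_d$. Since each $\pi_i$ is spatial, Proposition~\ref{P_3910_SpMd} gives a measurable partition $X_i = \coprod_{j=1}^d X_{i,j}$ such that $\pi_i(e_{j,k})$ is a spatial partial isometry with domain support $X_{i,k}$ and range support $X_{i,j}$. On the disjoint union $X = \coprod_{i \in I} X_i$, I would then set $X_j = \coprod_{i \in I} X_{i,j}$, which clearly gives a measurable partition $X = \coprod_{j=1}^d X_j$.

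The key step is to identify $\pi(e_{j,k})$ with the direct sum $\bigoplus_{i \in I} \pi_i(e_{j,k})$ and apply Corollary~\ref{C_3924_DSumSPI}. By the definition of the $L^p$~direct sum of representations (Definition~\ref{D_3406_pSumDfn}), $\pi(e_{j,k})$ acts on $(\xi_i)_{i\in I}$ coordinatewise as $(\pi_i(e_{j,k})\xi_i)_{i\in I}$; since $\sup_{i\in I}\|\pi_i(e_{j,k})\| < \infty$ (all these operators are spatial partial isometries, hence of norm at most~$1$ — or one simply invokes the hypothesis of Lemma~\ref{L_3406_pSum}), the family $(\pi_i(e_{j,k}))_{i\in I}$ has a direct sum in the sense of Remark~\ref{R_3913_DSElt}, and this direct sum agrees with $\pi(e_{j,k})$. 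Now Corollary~\ref{C_3924_DSumSPI} applies directly: each $\pi_i(e_{j,k})$ is a spatial partial isometry with domain support $X_{i,k}$ and range support $X_{i,j}$, so $\pi(e_{j,k}) = \bigoplus_{i\in I}\pi_i(e_{j,k})$ is a spatial partial isometry with domain support $\coprod_{i\in I} X_{i,k} = X_k$ and range support $\coprod_{i\in I} X_{i,j} = X_j$.

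Having verified that $\pi(e_{j,k})$ is a spatial partial isometry with domain support $X_k$ and range support $X_j$ for all $j,k$, the reverse implication in Proposition~\ref{P_3910_SpMd} immediately yields that $\pi = \bigoplus_{i\in I}\pi_i$ is spatial, completing the proof. I do not expect any serious obstacle here; the only point requiring a moment's care is making sure the bookkeeping of domain and range supports under the disjoint union is consistent across all the matrix units simultaneously — that is, that the same partition $X = \coprod_{j=1}^d X_j$ works for every $e_{j,k}$ — but this is automatic since the partition of each $X_i$ comes from a single application of Proposition~\ref{P_3910_SpMd} to $\pi_i$ and is therefore compatible across all $j,k$ by construction.
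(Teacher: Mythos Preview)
Your proposal is correct and follows exactly the approach of the paper, which simply says to use Corollary~\ref{C_3924_DSumSPI} to verify the criterion in Proposition~\ref{P_3910_SpMd}. You have spelled out the details of that one-line argument accurately.
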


\begin{proof}
Use Corollary~\ref{C_3924_DSumSPI}
to verify the criterion in Proposition~\ref{P_3910_SpMd}.
\end{proof}

The following definition is
Example~2.15 of~\cite{PhLp2a}.

\begin{dfn}\label{D_3406_SySiRpn}
Let $d \in \N,$
let $S = (I, s, f)$ be a system of $d$-similarities,
and let $p \in [1, \infty).$
We define $X_S = X_{I, s, f} = \{ 1, 2, \ldots, d \} \times I,$
with the $\sm$-algebra $\cB_S$ consisting of all subsets of~$X_S$
and the atomic measure $\mu_S = \mu_{I, s, f}$
determined by $\mu_{I, s, f} ( \{ (j, i) \} ) = d^{-1} f (i)$
for $j = 1, 2, \ldots, d$ and $i \in I.$
For $i \in I,$
we make the obvious identification (Lemma~\ref{L_3903_ChM})
\begin{equation}\label{Eq_3406_Ident}
L \big( L^p ( \{ 1, 2, \ldots, d \} \times \{ i \}, \, \mu_{S}) \big)
  = \MP{d}{p}.
\end{equation}
We then define
\[
\ps_i \colon M_d \to
L \big( L^p ( \{ 1, 2, \ldots, d \} \times \{ i \}, \, \mu_{S}) \big)
\]
by $\ps_i (a) = s (i) a s (i)^{-1}$ for $a \in M_d.$
The {\emph{representation associated with $(p, S)$}}
is then the unital \hm{}
$\ps^{p, S} = \ps^{p, I, s, f} \colon
 M_d \to L \big( L^p ( X_{S}, \mu_{S} ) \big)$
obtained as the $L^p$~direct sum over $i \in I,$
as in Definition~\ref{D_3406_pSumDfn},
of the \rpn{s}~$\ps_i,$
and whose existence is ensured by Corollary~\ref{C_3406_SySiBdd}
and Lemma~\ref{L_3406_pSum}.
Define
\[
\MP{d}{p, S}
  = \MP{d}{p, I, s, f}
  = \ps^{p, S} (M_d)
  \subset L \big( L^p ( X_{S}, \mu_{S} ) \big).
\]

We further define the {\emph{$p$-bound}} of $S$ to be
\[
R_{p, S}
 = R_{p, I, s, f}
 = \sup_{i \in I} \| s (i) \|_p \| s (i)^{-1} \|_p.
\]
\end{dfn}

\begin{lem}\label{L_3406_TensorB}
Let $d \in \N,$
let $S$ be a system of $d$-similarities,
and let $p \in [1, \infty).$
Further let $\YCN$ be a \sfm,
and let $B \subset L (L^p (Y, \nu))$ be a unital closed subalgebra.
Then the assignment $x \otimes b \mapsto \ps^{p, S} (x) \otimes b$
defines a bijective \hm{}
$\ps^{p, S}_B$
from the algebraic tensor product $M_d \otimes_{\operatorname{alg}} B$
to
\[
\MP{d}{p, S} \otimes_p B \subset
   L \big( L^p ( X_{S} \times Y, \, \mu_{I, s, f} \times \nu) \big).
\]
\end{lem}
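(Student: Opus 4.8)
The plan is to introduce the matrix units $E_{j,k}=\ps^{p, S}(e_{j,k})$ of $\MP{d}{p, S}$, prove that $\ps^{p, S}_B$ is bounded below by a matrix-unit compression estimate, and read off both injectivity and surjectivity from that. First I would observe that $\ps^{p, S}$ is injective: it is an $L^p$~direct sum of the representations $\ps_i(a)=s(i)as(i)^{-1}$, one of which (the one with $s(i)=1$, which exists by Definition~\ref{D_3406_SysSim}(\ref{D_3406_SysSim_One})) is the identity representation of $M_d$, so Lemma~\ref{L_3406_pSum} gives $\|\ps^{p, S}(a)\|\geq\|a\|$ for all $a\in M_d$. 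Hence $\ps^{p, S}\colon M_d\to\MP{d}{p, S}$ is an algebra isomorphism, the elements $E_{j,k}:=\ps^{p, S}(e_{j,k})$ form a linear basis of $\MP{d}{p, S}$ obeying the matrix-unit relations $E_{j,k}E_{l,m}=\delta_{k,l}E_{j,m}$ and $\sum_{j=1}^d E_{j,j}=1$, and every element of $M_d\otimes_{\operatorname{alg}}B$ can be written uniquely as $\sum_{j,k=1}^d e_{j,k}\otimes c_{j,k}$ with $c_{j,k}\in B$.

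That $\ps^{p, S}_B$ is well defined is just the universal property of the algebraic tensor product, and that it is a \hm{} follows from $\ps^{p, S}$ being a \hm{} together with the componentwise multiplicativity $(a_1\otimes b_1)(a_2\otimes b_2)=a_1a_2\otimes b_1b_2$ of the $L^p$~operator tensor product (see \cite{PhLp1}). The real content is injectivity and surjectivity. Using the basis above, equip $M_d\otimes_{\operatorname{alg}}B$ with the norm $\big\|\sum_{j,k}e_{j,k}\otimes c_{j,k}\big\|=\max_{j,k}\|c_{j,k}\|$, which makes it a Banach space; I claim $\ps^{p, S}_B$ is bounded below for this norm. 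Indeed, given $\mu=\sum_{j,k}e_{j,k}\otimes c_{j,k}$, so that $\ps^{p, S}_B(\mu)=\sum_{j,k}E_{j,k}\otimes c_{j,k}$ acts on $L^p(X_S\times Y,\,\mu_S\times\nu)=L^p(X_S,\mu_S)\otimes_p L^p(Y,\nu)$, fix $a,b\in\{1,\ldots,d\}$ and compute, using the matrix-unit relations and componentwise multiplicativity,
\[
(E_{a,a}\otimes 1_Y)\,\ps^{p, S}_B(\mu)\,(E_{b,b}\otimes 1_Y)=E_{a,b}\otimes c_{a,b}.
\]
Since the operator norm on $L^p(X_S\times Y)$ restricts to a cross norm on $L(L^p(X_S))\otimes L(L^p(Y))$ (see \cite{PhLp1}), the right-hand side has norm $\|E_{a,b}\|\,\|c_{a,b}\|$, while the left-hand side has norm at most $\|E_{a,a}\|\,\|E_{b,b}\|\,\|\ps^{p, S}_B(\mu)\|$; as $E_{a,b}\neq 0$ this yields $\|c_{a,b}\|\leq \|E_{a,b}\|^{-1}\|E_{a,a}\|\,\|E_{b,b}\|\,\|\ps^{p, S}_B(\mu)\|$, and taking the maximum over $a,b$ proves the claim.

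Being a bounded-below linear map out of a Banach space, $\ps^{p, S}_B$ is injective and has closed range. That range is the linear span of the elementary tensors $a\otimes b$ with $a\in\MP{d}{p, S}$ and $b\in B$ (because the $E_{j,k}$ span $\MP{d}{p, S}$), whose closure in $L(L^p(X_S\times Y))$ is by definition $\MP{d}{p, S}\otimes_p B$; a closed subspace whose closure is $\MP{d}{p, S}\otimes_p B$ must equal $\MP{d}{p, S}\otimes_p B$, so $\ps^{p, S}_B$ maps onto $\MP{d}{p, S}\otimes_p B$.

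I expect the only step with any content to be the compression identity displayed above, which is a one-line computation with matrix units; everything else is bookkeeping. The single feature of the hypotheses that is genuinely needed is the finite-dimensionality of $\MP{d}{p, S}$: it is what allows the argument to be run on the explicit basis $\{E_{j,k}\}$ and what guarantees that the algebraic span of the elementary tensors is already closed, so that no completion is lost in passing to $\MP{d}{p, S}\otimes_p B$.
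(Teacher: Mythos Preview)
Your argument is correct, but it is considerably more elaborate than what the paper does: the paper's proof is the single sentence ``The statement is immediate from the fact that $M_d$ is finite dimensional.'' The point is that since $\ps^{p,S}\colon M_d\to\MP{d}{p,S}$ is an algebra isomorphism (which you establish), the induced map on algebraic tensor products $M_d\otimes_{\operatorname{alg}}B\to\MP{d}{p,S}\otimes_{\operatorname{alg}}B$ is automatically a bijection; and because $\MP{d}{p,S}$ is finite dimensional, the algebraic tensor product $\MP{d}{p,S}\otimes_{\operatorname{alg}}B$ (being isomorphic as a vector space to $B^{d^2}$ with $B$ complete) is already closed in $L(L^p(X_S\times Y))$, so $\MP{d}{p,S}\otimes_p B=\MP{d}{p,S}\otimes_{\operatorname{alg}}B$. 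No norm estimates are required.

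Your route via the matrix-unit compression identity is a genuinely different (and longer) argument. It does buy something: it produces an explicit lower bound for $\|\ps^{p,S}_B(\mu)\|$ in terms of the coefficients $c_{j,k}$, which the one-line proof does not. But you are implicitly using finite-dimensionality anyway, at the step where you assert that $M_d\otimes_{\operatorname{alg}}B$ with the max norm is a Banach space (this is exactly the observation $M_d\otimes_{\operatorname{alg}}B\cong B^{d^2}$), and that is already enough to conclude bijectivity directly without the bounded-below estimate. So your closing remark correctly identifies the essential hypothesis; the compression argument, while valid, is not needed for the statement as written.
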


\begin{proof}
The statement is immediate from the fact that
$M_d$ is \fd.
\end{proof}

\begin{ntn}\label{N_3406_Norm}
Let the notation be as in Lemma~\ref{L_3406_TensorB}.
For $x \in M_d \otimes_{\operatorname{alg}} B,$
we define
$\| x \|_{p, S}
 = \| x \|_{p, I, s, f}
 = \big\| \ps^{p, I, s, f}_B (x) \big\|.$
Taking $B = \C,$
we have in particular defined $\| x \|_{p, S}$ for $x \in M_d.$
\end{ntn}

\begin{ntn}\label{N_3406_Kp}
Let $d \in \N,$
let $S_1$ and $S_2$ be systems of $d$-similarities,
and let $p \in [1, \infty).$
Further let $\YCN$ be a \sfm,
and let $B \subset L (L^p (Y, \nu))$ be a unital closed subalgebra.
Using the notation of Lemma~\ref{L_3406_TensorB},
we define
\[
\kp^{p, S_2, S_1}_B
 \colon \MP{d}{p, S_1} \otimes_p B
  \to \MP{d}{p, S_2} \otimes_p B
\]
by
\[
\kp^{p, S_2, S_1}_B
 = \ps^{p, S_2}_B
    \circ \big( \ps^{p, S_1}_B \big)^{-1}.
\]
\end{ntn}

\begin{lem}\label{L_3405_Basics}
Let $d \in \N,$
and let $p \in [1, \infty).$
Let $S = (I, s, f),$
$S_1 = (I_1, s_1, f_1),$
$S_2 = (I_2, s_2, f_2),$ and $S_3 = (I_3, s_3, f_3)$
be systems of $d$-similarities,
and let $S_0 = (I_0, s_0, f_0)$ be the basic system of $d$-similarities
(with $s (0) = 1$).
Further let $\YCN$ be a \sfm,
and let $B \subset L (L^p (Y, \nu))$ be a unital closed subalgebra.
Then:
\begin{enumerate}
\item\label{L_3405_Basics_NForm}
$\big\| \ps^{p, S}_B (x) \big\|
  = \sup_{i \in I}
    \big\| (s (i) \otimes 1) \ps^{p, S_0}_B (x)
       (s (i)^{-1} \otimes 1) \big\|$
for all $x \in M_d \otimes_{\operatorname{alg}} B.$
\item\label{L_3405_Basics_FrmBase}
$\big\| \kp^{p, S, S_0}_B \big\| = R_{p, S}.$
\item\label{L_3405_Basics_ToBase}
$\big\| \kp^{p, S_0, S}_B \big\| = 1.$
\item\label{L_3405_Basics_Comp}
$\kp^{p, S_3, S_2}_B
 \circ \kp^{p, S_2, S_1}_B
 = \kp^{p, S_3, S_1}_B.$
\item\label{L_3405_Basics_Sub}
If ${\overline{\ran (s_2)}} \subset {\overline{\ran (s_1)}}$ then
$\big\| \kp^{p, S_2, S_1}_B \big\| = 1.$
\item\label{L_3405_Basics_Indep}
If ${\overline{\ran (s_1)}} = {\overline{\ran (s_2)}}$ then
$\kp^{p, S_2, S_1}_B$ is an isometric bijection.
\end{enumerate}
\end{lem}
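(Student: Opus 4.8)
The plan is to prove part~(\ref{L_3405_Basics_NForm}) first; parts~(\ref{L_3405_Basics_FrmBase})--(\ref{L_3405_Basics_Indep}) all follow from it, together with Lemma~\ref{L_3317_NConj} and essentially formal manipulations.

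For part~(\ref{L_3405_Basics_NForm}) I would use that $\ps^{p, S}$ is, by construction, the $L^p$~direct sum over $i \in I$ of the representations $\ps_i(a) = s(i) a s(i)^{-1}$ on $L^p(\{1, \ldots, d\} \times \{i\}, \mu_S)$, each slice being identified isometrically with $\MP{d}{p}$ via Lemma~\ref{L_3903_ChM} and~(\ref{Eq_3406_Ident}); and that $\ps^{p, S_0}$ is simply the identity representation of $M_d$ on $\MP{d}{p}$ (since $I_0 = \{0\}$, $s_0(0) = 1$, and $\mu_{S_0}$ is normalized counting measure), so that $\ps^{p, S_0}_B$ is the canonical bijection $M_d \otimes_{\operatorname{alg}} B \to \MP{d}{p} \otimes_p B$ from Lemma~\ref{L_3406_TensorB}. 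Next I would identify $L^p(X_S \times Y, \mu_S \times \nu)$ with the $L^p$~direct sum over $i \in I$ of the spaces $L^p(\{1, \ldots, d\} \times \{i\} \times Y)$, compatibly with the evident operators (tensoring an $L^p$~direct sum with a fixed $L^p$~space is again an $L^p$~direct sum), and check on an elementary tensor $x = a \otimes b$, then extend linearly, that $\ps^{p, S}_B(x)$ equals the direct sum --- in the sense of Remark~\ref{R_3913_DSElt} --- of the operators $(s(i) \otimes 1)\, \ps^{p, S_0}_B(x)\, (s(i)^{-1} \otimes 1)$ over $i \in I$. These are uniformly bounded (each by $R_{p, S} \| \ps^{p, S_0}_B(x) \|$, which is finite since $\overline{\ran(s)}$ is a compact subset of $\inv(M_d)$; compare Corollary~\ref{C_3406_SySiBdd}), so the norm-of-a-direct-sum formula (Lemma~\ref{L_3406_pSum} and Remark~\ref{R_3913_DSElt}) yields the claimed identity.

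Granting~(\ref{L_3405_Basics_NForm}): the upper bound $\big\| \kp^{p, S, S_0}_B \big\| \le R_{p, S}$ in~(\ref{L_3405_Basics_FrmBase}) comes from bounding each summand by $\| s(i) \otimes 1 \| \cdot \| \ps^{p, S_0}_B(x) \| \cdot \| s(i)^{-1} \otimes 1 \| = \| s(i) \|_p \| s(i)^{-1} \|_p \, \| \ps^{p, S_0}_B(x) \|$, where $\| s(i) \otimes 1 \| = \| s(i) \|_p$ because tensoring with an identity operator is isometric on $L^p$; the reverse inequality comes from restricting $\kp^{p, S, S_0}_B$ to the elements $c \otimes 1$ with $c \in \MP{d}{p}$, which form an isometric copy of $\MP{d}{p}$ on which the map acts as $c \mapsto \ps^{p, S}(c) \otimes 1$, so its norm there is $\| \ps^{p, S} \| = \sup_{i \in I} \| s(i) \|_p \| s(i)^{-1} \|_p = R_{p, S}$ by Lemma~\ref{L_3317_NConj}. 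For~(\ref{L_3405_Basics_ToBase}), Definition~\ref{D_3406_SysSim}(\ref{D_3406_SysSim_One}) provides $i_0 \in I$ with $s(i_0) = 1$, so the $i_0$~term in~(\ref{L_3405_Basics_NForm}) is exactly $\| \ps^{p, S_0}_B(x) \|$; hence $\| \ps^{p, S_0}_B(x) \| \le \| \ps^{p, S}_B(x) \|$, giving $\big\| \kp^{p, S_0, S}_B \big\| \le 1$, and equality holds since $\kp^{p, S_0, S}_B$ is unital. Part~(\ref{L_3405_Basics_Comp}) is immediate cancellation: $\kp^{p, S_3, S_2}_B \circ \kp^{p, S_2, S_1}_B = \ps^{p, S_3}_B \circ (\ps^{p, S_2}_B)^{-1} \circ \ps^{p, S_2}_B \circ (\ps^{p, S_1}_B)^{-1} = \kp^{p, S_3, S_1}_B$.

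For~(\ref{L_3405_Basics_Sub}) the key point is that, for fixed $x$, the function $v \mapsto \big\| (v \otimes 1)\, \ps^{p, S_0}_B(x)\, (v^{-1} \otimes 1) \big\|$ is continuous on $\inv(M_d)$; hence by~(\ref{L_3405_Basics_NForm}) the supremum defining $\| \ps^{p, S_j}_B(x) \|$ over $\ran(s_j)$ agrees with its supremum over the compact set $\overline{\ran(s_j)}$, and the hypothesis $\overline{\ran(s_2)} \subset \overline{\ran(s_1)}$ then gives $\| \ps^{p, S_2}_B(x) \| \le \| \ps^{p, S_1}_B(x) \|$, so $\big\| \kp^{p, S_2, S_1}_B \big\| \le 1$, with equality by unitality as before. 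Then~(\ref{L_3405_Basics_Indep}) follows by applying~(\ref{L_3405_Basics_Sub}) in both directions: $\kp^{p, S_2, S_1}_B$ and $\kp^{p, S_1, S_2}_B$ are both contractive, and by~(\ref{L_3405_Basics_Comp}) they are mutually inverse (as $\kp^{p, S_1, S_1}_B = \id$), so $\kp^{p, S_2, S_1}_B$ is an isometric bijection. I expect the main obstacle to be part~(\ref{L_3405_Basics_NForm}), specifically the bookkeeping that identifies $\ps^{p, S}_B(x)$ with the stated direct sum of conjugates, and the verification that the norm of a direct sum of uniformly bounded operators on an $L^p$~direct sum equals the supremum of the norms of the summands.
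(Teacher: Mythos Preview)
Your proposal is correct and follows essentially the same route as the paper: establish the direct-sum description of $\ps^{p,S}_B(x)$ and the resulting sup formula for its norm (part~(\ref{L_3405_Basics_NForm})), then read off the remaining parts, using continuity of $v \mapsto \|(v\otimes 1)\,\ps^{p,S_0}_B(x)\,(v^{-1}\otimes 1)\|$ to pass to $\overline{\ran(s)}$ for~(\ref{L_3405_Basics_Sub}). The only cosmetic differences are that the paper records part~(\ref{L_3405_Basics_Comp}) first, obtains the lower bound in~(\ref{L_3405_Basics_FrmBase}) by computing $\|\ps_{i,B}\|$ directly via Lemma~\ref{L_3317_NConj} (rather than by restricting to $c\otimes 1$ as you do), and deduces~(\ref{L_3405_Basics_ToBase}) as a special case of~(\ref{L_3405_Basics_Sub}) rather than directly from~(\ref{L_3405_Basics_NForm}).
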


\begin{proof}
Part~(\ref{L_3405_Basics_Comp}) is immediate.

Identify $M_d \otimes_{\operatorname{alg}} B$
with $\MP{d}{p} \otimes_p B,$
and write $\| \cdot \|_p$ for the corresponding norm.
(The identification map
is just $\ps^{p, S_0}_B.$
Thus, in the following,
we are taking $\ps^{p, S_0}_B$ to be the identity.)

For $i \in I,$
define a unital \hm{}
\[
\ps_{i, B} \colon
  \MP{d}{p} \otimes_p B
  \to L \big( L^p \big( \{ 1, 2, \ldots, d \} \times \{ i \} \times Y,
       \,  \mu_{S} \times \nu \big) \big)
\]
by making the identification~(\ref{Eq_3406_Ident})
and setting $\ps_{i, B} (x) = (s (i) \otimes 1) x (s (i)^{-1} \otimes 1)$
for $x \in \MP{d}{p} \otimes_p B.$
Using Lemma~\ref{L_3317_NConj}
and $\| y \otimes 1 \|_p = \| y \|_p$ for all $y \in \MP{d}{p},$
we get $\| \ps_{i, B} \| = \| s (i) \|_p \| s (i)^{-1} \|_p.$

Parts (\ref{L_3405_Basics_NForm})
and~(\ref{L_3405_Basics_FrmBase})
now both follow from the fact
that $\ps^{p, S}_B (x)$ is the operator
on
\[
L^p \left(
 \coprod_{i \in I} \{ 1, 2, \ldots, d \} \times \{ i \} \times Y, \,\,
 \mu_{S} \times \nu \right)
\]
which acts on
$L^p \big( \{ 1, 2, \ldots, d \} \times \{ i \} \times Y, \,
 \mu_{S} \times \nu \big)$
as $\ps_{i, B} (x),$
and whose norm is therefore $\sup_{i \in I} \| \ps_{i, B} (x) \|_p.$
Moreover,
we also have
\[
\big\| \ps^{p, S}_B (x) \big\|_{p, S}
  = \sup \big( \big\{ 
    \| (v \otimes 1) x
       (v^{-1} \otimes 1) \|_p \colon
      v \in {\overline{\ran (s)}} \big\} \big)
\]
for all $x \in M_d \otimes_{\operatorname{alg}} B.$
The inequality $\big\| \kp^{p, S_2, S_1}_B \big\| \leq 1$
in part~(\ref{L_3405_Basics_Sub})
is now immediate,
as is part~(\ref{L_3405_Basics_Indep}).
The inequality $\big\| \kp^{p, S_2, S_1}_B \big\| \geq 1$
in part~(\ref{L_3405_Basics_Sub})
follows from $\kp^{p, S_2, S_1}_B (1) = 1.$
By Definition \ref{D_3406_SysSim}(\ref{D_3406_SysSim_One}),
part~(\ref{L_3405_Basics_ToBase})
is a special case of part~(\ref{L_3405_Basics_Sub}).
\end{proof}

We get some additional properties for diagonal
systems of similarities.

\begin{lem}\label{L_3406_NormDg}
Let $d \in \N,$
let $S = (I, s, f)$ be a diagonal system of $d$-similarities,
let $S_0$ be the basic system of $d$-similarities
(Definition~\ref{D_3406_SysSim}),
and let $p \in [1, \infty).$
For $i \in I,$
define
$\af_{i, 1}, \af_{i, 2}, \ldots, \af_{i, d} \in \C \setminus \{ 0 \}$
by $s (i) = \diag ( \af_{i, 1}, \af_{i, 2}, \ldots, \af_{i, d} ).$
For $j, k = 1, 2, \ldots, d,$
define
$r_{j, k} = \sup_{i \in I} | \af_{i, j} | \cdot | \af_{i, k} |^{-1}.$
Let $\YCN$ be a \sfm,
and let $B \subset L (L^p (Y, \nu))$ be a unital closed subalgebra.
Then:
\begin{enumerate}
\item\label{L_3406_NormDg_Cj}
$( s (i) \otimes 1) (e_{j, k} \otimes b) ( s (i)^{-1} \otimes 1)
 = \af_{i, j} \af_{i, k}^{-1} e_{j, k} \otimes b$
for $i \in I,$ $b \in B,$ and $j, k = 1, 2, \ldots, d.$
\item\label{L_3406_NormDg_Om1}
$1 \leq r_{j, k} < \infty$ for $j, k = 1, 2, \ldots, d.$
\item\label{L_3406_NormDg_Omjj}
$r_{j, j} = 1$ for $j = 1, 2, \ldots, d.$
\item\label{L_3406_NormDg_Normejk}
$\| e_{j, k} \otimes b \|_{p, S} = r_{j, k} \| b \|$
for $j, k = 1, 2, \ldots, d$ and $b \in B.$
\item\label{L_3406_NormDg_Diag}
$\| e_{j, j} \otimes b \|_{p, S} = \| b \|$
for $j = 1, 2, \ldots, d$ and $b \in B.$
\item\label{L_3406_NormDg_Nrho}
$\big\| \kp^{p, S, S_0 }_B \big\|
 = R_{p, S}
 = \sup \big( \big\{ r_{j, k} \colon
  j, k \in \{ 1, 2, \ldots, d \} \big).$
\item\label{L_3406_NormDg_Sum}
Let $b_1, b_2, \ldots, b_d \in B.$
Then
\[
\big\| e_{1, 1} \otimes b_1 + e_{2, 2} \otimes b_2 + \cdots
    + e_{d, d} \otimes a_d \big\|_{p, S}
 = \max \big( \| b_1 \|, \, \| b_2 \|, \, \ldots, \, \| b_d \| \big).
\]
\end{enumerate}
\end{lem}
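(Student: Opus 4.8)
The plan is to reduce every part to an explicit computation with diagonal matrices, using as the only substantive input Lemma~\ref{L_3405_Basics}(\ref{L_3405_Basics_NForm}), which rewrites $\|\cdot\|_{p,S}$ on $M_d \otimes_{\operatorname{alg}} B$ as the supremum over $i \in I$ of the norms of the conjugates $(s(i)\otimes 1)\,x\,(s(i)^{-1}\otimes 1)$ computed inside $\MP{d}{p}\otimes_p B$, with $\ps^{p,S_0}_B$ taken as the identity identification, exactly as in the proof of that lemma. Part~(\ref{L_3406_NormDg_Cj}) is the engine and is a one-line matrix computation: since $s(i)=\diag(\af_{i,1},\ldots,\af_{i,d})$ we have $s(i)e_{j,k}=\af_{i,j}e_{j,k}$ and $e_{j,k}s(i)^{-1}=\af_{i,k}^{-1}e_{j,k}$ in $M_d$, and tensoring with $b$ gives the stated identity. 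Parts~(\ref{L_3406_NormDg_Om1})--(\ref{L_3406_NormDg_Omjj}) are then routine: $r_{j,k}<\infty$ because $\overline{\ran(s)}$ is a compact subset of $\inv(M_d)$ (Definition~\ref{D_3406_SysSim}(\ref{D_3406_SysSim_Inv}),(\ref{D_3406_SysSim_Cpt})), so $\sup_i\|s(i)\|$ and $\sup_i\|s(i)^{-1}\|$ are finite, whence the $|\af_{i,j}|$ are bounded above and away from $0$; $r_{j,k}\ge 1$ because $1\in\ran(s)$ (Definition~\ref{D_3406_SysSim}(\ref{D_3406_SysSim_One})) contributes the value $1$ to the supremum; and $r_{j,j}=1$ trivially.

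For part~(\ref{L_3406_NormDg_Normejk}), combining Notation~\ref{N_3406_Norm}, Lemma~\ref{L_3405_Basics}(\ref{L_3405_Basics_NForm}), and part~(\ref{L_3406_NormDg_Cj}) gives $\|e_{j,k}\otimes b\|_{p,S}=\sup_i |\af_{i,j}|\,|\af_{i,k}|^{-1}\,\|e_{j,k}\otimes b\|_p$; and $\|e_{j,k}\otimes b\|_p=\|b\|$, either from the cross-norm property of the $L^p$-operator tensor product (Theorem~2.16 of~\cite{PhLp1}) together with $\|e_{j,k}\|_p=1$, or directly by viewing $\MP{d}{p}\otimes_p B$ as acting on $l^p_d\otimes_p L^p(Y,\nu)=\bigoplus_{m=1}^d L^p(Y,\nu)$, on which $e_{j,k}\otimes b$ sends the $k$-th summand by $b$ to the $j$-th and vanishes elsewhere. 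Taking the supremum over $i$ yields $r_{j,k}\|b\|$, and part~(\ref{L_3406_NormDg_Diag}) is this together with~(\ref{L_3406_NormDg_Omjj}). For part~(\ref{L_3406_NormDg_Nrho}), the first equality is Lemma~\ref{L_3405_Basics}(\ref{L_3405_Basics_FrmBase}); for the second, a diagonal operator on $l^p_d$ has norm equal to the largest modulus of its diagonal entries, so $\|s(i)\|_p\,\|s(i)^{-1}\|_p=\big(\max_j|\af_{i,j}|\big)\big(\max_k|\af_{i,k}|^{-1}\big)$, and then $R_{p,S}=\sup_i\sup_{j,k}|\af_{i,j}|\,|\af_{i,k}|^{-1}=\sup_{j,k}r_{j,k}$ after interchanging the two suprema.

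Part~(\ref{L_3406_NormDg_Sum}) is the one step I would flag, and it is where diagonality is decisive. By part~(\ref{L_3406_NormDg_Cj}), for every $i$ we have $(s(i)\otimes 1)\big(\sum_{j}e_{j,j}\otimes b_j\big)(s(i)^{-1}\otimes 1)=\sum_j\af_{i,j}\af_{i,j}^{-1}\,e_{j,j}\otimes b_j=\sum_j e_{j,j}\otimes b_j$; that is, conjugation by a diagonal similarity fixes any diagonal element, so the supremum in Lemma~\ref{L_3405_Basics}(\ref{L_3405_Basics_NForm}) is constant in $i$ and $\big\|\sum_j e_{j,j}\otimes b_j\big\|_{p,S}=\big\|\sum_j e_{j,j}\otimes b_j\big\|_p$, independently of $S$. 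Then, identifying $l^p_d\otimes_p L^p(Y,\nu)$ with the $L^p$-direct sum $\bigoplus_{j=1}^d L^p(Y,\nu)$ (Theorem~2.16 of~\cite{PhLp1}), the operator $\sum_j e_{j,j}\otimes b_j$ becomes the block-diagonal operator $\bigoplus_{j=1}^d b_j$, whose norm is $\max_j\|b_j\|$ by Lemma~\ref{L_3406_pSum} (equivalently Remark~\ref{R_3913_DSElt}). I do not expect a genuine obstacle anywhere: the whole lemma is bookkeeping with the identifications of Definition~\ref{D_3406_SySiRpn}, with the only conceptual point being the observation just used --- that diagonal conjugation is the identity on diagonal matrices --- which is precisely why $\|\cdot\|_{p,S}$ loses its dependence on $S$ on such elements (and fails to do so for non-diagonal systems).
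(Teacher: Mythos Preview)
Your proof is correct and follows essentially the same route as the paper's: both reduce everything to Lemma~\ref{L_3405_Basics}(\ref{L_3405_Basics_NForm}) plus the explicit diagonal conjugation formula in part~(\ref{L_3406_NormDg_Cj}), with part~(\ref{L_3406_NormDg_Sum}) handled by the observation that diagonal similarities commute with (equivalently, fix under conjugation) diagonal elements. The one small difference is in part~(\ref{L_3406_NormDg_Om1}): the paper argues $r_{j,k}\ge 1$ via $\max\big(|\af_{i,j}\af_{i,k}^{-1}|,\,|\af_{i,j}^{-1}\af_{i,k}|\big)\ge 1$, whereas you use $1\in\ran(s)$ to exhibit an index contributing the value~$1$ to the supremum---your argument is the more direct one here.
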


\begin{proof}
Part~(\ref{L_3406_NormDg_Cj}) is a calculation.
In part~(\ref{L_3406_NormDg_Om1}),
finiteness of $r_{j, k}$ follows from
compactness of ${\overline{{\mathrm{ran}} (s)}}.$
The inequality $r_{j, k} \geq 1$
follows from
\[
\max \big( | \af_{i, j} | \cdot | \af_{i, k} |^{-1},
     \, | \af_{i, j} |^{-1} \cdot | \af_{i, k} | \big)
   \geq 1.
\]
Part~(\ref{L_3406_NormDg_Omjj}) is clear.
Part~(\ref{L_3406_NormDg_Normejk})
follows from part~(\ref{L_3406_NormDg_Cj}),
Lemma \ref{L_3405_Basics}(\ref{L_3405_Basics_NForm}),
and the fact that $\| e_{j, k} \|_p = 1$ in~$\MP{d}{p}.$
Part~(\ref{L_3406_NormDg_Diag})
follows from part~(\ref{L_3406_NormDg_Normejk})
and part~(\ref{L_3406_NormDg_Omjj}).

For part~(\ref{L_3406_NormDg_Nrho}),
by Lemma \ref{L_3405_Basics}(\ref{L_3405_Basics_FrmBase})
it suffices to prove that
\[
R_{p, S}
 = \sup \big( \big\{ r_{j, k} \colon
  j, k \in \{ 1, 2, \ldots, d \} \big).
\]
This statement follows from the computation
\[
\| s (i) \|_p \| s (i)^{-1} \|_p
 = \left( \sup_{1 \leq j \leq d} | \af_{i, j} | \right)
   \left( \sup_{1 \leq k \leq d} | \af_{i, k} |^{-1} \right)
 = \sup_{1 \leq j, k \leq d} | \af_{i, j} | \cdot | \af_{i, k} |^{-1}
\]
by taking the supremum over $i \in I.$

We prove part~(\ref{L_3406_NormDg_Sum}).
Set
\[
b = e_{1, 1} \otimes b_1 + e_{2, 2} \otimes b_2 + \cdots
    + e_{d, d} \otimes b_d.
\]
Regarded as an element of $\MP{d}{p} \otimes_p B,$
the element $b$ is the $L^p$~direct sum,
over $k \in \{ 1, 2, \ldots, d \},$
of the operators $b_k$ acting on $L^p ( \{ k \} \times Y),$
so
$\| b \|_p = \sup_{k = 1, 2, \ldots, d} \| b_k \|.$
Since $S$ is diagonal,
$s (i) \otimes 1$ commutes with~$b$
for all $i \in I.$
The result now follows from
Lemma \ref{L_3405_Basics}(\ref{L_3405_Basics_NForm}).
\end{proof}

\begin{lem}\label{L_3406_TPSys}
Let $d_1, d_2 \in \N,$
let $S_1 = (I_1, s_1, f_1)$ be a system of $d_1$-similarities,
let $S_2 = (I_2, s_2, f_2)$ be a system of $d_2$-similarities,
and let $p \in [1, \infty).$
Identify $\C^{d_1} \otimes \C^{d_2}$ with $\C^{{d_1} {d_2}}$
via an isomorphism which sends tensor products of standard basis vectors
to standard basis vectors,
and use this isomorphism to identify
$M_{d_1} \otimes M_{d_2} = L (\C^{d_1} \otimes \C^{d_2})$
with~$M_{{d_1} {d_2}} = L (\C^{{d_1} {d_2}}).$
Set $I = I_1 \times I_2,$
and define $s \colon I \to M_{{d_1} {d_2}}$
and $f \colon I \to (0, 1]$ by
\[
s (i_1, i_2) = s_1 (i_1) \otimes s_2 (i_2)
\andeqn
f (i_1, i_2) = f_1 (i_1) f_2 (i_2)
\]
for $i_1 \in I_2$ and $i_2 \in I_2.$
Then $S = (I, s, f)$ is a system of $d_1 d_2$-similarities
and the same identification as already used becomes an
isometric isomorphism
$\MP{d_1}{p, S_1} \otimes_p \MP{d_2}{p, S_2} \to \MP{d_1 d_2}{p, S}.$
We have $R_{p, S} = R_{p, S_1} R_{p, S_2}.$
Moreover, if $S_1$ and $S_2$ are diagonal,
then so is~$S.$
\end{lem}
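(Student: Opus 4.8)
The plan is to verify the four assertions of the lemma in turn; almost all of the work goes into identifying the representation $\ps^{p,S}$ with a tensor product of representations. I would begin with the routine checks that $S = (I, s, f)$ is a system of $d_1 d_2$-similarities (Definition~\ref{D_3406_SysSim}): the index set $I = I_1 \times I_2$ is countable, $f(i_1,i_2) = f_1(i_1) f_2(i_2) > 0$, and $\sum_{(i_1,i_2) \in I} f(i_1,i_2) = \big( \sum_{i_1} f_1(i_1) \big)\big( \sum_{i_2} f_2(i_2) \big) = 1$. Choosing $i_1, i_2$ with $s_1(i_1) = 1$ and $s_2(i_2) = 1$ (Definition~\ref{D_3406_SysSim}(\ref{D_3406_SysSim_One})) gives $s(i_1,i_2) = 1 \otimes 1 = 1$, so $1 \in \ran(s)$. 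Since $(u,v) \mapsto u \otimes v$ is continuous and $\overline{\ran(s_1)}$, $\overline{\ran(s_2)}$ are compact subsets of $\inv(M_{d_1})$ and $\inv(M_{d_2})$, the set $\{ u \otimes v : u \in \overline{\ran(s_1)},\ v \in \overline{\ran(s_2)} \}$ is a compact subset of $\inv(M_{d_1 d_2})$ containing $\ran(s)$, hence also $\overline{\ran(s)}$, which gives compactness of $\overline{\ran(s)}$ and its containment in $\inv(M_{d_1 d_2})$. Finally, a tensor product of two diagonal matrices is diagonal under an identification $M_{d_1} \otimes M_{d_2} = M_{d_1 d_2}$ sending tensor products of standard basis vectors to standard basis vectors, so if $S_1$ and $S_2$ are diagonal, so is $S$.

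The main step is the isometric isomorphism, which I would obtain by matching $L^p$~direct sum decompositions. Write $X_{S_j} = \{1, \ldots, d_j\} \times I_j$ with $\mu_{S_j}(\{(k, i_j)\}) = d_j^{-1} f_j(i_j)$, and similarly $X_S = \{1, \ldots, d_1 d_2\} \times I$. By Definition~\ref{D_3406_SySiRpn}, $L^p(X_{S_j}, \mu_{S_j})$ is the $L^p$~direct sum over $i_j \in I_j$ of copies of $l^p_{d_j}$ (up to a positive rescaling of the measure on each block, harmless at the level of operator algebras by Lemma~\ref{L_3903_ChM}), with $\ps^{p, S_j}$ the $L^p$~direct sum of the conjugations $a \mapsto s_j(i_j) a s_j(i_j)^{-1}$. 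Using that $L^p$ of a product of measure spaces is the $L^p$~operator tensor product (Theorem~2.16 of~\cite{PhLp1}) and that $L^p$ of a disjoint union is the $L^p$~direct sum, one gets an identification
\[
L^p(X_{S_1}, \mu_{S_1}) \otimes_p L^p(X_{S_2}, \mu_{S_2})
 \;=\; \bigoplus_{(i_1,i_2) \in I} \big( l^p_{d_1} \otimes_p l^p_{d_2} \big)
 \;=\; \bigoplus_{(i_1,i_2) \in I} l^p_{d_1 d_2}
 \;=\; L^p(X_S, \mu_S),
\]
the only point needing a check being that the atomic measures agree: the $(i_1,i_2)$-block on the left carries mass $d_1^{-1} f_1(i_1) \cdot d_2^{-1} f_2(i_2) = (d_1 d_2)^{-1} f(i_1,i_2)$ at each of its $d_1 d_2$ points, exactly $\mu_S$ on the corresponding block of $X_S$. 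On the $(i_1,i_2)$-summand, $\ps^{p, S_1} \otimes \ps^{p, S_2}$ restricts to $a \otimes b \mapsto (s_1(i_1) \otimes s_2(i_2))(a \otimes b)(s_1(i_1) \otimes s_2(i_2))^{-1}$, i.e.\ to $c \mapsto s(i_1,i_2)\, c\, s(i_1,i_2)^{-1}$ on $l^p_{d_1 d_2} = l^p_{d_1} \otimes_p l^p_{d_2}$, which is precisely the $(i_1,i_2)$-component of $\ps^{p, S}$. Hence, under the identification above, $\ps^{p, S_1} \otimes \ps^{p, S_2}$ is the $L^p$~direct sum of these components, so it equals $\ps^{p, S}$. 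In particular the operator algebras $\MP{d_1}{p, S_1} \otimes_p \MP{d_2}{p, S_2}$ and $\MP{d_1 d_2}{p, S}$ coincide, and the identification $M_{d_1} \otimes M_{d_2} = M_{d_1 d_2}$ realizes the claimed isometric isomorphism.

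For the statement $R_{p, S} = R_{p, S_1} R_{p, S_2}$, I would use $s(i_1,i_2) = s_1(i_1) \otimes s_2(i_2)$ and $s(i_1,i_2)^{-1} = s_1(i_1)^{-1} \otimes s_2(i_2)^{-1}$ together with the fact that, under the isometric identification $\MP{d_1 d_2}{p} = \MP{d_1}{p} \otimes_p \MP{d_2}{p}$ of Corollary~1.13 of~\cite{PhLp2a}, the norm of an elementary tensor is the product of the norms of its factors (the $L^p$~operator tensor norm being a cross norm). This gives $\| s(i_1,i_2) \|_p = \| s_1(i_1) \|_p \| s_2(i_2) \|_p$ and the analogous identity for inverses, so $\| s(i_1,i_2) \|_p \| s(i_1,i_2)^{-1} \|_p = \big( \| s_1(i_1) \|_p \| s_1(i_1)^{-1} \|_p \big)\big( \| s_2(i_2) \|_p \| s_2(i_2)^{-1} \|_p \big)$; taking the supremum over $(i_1,i_2) \in I_1 \times I_2$, and using that a supremum over a product index set of a product of two nonnegative quantities factors as the product of the suprema, yields $R_{p, S} = R_{p, S_1} R_{p, S_2}$.

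I expect the only genuine obstacle to be the second step: being careful enough with the measure-space bookkeeping that the $L^p$~direct sum decomposition of $L^p(X_{S_1}) \otimes_p L^p(X_{S_2})$ matches that of $L^p(X_S)$ block by block, and then verifying that the two representations agree on each block. The remaining assertions are either direct verifications against Definition~\ref{D_3406_SysSim} or consequences of the standard cross-norm property of the $L^p$~operator tensor product.
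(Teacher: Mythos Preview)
Your proposal is correct and matches the paper's approach: the paper declares the proof ``straightforward'' and omits it, singling out only the identity $R_{p,S} = R_{p,S_1} R_{p,S_2}$ as requiring the cross-norm property $\| v_1 \otimes v_2 \| = \| v_1 \| \cdot \| v_2 \|$ (citing Theorem~2.16(5) of~\cite{PhLp1}), which is exactly the ingredient you invoke. Your detailed verification of the axioms and the block-by-block identification of the representations fills in what the paper leaves implicit.
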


\begin{proof}
The proof is straightforward, and is omitted,
except that to prove that $R_{p, S} = R_{p, S_1} R_{p, S_2},$
we need to know that
$\| v_1 \otimes v_2 \| = \| v_1 \| \cdot \| v_2 \|$
for $v_1 \in L (L^p (X_{S_1}, \mu_{S_1} ))$
and $v_2 \in L (L^p (X_{S_2}, \mu_{S_2} )).$
For this, we use Theorem 2.16(5) of~\cite{PhLp1}.
\end{proof}

\begin{dfn}\label{D_3406_SfUHF}
Let $p \in [1, \infty),$
let $d = (d (1), \, d (2), \, \ldots )$
be a sequence of integers such that $d (n) \geq 2$ for all $n \in \N,$
and let
\[
I = (I_1, I_2, \ldots ),
\,\,\,\,\,\,
s = (s_1, s_2, \ldots ),
\andeqn
f = (f_1, f_2, \ldots )
\]
be sequences such that $S_n = (I_n, s_n, f_n)$
is a system of $d (n)$-similarities for all $n \in \N.$
We define the
{\emph{$(p, d, I, s, f)$-UHF algebra}}
$D_{p, d, I, s, f}$
and associated subalgebras
by applying the construction of Example~\ref{E_2Y14_LpUHF}
using the \rpn{s}
$\rh_n = \ps^{p, S_n} \colon
    M_{d (n)} \to L ( L^p (X_{S_n}, \mu_{S_n} ) ).$
We let $( X_{I, s, f}, \cB_{I, s, f}, \mu_{I, s, f} )$
be the product measure space $\XBM$
from Example~\ref{E_2Y14_LpUHF},
that is,
\[
( X_{I, s, f}, \cB_{I, s, f}, \mu_{I, s, f} )
 = \prod_{n = 1}^{\infty} ( X_{S_n}, \cB_{S_n}, \mu_{S_n} ).
\]
For $m, n \in \Nz$ with $m \leq n,$
we define
\[
D_{p, d, I, s, f}^{(m, n)}
 = \MP{d (m + 1)}{p, S_{m + 1}} \otimes_p
     \MP{d (m + 2)}{p, S_{m + 2}} \otimes_p
     \cdots \otimes_p
     \MP{d (n)}{p, S_n}.
\]
(This algebra is called
$A_{ {\mathbb{N}}_{\leq n}, {\mathbb{N}}_{> m} }$
in Example~\ref{E_2Y14_LpUHF}.)
We then define $D_{p, d, I, s, f}^{(m, \infty)}$
to be the direct limit
\[
D_{p, d, I, s, f}^{(m, \infty)} = \dirlim_n D_{p, d, I, s, f}^{(m, n)}
\]
using, for $n_1, n_2 \in \Nz$ with $n_2 \geq n_1 \geq m,$ the maps
$\ps_{n_2, n_1}^{(m)} \colon
   D_{p, d, I, s, f}^{(m, n_1)} \to D_{p, d, I, s, f}^{(m, n_2)}$
given by $\ps_{n_2, n_1}^{(m)} (x) = x \otimes 1$
for $x \in D_{p, d, I, s, f}^{(m, n_1)}.$
(This algebra is called
$A_{ {\mathbb{N}}, {\mathbb{N}}_{> m} }$
in Example~\ref{E_2Y14_LpUHF}.)
We regard it as
a subalgebra of $L \big( L^p ( X_{I, s, f}, \mu_{I, s, f} ) \big).$
For $n \geq m$ we let
$\ps_{\infty, n}^{(m)} \colon
   D_{p, d, I, s, f}^{(m, n)} \to D_{p, d, I, s, f}^{(m, \infty)}$
be the map associated with the direct limit.
Finally,
we set $D_{p, d, I, s, f} = D_{p, d, I, s, f}^{(0, \infty)}.$
(This algebra is called
$A (d, \rh)$
in Example~\ref{E_2Y14_LpUHF}.)
\end{dfn}

The algebra $D_{p, d, I, s, f}^{(n, n)}$ is the empty tensor product,
which we take to be~$\C.$

\begin{thm}\label{T_3406_DIsLpUHF}
Let the hypotheses and notation
be as in Definition~\ref{D_3406_SfUHF}.
Then:
\begin{enumerate}
\item\label{T_3406_DIsLpUHF_FinStg}
For $m, n \in \Nz$ with $m \leq n,$
there is a system $T$ of
$d (m + 1) d (m + 2) \cdots d (n)$-similarities
such that
there is an isometric isomorphism
\[
D_{p, d, I, s, f}^{(m, n)}
\cong M_{d (m + 1) d (m + 2) \cdots d (n)}^{p, T}
\]
which sends tensor products of standard matrix units to
standard matrix units.
Moreover, $R_{p, T} = \prod_{l = m + 1}^n R_{p, S_l}$
and, if $S_l$ is diagonal
for $l = m + 1, \, m + 2, \, \ldots, \, n,$
then $T$ is diagonal.
\item\label{T_3406_DIsLpUHF_ITPType}
For every $m \in \Nz,$
the algebra
$D_{p, d, I, s, f}^{(m, \infty)}$ is an $L^p$~UHF algebra of
infinite tensor product type
which locally has enough isometries in the sense of
Definition 2.7(2) of~\cite{PhLp2a}.
\item\label{T_3406_DIsLpUHF_Simp}
For every $m \in \Nz,$
the algebra
$D_{p, d, I, s, f}^{(m, \infty)}$
is simple and has a unique continuous normalized trace.
\item\label{T_3406_DIsLpUHF_Decomp}
For every $m \in \Nz,$
using part~(\ref{T_3406_DIsLpUHF_FinStg}) and Definition~\ref{D_3406_MP}
for the definition of the domain,
there is a unique isometric isomorphism
\[
\ph \colon
 D_{p, d, I, s, f}^{(m, n)} \otimes_p D_{p, d, I, s, f}^{(n, \infty)}
 \to D_{p, d, I, s, f}^{(m, \infty)}
\]
such that
$\ph (x \otimes y) = \ps_{\infty, n}^{(m)} (x) (1 \otimes y)$
for $x \in D_{p, d, I, s, f}^{(m, n)}$
and $y \in D_{p, d, I, s, f}^{(n, \infty)}.$
\end{enumerate}
\end{thm}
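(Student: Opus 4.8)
The plan is to deduce all four parts from Lemma~\ref{L_3406_TPSys}, the construction of Example~\ref{E_2Y14_LpUHF}, and the standard facts about $L^p$~operator tensor products in Theorem~2.16 of~\cite{PhLp1}, together with the simplicity and trace results of~\cite{PhLp2a}. For part~(\ref{T_3406_DIsLpUHF_FinStg}) I would induct on $n - m,$ the case $n = m$ being trivial since both sides are~$\C.$ For the inductive step, let $T$ be the system produced for $D_{p, d, I, s, f}^{(m, n - 1)}$ and apply Lemma~\ref{L_3406_TPSys} to $T$ and $S_n$: this yields a system $T'$ of $d (m + 1) \cdots d (n)$-similarities, an isometric isomorphism $\MP{d (m + 1) \cdots d (n - 1)}{p, T} \otimes_p \MP{d (n)}{p, S_n} \to \MP{d (m + 1) \cdots d (n)}{p, T'}$ carrying tensor products of standard matrix units to standard matrix units, the identity $R_{p, T'} = R_{p, T} R_{p, S_n},$ and preservation of diagonality. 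Associativity of $\otimes_p$ (from Theorem~2.16 of~\cite{PhLp1}) identifies $D_{p, d, I, s, f}^{(m, n - 1)} \otimes_p \MP{d (n)}{p, S_n}$ with $D_{p, d, I, s, f}^{(m, n)},$ and composing gives the assertion, with $R_{p, T'} = \prod_{l = m + 1}^{n} R_{p, S_l}.$

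For part~(\ref{T_3406_DIsLpUHF_ITPType}), reindexing exhibits $D_{p, d, I, s, f}^{(m, \infty)}$ as an algebra of the form $A (d', \rh')$ as in Example~\ref{E_2Y14_LpUHF}, with $d' = (d (m + 1), d (m + 2), \ldots)$ and $\rh'_k = \ps^{p, S_{m + k}};$ in particular it is an $L^p$~UHF algebra of tensor product type in the sense of Definition~\ref{D_2Y15_ITP}(\ref{D_2Y15_ITP_Basic}). To see it locally has enough isometries I would observe that, by Definition~\ref{D_3406_SySiRpn}, each $\rh'_k$ is the $L^p$~direct sum over $i \in I_{m + k}$ of the representations $\ps_i (a) = s_{m + k} (i) a s_{m + k} (i)^{-1},$ and that each $\ps_i$ has enough isometries in the sense of Definition~2.7(1) of~\cite{PhLp2a}: if $H \subset \inv \big( \MP{d (m + k)}{p} \big)$ is the isometry group of $l^p_{d (m + k)},$ which acts irreducibly on $\C^{d (m + k)},$ then $G_i = s_{m + k} (i)^{-1} H s_{m + k} (i)$ is an irreducibly acting subgroup of $\inv ( M_{d (m + k)} )$ whose image under $\ps_i$ is~$H,$ consisting of isometries. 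Definition~\ref{D_2Y15_ITP}(\ref{D_2Y15_ITP_LEI}) then applies. Part~(\ref{T_3406_DIsLpUHF_Simp}) is immediate from part~(\ref{T_3406_DIsLpUHF_ITPType}) and the results of~\cite{PhLp2a} giving simplicity and a unique continuous normalized trace for $L^p$~UHF algebras of tensor product type which locally have enough isometries.

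For part~(\ref{T_3406_DIsLpUHF_Decomp}), I would use that tensoring with the fixed algebra $D_{p, d, I, s, f}^{(m, n)}$ commutes with formation of the direct limit $D_{p, d, I, s, f}^{(n, \infty)} = \dirlim_N D_{p, d, I, s, f}^{(n, N)},$ the relevant inclusions remaining isometric by Theorem~2.16(5) of~\cite{PhLp1}; thus $D_{p, d, I, s, f}^{(m, n)} \otimes_p D_{p, d, I, s, f}^{(n, \infty)}$ is the closure of the increasing union of the subalgebras $D_{p, d, I, s, f}^{(m, n)} \otimes_p D_{p, d, I, s, f}^{(n, N)},$ each of which is identified isometrically with $D_{p, d, I, s, f}^{(m, N)}$ by associativity of $\otimes_p,$ and these have direct limit $D_{p, d, I, s, f}^{(m, \infty)}.$ Tracking an element $x \otimes y$ with $x \in D_{p, d, I, s, f}^{(m, n)}$ and $y \in D_{p, d, I, s, f}^{(n, N)}$ through these identifications shows it is carried to the operator on $L^p ( X_{I, s, f}, \mu_{I, s, f} )$ acting as $x$ on the tensor factors indexed $m + 1, \ldots, n$ and as $y$ on those indexed $n + 1, \ldots, N$; since $\ps_{\infty, n}^{(m)} (x)$ acts on the former factors, $1 \otimes y$ on the latter, and these commute, that operator equals $\ps_{\infty, n}^{(m)} (x) (1 \otimes y).$ Uniqueness of $\ph$ follows because the elements $x \otimes y$ span a dense subalgebra of the domain.

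I expect the only real difficulty to be organizational: arranging that the associativity and ``tensor with a direct limit'' identifications of $\otimes_p$ are mutually coherent and compatible with the matrix-unit identifications of part~(\ref{T_3406_DIsLpUHF_FinStg}), so that the maps in parts~(\ref{T_3406_DIsLpUHF_FinStg}) and~(\ref{T_3406_DIsLpUHF_Decomp}) are genuinely well defined and isometric. Every isometry statement used here ultimately rests on the multiplicativity of the $\otimes_p$ norm in Theorem~2.16 of~\cite{PhLp1}.
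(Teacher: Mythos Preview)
Your proposal is correct and follows essentially the same route as the paper: induction via Lemma~\ref{L_3406_TPSys} for part~(\ref{T_3406_DIsLpUHF_FinStg}), reindexing plus verification of Definition~2.7(2) of~\cite{PhLp2a} for part~(\ref{T_3406_DIsLpUHF_ITPType}), invocation of the simplicity and trace results of~\cite{PhLp2a} for part~(\ref{T_3406_DIsLpUHF_Simp}), and associativity of $\otimes_p$ together with compatibility with direct limits for part~(\ref{T_3406_DIsLpUHF_Decomp}). The only notable difference is in part~(\ref{T_3406_DIsLpUHF_ITPType}): the paper takes $G_0$ to be the finite group of signed permutation matrices and cites Lemmas~2.10 and~2.11 of~\cite{PhLp2a} for irreducibility, whereas you use the full isometry group of $l^p_{d (m + k)}$; both choices work, though you should make sure Definition~2.7(1) of~\cite{PhLp2a} does not require the subgroup to be finite (the summary given in this paper does not impose finiteness, and your argument is easily adapted to signed permutation matrices if it does).
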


\begin{proof}
Part~(\ref{T_3406_DIsLpUHF_FinStg})
follows from
Lemma~\ref{L_3406_TPSys} by induction.

We prove parts (\ref{T_3406_DIsLpUHF_ITPType})
and~(\ref{T_3406_DIsLpUHF_Decomp}).
By renumbering, \wolog{} $m = 0.$
In Example~\ref{E_2Y14_LpUHF},
for $n \in \N$
we set $X_n = X_{S_n},$
$\mu_n = \mu_{S_n},$
and $\rh_n = \ps^{p, S_n}.$
It is easy to see that the algebra constructed there
(and represented on $L^p \left( \prod_{n = 1}^{\infty} X_n \right)$
is isometrically isomorphic to $D_{p, d, I, s, f}^{(0, \infty)}.$

For part~(\ref{T_3406_DIsLpUHF_ITPType}),
it remains to check that this algebra locally has enough isometries.
We verify Definition 2.7(2) of~\cite{PhLp2a}
by using the partition
$X_n = \coprod_{i \in I_n} Y_i$
with $Y_i = \{ 1, 2, \ldots, d (n) \} \times \{ i \}$ for $i \in I_n.$
Take $G_0$ to be the group of signed permutation matrices
(Definition~2.9 of~\cite{PhLp2a}),
and,
for $i \in I_n,$ in the application of
Definition 2.7(1) of~\cite{PhLp2a}
to $\rh_n (\cdot) |_{L^p (Y_v, \mu_n)}$
we take the finite subgroup $G$ to be $G = s_n (i)^{-1} G_0 s_n (i).$
Using Lemma 2.10 of~\cite{PhLp2a},
we easily see that $G$ acts irreducibly
and we easily get $\| \rh_n (g) |_{L^p (Y_i, \mu_{S_n})} \| = 1$
for all $g \in G.$

For part~(\ref{T_3406_DIsLpUHF_Decomp}),
set $X = \prod_{m = 1}^{n} X_{S_n},$ with product measure~$\mu,$
and $Y = \prod_{m = n + 1}^{\infty} X_{S_n},$ with product measure~$\nu.$
Applying part~(\ref{T_3406_DIsLpUHF_ITPType}) with $n$ in place of~$m,$
we obtain $D_{p, d, I, s, f}^{(n, \infty)}$ as a closed unital
subalgebra of $L (L^p (Y, \nu)).$
Set
$J = \prod_{m = 1}^n I_m,$
and define $g \colon J \to (0, 1]$
and $t \colon J \to M_{d (1) d (2) \cdots d (n)}$
by
\[
g (i_1, i_2, \ldots \otimes i_n)
  = f_1 (i_1) f_2 (i_2) \cdots f_n (i_n)
\]
and
\[
t (i_1, i_2, \ldots \otimes i_n)
  = s_1 (i_1) \otimes s_2 (i_2) \otimes \cdots \otimes s_n (i_n)
\]
for $i_k \in I_k$ for $k = 1, 2, \ldots, n.$
Then it is easy to see
(compare with part~(\ref{T_3406_DIsLpUHF_FinStg}))
that
we we can identify $D_{p, d, I, s, f}^{(m, n)}$
with $M_{d (m + 1) d (m + 2) \cdots d (n)}^{p, J, t, g},$
and that
$D_{p, d, I, s, f}^{(0, \infty)}
 \subset L \left( L^p \left( \prod_{n = 1}^{\infty} X_n \right) \right)$
is the closed linear span of all
$x \otimes y$ with
\[
x \in D_{p, d, I, s, f}^{(0, n)}
  \subset \LLp
\andeqn
y \in D_{p, d, I, s, f}^{(n, \infty)} \subset L (L^p (Y, \nu)).
\]
Part~(\ref{T_3406_DIsLpUHF_Decomp}) is now immediate.

Part~(\ref{T_3406_DIsLpUHF_Simp})
follows from part~(\ref{T_3406_DIsLpUHF_ITPType})
and from Corollary 3.12 and Theorem 3.13 of~\cite{PhLp2a}.
\end{proof}

\begin{prp}\label{P_3906_NewIso}
Let $p \in [1, \infty),$
let $d = (d (1), \, d (2), \, \ldots )$
be a sequence of integers such that $d (n) \geq 2$ for all $n \in \N,$
and let
\[
I = (I_1, I_2, \ldots ),
\,\,\,\,\,\,
s = (s_1, s_2, \ldots ),
\,\,\,\,\,\,
t = (t_1, t_2, \ldots ),
\andeqn
f = (f_1, f_2, \ldots )
\]
be sequences such that $S_n = (I_n, s_n, f_n)$
and $T_n = (I_n, t_n, f_n)$
are systems of $d (n)$-similarities for all $n \in \N.$
Assume one of the following:
\begin{enumerate}
\item\label{P_3906_NewIso_Scalar}
For every $n \in \N$ and every $i \in I_n,$
there is $\gm_{n} (i) \in \C \setminus \{ 0 \}$
such that $t_n (i) = \gm_{n} (i) s_n (i).$
\item\label{P_3906_NewIso_Isometry}
For every $n \in \N$ and every $i \in I_n,$
there is $v_n (i) \in \inv (M_{d (n)})$
such that $\| v_n (i) \|_p = \| v_n (i)^{-1} \|_p = 1$
and $t_n (i) = v_{n} (i) s_n (i).$
\item\label{P_3906_NewIso_Conj}
For every $n \in \N$ there is $w_n \in \inv (M_{d (n)})$
such that $\| w_n \|_p = \| w_n^{-1} \|_p = 1$
and for all $i \in I_n$ we have $t_n (i) = w_n s_n (i) w_n^{-1}.$
\end{enumerate}
Then $R_{p, T_n} = R_{p, S_n}$ for all $n \in \N,$
and there is an isometric isomorphism
$D_{p, d, I, t, f} \cong D_{p, d, I, s, f}.$
\end{prp}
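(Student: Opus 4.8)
The plan is to treat the three hypotheses by a single mechanism: for each $n$ we produce a surjective isometry $V_n$ of the relevant $L^p$~space such that conjugation by $V_n$ carries the subalgebra $\MP{d (n)}{p, S_n}$ onto $\MP{d (n)}{p, T_n}$, and then we assemble these isometries at the level of the direct limit defining $D_{p, d, I, s, f}$; the equality of $p$-bounds falls out of the same computations. Observe first that, since $S_n$ and $T_n$ share the index set $I_n$ and the function $f_n$, the measure spaces $(X_{S_n}, \cB_{S_n}, \mu_{S_n})$ and $(X_{T_n}, \cB_{T_n}, \mu_{T_n})$ of Definition~\ref{D_3406_SySiRpn} are literally identical, hence so are the infinite product spaces $(X_{I, s, f}, \mu_{I, s, f})$ and $(X_{I, t, f}, \mu_{I, t, f})$; all the algebras involved thus live inside the same spaces of bounded operators.

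For the $p$-bounds, fix $n$ and $i \in I_n$. Under hypothesis~(\ref{P_3906_NewIso_Scalar}) the scalars cancel: $\| t_n (i) \|_p \| t_n (i)^{-1} \|_p = | \gm_n (i) | \cdot | \gm_n (i) |^{-1} \cdot \| s_n (i) \|_p \| s_n (i)^{-1} \|_p$. Under~(\ref{P_3906_NewIso_Isometry}), submultiplicativity applied to $t_n (i) = v_n (i) s_n (i)$ and to $s_n (i) = v_n (i)^{-1} t_n (i)$ gives $\| t_n (i) \|_p = \| s_n (i) \|_p$, and the same argument applied to $t_n (i)^{-1} = s_n (i)^{-1} v_n (i)^{-1}$ gives $\| t_n (i)^{-1} \|_p = \| s_n (i)^{-1} \|_p$. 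Under~(\ref{P_3906_NewIso_Conj}), Lemma~\ref{L_3317_NConj} shows that $a \mapsto w_n a w_n^{-1}$ and its inverse $a \mapsto w_n^{-1} a w_n$ both have norm $\| w_n \|_p \| w_n^{-1} \|_p = 1$ on $\MP{d (n)}{p}$, so this conjugation is a surjective isometry of $\MP{d (n)}{p}$, whence $\| t_n (i) \|_p = \| s_n (i) \|_p$ and $\| t_n (i)^{-1} \|_p = \| s_n (i)^{-1} \|_p$. Taking the supremum over $i \in I_n$ gives $R_{p, T_n} = R_{p, S_n}$ in every case.

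Now recall from Definition~\ref{D_3406_SySiRpn} that $\ps^{p, S_n}$ is the $L^p$~direct sum over $i \in I_n$ of the maps $a \mapsto s_n (i) a s_n (i)^{-1}$ on the summands $L^p ( \{ 1, \ldots, d (n) \} \times \{ i \}, \mu_{S_n} )$, and similarly for $\ps^{p, T_n}$. Under~(\ref{P_3906_NewIso_Scalar}) we have $t_n (i) a t_n (i)^{-1} = s_n (i) a s_n (i)^{-1}$, so $\ps^{p, T_n} = \ps^{p, S_n}$ and in fact $D_{p, d, I, t, f} = D_{p, d, I, s, f}$; take $V_n = \id$. Under~(\ref{P_3906_NewIso_Isometry}) put $V_n = \bigoplus_{i \in I_n} v_n (i)$, which by Remark~\ref{R_3913_DSElt} is invertible with $\| V_n \| = \| V_n^{-1} \| = 1$; a summand-by-summand computation using $t_n (i) = v_n (i) s_n (i)$ gives $V_n \ps^{p, S_n} (a) V_n^{-1} = \ps^{p, T_n} (a)$ for all $a \in M_{d (n)}$. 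Under~(\ref{P_3906_NewIso_Conj}) put $V_n = \bigoplus_{i \in I_n} w_n$, again a surjective isometry; here the identity $t_n (i) \big( w_n x w_n^{-1} \big) t_n (i)^{-1} = w_n \big( s_n (i) x s_n (i)^{-1} \big) w_n^{-1}$ gives $V_n \ps^{p, S_n} (x) V_n^{-1} = \ps^{p, T_n} \big( w_n x w_n^{-1} \big)$, and since $x \mapsto w_n x w_n^{-1}$ is an automorphism of $M_{d (n)}$, conjugation by $V_n$ still carries $\MP{d (n)}{p, S_n} = \ps^{p, S_n} (M_{d (n)})$ \emph{onto} $\MP{d (n)}{p, T_n}$. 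In every case $\Ad (V_n)$ therefore restricts to an isometric isomorphism $\ph^{(n)} \colon \MP{d (n)}{p, S_n} \to \MP{d (n)}{p, T_n}$.

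Finally, for each $m \geq 1$ the map $\ph^{(1)} \otimes_p \cdots \otimes_p \ph^{(m)}$ from $D_{p, d, I, s, f}^{(0, m)} = \MP{d (1)}{p, S_1} \otimes_p \cdots \otimes_p \MP{d (m)}{p, S_m}$ to $D_{p, d, I, t, f}^{(0, m)}$ agrees on elementary tensors with $\Ad (V_1 \otimes \cdots \otimes V_m)$, and $V_1 \otimes \cdots \otimes V_m$ is a surjective isometry of $L^p (X_{S_1} \times \cdots \times X_{S_m})$ by applying $\| v \otimes w \| = \| v \| \cdot \| w \|$ (Theorem~2.16(5) of~\cite{PhLp1}) inductively to the $V_k$ and to the $V_k^{-1}$; hence $\ph^{(1)} \otimes_p \cdots \otimes_p \ph^{(m)}$ is an isometric isomorphism. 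Because $\Ad (V_{m + 1})$ fixes the identity operator, these maps are compatible with the connecting maps $x \mapsto x \otimes 1$ of Definition~\ref{D_3406_SfUHF}, so they pass to an isometric isomorphism of the direct limits $D_{p, d, I, s, f} = \dirlim_m D_{p, d, I, s, f}^{(0, m)}$ and $D_{p, d, I, t, f} = \dirlim_m D_{p, d, I, t, f}^{(0, m)}$. (Alternatively, $V = \bigotimes_{n} V_n$ defines a surjective isometry of $L^p (X_{I, s, f}, \mu_{I, s, f})$ and one uses $\Ad (V)$ directly.) The only step needing real care is hypothesis~(\ref{P_3906_NewIso_Conj}): there $\Ad (V_n)$ does not intertwine $\ps^{p, S_n}$ and $\ps^{p, T_n}$ on the nose but only up to the inner automorphism $x \mapsto w_n x w_n^{-1}$ of $M_{d (n)}$, and one must observe that this twist is harmless since a bijection of $M_{d (n)}$ does not change the image subalgebra.
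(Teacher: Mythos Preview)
Your proof is correct and follows essentially the same approach as the paper: both construct, for each $n$, a surjective isometry of $L^p(X_{S_n},\mu_{S_n})$ whose conjugation carries $\MP{d(n)}{p,S_n}$ onto $\MP{d(n)}{p,T_n}$, and then assemble these at the finite stages of the direct limit. The only noteworthy difference is in case~(\ref{P_3906_NewIso_Conj}), where the paper verifies isometry of the stage-$n$ map by an explicit computation with the supremum norm formula of Lemma~\ref{L_3405_Basics}(\ref{L_3405_Basics_NForm}), whereas you observe more directly that the map is $\Ad(V_1\otimes\cdots\otimes V_m)$ for a surjective isometry and hence automatically isometric; your packaging is slightly cleaner but the content is the same.
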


In part~(\ref{P_3906_NewIso_Conj}),
we could take $t_n (i) = s_n (i) w_n^{-1}.$
We would then no longer need $w_n$ to be isometric,
merely invertible,
and the proof would be a bit simpler.
But this operation rarely gives
$1 \in \ran (t_n).$

\begin{proof}[Proof of Proposition~\ref{P_3906_NewIso}]
We use the notation
of Definition~\ref{D_3406_SfUHF} and Example~\ref{E_2Y14_LpUHF},
with the following modifications.
For $n \in \Nz,$
set $X_n = X_{I_n, s_n, f_n},$
which is the same space as $X_{I_n, t_n, f_n},$
and set $\mu_n = \mu_{I_n, s_n, f_n},$
which is equal to $\mu_{I_n, t_n, f_n}.$
Set $Y_n = \prod_{k = 1}^n X_n,$
and call the product measure~$\nu_n.$
Set $Z_n = \prod_{k = n + 1}^{\infty} X_n,$
and call the product measure~$\ld_n.$
Thus $Z_0 = X_{I, s, f},$
and we abbreviate this space to~$X$
and call the measure on it~$\mu.$
We set
\[
A_{n, s} = D_{p, d, I, s, f}^{(0, n)} \otimes 1_{L^p (Z_n, \ld_n)}
\andeqn
A_{n, t} = D_{p, d, I, t, f}^{(0, n)} \otimes 1_{L^p (Z_n, \ld_n)},
\]
both of which are subsets of $\LLp.$
We set $A_s = D_{p, d, I, s, f}$ and $A_t = D_{p, d, I, t, f}.$
Thus
\[
A_s = {\overline{\bigcup_{n = 0}^{\infty} A_{n, s} }}
\andeqn
A_t = {\overline{\bigcup_{n = 0}^{\infty} A_{n, t} }}.
\]

We prove the case~(\ref{P_3906_NewIso_Scalar}).
For $d \in \N,$ $s \in \inv (M_d),$ and $\gm \in \C \setminus \{ 0 \},$
we have $(\gm s) a (\gm s)^{-1} = s a s^{-1}$
for all $a \in M_d,$
and also
$\| \gm s \|_p \cdot \| (\gm s)^{-1} \|_p = \| s \|_p \cdot \| s^{-1} \|_p.$
It follows that $\ps^{p, T_n} = \ps^{p, S_n}$ for all $n \in \N.$
Therefore $A_s$ and $A_t$ are actually equal as subsets
of $\LLp.$
It is also immediate that $R_{p, T_n} = R_{p, S_n}$ for all $n \in \N.$

We next prove the case~(\ref{P_3906_NewIso_Isometry}).
(We will refer to this argument
in the proof of the case~(\ref{P_3906_NewIso_Conj}) as well.)

Let $n \in \N.$
For all $i \in I_n,$
we clearly have $\| v_n (i) s_n (i) \|_p = \| s_n (i) \|_p$
and $\| (v_n (i) s_n (i))^{-1} \|_p = \| s_n (i)^{-1} \|_p.$
So $R_{p, T_n} = R_{p, S_n}.$
For $i \in I_n,$
we interpret $v_n (i)$ as an element of
$L \big( L^p ( \{ 1, 2, \ldots, d (n) \}
         \times \{ i \}, \, \mu_{n}) \big).$
By Lemma~\ref{L_3903_ChM},
we still have $\| v_n (i) \|_p = \| v_n (i)^{-1} \|_p = 1.$
Following Remark~\ref{R_3913_DSElt},
set $v_n = \bigoplus_{i \in I} v_n (i),$
so $\| v_n \| = \| v_n^{-1} \| = 1.$

For $n \in \Nz,$
define
$z_n \in \LLp$
with respect to the decomposition
\[
L^p (X, \mu)
 = L^p (X_1, \mu_1) \otimes  L^p (X_2, \mu_2) \otimes \cdots
       \otimes  L^p (X_n, \mu_n) \otimes  L^p (Z_n, \ld_n)
\]
by
\[
z_n = v_1 \otimes v_2 \otimes \cdots
       \otimes v_n \otimes 1_{L^p (Z_n, \ld_n)}.
\]
Using Theorem 2.16(5) of~\cite{PhLp1},
we get $\| z_n \| = 1.$
An analogous tensor product decomposition gives
$\| z_n^{-1} \| = 1.$
The formula $a \mapsto z_n a z_n^{-1}$
defines a bijection $\ph_n \colon A_{n, s} \to A_{n, t},$
which is isometric because $\| z_n \| = \| z_n^{-1} \| = 1.$
Clearly $\ph_{n + 1} |_{A_{n, s}} = \ph_n.$
Therefore there exists an isometric \hm{}
$\ph \colon A_s \to A_t$
such that $\ph |_{A_{n, s}} = \ph_n$ for all $n \in \Nz.$
Clearly $\ph$ has dense range.
Therefore $\ph$ is surjective.
This completes the proof of the case~(\ref{P_3906_NewIso_Isometry}).

We now prove the case~(\ref{P_3906_NewIso_Conj}).
Let $n \in \N.$
We clearly have
\[
\| w_n s_n (i) w_n^{-1} \|_p = \| s_n (i) \|_p
\andeqn
\| (w_n s_n (i) w_n^{-1})^{-1} \|_p = \| s_n (i)^{-1} \|_p.
\]
So $R_{p, T_n} = R_{p, S_n}.$
Let $\sm_{n, s} \colon M_{r_d (n)} \to A_{n, s}$
and $\sm_{n, t} \colon M_{r_d (n)} \to A_{n, t}$
be the maps analogous to $\sm_n$ in Example~\ref{E_2Y14_LpUHF},
except that the codomains are taken to be $A_{n, s}$ instead of~$A_s$
and $A_{n, t}$ instead of~$A_t.$
In the proof of the case~(\ref{P_3906_NewIso_Isometry}),
take $v_n (i) = w_n$ for $n \in \N$ and $i \in I_n,$
and then for $n \in \N$ let $v_n$ be as there
and for $n \in \Nz$ let $z_n$ be as there.
Thus $z_n$ is a bijective isometry.
Further set
$y_n = w_1 \otimes w_2 \otimes \cdots \otimes w_n \in M_{r_d (n)}.$
Since $\sm_{n, s}$ and $\sm_{n, t}$
are bijections,
there is a unique bijection
$\bt_n \colon A_{n, s} \to A_{n, t}$
such that
$\bt_n (\sm_{n, s} (x)) = \sm_{n, t} (y_n x y_n^{-1})$
for all $x \in M_{r_d (n)}.$
Since $\sm_{n + 1, \, s} (x \otimes 1) = \sm_{n, s} (x)$
and $\sm_{n + 1, \, t} (x \otimes 1) = \sm_{n, t} (x)$
for all $x \in M_{r_d (n)},$
we get $\bt_{n + 1} |_{A_{n, s}} = \bt_n$ for all $n \in \Nz.$

We now show that $\bt_n$ is isometric for all $n \in \Nz.$
Doing so finishes the proof,
in the same way as at the end
of the proof of the case~(\ref{P_3906_NewIso_Isometry}).
Fix $n \in \Nz.$
Let $x \in \MP{r_d (n)}{p},$ and interpret $y_n$ as an element
of $\MP{r_d (n)}{p}.$
For
\[
i = (i_1, i_2, \ldots, i_n)
  \in I_1 \times I_2 \times \cdots \times I_n,
\]
make the abbreviations
\[
s (i) = s_1 (i_1) \otimes s_2 (i_2) \otimes \cdots \otimes s_n (i_n)
\andeqn
t (i) = t_1 (i_1) \otimes t_2 (i_2) \otimes \cdots \otimes t_n (i_n).
\]
Then $t (i) = y_n s (i) y_n^{-1}.$
By Lemma~\ref{L_3406_TPSys}
and Lemma \ref{L_3405_Basics}(\ref{L_3405_Basics_NForm}),
\[
\| \sm_{n, s} (x) \|
 = \sup \big( \big\{ \| s (i) x s (i)^{-1} \|_p \colon
    i \in I_1 \times I_2 \times \cdots \times I_n  \big\} \big),
\]
and similarly with $t$ in place of~$s.$
Theorem 2.16(5) of \cite{PhLp1},
applied to both $y_n$ and $y_n^{-1},$
shows that $y_n$ is isometric.
For $i \in I_1 \times I_2 \times \cdots \times I_n,$
we use this fact at the second step
and $t (i) = y_n s (i) y_n^{-1}$ at the first step
to get
\[
\big\| t (i) y_n x y_n^{-1} t (i)^{-1} \big\|_p
 = \big\| y_n s (i) x s (i)^{-1} y_n^{-1} \big\|_p
 = \| s (i) x s (i)^{-1} \|_p.
\]
Taking the supremum over
$i \in I_1 \times I_2 \times \cdots \times I_n,$
we get
$\| \sm_{n, t} (y_n x y_n^{-1}) \| = \| \sm_{n, s} (x) \|,$
as desired.
\end{proof}

Since
the Banach algebras
$\MP{d}{p, I, s, f}$ and $\MP{d}{p, I, s, f} \otimes_p B$
and only depend on ${\overline{\ran (s)}},$
we can make the following definition,
based on Example~2.15 of~\cite{PhLp2a}.

\begin{dfn}\label{D_3406_MP}
Let $d \in \N,$
let $K \subset \inv (M_d)$ be a compact set with $1 \in K,$
and let $p \in [1, \infty).$
Choose any system $S = (I, s, f)$ of $d$-similarities
such that ${\overline{\ran (s)}} = K.$
(It is obvious that there is such a system of $d$-similarities.)
We then define $\MP{d}{p, K} = \MP{d}{p, S}$ as a Banach algebra.
For any \sfm{} $\YCN$
and any unital closed subalgebra $B \subset L (L^p (Y, \nu)),$
we define $\MP{d}{p, K} \otimes_p B = \MP{d}{p, S} \otimes_p B$
as a Banach algebra.
We define
\[
\| \cdot \|_{p, K} = \| \cdot \|_{p, S},
\,\,\,\,\,\,
\ps^{p, K}_B = \ps^{p, S}_B,
\andeqn
R_{p, K} = R_{p, S}.
\]
If $K = \{ 1 \},$
taking $S$ to be the basic system $S_0$ of $d$-similarities
(Definition~\ref{D_3406_SysSim}),
we get $\MP{d}{p, \{ 1 \} } = \MP{d}{p}$
and $\ps_B^{p, \{ 1 \} } = \ps_B^{p, S_0 }.$
If $K_1, K_2 \subset \inv (M_d)$ are compact sets
which contain~$1,$
then we choose systems
$S_1 = (I_1, s_1, f_1)$ and $S_2 = (I_2, s_2, f_2)$ of $d$-similarities
such that ${\overline{\ran (s_1)}} = K_1$
and ${\overline{\ran (s_2)}} = K_2,$
and define
\[
\kp_B^{p, K_2, K_1}
 = \kp^{p, S_2, S_1}_B
  \colon \MP{d}{p, K_1} \otimes_p B \to \MP{d}{p, K_2} \otimes_p B.
\]
We say that $K$ is {\emph{diagonal}} if $K$
is contained in the diagonal matrices in~$M_d.$
\end{dfn}

\begin{dfn}\label{D_3406_KUHF}
Let $p \in [1, \infty),$
let $d = (d (1), \, d (2), \, \ldots )$
be a sequence of integers such that $d (n) \geq 2$ for all $n \in \N,$
and let $K = (K_1, \, K_2, \, \ldots )$
be a sequence of compact subsets $K_n \subset \inv (M_{d (n)} )$
with $1 \in K_n$
for $n \in \N.$
We define the
{\emph{$(p, d, K)$-UHF algebra}}
$D_{p, d, K}$
and associated subalgebras as follows.
Choose any sequences $I = (I_1, I_2, \ldots ),$ $s = (s_1, s_2, \ldots ),$
and $f = (f_1, f_2, \ldots )$
such that
for all $n \in \N,$ the triple $S_n = (I_n, s_n, f_n)$
is a system of $d (n)$-similarities
with ${\overline{ \ran (s_n)}} = K_n.$
Then, following Definition~\ref{D_3406_SfUHF}, define
$D_{p, d, K}^{(m, n)} = D_{p, d, I, s, f}^{(m, n)}$
for $m, n \in \Nz \cup \{ \infty \}$ with $m \leq n$ and $m \neq \infty,$
define
$\ps_{\infty, n}^{(m)} \colon
   D_{p, d, K}^{(m, n)} \to D_{p, d, K}^{(m, \infty)}$
as in Definition~\ref{D_3406_SfUHF},
and define $D_{p, d, K} = D_{p, d, I, s, f}^{(0, \infty)}.$
\end{dfn}

One can prove that for algebras of the form $D_{p, d, K}$
as in Definition~\ref{D_3406_KUHF},
the conditions
(\ref{T_2Z29_First_IsoSp}),
(\ref{T_2Z29_First_ClRange}),
(\ref{T_2Z29_First_AIF}),
(\ref{T_2Z29_First_Half}),
(\ref{T_2Z29_First_SmBdd}),
and (\ref{T_2Z29_First_SumFin})
in Theorem~\ref{T_2Z29_First}
are also equivalent to the following:
\begin{enumerate}
\setcounter{enumi}{\value{RsvEnumi}}
\item\label{T_3708_Second_SmBddOne}
There is a uniform bound on the norms of the maps
$\sm_n \colon \MP{r_d (n)}{p} \to A.$
\item\label{T_3708_Second_RhBdd}
$\sum_{n = 1}^{\infty} ( \| \rh_n \| - 1 ) < \infty.$
\end{enumerate}

The new feature
(which will be made explicit in a more restrictive context
in Theorem~\ref{T_3708_Third} below)
is that there are a spatial \rpn{} $\rh_n^{(0)}$ of $M_{d_n}$
and an invertible operator~$w$
with $\| w \| \cdot \| w^{-1} \| = \| \rh_n \|$
such that $\rh_n (x) = w \rh_n^{(0)} (x) w^{-1}$
for all $x \in M_{d (n)}.$
Since $\rh_n^{(0)} \otimes \rh_n^{(0)}$ is again spatial
(Lemma~1.12 of~\cite{PhLp2a}) and
\[
(\rh_n \otimes_p \rh_n) (x)
  = (w \otimes w) \big( \rh_n^{(0)} \otimes \rh_n^{(0)} \big) (x)
         (w \otimes w)^{-1}
\]
for $x \in M_{d (n)} \otimes_p M_{d (n)},$
we can use Lemma~\ref{L_3317_NConj}
to get $\| \rh_n \otimes_p \rh_n \| = \| \rh_n \|^2,$
rather than merely $\| \rh_n \otimes_p \rh_n \| \geq \| \rh_n \|^2.$
Similarly,
we get $\| \sm_n \otimes_p \sm_n \| = \| \sm_n \|^2.$

\section{Amenability of $L^p$~UHF algebras constructed from
 diagonal similarities}\label{Sec_Amen}

\indent
Let $A \subset \LLp$ be an $L^p$~UHF algebra of tensor product type
constructed using a system of diagonal similarities.
The main result of this section
(Theorem~\ref{T_3708_Third})
is that a number of conditions,
of which the most interesting is probably amenability,
are equivalent to $A$ being isomorphic to the
spatial $L^p$~UHF algebra~$B$ with the same supernatural number.
If there is an isomorphism,
we can in fact realize $B$ as a subalgebra of $\LLp,$
in such a way that the isomorphism can be taken to be
given by conjugation by an invertible element in $\LLp.$

When $p = 2,$
the conditions can be relaxed:
we do not need to assume that the similarities are diagonal.

The key technical ideas are information about the form
of an approximate diagonal,
and an estimate on its norm based on information about
the norms of off diagonal matrix units after conjugating
by an invertible element.
We only know sufficiently good estimates for
conjugation by diagonal matrices,
which is why we restrict to diagonal similarities.

\begin{lem}\label{L_3330_Exact}
Let $M \in [1, \infty).$
Let $B$ be a unital Banach algebra which has
an approximate diagonal with norm at most~$M.$
Let $G \subset {\operatorname{inv}} (B)$ be a finite subgroup.
Then for every $\ep > 0$ there exists
$z \in B {\widehat{\otimes}} B$
such that
$\| z \|_{\pi} < M + \ep,$
$\Dt_B (z) = 1,$
and $(g \otimes 1) z = z (1 \otimes g)$
for all $g \in G.$
\end{lem}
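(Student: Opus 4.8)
The plan is to symmetrize an arbitrary approximate diagonal over the finite group $G$, using the averaging trick that already appears in Lemma~\ref{L_2Y25_DiagMn}(\ref{L_2Y25_DiagMn_FromGp}) but now applied to an only-approximate object. First I would fix $\ep > 0$ and choose, using the hypothesis, an element $m \in B \widehat{\otimes} B$ with $\| m \|_{\pi} \leq M + \tfrac{\ep}{2}$ (or a net of them; an approximate diagonal of norm at most $M$ means we can get within any prescribed slack), such that $\Dt_B(m) = 1$ and $(b \otimes 1) m - m (1 \otimes b)$ is as small as we like for each of the finitely many $b$ we care about — in particular for $b = g$, $g \in G$. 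Then I would set
\[
z_0 = \frac{1}{\card(G)} \sum_{g \in G} (g \otimes 1)\, m\, (1 \otimes g^{-1}).
\]

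The key computations are then three. First, $\| z_0 \|_{\pi} \leq \frac{1}{\card(G)} \sum_{g \in G} \| g \| \cdot \| m \|_{\pi} \cdot \| g^{-1} \|$; since $G$ is a \emph{finite} subgroup of $\inv(B)$ it need not consist of isometries, so this bound is $\| m \|_{\pi} \cdot \max_{g} \| g \| \cdot \| g^{-1} \|$, which is not good enough. This is the main obstacle: the naive symmetrization inflates the norm by the distortion constant of $G$. The fix is to observe that we do not symmetrize $m$ itself but rather use the approximate commutation: because $(g \otimes 1) m \approx m (1 \otimes g)$, each summand $(g \otimes 1) m (1 \otimes g^{-1}) \approx m (1 \otimes g)(1 \otimes g^{-1}) = m (1 \otimes g g^{-1})$. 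Wait — that collapses the left factor too fast. The correct symmetrization to control the norm is instead to push $g$ through from the right: write $(g \otimes 1) m (1 \otimes g^{-1})$ and note that as $m$ nearly intertwines, this is within a controllable error (depending on $\max_g \|g\|\|g^{-1}\|$ times the intertwining defect, which we made arbitrarily small) of $m$ itself. Hence $\| z_0 - m \|_{\pi} < \tfrac{\ep}{2}$ provided the $m$ we started with has small enough intertwining defects against all $g \in G$, and therefore $\| z_0 \|_{\pi} < M + \ep$.

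The second computation is $\Dt_B(z_0) = \frac{1}{\card(G)} \sum_{g} \Dt_B\big((g \otimes 1) m (1 \otimes g^{-1})\big) = \frac{1}{\card(G)} \sum_g g\, \Dt_B(m)\, g^{-1} = \Dt_B(m) = 1$, using that $\Dt_B$ is a bimodule map in the obvious sense (or just compute on elementary tensors and extend by continuity, as in Lemma~\ref{L_3804_DtFnc}). The third is the exact intertwining: for $h \in G$,
\[
(h \otimes 1) z_0
 = \frac{1}{\card(G)} \sum_{g \in G} (hg \otimes 1)\, m\, (1 \otimes g^{-1})
 = \frac{1}{\card(G)} \sum_{g' \in G} (g' \otimes 1)\, m\, (1 \otimes g'^{-1} h)
 = z_0 (1 \otimes h),
\]
by the substitution $g' = hg$ and using that $G$ is a group. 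So $z_0$ already has the exact commutation property — that is why we symmetrize on only one side — while being $\ep$-close in norm to the approximate diagonal and still mapping to $1$ under $\Dt_B$. Setting $z = z_0$ finishes the proof, once I have been careful that "approximate diagonal with norm at most $M$" supplies, for any finite set of elements and any tolerance, a single $m$ with $\|m\|_\pi \le M + \tfrac\ep2$ and all relevant intertwining defects below the threshold needed to make $\|z_0 - m\|_\pi < \tfrac\ep2$; this is just unwinding the definition of approximate diagonal as a bounded net together with the fact that $G$ is finite so only finitely many conditions must be met simultaneously.
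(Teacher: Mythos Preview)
Your approach is essentially the same as the paper's: take an element of the approximate diagonal with small intertwining defects against the finitely many $g \in G$, symmetrize by averaging $(g \otimes 1)\, m\, (1 \otimes g^{-1})$ over $G$, use the approximate intertwining to show each summand is close to $m$ (hence the average has norm close to $\|m\|_\pi$), and verify exact intertwining by the group substitution and $\Dt_B(z_0)=1$ by the bimodule property of $\Dt_B$.

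There is one small gap. You assert that the approximate diagonal supplies an $m$ with $\Dt_B(m) = 1$ exactly, but the definition only gives $\Dt_B(m_\ld) \to 1$ along the net. Your computation $\Dt_B(z_0) = \frac{1}{\card(G)}\sum_g g\,\Dt_B(m)\,g^{-1}$ needs $\Dt_B(m)=1$ on the nose, not approximately (an average of conjugates of something close to $1$ need not equal $1$). The paper handles this by first replacing the chosen element $z_0$ by $z_1 = (\Dt_B(z_0)^{-1} \otimes 1)\, z_0$, which forces $\Dt_B(z_1)=1$ exactly at the cost of a controllable perturbation, and only then symmetrizing. You should insert the same correction; once you do, your argument and the paper's coincide.
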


The proof is easily modified to show that we can also require that,
for all $a$ in a given finite set $F \subset A,$
we have
$\| (a \otimes 1) z - z (1 \otimes a) \| < \ep.$
We don't need this refinement here.

\begin{proof}[Proof of Lemma~\ref{L_3330_Exact}]
Using the definition of $\Dt_B$
(see Lemma~\ref{L_2Y25_PjTMaps}),
for $a, b \in B$ and $x \in B {\widehat{\otimes}} B,$
we get
\begin{equation}\label{Eq_3330_DtMod}
\Dt_B \big( ( a \otimes 1) x (1 \otimes b) \big) = a \Dt_B (x) b.
\end{equation}

Set $r = \sup_{g \in G} \| g \|.$
Set
\[
\dt = \min \left( \frac{1}{2}, \, \frac{\ep}{4 M}, \,
          \frac{\ep}{4 r}, \, \frac{\ep}{8 r^2} \right).
\]
By hypothesis, there exists $z_0 \in B {\widehat{\otimes}} B$
such that
$\| z_0 \|_{\pi} \leq M,$
$\| \Dt_B (z_0) - 1 \| < \dt,$
and $\big\| (g \otimes 1) z_0 - z_0 (1 \otimes g) \big\|_{\pi} < \dt$
for all $g \in G.$

Since $\dt \leq \frac{1}{2},$
the element $\Dt_B (z_0)$ is invertible in~$B$
and
\[
\big\| \Dt_B (z_0)^{-1} - 1 \big\|
  < \frac{\dt}{1 - \dt}
  \leq 2 \dt.
\]
Set $z_1 = (\Dt_B (z_0)^{-1} \otimes 1) z_0.$
Then
$\Dt_B (z_1) = 1$ by~(\ref{Eq_3330_DtMod}).
Further,
\[
\| z_1 - z_0 \|
  \leq \| \Dt_B (z_0)^{-1} \otimes 1 \| \cdot \| z_0 \|
  < 2 \dt M.
\]
Since $2 \dt M \leq \frac{\ep}{2},$
we get $\| z_1 \| < M + \frac{\ep}{2}.$
Also, for $g \in G$ we have
\begin{align}\label{Eq_3330_CommEst}
\big\| (g \otimes 1) z_1 - z_1 (1 \otimes g) \big\|_{\pi}
& \leq \big\| (g \otimes 1) z_0 - z_0 (1 \otimes g) \big\|_{\pi}
           + 2 \| g \| \cdot \| z_1 - z_0 \|
         \\
& < \dt + 2 r \left( \frac{\ep}{8 r^2} \right)
  \leq \frac{\ep}{2 r}.
         \notag
\end{align}

Now define
\[
z = \frac{1}{\card (G)}
     \sum_{h \in G} (h \otimes 1) z_0 (1 \otimes h)^{-1}.
\]
{}From~(\ref{Eq_3330_CommEst}),
for $g \in G$ we get
\[
\big\| (g \otimes 1) z_1 (1 \otimes g)^{-1} - z_1 \big\|_{\pi}
  \leq \big\| (g \otimes 1) z_1 - z_1 (1 \otimes g) \big\|_{\pi}
          \cdot \| (1 \otimes g)^{-1} \|_{\pi}
  < \left( \frac{\ep}{2 r} \right) r
  \leq \frac{\ep}{2}.
\]
It follows that $\| z - z_1 \| < \frac{\ep}{2},$
whence $\| z \| < M + \ep.$

For $g \in G$ we also get
\begin{align*}
(g \otimes 1) z
& = \frac{1}{\card (G)}
     \sum_{h \in G} (g h \otimes 1) z_0 (1 \otimes h)^{-1}
        \\
& = \frac{1}{\card (G)}
     \sum_{h \in G} (h \otimes 1) z_0 (1 \otimes g^{-1} h)^{-1}
  = z (1 \otimes g).
\end{align*}

Finally, using $\Dt_B (z_1) = 1$ and~(\ref{Eq_3330_DtMod}),
we have
\[
\Dt_B (z)
 = \frac{1}{\card (G)} \sum_{h \in G} h \Dt_B (z_1) h^{-1}
 = 1.
\]
This completes the proof.
\end{proof}

\begin{lem}\label{L_3330_TPerm}
Let $A_1$ and $A_2$ be unital Banach algebras,
let $d_1, d_2 \in \N,$
equip $M_{d_1}$ and $M_{d_2}$ with any algebra norms,
and equip $B_1 = M_{d_1} \otimes A_1$
and $B_2 = M_{d_2} \otimes A_2$ with any algebra tensor norm.
Then $B_1$ and $B_2$ are complete,
and there is a unique algebra bijection
\[
\ph \colon M_{d_1} \otimes_{\mathrm{alg}} M_{d_2} \otimes_{\mathrm{alg}}
            \big( A_1 {\widehat{\otimes}} A_2 \big)
       \to B_1 {\widehat{\otimes}} B_2
\]
such that
\[
\ph \big( x_1 \otimes x_2
      \otimes \big( a_1 {\widehat{\otimes}} a_2 \big) \big)
  = (x_1 \otimes a_1) {\widehat{\otimes}} (x_2 \otimes a_2)
\]
for all
$x_1 \in M_{d_1},$ $x_2 \in M_{d_2},$ $a_1 \in A_1,$
and $a_2 \in A_2.$
\end{lem}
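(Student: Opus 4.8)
The plan is to build $\ph$ by hand from the universal properties of the projective and algebraic tensor products, and then to produce its inverse the same way, rather than appealing to a general identification of iterated tensor products. Throughout I will use that an algebra tensor norm is in particular a cross norm, so that on $B_i$ we have $\| x \otimes a \| = \| x \| \cdot \| a \|$.

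First I would dispose of the completeness claim. Since $M_{d_i}$ is finite dimensional, every element of $B_i$ is uniquely $\sum_{j,k} e^{(i)}_{j,k} \otimes a^{(i)}_{j,k}$; multiplying such an element on the left by $e^{(i)}_{1,j} \otimes 1$ and on the right by $e^{(i)}_{k,1} \otimes 1$ yields exactly $e^{(i)}_{1,1} \otimes a^{(i)}_{j,k}$, so submultiplicativity together with the cross norm property shows that the coordinate maps $\pi^{(i)}_{j,k} \colon B_i \to A_i$, sending $\sum e^{(i)}_{l,m} \otimes a^{(i)}_{l,m}$ to $a^{(i)}_{j,k}$, are bounded; the inclusions $a \mapsto e^{(i)}_{j,k} \otimes a$ are bounded as well. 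Hence $B_i$ is linearly homeomorphic to the Banach space $A_i^{\oplus d_i^2}$ and is complete, and by Exercise~B.2.14 of~\cite{Rnd} the space $B_1 {\widehat{\otimes}} B_2$ is a Banach algebra. Likewise $M_{d_1} \otimes_{\mathrm{alg}} M_{d_2} \otimes_{\mathrm{alg}} \big( A_1 {\widehat{\otimes}} A_2 \big)$ is a Banach algebra, being the algebraic tensor product of a finite dimensional algebra with the Banach algebra $A_1 {\widehat{\otimes}} A_2$.

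Next I would construct $\ph$. For fixed $x_1 \in M_{d_1}$ and $x_2 \in M_{d_2}$, the bilinear map $(a_1, a_2) \mapsto (x_1 \otimes a_1) {\widehat{\otimes}} (x_2 \otimes a_2)$ is bounded (by the cross norm property, its norm is $\| x_1 \| \cdot \| x_2 \|$), so it induces a bounded linear map $\ps_{x_1, x_2} \colon A_1 {\widehat{\otimes}} A_2 \to B_1 {\widehat{\otimes}} B_2$; then $(x_1, x_2, c) \mapsto \ps_{x_1, x_2} (c)$ is trilinear, and being linear in the finite dimensional variables it is bounded, hence yields a bounded linear map $\ph$ on $M_{d_1} \otimes_{\mathrm{alg}} M_{d_2} \otimes_{\mathrm{alg}} \big( A_1 {\widehat{\otimes}} A_2 \big)$ with $\ph \big( x_1 \otimes x_2 \otimes (a_1 \otimes a_2) \big) = (x_1 \otimes a_1) {\widehat{\otimes}} (x_2 \otimes a_2)$. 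That $\ph$ is multiplicative I would check by a one-line computation on the subalgebra $\mathcal{D} = M_{d_1} \otimes_{\mathrm{alg}} M_{d_2} \otimes_{\mathrm{alg}} \big( A_1 \otimes_{\mathrm{alg}} A_2 \big)$ spanned by the elements $x_1 \otimes x_2 \otimes (a_1 \otimes a_2)$, since on both sides the product is the coordinatewise one; then $\ph$ is multiplicative on all of $M_{d_1} \otimes_{\mathrm{alg}} M_{d_2} \otimes_{\mathrm{alg}} \big( A_1 {\widehat{\otimes}} A_2 \big)$ because $\mathcal{D}$ is dense and multiplication is jointly continuous. Since the formula determines $\ph$ on the dense subspace $\mathcal{D}$, this also gives the uniqueness statement.

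Finally, for bijectivity I would run the same machine backwards, using the bounded coordinate maps $\pi^{(i)}_{j,k}$ from the first step: the bilinear map $(b_1, b_2) \mapsto \sum_{j,k,l,m} e^{(1)}_{j,k} \otimes e^{(2)}_{l,m} \otimes \big( \pi^{(1)}_{j,k} (b_1) {\widehat{\otimes}} \pi^{(2)}_{l,m} (b_2) \big)$ is bounded, so it induces a bounded linear map $\ph' \colon B_1 {\widehat{\otimes}} B_2 \to M_{d_1} \otimes_{\mathrm{alg}} M_{d_2} \otimes_{\mathrm{alg}} \big( A_1 {\widehat{\otimes}} A_2 \big)$. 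Using $\pi^{(i)}_{j,k} (x \otimes a) = x_{jk} a$ one checks that $\ph' \circ \ph$ and $\ph \circ \ph'$ are the identity on the dense subspaces spanned respectively by the $x_1 \otimes x_2 \otimes (a_1 \otimes a_2)$ and by the $(x_1 \otimes a_1) {\widehat{\otimes}} (x_2 \otimes a_2)$, hence (by continuity) everywhere, so $\ph$ is a bijection. I expect the only mildly delicate point to be the bookkeeping in these last identities, together with the standing reliance on the norms on $B_1$ and $B_2$ being genuine cross norms, which is exactly what makes the coordinate functionals bounded; the rest is soft.
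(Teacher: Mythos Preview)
Your argument is correct. The paper takes a shorter, more structural route: since $M_{d_i}$ is finite dimensional, any algebra tensor norm on $B_i$ is equivalent to the projective one, so one may as well take $B_i = M_{d_i} {\widehat{\otimes}} A_i$; likewise the domain becomes $M_{d_1} {\widehat{\otimes}} M_{d_2} {\widehat{\otimes}} A_1 {\widehat{\otimes}} A_2$, and then $\ph$ is nothing but the standard permutation-of-factors isomorphism for the projective tensor product. Your approach instead builds $\ph$ and its inverse by hand via the bounded coordinate projections $\pi^{(i)}_{j,k}$, which avoids invoking the equivalence-of-norms reduction and the general permutation isomorphism. The paper's route is quicker once those facts are available; yours is more self-contained and makes the boundedness of $\ph$ and $\ph^{-1}$ completely explicit, which is harmless extra information. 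One small caveat: your uniqueness argument (``the formula determines $\ph$ on a dense set'') really gives uniqueness among \emph{continuous} algebra bijections; the paper's proof has the same implicit reading, and this is all that is ever used.
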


\begin{ntn}\label{N_3330_FPrm}
We will often implicitly use the isomorphism~$\ph$
of Lemma~\ref{L_3330_TPerm}
to write particular elements of $B_1 {\widehat{\otimes}} B_2$
as there in the form
$x_1 \otimes x_2 \otimes a$ with
$x_1 \in M_{d_1},$ $x_2 \in M_{d_2},$ and
$a \in A {\widehat{\otimes}} A,$
or
(using Notation~\ref{N_3406_MatU})
a general element of $B_1 {\widehat{\otimes}} B_2$ as
\[
\sum_{j, k = 1}^{d_1} \sum_{l, m = 1}^{d_2}
  e_{j, k} \otimes e_{l, m} \otimes a_{j, k, l, m}
\]
with $a_{j, k, l, m} \in A_1 {\widehat{\otimes}} A_2$
for $j, k = 1, 2, \ldots, d_1$
and $l, m = 1, 2, \ldots, d_2.$
\end{ntn}

\begin{proof}[Proof of Lemma~\ref{L_3330_TPerm}]
Completeness of $B_1$ and $B_2$
follows from \fd{ity} of $M_{d_1}$ and $M_{d_2}.$
Also by \fd{ity} of $M_{d_1},$
any two algebra tensor norms on $B_1$ are equivalent.
Thus, we may take $B_1 = M_{d_1} {\widehat{\otimes}} A_1.$
Similarly, we may take $B_2 = M_{d_2} {\widehat{\otimes}} A_2,$
and make the identification of algebras
\[
M_{d_1} \otimes_{\mathrm{alg}} M_{d_2} \otimes_{\mathrm{alg}}
            \big( A_1 {\widehat{\otimes}} A_2 \big)
     = M_{d_1} {\widehat{\otimes}} M_{d_2} {\widehat{\otimes}} 
            A_1 {\widehat{\otimes}} A_2.
\]
Now $\ph$ is just the permutation of projective tensor factors isomorphism
\[
\ph \colon
  M_{d_1} {\widehat{\otimes}} M_{d_2} {\widehat{\otimes}} 
            A_1 {\widehat{\otimes}} A_2
 \to M_{d_1} {\widehat{\otimes}} A_1 {\widehat{\otimes}} 
            M_{d_2} {\widehat{\otimes}} A_2.
\]
This completes the proof.
\end{proof}

The following lemma was suggested by the last paragraph
in Section~7.5
of~\cite{BlLM}.

\begin{lem}\label{L_3330_Diag}
Let $A$ be a unital Banach algebra,
let $d \in \N,$
equip $M_d$ with any algebra norm,
and equip $B = M_d \otimes A$ with any algebra tensor norm.
Suppose $z \in B {\widehat{\otimes}} B$
satisfies $\Dt_B (z) = 1$
and
\begin{equation}\label{Eq_3331_x1z}
(x \otimes 1_A \otimes 1_B) z = z (1_B \otimes x \otimes 1_A)
\end{equation}
for all $x \in M_d.$
Then, rearranging tensor factors as in Notation~\ref{N_3330_FPrm},
there exist elements $z_{j, k} \in A {\widehat{\otimes}} A$
for $j, k = 1, 2, \ldots, d$
such that
\[
z = \sum_{j, k, l = 1}^d e_{j, k} \otimes e_{l, j} \otimes z_{l, k}
\andeqn
\sum_{j = 1}^d \Dt_A (z_{j, j}) = 1.
\]
\end{lem}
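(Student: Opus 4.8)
The plan is to write $z$ out in coordinates and then read off everything from finite-dimensional linear algebra over $A \widehat{\otimes} A.$ First I would apply Lemma~\ref{L_3330_TPerm} with $A_1 = A_2 = A$ and $d_1 = d_2 = d,$ together with Notation~\ref{N_3330_FPrm}, to write
\[
z = \sum_{j, k, l, m = 1}^d e_{j, k} \otimes e_{l, m} \otimes a_{j, k, l, m}
\]
for uniquely determined $a_{j, k, l, m} \in A \widehat{\otimes} A;$ the expression is unique because $\{ e_{j, k} \otimes e_{l, m} \}$ is a vector space basis of the finite-dimensional algebra $M_d \otimes_{\mathrm{alg}} M_d.$ Under the identification of Notation~\ref{N_3330_FPrm}, the element $x \otimes 1_A \otimes 1_B$ of~(\ref{Eq_3331_x1z}) is $x \otimes 1 \otimes 1_{A \widehat{\otimes} A}$ and $1_B \otimes x \otimes 1_A$ is $1 \otimes x \otimes 1_{A \widehat{\otimes} A},$ so left multiplication of $z$ by the former, and right multiplication by the latter, act simply by matrix multiplication in the first, respectively second, matrix-unit slot.

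Next I would substitute $x = e_{\af, \bt}$ into~(\ref{Eq_3331_x1z}). Using $e_{\af, \bt} e_{j, k} = \dt_{\bt j} e_{\af, k}$ and $e_{l, m} e_{\af, \bt} = \dt_{m \af} e_{l, \bt},$ the relation becomes
\[
\sum_{k, l, m} e_{\af, k} \otimes e_{l, m} \otimes a_{\bt, k, l, m}
 = \sum_{j, k, l} e_{j, k} \otimes e_{l, \bt} \otimes a_{j, k, l, \af}.
\]
Comparing, for each choice of $\af, \bt,$ the coefficient of a basis element $e_{P, K} \otimes e_{L, M}$ of $M_d \otimes_{\mathrm{alg}} M_d$ on the two sides (legitimate by the uniqueness above): the left side contributes $a_{\bt, K, L, M}$ if $P = \af$ and $0$ otherwise, while the right side contributes $a_{P, K, L, \af}$ if $M = \bt$ and $0$ otherwise. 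Specializing to $P = \af,$ $M \neq \bt$ gives $a_{j, k, l, m} = 0$ whenever $j \neq m,$ and specializing to $P = \af,$ $M = \bt$ shows $a_{\gm, k, l, \gm}$ is independent of~$\gm.$ Setting $z_{l, k} = a_{\gm, k, l, \gm}$ for any $\gm,$ only the terms with $m = j$ survive, so
\[
z = \sum_{j, k, l = 1}^d e_{j, k} \otimes e_{l, j} \otimes z_{l, k}.
\]

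Finally I would apply $\Dt_B.$ From the definition of $\Dt_B$ (Lemma~\ref{L_2Y25_PjTMaps}) together with Lemma~\ref{L_3330_TPerm}, on a coordinate element one has $\Dt_B \big( e_{j, k} \otimes e_{l, m} \otimes a \big) = \dt_{k l} \, e_{j, m} \otimes \Dt_A (a)$ for $a \in A \widehat{\otimes} A.$ Applying this to the displayed formula for $z$ collapses it to $\sum_{j = 1}^d e_{j, j} \otimes \big( \sum_{k = 1}^d \Dt_A (z_{k, k}) \big),$ and since this must equal $\Dt_B (z) = 1_B = \sum_{j = 1}^d e_{j, j} \otimes 1_A,$ comparing the $A$-component in any diagonal block yields $\sum_{j = 1}^d \Dt_A (z_{j, j}) = 1,$ as required. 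The computation is elementary throughout; the only thing that needs care is keeping the four matrix-unit indices straight and correctly reading off which tensor slot each symbol in~(\ref{Eq_3331_x1z}) occupies, so there is no real obstacle.
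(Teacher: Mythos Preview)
Your proof is correct and follows essentially the same approach as the paper: expand $z$ in matrix-unit coordinates, substitute $x = e_{\af, \bt}$ into~(\ref{Eq_3331_x1z}), and read off the constraints. The only cosmetic difference is that the paper isolates coefficients by multiplying the resulting equation on left and right by suitable $e_{p,p} \otimes e_{r,r} \otimes 1$ factors, whereas you invoke linear independence of the basis $\{e_{P,K} \otimes e_{L,M}\}$ directly; your route is slightly cleaner but the content is identical.
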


\begin{proof}
Use Lemma~\ref{L_3330_TPerm}
and Notation~\ref{N_3330_FPrm}
to write
\[
z =
\sum_{j, k, l, m = 1}^d e_{j, k} \otimes e_{l, m} \otimes a_{j, k, l, m}
\]
with $a_{j, k, l, m} \in A {\widehat{\otimes}} A$
for $j, k, l, m = 1, 2, \ldots, d.$
Let $r, s \in \{ 1, 2, \ldots, d \}.$
In~(\ref{Eq_3331_x1z})
put $x = e_{s, r},$
getting
\[
\sum_{k, l, m = 1}^d
        e_{s, k} \otimes e_{l, m} \otimes a_{r, k, l, m}
 - \sum_{j, k, l = 1}^d
        e_{j, k} \otimes e_{l, r} \otimes a_{j, k, l, s}
= 0.
\]
Let $p \in \{ 1, 2, \ldots, d \}$ and multiply on the right
by $e_{p, p} \otimes e_{r, r} \otimes 1_{A {\widehat{\otimes}} A},$
getting
\begin{equation}\label{Eq_3331_zprs}
\sum_{l = 1}^d e_{s, p} \otimes e_{l, r} \otimes a_{r, p, l, r}
 - \sum_{j, l = 1}^d
        e_{j, p} \otimes e_{l, r} \otimes a_{j, p, l, s}
= 0.
\end{equation}

Let $q \in \{ 1, 2, \ldots, d \}.$
First, assume $r \neq s$
and multiply~(\ref{Eq_3331_zprs}) on the left by
$e_{r, r} \otimes e_{q, q} \otimes 1_{A {\widehat{\otimes}} A}.$
The first sum is annihilated,
and we get
\[
- e_{r, p} \otimes e_{q, r} \otimes a_{r, p, q, s}
= 0.
\]
Thus (by injectivity in Lemma~\ref{L_3330_TPerm}),
we have $a_{r, p, q, s} = 0$ whenever
$p, q, r, s \in \{ 1, 2, \ldots, d \}$ satisfy $r \neq s.$
For arbitrary $r, s \in \{ 1, 2, \ldots, d \},$
multiply~(\ref{Eq_3331_zprs}) on the left by
$e_{s, s} \otimes e_{q, q} \otimes 1_{A {\widehat{\otimes}} A}.$
This gives
\[
e_{s, p} \otimes e_{q, r} \otimes a_{r, p, q, r}
  - e_{s, p} \otimes e_{q, r} \otimes a_{s, p, q, s} = 0.
\]
Therefore $a_{r, p, q, r} = a_{s, p, q, s}$
for all $p, q, r, s \in \{ 1, 2, \ldots, d \}.$
With $z_{l, k} = a_{1, k, l, 1}$
for $k, l = 1, 2, \ldots, d,$
we therefore get
\[
z = \sum_{j, k, l = 1}^d e_{j, k} \otimes e_{l, j} \otimes z_{l, k}.
\]
Apply $\Dt_B$ to this equation,
using $e_{j, k} e_{l, j} = 0$ when $k \neq l$
and $e_{j, k} e_{k, j} = e_{j, j},$
to get
\[
1 = \Dt_B (z)
  = \sum_{j, k = 1}^d e_{j, j} \otimes \Dt_A ( z_{k, k} ).
\]
So $\sum_{k = 1}^d \Dt_A (z_{k, k}) = 1.$
\end{proof}

\begin{lem}\label{L_3330_Ineq}
Let $d \in \N,$
let $\af_1, \af_2, \ldots, \af_d \in \C \setminus \{ 0 \},$
and set
\[
\bt = \min \big( | \af_1 |, | \af_2 |, \ldots, | \af_d | \big)
\andeqn
\gm = \max \big( | \af_1 |, | \af_2 |, \ldots, | \af_d | \big).
\]
Let $E$ be a normed vector space,
and let $\xi_1, \xi_2, \ldots, \xi_d \in E$
satisfy $\left\| \sum_{j = 1}^d \xi_j \right\| = 1.$
Then there exist
\[
\zt_1, \zt_2, \ldots, \zt_d \in S^1
   = \{ \zt \in \C \colon | \zt | = 1 \}
\andeqn
j_0 \in \{ 1, 2, \ldots, d \}
\]
such that
\begin{equation}\label{Eq_3331_Star}
\left\| \sum_{k = 1}^d
  (\zt_{j_0} \af_{j_0}) (\zt_{k} \af_{k})^{-1} \xi_k \right\|
 \geq \bt^{- 1/2} \gm^{1/2}
\end{equation}
or
\begin{equation}\label{Eq_3331_StSt}
\left\| \sum_{k = 1}^d
  (\zt_{j_0} \af_{j_0})^{-1} (\zt_{k} \af_{k}) \xi_k \right\|
 \geq \bt^{- 1/2} \gm^{1/2}.
\end{equation}
\end{lem}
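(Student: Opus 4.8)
The plan is to reduce everything to a scalar inequality by means of a norm-one functional, and then to run a short Cauchy--Schwarz dichotomy.

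First I would use the Hahn--Banach theorem to choose a continuous linear functional $\om$ on $E$ with $\| \om \| = 1$ and $\om \big( \sum_{j = 1}^d \xi_j \big) = 1.$ Put $c_k = \om (\xi_k),$ so that $\sum_{k = 1}^d c_k = 1,$ and hence $\sum_{k = 1}^d | c_k | \geq 1.$ Write $a_k = | \af_k |,$ so that $\bt = \min_k a_k$ and $\gm = \max_k a_k$ (both positive, since each $\af_k \neq 0$), and set $S_1 = \sum_{k = 1}^d | c_k | a_k^{-1}$ and $S_2 = \sum_{k = 1}^d a_k | c_k |.$ Applying the Cauchy--Schwarz inequality to the vectors $\big( \sqrt{| c_k | / a_k} \big)_k$ and $\big( \sqrt{a_k | c_k |} \big)_k$ gives $S_1 S_2 \geq \big( \sum_{k = 1}^d | c_k | \big)^2 \geq 1.$

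The role of $\om$ is that, for any unimodular $\zt_1, \ldots, \zt_d$ and any index $j_0,$ one has $\big\| \sum_{k = 1}^d (\zt_{j_0} \af_{j_0}) (\zt_k \af_k)^{-1} \xi_k \big\| \geq a_{j_0} \big| \sum_{k = 1}^d c_k (\zt_k \af_k)^{-1} \big|$ and $\big\| \sum_{k = 1}^d (\zt_{j_0} \af_{j_0})^{-1} (\zt_k \af_k) \xi_k \big\| \geq a_{j_0}^{-1} \big| \sum_{k = 1}^d c_k \zt_k \af_k \big|,$ simply by applying $\om$ and using $| \om (v) | \leq \| v \|.$ Now I would split into two cases. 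If $\gm S_1 \geq \bt^{-1/2} \gm^{1/2},$ choose $j_0$ with $a_{j_0} = \gm$ and choose each $\zt_k$ (taking $\zt_k = 1$ when $c_k = 0$) so that $c_k (\zt_k \af_k)^{-1} = | c_k | / a_k \geq 0$; then the first lower bound becomes $a_{j_0} S_1 = \gm S_1 \geq \bt^{-1/2} \gm^{1/2},$ which is~(\ref{Eq_3331_Star}). If instead $\gm S_1 < \bt^{-1/2} \gm^{1/2},$ then $S_1 < (\gm \bt)^{-1/2},$ so $S_2 \geq S_1^{-1} > (\gm \bt)^{1/2}$ by the Cauchy--Schwarz bound; choose $j_0$ with $a_{j_0} = \bt$ and choose each $\zt_k$ so that $c_k \zt_k \af_k = a_k | c_k | \geq 0,$ and then the second lower bound becomes $a_{j_0}^{-1} S_2 = S_2 / \bt > \bt^{-1/2} \gm^{1/2},$ which is~(\ref{Eq_3331_StSt}).

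There is no serious obstacle once the right quantities are identified; the only care needed is the bookkeeping of the phase choices (so that $c_k (\zt_k \af_k)^{-1},$ respectively $c_k \zt_k \af_k,$ is a nonnegative real number), and the observation that the two cases, together with $S_1 S_2 \geq 1,$ are exhaustive. If anything, the ``hard part'' is simply spotting that the dichotomy should be phrased in terms of the size of $\gm S_1$ and that $S_1 S_2 \geq 1$ is exactly what turns the failure of the first alternative into the second.
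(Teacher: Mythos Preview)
Your proof is correct and follows essentially the same approach as the paper's: Hahn--Banach to pass to scalars, the product inequality $S_1 S_2 \geq \big(\sum_k |c_k|\big)^2 \geq 1$, and a dichotomy on which factor is large. The only cosmetic differences are that the paper first reduces to positive $\af_j$ (absorbing the phases of the $\af_k$ into the $\zt_k$ in advance) and proves $S_1 S_2 \geq 1$ by expanding and using $\af + \af^{-1} \geq 2$ rather than by Cauchy--Schwarz; your direct handling of the phases and use of Cauchy--Schwarz is slightly slicker but not a genuinely different route.
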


\begin{proof}
We claim that it suffices to consider the case
$\af_j > 0$ for $j = 1, 2, \ldots, d.$
To see this, assume that the result has been proved
for $| \af_1 |, | \af_2 |, \ldots, | \af_d |,$
yielding $\zt_1^{(0)}, \zt_2^{(0)}, \ldots, \zt_d^{(0)} \in S^1.$
Then the result for $\af_1, \af_2, \ldots, \af_d$
follows by taking $\zt_j = {\overline{\sgn (\af_j)}} \zt_j^{(0)}$
for $j = 1, 2, \ldots, d.$
We therefore assume that $\af_j > 0$ for $j = 1, 2, \ldots, d.$

Both (\ref{Eq_3331_Star}) and~(\ref{Eq_3331_StSt})
are unchanged if we choose any $\rh > 0$
and replace $\af_j$ by $\rh \af_j$ for $j = 1, 2, \ldots, d.$
We may therefore assume that $\bt = 1.$
Reordering the $\af_j$ and the $\xi_j,$
we may assume that
$1 = \af_1 \leq \af_2 \leq \cdots \leq \af_d = \gm.$

By the Hahn-Banach Theorem,
there is a linear functional $\om \colon E \to \C$
such that
\[
\| \om \| = 1
\andeqn
\sum_{j = 1}^d \om (\xi_j) = 1.
\]
Set $\sm_j = {\overline{\sgn (\om (\xi_j))}}$
and $\ld_j = | \om (\xi_j) |$
for $j = 1, 2, \ldots, d.$
Then $\sum_{j = 1}^d \ld_j \geq 1.$

Applying $\om$ at the first step in both the following calculations,
and using $\af_1 = 1$ in the first and $\af_d = \gm$ in the second,
we get
\begin{align}\label{Eq_3406_Bottom}
\left\| \sum_{k = 1}^d
  (\sm_{1} \af_{1})^{-1} (\sm_{k} \af_{k}) \xi_k \right\|
& \geq \left| \sum_{k = 1}^d
      (\sm_{1} \af_{1})^{-1} (\sm_{k} \af_{k}) \om (\xi_k) \right|
\\
& = | \sm_{1}^{-1} | \left| \sum_{k = 1}^d
      \af_{k} \sm_k \om (\xi_k) \right|
  = \sum_{k = 1}^d \af_{k} \ld_k      \notag
\end{align}
and
\begin{align}\label{Eq_3406_Top}
\frac{1}{\gm} \left\| \sum_{k = 1}^d
  \big( {\overline{\sm_{d}}} \af_{d} \big)
     \big( {\overline{\sm_{k}}} \af_{k} \big)^{-1} \xi_k \right\|
& \geq \frac{1}{\gm} \left| \sum_{k = 1}^d
       \big( {\overline{\sm_{d}}} \af_{d} \big)
     \big( {\overline{\sm_{k}}} \af_{k} \big)^{-1} \om (\xi_k) \right|
\\
& = \gm^{-1} | {\overline{\sm_{d}}} | \af_d \left| \sum_{k = 1}^d
      \af_{k}^{-1} \sm_k \om (\xi_k) \right|
  = \sum_{k = 1}^d \af_{k}^{-1} \ld_k.   \notag
\end{align}
Now, using the inequality $\af + \af^{-1} \geq 2$ for $\af > 0$
at the third step,
we get
\begin{align*}
\left( \sum_{k = 1}^d \af_{k} \ld_k \right)
  \left( \sum_{k = 1}^d \af_{k}^{-1} \ld_k \right)
& = \sum_{k = 1}^d \af_{k} \ld_k \cdot \af_{k}^{-1} \ld_k
    + \sum_{k = 1}^d \sum_{j \neq k}
          \af_{k} \ld_k \cdot \af_{j}^{-1} \ld_j
\\
& = \sum_{k = 1}^d \ld_k^2
       + \sum_{k = 1}^d \sum_{j = 1}^{k - 1}
               \big( \af_{k} \af_{j}^{-1} + \af_{k}^{-1} \af_{j}  \big)
               \ld_j \ld_k
\\
& \geq \sum_{k = 1}^d \ld_k^2
       + \sum_{k = 1}^d \sum_{j = 1}^{k - 1} 2 \ld_j \ld_k
  = \left( \sum_{k = 1}^d \ld_k \right)^2
  \geq 1.
\end{align*}
Combining this result with
(\ref{Eq_3406_Bottom}) and~(\ref{Eq_3406_Top}),
we get
\[
%
\left\| \sum_{k = 1}^d
  (\sm_{1} \af_{1})^{-1} (\sm_{k} \af_{k}) \xi_k \right\|
  \geq \gm^{1/2}
\,\,\,\,\,\,
{\text{or}}
\,\,\,\,\,\,
\frac{1}{\gm} \left\| \sum_{k = 1}^d
  \big( {\overline{\sm_{d}}} \af_{d} \big)
     \big( {\overline{\sm_{k}}} \af_{k} \big)^{-1} \xi_k \right\|
  \geq \gm^{-1/2}.
%
\]
In the first case,
we get the conclusion of the lemma by choosing $j_0 = 1$
and $\zt_j = \sm_j$ for $j = 1, 2, \ldots, d.$
In the second case,
we get the conclusion by choosing $j_0 = d$
and $\zt_j = {\overline{\sm_j}}$ for $j = 1, 2, \ldots, d.$
\end{proof}

\begin{lem}\label{L_3330_LEst}
Let $d \in \N,$
let $S = (I, s, f)$ be a diagonal system of $d$-similarities,
and let $p \in [1, \infty).$
Let $\YCN$ be a \sfm,
and let $A \subset L (L^p (Y, \nu))$ be a unital closed subalgebra.
Set $B = \MP{d}{p, S} \otimes_p A.$
Suppose $z \in B {\widehat{\otimes}} B$
satisfies $\Dt_B (z) = 1$
and
\[
(x \otimes 1_A \otimes 1_B) z = z (1_B \otimes x \otimes 1_A)
\]
for all $x \in \MP{d}{p, S}.$
Then $\| z \|_{S, \pi} \geq R_{p, S}^{1/2}.$
\end{lem}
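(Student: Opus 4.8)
The plan is to combine the structural description of $z$ provided by Lemma~\ref{L_3330_Diag} with the numerical estimate of Lemma~\ref{L_3330_Ineq}, the bridge between them being a careful choice of contractive maps out of $B {\widehat{\otimes}} B.$ Since $B = \MP{d}{p, S} \otimes_p A$ is nothing but $M_d \otimes A$ equipped with an algebra tensor norm, Lemma~\ref{L_3330_Diag} applies: rearranging tensor factors as in Notation~\ref{N_3330_FPrm}, we may write
\[
z = \sum_{j, k, l = 1}^d e_{j, k} \otimes e_{l, j} \otimes z_{l, k}
\]
with $z_{l, k} \in A {\widehat{\otimes}} A$ and $\sum_{j = 1}^d \Dt_A (z_{j, j}) = 1_A.$ Because $R_{p, S} = \sup_{i \in I} \| s (i) \|_p \| s (i)^{-1} \|_p,$ it suffices to fix $i \in I,$ write $s (i) = \diag (\af_1, \af_2, \ldots, \af_d)$ (all $\af_j \neq 0$), set $\bt = \min_j | \af_j |$ and $\gm = \max_j | \af_j |$ (so that $\| s (i) \|_p \| s (i)^{-1} \|_p = \gm \bt^{-1}$ since $s (i)$ is diagonal), and prove $\| z \|_{S, \pi} \geq \bt^{-1/2} \gm^{1/2}.$

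First I would apply Lemma~\ref{L_3330_Ineq} with $E = A,$ with $\xi_j = \Dt_A (z_{j, j})$ (so that $\big\| \sum_j \xi_j \big\| = \| 1_A \| = 1$), and with the scalars $\af_1, \ldots, \af_d$ just fixed. This yields $\zt_1, \zt_2, \ldots, \zt_d \in S^1$ and $j_0 \in \{ 1, 2, \ldots, d \}$ for which one of~(\ref{Eq_3331_Star}) and~(\ref{Eq_3331_StSt}) holds. Set $u = \diag (\zt_1 \af_1, \zt_2 \af_2, \ldots, \zt_d \af_d) \in \inv (M_d).$ The key observation is that conjugation by $u \otimes 1_A,$ regarded as a map $\theta_u \colon \MP{d}{p, S} \otimes_p A \to \MP{d}{p} \otimes_p A,$ is contractive: it is the composite of conjugation by $s (i) \otimes 1_A,$ which is contractive $\MP{d}{p, S} \otimes_p A \to \MP{d}{p} \otimes_p A$ by the formula in Lemma~\ref{L_3405_Basics}(\ref{L_3405_Basics_NForm}), followed by conjugation by $\diag (\zt_1, \ldots, \zt_d) \otimes 1_A,$ which is an isometry of $\MP{d}{p} \otimes_p A$ because $\big\| \diag (\zt_j) \otimes 1_A \big\|_p = \big\| \diag (\zt_j)^{-1} \otimes 1_A \big\|_p = 1$ (Theorem~2.16(5) of~\cite{PhLp1} and Lemma~\ref{L_3317_NConj}). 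Note that $\theta_u (e_{j, k} \otimes a) = (\zt_j \af_j)(\zt_k \af_k)^{-1} e_{j, k} \otimes a.$

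If~(\ref{Eq_3331_Star}) holds, I would apply $\theta_u {\widehat{\otimes}} \id_B$ to $z$ and then the multiplication map $m \colon \big( \MP{d}{p} \otimes_p A \big) {\widehat{\otimes}} \big( \MP{d}{p, S} \otimes_p A \big) \to \MP{d}{p} \otimes_p A,$ which is contractive because $\| x y \|_{p, S_0} \leq \| x \|_{p, S_0} \| y \|_{p, S_0} \leq \| x \|_{p, S_0} \| y \|_{p, S}$ by submultiplicativity in $\MP{d}{p} \otimes_p A$ together with Lemma~\ref{L_3405_Basics}(\ref{L_3405_Basics_ToBase}). Using $e_{j, k} e_{l, j} = \dt_{k, l} e_{j, j}$ one computes
\[
m \big( (\theta_u {\widehat{\otimes}} \id_B)(z) \big)
 = \sum_{j = 1}^d e_{j, j} \otimes
     \Big( \sum_{k = 1}^d (\zt_j \af_j)(\zt_k \af_k)^{-1} \Dt_A (z_{k, k}) \Big),
\]
and by Lemma~\ref{L_3406_NormDg}(\ref{L_3406_NormDg_Sum}), applied to the basic (hence diagonal) system $S_0,$ the norm of this element equals $\max_j \big\| \sum_k (\zt_j \af_j)(\zt_k \af_k)^{-1} \Dt_A (z_{k, k}) \big\|,$ which is at least $\bt^{-1/2} \gm^{1/2}$ by~(\ref{Eq_3331_Star}). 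Since $m \circ (\theta_u {\widehat{\otimes}} \id_B)$ is contractive, this gives $\| z \|_{S, \pi} \geq \bt^{-1/2} \gm^{1/2}.$ If instead~(\ref{Eq_3331_StSt}) holds, the same argument works after applying $\id_B {\widehat{\otimes}} \theta_u$ to $z$ and then the multiplication $\big( \MP{d}{p, S} \otimes_p A \big) {\widehat{\otimes}} \big( \MP{d}{p} \otimes_p A \big) \to \MP{d}{p} \otimes_p A$; now the surviving coefficient of $\Dt_A (z_{k, k})$ in the $e_{j_0, j_0}$ block is $(\zt_{j_0} \af_{j_0})^{-1} (\zt_k \af_k),$ and~(\ref{Eq_3331_StSt}) yields the estimate. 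Taking the supremum over $i \in I$ gives $\| z \|_{S, \pi} \geq R_{p, S}^{1/2}.$

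I expect the only delicate point to be the bookkeeping in the last step — tracking which tensor slot each matrix unit occupies and verifying that the two multiplication maps are contractive in precisely the directions used. The two genuinely substantive ingredients, Lemma~\ref{L_3330_Diag} and Lemma~\ref{L_3330_Ineq}, are already in hand, and the observation that conjugation by a diagonal unitary is isometric — so that conjugation by $\zt \cdot s (i)$ is a legitimate contraction $\MP{d}{p, S} \otimes_p A \to \MP{d}{p} \otimes_p A$ — is what allows the phases produced by Lemma~\ref{L_3330_Ineq} to be absorbed and the two pieces to fit together.
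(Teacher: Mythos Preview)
Your proof is correct and follows essentially the same approach as the paper: both combine Lemma~\ref{L_3330_Diag} with Lemma~\ref{L_3330_Ineq}, and your contractive maps $m \circ (\theta_u {\widehat{\otimes}} \id_B)$ and $m' \circ (\id_B {\widehat{\otimes}} \theta_u)$ are exactly the paper's $\Dt_1$ and $\Dt_2$ (built from conjugation by $w = \diag(\zt_j \af_j)$ on one factor and $\kp^{p, S_0, S}$ on the other). The only cosmetic difference is that you extract the $j_0$ block via Lemma~\ref{L_3406_NormDg}(\ref{L_3406_NormDg_Sum}) whereas the paper compresses by $e_{j_0, j_0} \otimes 1$; both yield the same bound.
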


\begin{proof}
Set $B_0 = \MP{d}{p} \otimes_p A.$
Let $i \in I$; we show that
$\| z \|_{S, \pi} \geq \| s (i) \|_p^{1/2} \| s (i)^{-1} \|_p^{1/2}.$
Let $S_0$ be the basic system of $d$-similarities
(Definition~\ref{D_3406_SysSim}).

Adopt Notation~\ref{N_3330_FPrm},
and use Lemma~\ref{L_3330_Diag} to find $z_{j, k} \in A {\widehat{\otimes}} A$
for $j, k = 1, 2, \ldots, d$
such that
\[
z = \sum_{j, k, l = 1}^d e_{j, k} \otimes e_{l, j} \otimes z_{l, k}
\andeqn
\sum_{j = 1}^d \Dt_A (z_{j, j}) = 1.
\]
There are $\af_1, \af_2, \ldots, \af_d \in \C \setminus \{ 0 \}$
such that $s (i) = \diag ( \af_1, \af_2, \ldots, \af_d ).$
Apply Lemma~\ref{L_3330_Ineq}
with $E = A,$
with $\af_1, \af_2, \ldots, \af_d$ as above,
and with $\xi_j = \Dt ( z_{j, j} )$
for $j = 1, 2, \ldots, d.$
Then $\bt = \| s (i)^{-1} \|_p^{-1}$ and $\gm = \| s (i) \|_p.$
We obtain $\zt_1, \zt_2, \ldots, \zt_d \in S^1$
and $j_0 \in \{ 1, 2, \ldots, d \}$
such that
\begin{equation}\label{Eq_3406_Star}
\left\| \sum_{k = 1}^d
  (\zt_{j_0} \af_{j_0}) (\zt_{k} \af_{k})^{-1} \Dt ( z_{k, k} ) \right\|
 \geq \| s (i) \|_p^{1/2} \| s (i)^{-1} \|_p^{1/2}
\end{equation}
or
\begin{equation}\label{Eq_3406_StSt}
\left\| \sum_{k = 1}^d
  (\zt_{j_0} \af_{j_0})^{-1} (\zt_{k} \af_{k}) \Dt ( z_{k, k} ) \right\|
 \geq \| s (i) \|_p^{1/2} \| s (i)^{-1} \|_p^{1/2}.
\end{equation}
Define $u = \diag ( \zt_1, \zt_2, \ldots, \zt_d )$
and $w = u s (i).$
Then $\| u \|_p = \| u^{-1} \|_p = 1,$
so $\| u \otimes 1 \|_p = \| u^{-1} \otimes 1 \|_p = 1.$

Since $B_0 = \MP{d}{p} \otimes_p A$ and $B = \MP{d}{p, S} \otimes_p A$
are just $M_d \otimes_{\operatorname{alg}} A$ as algebras
(by Lemma~\ref{L_3406_TensorB}),
the formula $b \mapsto (s (i) \otimes 1) b (s (i)^{-1} \otimes 1)$
defines a \hm{}
$\ph_0 \colon B \to B_0.$
It follows from Lemma~\ref{L_3405_Basics}(\ref{L_3405_Basics_NForm})
that $\| \ph_0 (b) \|_{p, S_0} \leq \| b \|_{p, S}$
for all $b \in B.$
Therefore also the formula
\[
\ph (b) = (w \otimes 1) b (w^{-1} \otimes 1)
        = (u \otimes 1) \ph_0 (b) (u^{-1} \otimes 1)
\]
defines a contractive \hm{}
$\ph \colon B \to B_0.$
Also, $b \mapsto b$ defines a contractive \hm{} from $B$ to~$B_0,$
namely the map $\kp^{p, S_0, S}_B$
of Notation~\ref{N_3406_Kp}
and Lemma~\ref{L_3405_Basics}(\ref{L_3405_Basics_ToBase}).
The projective tensor product of contractive linear maps is
contractive,
and $\Dt_{B_0} \colon B_0 {\widehat{\otimes}} B_0 \to B_0$
is contractive.
So there are contractive linear maps
$\Dt_1, \Dt_2 \colon B {\widehat{\otimes}} B \to B_0$
such that for all $b_1, b_2 \in B$ we have
the following formulas
(in each case, the first one justifies contractivity
and the second one is in terms of what happens
in $M_d \otimes_{\operatorname{alg}} B$):
\[
\Dt_1 (b_1 \otimes b_2)
 = \Dt_{B_0} \big( \ph_0 (b_1) \otimes
             \kp^{p, S_0, S}_B (b_2) \big)
 = (w \otimes 1) b_1 (w^{-1} \otimes 1) b_2
\]
and
\[
\Dt_2 (b_1 \otimes b_2)
 = \Dt_{B_0} \big( \kp^{p, S_0, S}_B (b_1) \otimes
             \ph_0 (b_2) \big)
 = b_1 (w \otimes 1) b_2 (w^{-1} \otimes 1).
\]

We evaluate $\Dt_1 (z)$ and $\Dt_2 (z).$
For $j, k, l = 1, 2, \ldots, d,$ we have
\[
w e_{j, k} w^{-1} e_{l, j}
 = \begin{cases}
   0                                               & l \neq k
        \\
 (\zt_{j} \af_{j}) (\zt_{k} \af_{k})^{-1} e_{j, j} & l = k.
\end{cases}
\]
Therefore
\[
\Dt_1 (z)
 = \sum_{j, k, l = 1}^d
    w e_{j, k} w^{-1} e_{l, j} \otimes \Dt_A (z_{l, k})
 = \sum_{j = 1}^d e_{j, j} \otimes
    \sum_{k = 1}^d (\zt_{j} \af_{j})
       (\zt_{k} \af_{k})^{-1} \Dt_A (z_{k, k}).
\]
Similarly, one gets
\[
\Dt_2 (z)
 = \sum_{j = 1}^d e_{j, j} \otimes
    \sum_{k = 1}^d (\zt_{j} \af_{j})^{-1}
       (\zt_{k} \af_{k}) \Dt_A (z_{k, k}).
\]

If~(\ref{Eq_3406_Star}) holds,
then we use~(\ref{Eq_3406_Star})
and $\| e_{j_0, j_0} \otimes 1 \|_p = \| e_{j_0, j_0} \|_p = 1$
to get
\begin{align*}
\| z \|_{S, \pi}
& \geq \| \Dt_1 (z) \|
  \geq \big\| (e_{j_0, j_0} \otimes 1) \Dt_1 (z)
                (e_{j_0, j_0} \otimes 1) \big\|
\\
& = \left\| \sum_{k = 1}^d
  (\zt_{j_0} \af_{j_0}) (\zt_{k} \af_{k})^{-1} \Dt ( z_{k, k} ) \right\|
 \geq \| s (i) \|_p^{1/2} \| s (i)^{-1} \|_p^{1/2}.
\end{align*}
If instead~(\ref{Eq_3406_StSt}) holds,
we similarly get
\[
\| z \|_{S, \pi}
  \geq \| \Dt_2 (z) \|
  \geq \big\| (e_{j_0, j_0} \otimes 1) \Dt_1 (z)
                (e_{j_0, j_0} \otimes 1) \big\|
 \geq \| s (i) \|_p^{1/2} \| s (i)^{-1} \|_p^{1/2}.
\]
This completes the proof.
\end{proof}

\begin{thm}\label{T_3330_ConseqAmen}
Let $p \in [1, \infty),$
let $d = (d (1), \, d (2), \, \ldots )$
be a sequence of integers such that $d (n) \geq 2$ for all $n \in \N,$
and let
\[
I = (I_1, I_2, \ldots ),
\,\,\,\,\,\,
s = (s_1, s_2, \ldots ),
\andeqn
f = (f_1, f_2, \ldots )
\]
be sequences such that $S_n = (I_n, s_n, f_n)$
is a system of $d (n)$-similarities for all $n \in \N.$
Suppose the algebra $D_{p, d, I, s, f}$
of Definition~\ref{D_3406_KUHF}
is amenable.
Then $\prod_{n = 1}^{\infty} R_{p, S_n} < \infty.$
\end{thm}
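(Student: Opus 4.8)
The plan is to exploit amenability only through a uniformly bounded approximate diagonal, and to play it off against Lemma~\ref{L_3330_LEst} at each finite stage of $A = D_{p, d, I, s, f}$. First I would record that, by the usual characterization of amenability for unital Banach algebras (Definition~2.1.9 and Theorem~2.2.4 of~\cite{Rnd}), there is $M \in [1, \infty)$ such that $A$ has an approximate diagonal with norm at most~$M$.

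Next, fix $n \in \N$. Using Theorem~\ref{T_3406_DIsLpUHF}(\ref{T_3406_DIsLpUHF_FinStg}) and~(\ref{T_3406_DIsLpUHF_Decomp}), I would pick a system $T_n$ of $r_d (n)$-similarities (diagonal, as the $S_l$ are) with $R_{p, T_n} = \prod_{l = 1}^n R_{p, S_l}$, together with an isometric isomorphism $\Phi \colon A \to \MP{r_d (n)}{p, T_n} \otimes_p D_{p, d, I, s, f}^{(n, \infty)}$ carrying $D_{p, d, I, s, f}^{(0, n)}$ onto $\MP{r_d (n)}{p, T_n} \otimes 1$. Inside $\MP{r_d (n)}{p, T_n}$, which is $M_{r_d (n)}$ as an algebra, the signed permutation matrices form a finite group $G_0$ acting irreducibly on $\C^{r_d (n)}$ (the group of Definition~2.9 of~\cite{PhLp2a}, as used for this purpose in the proof of Lemma~\ref{L_2Y25_DiagMn}(\ref{L_2Y25_DiagMn_Norm})), so by Burnside's theorem the linear span of $G_0$ is all of $\MP{r_d (n)}{p, T_n}$. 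Setting $G_n = \Phi^{-1}(G_0 \otimes 1) \subset \inv (A)$, for each $\ep > 0$ I would invoke Lemma~\ref{L_3330_Exact} with $B = A$ and $G = G_n$ to get $z \in A {\widehat{\otimes}} A$ with $\| z \|_{\pi} < M + \ep$, $\Dt_A (z) = 1$, and $(g \otimes 1) z = z (1 \otimes g)$ for all $g \in G_n$; linearity and the spanning property then upgrade this to $(x \otimes 1) z = z (1 \otimes x)$ for every $x \in D_{p, d, I, s, f}^{(0, n)}$.

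Finally I would transport the situation through~$\Phi$. Since $\Phi$ is isometric, $\Phi {\widehat{\otimes}} \Phi$ is an isometric isomorphism of the two projective tensor squares, and $\Dt \circ (\Phi {\widehat{\otimes}} \Phi) = \Phi \circ \Dt_A$ by Lemma~\ref{L_3804_DtFnc}. Hence $w = (\Phi {\widehat{\otimes}} \Phi)(z)$ has the same projective norm as~$z$, satisfies $\Dt (w) = 1$, and --- after the rearrangement of Notation~\ref{N_3330_FPrm} --- meets the commutation hypothesis of Lemma~\ref{L_3330_LEst} for all $x \in \MP{r_d (n)}{p, T_n}$. That lemma, applied with $d = r_d (n)$, $S = T_n$, and the closed subalgebra there taken to be $D_{p, d, I, s, f}^{(n, \infty)}$, then yields
\[
M + \ep > \| z \|_{\pi} = \| w \|_{T_n, \pi}
  \geq R_{p, T_n}^{1/2} = \Big( \prod_{l = 1}^n R_{p, S_l} \Big)^{1/2}.
\]
Since $\ep > 0$ is arbitrary, $\prod_{l = 1}^n R_{p, S_l} \leq M^2$ for all $n$; as each $R_{p, S_l} \geq 1$ the partial products are nondecreasing, so $\prod_{l = 1}^{\infty} R_{p, S_l} \leq M^2 < \infty$.

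The genuinely delicate point --- rather than a real obstacle --- is the passage from the $G_n$-commutation handed back by Lemma~\ref{L_3330_Exact} to commutation with the whole finite-stage matrix algebra: this rests on Burnside's theorem for the irreducible signed permutation group, together with the fact that all the structural identifications ($D_{p, d, I, s, f}^{(0, n)} \otimes_p D_{p, d, I, s, f}^{(n, \infty)} \cong A$ and $D_{p, d, I, s, f}^{(0, n)} \cong \MP{r_d (n)}{p, T_n}$) are isometric, so that the projective-norm bound $M + \ep$ survives on the way into Lemma~\ref{L_3330_LEst}. Everything else is bookkeeping with these identifications.
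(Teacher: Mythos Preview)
Your proof is correct and follows essentially the same route as the paper's: decompose $A$ isometrically as $\MP{r_d(n)}{p,T_n}\otimes_p D_{p,d,I,s,f}^{(n,\infty)}$ via Theorem~\ref{T_3406_DIsLpUHF}, use Lemma~\ref{L_3330_Exact} with the signed permutation group to produce an approximate diagonal element that commutes exactly with the first factor (since the signed permutation matrices span $M_{r_d(n)}$), and feed this into Lemma~\ref{L_3330_LEst}. The only cosmetic differences are that the paper works directly in the tensor product rather than transporting through~$\Phi$, cites Lemma~2.11 of~\cite{PhLp2a} rather than Burnside for the spanning property, and fixes $\ep = 1$ to obtain the bound $(M+1)^2$ whereas your $\ep \to 0$ gives the slightly sharper~$M^2$.
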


\begin{proof}
The hypothesis means that there is $M \in [1, \infty)$
such that $D_{p, d, I, s, f}$ has
an approximate diagonal with norm at most~$M.$
Let $n \in \N.$
We show that $\prod_{l = 1}^{n} R_{p, S_l} \leq (M + 1)^2.$
This will prove the theorem.

Set $r = d (1) d (2) \cdots d (n).$
Parts (\ref{T_3406_DIsLpUHF_FinStg})
and~(\ref{T_3406_DIsLpUHF_Decomp}) of Theorem~\ref{T_3406_DIsLpUHF}
provide a diagonal system $T = (J, t, g)$ of $r$-similarities,
a \sfm{} $\YCN,$
and a unital closed subalgebra
$A \subset L (L^p (Y, \nu))$
(namely $A = D_{p, d, I, s, f}^{n, \infty}$),
such that $R_{p, T} = \prod_{l = 1}^n R_{p, S_l}$
and $D_{p, d, I, s, f}$ is isometrically isomorphic to
$\MP{r}{p, T} \otimes_p A.$
Let $G_0 \subset \inv \big( \MP{r}{p, L} \big)$
be the group of signed permutation matrices
(Definition~2.9 of~\cite{PhLp2a}),
and set
\[
G = \{ g \otimes 1_A \colon g \in G_0 \}
  \subset \inv \big( \MP{r}{p, L} \otimes_p A \big).
\]
Apply
Lemma~\ref{L_3330_Exact}
with $B = \MP{r}{p, L} \otimes_p A,$
with $G$ as given,
and with $\ep = 1,$
getting
\[
z \in \big( \MP{r}{p, L} \otimes_p A \big)
   \otimes_{\operatorname{alg}} \big( \MP{r}{p, L} \otimes_p A \big)
\]
such that
$\| z \|_{\pi} < M + 1,$
$\Dt_{\MP{r}{p, L} \otimes_p A} (z) = 1,$
and $(g \otimes 1) z = z (1 \otimes g)$
for all $g \in G.$
The signed permutation matrices span $\MP{r}{p, L}$
by Lemma~2.11 of~\cite{PhLp2a},
so
\[
\big( x \otimes 1_A \otimes 1_{\MP{r}{p, L} \otimes_p A} \big) z
  = z \big( 1_{\MP{r}{p, L} \otimes_p A} \otimes x \otimes 1_A \big)
\]
for all $x \in \MP{r}{p, L}.$
Therefore Lemma~\ref{L_3330_LEst} applies, and we conclude
\[
M + 1
 > \| z \|_{L, \pi}
 \geq R_{p, L}^{1/2}
 = \prod_{l = 1}^n R_{p, S_l}^{1/2}.
\]
So $\prod_{l = 1}^{n} R_{p, S_l} \leq (M + 1)^2,$
as desired.
\end{proof}

\begin{lem}\label{L_3708_Sim}
Let $d \in \N,$
let $S = (I, s, f)$ be a diagonal system of $d$-similarities,
and let $p \in [1, \infty).$
Then, following the notation of Definition~\ref{D_3406_SySiRpn},
there exist
a spatial \rpn{}
$\ta \colon M_d \to L \big( L^p ( X_{S}, \mu_{S} ) \big)$
and
$w \in \inv \big( L \big( L^p ( X_{S}, \mu_{S} ) \big) \big)$
such that
\begin{equation}\label{Eq_3708_wBounds}
\| w \| = R_{p, S},
\,\,\,\,\,\,
\| w - 1 \| = R_{p, S} - 1,
\,\,\,\,\,\,
\| w^{-1} \| = 1,
\,\,\,\,\,\,
\| w^{-1} - 1 \| = 1 - \frac{1}{R_{p, S}},
\end{equation}
and $\ps^{p, S} (x) = w \ta (x) w^{-1}$ for all $x \in M_d.$
\end{lem}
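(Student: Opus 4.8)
The plan is to handle each diagonal similarity $s(i)$ separately: split off its phase into a spatial representation and rescale its positive part so that its smallest diagonal entry becomes~$1$, and then pass to $L^p$ direct sums over $i \in I$. Note first that $R_{p, S} < \infty$ by compactness of ${\overline{\ran (s)}}$ (Definition~\ref{D_3406_SysSim}), which is needed for~(\ref{Eq_3708_wBounds}) to make sense.

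Concretely, for $i \in I$ I would write $s (i) = \diag ( \af_{i, 1}, \ldots, \af_{i, d} )$ as in Lemma~\ref{L_3406_NormDg}, set $m_i = \min_j | \af_{i, j} |$ and $M_i = \max_j | \af_{i, j} |$ (so that, identifying $M_d$ with $\MP{d}{p},$ we have $\| s (i) \|_p = M_i$ and $\| s (i)^{-1} \|_p = m_i^{-1},$ whence $R_{p, S} = \sup_{i \in I} M_i m_i^{-1}$ by Definition~\ref{D_3406_SySiRpn}), and factor $s (i) = m_i\, w_i\, u_i$ where $u_i = \diag ( \sgn (\af_{i, 1}), \ldots, \sgn (\af_{i, d}) )$ is a diagonal unitary and $w_i = m_i^{-1} \diag ( | \af_{i, 1} |, \ldots, | \af_{i, d} | )$ is positive diagonal with entries in $[ 1, M_i m_i^{-1} ] \subseteq [ 1, R_{p, S} ]$ and least entry~$1.$ Viewing these as operators on $L^p ( \{ 1, \ldots, d \} \times \{ i \}, \mu_S ) = \MP{d}{p}$ (via the identification of Definition~\ref{D_3406_SySiRpn} and Lemma~\ref{L_3903_ChM}), the elementary norm formulas for diagonal operators give $\| w_i \|_p = M_i m_i^{-1},$ $\| w_i^{-1} \|_p = 1,$ $\| w_i - 1 \|_p = M_i m_i^{-1} - 1,$ and $\| w_i^{-1} - 1 \|_p = 1 - m_i M_i^{-1}.$ I would then set $\ta_i (x) = u_i x u_i^{-1};$ since each $\ta_i (e_{j, k})$ is a unit-modulus scalar times $e_{j, k},$ hence a spatial partial isometry with domain support $\{ (k, i) \}$ and range support $\{ (j, i) \},$ Proposition~\ref{P_3910_SpMd} shows $\ta_i$ is spatial, and a direct computation (using $s (i)^{-1} = m_i^{-1} u_i^{-1} w_i^{-1}$) gives $\ps_i (x) = s (i) x s (i)^{-1} = w_i \ta_i (x) w_i^{-1}$ for $\ps_i$ as in Definition~\ref{D_3406_SySiRpn}.

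Finally I would take $\ta = \bigoplus_{i \in I} \ta_i,$ which is spatial by Lemma~\ref{L_3807_SpDSum}, and $w = \bigoplus_{i \in I} w_i$ in the sense of Remark~\ref{R_3913_DSElt}; this is legitimate since $\sup_i \| w_i \|_p = R_{p, S} < \infty$ and $\sup_i \| w_i^{-1} \|_p = 1,$ so that $w$ is invertible with $w^{-1} = \bigoplus_i w_i^{-1}.$ Because composition of operators acting componentwise on an $L^p$ direct sum is again componentwise, $w \ta (x) w^{-1} = \bigoplus_i w_i \ta_i (x) w_i^{-1} = \bigoplus_i \ps_i (x) = \ps^{p, S} (x)$ for all $x \in M_d.$ The four norm identities in~(\ref{Eq_3708_wBounds}) then drop out of the formula $\big\| \bigoplus_i b_i \big\| = \sup_i \| b_i \|$ (the norm formula in Lemma~\ref{L_3406_pSum}), the per-$i$ computations above, and the identities $\sup_i M_i m_i^{-1} = R_{p, S}$ and $\inf_i m_i M_i^{-1} = R_{p, S}^{-1}.$

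I expect the only delicate point to be the design of the factorization $s (i) = m_i w_i u_i$: the phase $u_i$ must be pulled into the spatial representation~$\ta,$ and the positive part must be rescaled to have least diagonal entry exactly~$1.$ This is precisely what forces $\| w^{-1} \| = 1$ and makes $\| w - 1 \|$ and $\| w^{-1} - 1 \|$ equal to $R_{p, S} - 1$ and $1 - R_{p, S}^{-1}$ rather than something larger (as would happen if one normalized $s (i)$ directly without removing its phase, since the $\af_{i, j}$ may be complex). Everything else is routine bookkeeping with diagonal operators on $L^p$ spaces and with countable $L^p$ direct sums.
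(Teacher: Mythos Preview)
Your proof is correct and is essentially identical to the paper's: the paper also factors $s(i) = \bt_i w_i u_i$ with $\bt_i = \min_j |\af_j|$, $u_i$ the diagonal phase matrix, and $w_i = \bt_i^{-1} \diag(|\af_j|)$, then takes $\ta = \bigoplus_i u_i (\cdot) u_i^{-1}$ and $w = \bigoplus_i w_i$ via Lemma~\ref{L_3807_SpDSum} and Remark~\ref{R_3913_DSElt}. The only cosmetic difference is that you spell out the spatiality of $\ta_i$ via Proposition~\ref{P_3910_SpMd}, whereas the paper simply asserts it.
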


\begin{proof}
Let $i \in I.$
There are $\af_1, \af_2, \ldots, \af_d \in \C \setminus \{ 0 \}$
such that
\[
s (i) = \diag (\af_1, \af_2, \ldots, \af_d) \in M_d^p.
\]
Set $\bt_i = \min \big( | \af_1 |, | \af_2 |, \ldots, | \af_d | \big),$
and define
\[
u_i
 = \diag \big( \sgn (\af_1), \, \sgn (\af_2), \, \ldots, \, \sgn (\af_d)
      \big)
\andeqn
w_i = \bt_i^{- 1}
      \diag \big( | \af_1 |, | \af_2 |, \ldots, | \af_d | \big).
\]
Then $s (i) = \bt_i w_i u_i,$
$u_i$ is isometric,
\begin{equation}\label{Eq_3708_ws}
\| w_i \| = \| s (i) \| \cdot \| s (i)^{-1} \|,
\,\,\,\,\,\,
\| w_i - 1 \| = \| s (i) \| \cdot \| s (i)^{-1} \| - 1,
\end{equation}
\begin{equation}\label{Eq_3901_ws2}
\| w_i^{-1} \| = 1,
\,\,\,\,\,\,
\| w_i^{-1} - 1 \| = 1 - \frac{1}{\| s (i) \| \cdot \| s (i)^{-1} \|},
\end{equation}
$s (i) x s (i)^{-1} = w_i (u_i x u_i^{-1}) w_i^{-1}$
for all $x \in M_d,$
and $x \mapsto u_i x u_i^{-1}$
is a spatial \rpn{} of~$M_d^p.$

Define a \rpn{}
\[
\ta_i \colon
 M_d \to L \big( L^p ( \{ 1, 2, \ldots, d \} \times \{ i \},
         \, \mu_{S}) \big)
\]
by using the identification~(\ref{Eq_3406_Ident}) in
Definition~\ref{D_3406_SySiRpn}
on the \rpn{} $x \mapsto u_i x u_i^{-1}$ for $x \in M_d.$
In the notation of Definition~\ref{D_3406_SySiRpn},
we then have $\ps_i (x) = w_i \ta_i (x) w_i^{-1}$ for all $x \in M_d.$

Let
$\ta \colon M_d \to L \big( L^p ( X_{S}, \mu_{S} ) \big)$
be the $L^p$~direct sum,
as in Definition~\ref{D_3406_pSumDfn},
over $i \in I$ of the \rpn{s}~$\ta_i.$
Then $\ta$ is spatial by Lemma~\ref{L_3807_SpDSum}.
Further take $w$ to be the $L^p$~direct sum of the
operators $w_i$ for $i \in I.$
Since $R_{p, S} = \sup_{i \in I} \| s (i) \| \cdot \| s (i)^{-1} \|,$
it follows from (\ref{Eq_3708_ws}) and~(\ref{Eq_3901_ws2})
that $w$ is in fact in $L \big( L^p ( X_{S}, \mu_{S} ) \big)$
and satisfies~(\ref{Eq_3708_wBounds}).
It is clear that
$\ps^{p, S} (x) = w \ta (x) w^{-1}$ for all $x \in M_d.$
\end{proof}

\begin{lem}\label{L_3805_SimTP}
Adopt the notation of Example~\ref{E_2Y14_LpUHF},
but suppose that, for each $n \in {\mathbb{N}},$
instead of $\rh_n \colon M_{d (n)} \to L (L^p (X_n, \mu_n))$
we have two \rpn{s}
\[
\rh_n^{(1)}, \rh_n^{(2)} \colon M_{d (n)} \to L (L^p (X_n, \mu_n)).
\]
Let $A^{(1)}$ and $A^{(2)}$ be the corresponding
$L^p$~UHF algebras of tensor product type.
Suppose that for every $n \in \N$
there is
$w_n \in \inv \big( L ( L^p ( X_n, \mu_n ) ) \big)$
such that $w_n \rh_n^{(1)} (x) w_n^{-1} = \rh_n^{(2)} (x)$
for all $x \in M_{d (n)}.$
Suppose further that
\[
\sum_{n = 1}^{\infty} \| w_n - 1 \| < \infty
\andeqn
\sum_{n = 1}^{\infty} \big\| w_n^{-1} - 1 \big\| < \infty.
\]
Then there is $y \in \inv \big( \LLp \big)$
such that $y A^{(1)} y^{-1} = A^{(2)}.$
\end{lem}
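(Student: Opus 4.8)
The plan is to take the conjugating element to be the infinite tensor product $y = w_1 \otimes w_2 \otimes w_3 \otimes \cdots$, construct it as a norm limit of the partial products, check it is invertible, and then verify that conjugation by $y$ carries $A^{(1)}$ onto $A^{(2)}$ by working on the finite stages and passing to the limit.

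First I would construct $y$. For $N \in \N$, using the factorization~(\ref{Eq_2Y14_2Fact}) (with $S = {\mathbb{N}}$) and Theorem~2.16 of~\cite{PhLp1}, set $y_N = w_1 \otimes \cdots \otimes w_N \otimes 1 \otimes 1 \otimes \cdots \in \LLp$. Theorem~2.16(5) of~\cite{PhLp1} gives $\| y_N \| = \prod_{n = 1}^N \| w_n \|$, and since $\| w_n \| \leq 1 + \| w_n - 1 \|$, the hypothesis $\sum_{n} \| w_n - 1 \| < \infty$ together with Lemma~\ref{L_2Y26_SumProd} bounds $\| y_N \|$ uniformly in $N$. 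Because $y_{N + 1} - y_N = w_1 \otimes \cdots \otimes w_N \otimes ( w_{N + 1} - 1 ) \otimes 1 \otimes \cdots$, the same theorem gives
\[
\| y_{N + 1} - y_N \|
  \leq \Big( \prod_{n = 1}^N \| w_n \| \Big) \| w_{N + 1} - 1 \| ,
\]
which is summable; hence $(y_N)_{N \in \N}$ is Cauchy and converges in norm to some $y \in \LLp$. Running the identical argument for the operators $w_n^{-1}$, using $\sum_n \| w_n^{-1} - 1 \| < \infty$, shows that $y_N^{-1} = w_1^{-1} \otimes \cdots \otimes w_N^{-1} \otimes 1 \otimes \cdots$ converges in norm to some $y' \in \LLp$. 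Since $y_N y_N^{-1} = y_N^{-1} y_N = 1$ for all $N$ and multiplication is jointly norm-continuous on bounded subsets of $\LLp$, I would pass to the limit to conclude $y y' = y' y = 1$, so $y \in \inv (\LLp)$ with $y^{-1} = y'$.

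Next I would verify the conjugation property on finite stages. Adopting the notation of Example~\ref{E_2Y14_LpUHF} for both systems, so $A^{(i)} = \overline{\bigcup_{n} A_n^{(i)}}$, a typical element of $A_n^{(1)}$ is a linear combination of operators $a = \rh_1^{(1)} (a_1) \otimes \cdots \otimes \rh_n^{(1)} (a_n) \otimes 1 \otimes \cdots$ with $a_j \in M_{d (j)}$. For $N \geq n$, the relations $w_k \rh_k^{(1)} (x) w_k^{-1} = \rh_k^{(2)} (x)$ and $w_k 1 w_k^{-1} = 1$ show that
\[
y_N a y_N^{-1}
 = \rh_1^{(2)} (a_1) \otimes \cdots \otimes \rh_n^{(2)} (a_n)
       \otimes 1 \otimes \cdots
\]
lies in $A_n^{(2)}$ and is independent of $N \geq n$; by linearity, $a \mapsto y_N a y_N^{-1}$ (for $N \geq n$) is the natural bijection $\Phi_n \colon A_n^{(1)} \to A_n^{(2)}$. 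Since $y_N \to y$ and $y_N^{-1} \to y^{-1}$ in norm with all these sequences bounded, letting $N \to \infty$ gives $y a y^{-1} = \Phi_n (a)$ for every $a \in A_n^{(1)}$, hence $y A_n^{(1)} y^{-1} = A_n^{(2)}$ for each $n$. As $b \mapsto y b y^{-1}$ is a homeomorphism of $\LLp$, it maps $\overline{\bigcup_n A_n^{(1)}}$ onto $\overline{\bigcup_n A_n^{(2)}}$, giving $y A^{(1)} y^{-1} = A^{(2)}$.

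The only step requiring real care is the first one: making sense of the formal infinite tensor product as a genuine element of $\inv (\LLp)$. Both summability hypotheses are used there — $\sum \| w_n - 1 \| < \infty$ for convergence of $(y_N)$, and $\sum \| w_n^{-1} - 1 \| < \infty$ for convergence of $(y_N^{-1})$ and hence invertibility of $y$ — and the key external input is the multiplicativity of the operator norm under $L^p$ tensor products (Theorem~2.16(5) of~\cite{PhLp1}), which is what makes the telescoping estimates summable. Everything after that is a routine density argument.
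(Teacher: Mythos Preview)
Your proof is correct and follows essentially the same route as the paper: both construct $y$ as the norm limit of the partial tensor products $y_N = w_1 \otimes \cdots \otimes w_N \otimes 1$, use the summability hypotheses together with Lemma~\ref{L_2Y26_SumProd} and the multiplicativity of norms under $L^p$ tensor products to get Cauchy sequences for both $(y_N)$ and $(y_N^{-1})$, and then verify $y A_n^{(1)} y^{-1} = A_n^{(2)}$ on each finite stage using that $y_N a y_N^{-1}$ stabilizes for $N \geq n$. The only cosmetic difference is that you spell out the limiting argument on elementary tensors a bit more explicitly than the paper does.
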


\begin{proof}
For $j = 1, 2,$
we adapt the notation of Example~\ref{E_2Y14_LpUHF}
by letting $\sm_n^{(j)} \colon M_{r_d (n)} \to A^{(j)}$
be the analog,
derived from the \rpn{s}~$\rh_n^{(j)},$
of the map $\sm_n \colon M_{r_d (n)} \to A$
at the end of Example~\ref{E_2Y14_LpUHF}.
We further set
$A_n^{(j)} = \sm_n^{(j)} ( M_{r_d (n)} ) \subset \LLp,$
so that $A^{(j)} = {\overline{\bigcup_{n = 0}^{\infty} A_n^{(j)} }}.$
It suffices to find $y \in \inv \big( \LLp \big)$
such that $y A_n^{(1)} y^{-1} = A_n^{(2)}$
for all $n \in \Nz.$

We claim that
\[
M_1 = \sup_{n \in \Nz} \prod_{k = 1}^n \| w_n \| < \infty
\andeqn
M_2 = \sup_{n \in \Nz} \prod_{k = 1}^n \big\| w_n^{-1} \big\| < \infty.
\]
The proofs are the same for both, so we prove only the first.
For $n \in \N,$ we observe that
$\| w_n \| \leq 1 + \| w_n - 1 \|,$ 
so
\[
\max ( \| w_n \|, \, 1 ) - 1 \leq \| w_n - 1 \|.
\]
Therefore
\[
\sum_{n = 1}^{\infty} \big[ \max ( \| w_n \|, \, 1 ) - 1 \big]
  \leq \sum_{n = 1}^{\infty} \| w_n - 1 \|
  < \infty.
\]
Using Lemma~\ref{L_2Y26_SumProd}
at the second step,
we therefore get
\[
\sup_{n \in \Nz} \prod_{k = 1}^n \| w_n \|
 \leq \prod_{n = 1}^{\infty} \max ( \| w_n \|, \, 1 )
 < \infty.
\]
The claim is proved.

For $n \in \Nz,$
set
\[
y_n = w_1 \otimes w_2 \otimes \cdots \otimes w_n
       \otimes 1_{{\mathbb{N}}_{> n}}
    \in \LLp.
\]
Then
\begin{align*}
\| y_n - y_{n - 1} \|
& = \big\| w_1 \otimes w_2 \otimes \cdots \otimes w_{n - 1}
                \otimes (w_n - 1)
                \otimes 1_{{\mathbb{N}}_{> n}} \big\|
          \\
& = \| w_n - 1 \| \prod_{k = 1}^{n - 1} \| w_k \|
  \leq M_1 \| w_n - 1 \|.
\end{align*}
Therefore
\[
\sum_{n = 1}^{\infty} \| y_n - y_{n - 1} \|
  \leq M_1 \sum_{n = 1}^{\infty} \| w_n - 1 \|
   < \infty.
\]
It follows that
$y = \limi{n} y_n \in L \big( L^p ( X, \mu ) \big)$
exists.
We also get
\[
\big\| y_n^{-1} - y_{n - 1}^{-1} \big\|
 = \big\| w_n^{-1} - 1 \big\|
       \prod_{k = 1}^{n - 1} \big\| w_k^{-1} \big\|
 \leq M_2 \big\| w_n^{-1} - 1 \big\|,
\]
so
$\sum_{n = 1}^{\infty} \big\| y_n^{-1} - y_{n - 1}^{-1} \big\| < \infty,$
whence
$z =
 \limi{n} y_n^{-1} \in L \big( L^p ( X_{I, s, f}, \mu_{I, s, f} ) \big)$
exists.
Clearly $y z = z y = 1,$
so $y$ is invertible with inverse~$z.$
Since $y_l A_n^{(1)} y_l^{-1} = A_n^{(2)}$
for all $l \in \N$ with $l \geq n,$
it follows that $y A_n^{(1)} y^{-1} = A_n^{(2)}.$
\end{proof}

\begin{thm}\label{T_3708_Third}
Let $p \in [1, \infty),$
let $d = (d (1), \, d (2), \, \ldots )$
be a sequence in $\{ 2, 3, \ldots \},$
and let
\[
I = (I_1, I_2, \ldots ),
\,\,\,\,\,\,
s = (s_1, s_2, \ldots ),
\andeqn
f = (f_1, f_2, \ldots )
\]
be sequences such that $S_n = (I_n, s_n, f_n)$
is a diagonal system of $d (n)$-similarities for all $n \in \N.$
Set
$A = D_{p, d, I, s, f}
   \subset L \big( L^p ( X_{I, s, f}, \mu_{I, s, f} ) \big)$
as in Definition~\ref{D_3406_SfUHF}
with the given choices of $I,$ $s,$ and~$f.$
For $n \in \Nz$,
following the notation of Example~\ref{E_2Y14_LpUHF},
let $\sm_n \colon M_{r_d (n)} \to A$
be $\sm_n = \rh_{{\mathbb{N}}, {\mathbb{N}}_{\leq n} },$
and following the notation of Definition~\ref{D_3406_SfUHF},
set $\rh_n = \ps^{p, S_n}.$
(The notation $\rh_n$ is used in Example~\ref{E_2Y14_LpUHF}.)
Let $B$ be the spatial $L^p$~UHF algebra whose supernatural number
is the same as that of~$A.$
Then \tfae:
\begin{enumerate}
\item\label{T_3708_Third_Iso}
$A \cong B.$
\item\label{T_3708_Third_IsoSp}
There exists an isomorphism $\ph \colon A \to B$
such that the algebraic tensor product of two copies of~$\ph$
extends to an isomorphism
$\ph \otimes_p \ph \colon A \otimes_p A \to B \otimes_p B.$
\item\label{T_3708_Third_SymAmen}
$A$ is symmetrically amenable.
\item\label{T_3708_Third_Amen}
$A$ is amenable.
\item\label{T_3708_Third_ClRangeP}
Whenever $\YCN$ is a \sfm,
$C \subset L (L^p (Y, \nu))$ is a closed unital subalgebra,
and $\ph \colon A \to C$
is a unital \ct{} \hm{}
such that
$\ph \otimes_p \id_A \colon A \otimes_p A \to C \otimes_p A$
is bounded,
then $\ph$ is bounded below.
\item\label{T_3708_Third_7a}
$A \otimes_p A$ has approximately inner $L^p$-tensor half flip.
\item\label{T_3708_Third_AIF}
$A$ has approximately inner $L^p$-tensor flip.
\item\label{T_3708_Third_Half}
$A$ has approximately inner $L^p$-tensor half flip.
\item\label{T_3708_Third_SmBddOne}
There is a uniform bound on the norms of the maps
$\sm_n \colon \MP{r_d (n)}{p} \to A.$
\item\label{T_3708_Third_SmBdd}
There is a uniform bound on the norms of the maps
\[
\sm_n \otimes_p \sm_n \colon \MP{r_d (n)^2}{p} \to A \otimes_p A.
\]
\item\label{T_3708_Third_RhBdd}
$\sum_{n = 1}^{\infty} ( \| \rh_n \| - 1 ) < \infty.$
\item\label{T_3708_Third_10A}
With $R_{p, S_n}$ as in Definition~\ref{D_3406_SySiRpn}
for $n \in \N,$
we have
$\sum_{n = 1}^{\infty} ( R_{p, S_n} - 1 ) < \infty.$
\item\label{T_3708_Third_SumFin}
The \hm{s}
$\rh_n \otimes_p \rh_n \colon \MP{d (n)^2}{p} \to A \otimes_p A$
satisfy
\[
\sum_{n = 1}^{\infty} ( \| \rh_n \otimes_p \rh_n \| - 1 ) < \infty.
\]
\item\label{T_3708_Third_Sim}
There exists
a spatial $L^p$~UHF algebra
$C \subset L \big( L^p ( X_{I, s, f}, \mu_{I, s, f} ) \big)$
which is isometrically isomorphic to~$B,$
and
$v \in
 \inv \big( L \big( L^p ( X_{I, s, f}, \mu_{I, s, f} ) \big) \big),$
such that $v A v^{-1} = C.$
\end{enumerate}
\end{thm}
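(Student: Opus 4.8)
The plan is to assemble the theorem from the general results of Sections~\ref{Sec_Sptl} and~\ref{Sec_ClEx} together with the estimates of the present section; almost everything needed is already available, so the argument is largely organizational, the one genuinely new ingredient being Theorem~\ref{T_3330_ConseqAmen}. First I would check that $A = D_{p, d, I, s, f}$ dominates the spatial representation in the sense of Definition~\ref{D_2Y15_ITP}(\ref{D_2Y15_DomSpatial}): it locally has enough isometries by Theorem~\ref{T_3406_DIsLpUHF}(\ref{T_3406_DIsLpUHF_ITPType}), and since $1 \in \ran (s_n)$ by Definition~\ref{D_3406_SysSim}(\ref{D_3406_SysSim_One}), one of the summands of $\rh_n = \ps^{p, S_n}$ is the identity representation of $\MP{d (n)}{p}$, which is spatial by Proposition~\ref{P_3910_SpMd}. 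Hence Theorem~\ref{T_2Z29_First} applies to~$A$, giving the equivalence of (\ref{T_3708_Third_IsoSp}), (\ref{T_3708_Third_7a}), (\ref{T_3708_Third_AIF}), (\ref{T_3708_Third_Half}), (\ref{T_3708_Third_SmBdd}) and (\ref{T_3708_Third_SumFin}), and the fact that these imply (\ref{T_3708_Third_SymAmen}) and (\ref{T_3708_Third_Amen}); moreover Corollary~\ref{L_2Z17New} now provides a contractive unital homomorphism $\kp \colon A \to B$ with dense range.

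Next I would record the norm identities that absorb the numerical conditions into this list. By Lemma~\ref{L_3405_Basics}(\ref{L_3405_Basics_FrmBase}) (with $B = \C$) one has $\| \rh_n \| = R_{p, S_n}$, so (\ref{T_3708_Third_RhBdd}) and (\ref{T_3708_Third_10A}) are the same condition; by Theorem~\ref{T_3406_DIsLpUHF}(\ref{T_3406_DIsLpUHF_FinStg}) one similarly has $\| \sm_n \| = \prod_{l = 1}^n R_{p, S_l}$. Using Lemmas~\ref{L_3708_Sim} and~\ref{L_3317_NConj} and Corollary~\ref{C_3910_TensSp}, exactly as in the discussion following Definition~\ref{D_3406_KUHF} — writing $\rh_n = \Ad (w) \circ \ta$ with $\ta$ spatial, so that $\ta \otimes \ta$ is spatial and isometric — one gets $\| \rh_n \otimes_p \rh_n \| = \| \rh_n \|^2 = R_{p, S_n}^2$ and $\| \sm_n \otimes_p \sm_n \| = \| \sm_n \|^2$. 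Combining these with Lemma~\ref{L_2Y26_SumProd} shows that (\ref{T_3708_Third_SmBddOne}), (\ref{T_3708_Third_SmBdd}), (\ref{T_3708_Third_RhBdd}), (\ref{T_3708_Third_10A}) and (\ref{T_3708_Third_SumFin}) are all equivalent to $\prod_{n = 1}^{\infty} R_{p, S_n} < \infty$, hence to one another and to the conditions coming from Theorem~\ref{T_2Z29_First}.

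It then remains to bring in (\ref{T_3708_Third_Iso}), (\ref{T_3708_Third_ClRangeP}) and (\ref{T_3708_Third_Sim}). That (\ref{T_3708_Third_SymAmen}) implies (\ref{T_3708_Third_Amen}) is trivial, and Theorem~\ref{T_3330_ConseqAmen} shows (\ref{T_3708_Third_Amen}) forces $\prod_n R_{p, S_n} < \infty$, i.e.\ (\ref{T_3708_Third_10A}). For (\ref{T_3708_Third_10A}) $\Rightarrow$ (\ref{T_3708_Third_Sim}) I would use Lemma~\ref{L_3708_Sim} to write, for each~$n$, $\rh_n (x) = w_n \ta_n (x) w_n^{-1}$ with $\ta_n$ spatial, $\| w_n - 1 \| = R_{p, S_n} - 1$, and $\| w_n^{-1} - 1 \| = 1 - R_{p, S_n}^{-1} \le R_{p, S_n} - 1$; then (\ref{T_3708_Third_10A}) makes $\sum \| w_n - 1 \|$ and $\sum \| w_n^{-1} - 1 \|$ finite, so Lemma~\ref{L_3805_SimTP} (applied with $\rh_n^{(1)} = \ta_n$ and $\rh_n^{(2)} = \rh_n$) yields $y \in \inv (\LLp)$ with $y C y^{-1} = A$, where $C \subset \LLp$ is the spatial $L^p$~UHF algebra built from the $\ta_n$. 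Taking $v = y^{-1}$ and identifying $C$ with~$B$ isometrically via Theorem~\ref{T_3729_EU} gives (\ref{T_3708_Third_Sim}); conjugation by~$v$ gives (\ref{T_3708_Third_Sim}) $\Rightarrow$ (\ref{T_3708_Third_Iso}); and since $B$ is amenable (indeed symmetrically amenable, as in the proof of Theorem~\ref{T_2Z29_First}) and amenability is an isomorphism invariant, (\ref{T_3708_Third_Iso}) $\Rightarrow$ (\ref{T_3708_Third_Amen}). Finally, (\ref{T_3708_Third_Half}) $\Rightarrow$ (\ref{T_3708_Third_ClRangeP}) is an instance of Theorem~\ref{T_2Y25_AIPHF} with the homomorphism $\ps$ there taken to be $\ph \otimes_p \id_A$, while (\ref{T_3708_Third_ClRangeP}) $\Rightarrow$ (\ref{T_3708_Third_Iso}) follows by applying (\ref{T_3708_Third_ClRangeP}) with $C = B$ and $\ph = \kp$: the map $\kp \otimes_p \id_A$ is bounded by Corollary~\ref{L_2Z17New}(\ref{2Z17New_TP}), so $\kp$ is bounded below, and being also dense-ranged it is an isomorphism, whence $A \cong B$. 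This closes all the loops, so the fourteen conditions are equivalent.

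The hard part is not in this assembly but in Theorem~\ref{T_3330_ConseqAmen} and the lemmas preceding it, where diagonality of the similarities is used in an essential way to bound from below the norm of an approximate diagonal of $\MP{r}{p, T} \otimes_p A$; granting that, the only remaining point requiring care is the bookkeeping — systematically identifying $\| \rh_n \|$, $\| \sm_n \|$ and their $L^p$~tensor squares with $R_{p, S_n}$, and tracking which condition implies which so that the numerical conditions (\ref{T_3708_Third_SmBddOne})--(\ref{T_3708_Third_SumFin}) are folded into the equivalence.
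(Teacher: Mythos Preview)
Your proposal is correct and follows essentially the same route as the paper's proof: invoke Theorem~\ref{T_2Z29_First} for the core block of equivalences, fold in the numerical conditions via the identities $\|\rh_n\| = R_{p,S_n}$ and $\|\rh_n \otimes_p \rh_n\| = \|\rh_n\|^2$, close the amenability loop through Theorem~\ref{T_3330_ConseqAmen}, and handle (\ref{T_3708_Third_Sim}) and (\ref{T_3708_Third_ClRangeP}) via Lemmas~\ref{L_3708_Sim}, \ref{L_3805_SimTP}, Theorem~\ref{T_2Y25_AIPHF}, and Corollary~\ref{L_2Z17New}. The only cosmetic differences are that the paper routes (\ref{T_3708_Third_Sim}) $\Rightarrow$ (\ref{T_3708_Third_IsoSp}) directly (using $v \otimes v$ to exhibit $\ph \otimes_p \ph$) rather than passing through (\ref{T_3708_Third_Iso}) $\Rightarrow$ (\ref{T_3708_Third_Amen}), and it proves $\|\sm_n \otimes_p \sm_n\| = \|\sm_n\|^2$ by building the tensor-product system via Lemma~\ref{L_3406_TPSys} rather than by invoking Lemma~\ref{L_3708_Sim}; both variants are equivalent in substance.
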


Some of the conditions in Theorem~\ref{T_3708_Third}
do not involve the triple $(I, s, f).$
It follows that these
all hold, or all fail to hold,
for any choice of $(I, s, f)$
giving an isomorphic algebra.

The algebra
$B \otimes_p B$ in~(\ref{T_3708_Third_IsoSp})
is a spatial $L^p$~UHF algebra.
In particular, using Theorem 3.10(5) of~\cite{PhLp2a},
one can check that, up to isometric isomorphism, it does not depend
on how $B$ is represented on an $L^p$~space.

\begin{proof}[Proof of Theorem~\ref{T_3708_Third}]
We simplify the notation,
following Example~\ref{E_2Y14_LpUHF},
by setting $X_n = X_{S_n}$ and $\mu_n = \mu_{S_n}$
for all $n \in \N.$
We freely use other notation from Example~\ref{E_2Y14_LpUHF}.

The equivalence of
(\ref{T_3708_Third_IsoSp}),
(\ref{T_3708_Third_7a}),
(\ref{T_3708_Third_AIF}),
(\ref{T_3708_Third_Half}),
(\ref{T_3708_Third_SmBdd}),
and (\ref{T_3708_Third_SumFin})
follows from Theorem~\ref{T_2Z29_First}.

That (\ref{T_3708_Third_IsoSp}) implies~(\ref{T_3708_Third_Iso})
is trivial.
Assume~(\ref{T_3708_Third_Iso});
we prove~(\ref{T_3708_Third_SymAmen}).
Since symmetric amenability only depends on the isomorphism class
of a Banach algebra,
it is enough to prove symmetric amenability when $A = B.$ 
The maps $\sm_n \colon \MP{r_d (n)}{p} \to B$ are then isometric,
so that the maps
$\sm_n {\widehat{\otimes}} \sm_n \colon
   \MP{r_d (n)}{p} {\widehat{\otimes}} \MP{r_d (n)}{p} \to
    B {\widehat{\otimes}} B$
are contractive.
Now apply Proposition~\ref{P_2Y26_GetAIF}(\ref{P_2Y26_GetAIF_Amen})
and Lemma~\ref{L_2Y25_DiagMn}(\ref{L_2Y25_DiagMn_Norm}).
The implication from (\ref{T_3708_Third_SymAmen})
to~(\ref{T_3708_Third_Amen})
is trivial.

The implication from~(\ref{T_3708_Third_Amen})
to~(\ref{T_3708_Third_10A})
is immediate from Theorem~\ref{T_3330_ConseqAmen}
and Lemma~\ref{L_2Y26_SumProd}.
Now assume~(\ref{T_3708_Third_10A});
we prove~(\ref{T_3708_Third_RhBdd}).
Let $T_n$ be the basic system of $d (n)$-similarities
(called $S_0$ in Definition~\ref{D_3406_SysSim}).
When one identifies $M_{d (n)}$ with $\MP{d (n)}{p},$
the map $\rh_n = \ps^{p, S_n} \colon M_{d (n)} \to \MP{d (n)}{p, S_n}$
becomes the map
$\kp_{\C}^{p, S_n, T_n} \colon \MP{d (n)}{p} \to \MP{d (n)}{p, S_n}$
of Notation~\ref{N_3406_Kp}.
Then Lemma \ref{L_3405_Basics}(\ref{L_3405_Basics_FrmBase})
implies that $\| \rh_n \| = R_{p, S_n}$ for $n \in \N.$
The desired implication is now clear.

Assume~(\ref{T_3708_Third_RhBdd});
we prove~(\ref{T_3708_Third_Sim}).
Apply Lemma~\ref{L_3708_Sim}
to $S_n$ for all $n \in \N,$
obtaining spatial \rpn{s}
$\ta_n \colon M_{d (n)} \to
   L ( L^p ( X_{n}, \mu_{n} ) )$
and invertible elements
$w_n \in L ( L^p ( X_{n}, \mu_{n} ) ),$
satisfying the estimates given there.
We have $R_{p, S_n} = \| \rh_n \|$
as in the proof of the implication from (\ref{T_3708_Third_10A})
to~(\ref{T_3708_Third_RhBdd}).
Therefore the estimates become
\[
\| w_n \| = \| \rh_n \|,
\,\,\,\,\,\,
\| w_n - 1 \| = \| \rh_n \| - 1,
\,\,\,\,\,\,
\| w_n^{-1} \| = 1,
\andeqn
\| w_n^{-1} - 1 \| = 1 - \frac{1}{\| \rh_n \|}.
\]
Applying the construction of Example~\ref{E_2Y14_LpUHF}
to $d$ and $(\ta_1, \ta_2, \ldots),$
we obtain a spatial $L^p$~UHF algebra
$C = A (d, \ta) \subset  L \big( L^p ( X_{I, s, f}, \mu_{I, s, f} ) \big).$
Obviously its supernatural number is the same as that of~$A.$
Lemma~\ref{L_3805_SimTP} provides an invertible element
$y \in L \big( L^p ( X_{I, s, f}, \mu_{I, s, f} ) \big)$
such that $y A y^{-1} = C.$
Theorem~\ref{T_3729_EU} implies that
$C$ is isometrically isomorphic to~$B.$

We now show that~(\ref{T_3708_Third_Sim})
implies~(\ref{T_3708_Third_IsoSp}).
By hypothesis,
the formula $\ph (a) = v a v^{-1}$ for $a \in A$
defines an isomorphism $\ph \colon A \to B.$
Then $a \mapsto (v \otimes v) a (v \otimes v)^{-1}$
is an isomorphism from $A \otimes_p A$ to $B \otimes_p B$
which sends $a_1 \otimes a_2$ to $\ph (a_1) \otimes \ph (a_2)$
for all $a_1, a_2 \in A.$

It remains only to prove that
(\ref{T_3708_Third_ClRangeP})
and~(\ref{T_3708_Third_SmBddOne})
are equivalent to the other conditions.

We prove that (\ref{T_3708_Third_SmBddOne})
is equivalent to~(\ref{T_3708_Third_SmBdd}).
It suffices to prove that $\| \sm_n \otimes \sm_n \| = \| \sm_n \|^2$
for all $n \in \N.$
Repeated application of Lemma~\ref{L_3406_TPSys}
shows that
there is a diagonal system $T_0 = (J_0, t_0, g_0)$
of $r_d (n)$-similarities
such that $\sm_n = \ps_{\C}^{p, T_0}.$
The same lemma further shows
that setting $J = J_0 \times J_0$
and setting
$t (j, k) = t_0 (j) \otimes t_0 (k)$
and $g (j, k) = g_0 (j) g_0 (k)$
gives a diagonal system $T = (J, t, g)$
of $r_d (n)^2$-similarities
such that $\sm_n \otimes \sm_n = \ps_{\C}^{p, T}.$
The relation $\| \sm_n \otimes \sm_n \| = \| \sm_n \|^2$
now follows from
Lemma \ref{L_3405_Basics}(\ref{L_3405_Basics_NForm}),
Lemma~\ref{L_3317_NConj},
and the relation $\| v \otimes w \| = \| v \| \cdot \| w \|$
for $v, w \in \MP{r_d (n)}{p}.$

That (\ref{T_3708_Third_Half}) implies~(\ref{T_3708_Third_ClRangeP})
follows from Theorem~\ref{T_2Y25_AIPHF}.

We show that~(\ref{T_3708_Third_ClRangeP})
implies~(\ref{T_3708_Third_Iso}).
Let $\kp \colon A \to B$ be as in Corollary~\ref{L_2Z17New}.
By Corollary \ref{L_2Z17New}(\ref{2Z17New_TP}),
there is a \ct{} \hm{}
$\gm \colon A \otimes_p A \to B \otimes_p A$
such that $\gm (a_1 \otimes a_2) = \kp (a_1) \otimes a_2$
for all $a_1, a_2 \in A.$
Condition~(\ref{T_3708_Third_ClRangeP})
implies that $\kp$ has closed range.
Since $\kp$ has dense range by Corollary \ref{L_2Z17New}(\ref{2Z17New_DR}),
we conclude that $\kp$ is an isomorphism.
\end{proof}

For $p = 2,$
we can do a little better:
we don't need to require the systems of similarities to be diagonal.

\begin{thm}\label{T_3711_CStAmen}
Let $d = (d (1), \, d (2), \, \ldots )$
be a sequence in $\{ 2, 3, \ldots \},$
and let
\[
I = (I_1, I_2, \ldots ),
\,\,\,\,\,\,
s = (s_1, s_2, \ldots ),
\andeqn
f = (f_1, f_2, \ldots )
\]
be sequences such that $S_n = (I_n, s_n, f_n)$
is a system of $d (n)$-similarities for all $n \in \N.$
Set
$A = D_{2, d, I, s, f}
   \subset L \big( L^2 ( X_{I, s, f}, \mu_{I, s, f} ) \big)$
as in Definition~\ref{D_3406_KUHF}
with the given choices of $I,$ $s,$ and~$f.$
If $A$ is amenable,
then there exists
$v \in \inv \big( L \big( L^2 ( X_{I, s, f}, \mu_{I, s, f} ) \big) \big)$
such that $v A v^{-1}$ is a \ca.
\end{thm}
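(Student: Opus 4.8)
The plan is to follow the chain of implications in Theorem~\ref{T_3708_Third} from condition~(\ref{T_3708_Third_Amen}) (amenability) to condition~(\ref{T_3708_Third_Sim}) (similarity to a spatial algebra), observing that the hypothesis that the $S_n$ be diagonal enters that particular chain only through Lemma~\ref{L_3708_Sim}; when $p = 2$ that lemma can be replaced by an argument using the polar decomposition, which removes the need for diagonality. To start, I would invoke Theorem~\ref{T_3330_ConseqAmen} (already stated for arbitrary systems of similarities): amenability of $A = D_{2, d, I, s, f}$ yields $\prod_{n = 1}^{\infty} R_{2, S_n} < \infty$, hence $\sum_{n = 1}^{\infty} ( R_{2, S_n} - 1 ) < \infty$ by Lemma~\ref{L_2Y26_SumProd} (note $R_{2, S_n} \geq 1$ since $1 \in \ran (s_n)$).

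Next comes the substitute for Lemma~\ref{L_3708_Sim}. Fix $n$ and $i \in I_n$, and take the polar decomposition $s_n (i) = u_{n, i} h_{n, i}$ in $M_{d (n)} = \MP{d (n)}{2}$, with $u_{n, i}$ unitary and $h_{n, i}$ positive; the spectrum of $h_{n, i}$ is the set of singular values of $s_n (i)$, so $\| s_n (i)^{-1} \|_2 \, h_{n, i}$ is positive with spectrum contained in $\big[ 1, \, \| s_n (i) \|_2 \| s_n (i)^{-1} \|_2 \big]$. Put $w_{n, i} = u_{n, i} \big( \| s_n (i)^{-1} \|_2 \, h_{n, i} \big) u_{n, i}^{-1}$ and $\ta_{n, i} = \Ad ( u_{n, i} )$, a unital $*$-representation of $M_{d (n)}$; a direct computation (the scalar $\| s_n (i)^{-1} \|_2$ cancels) gives $s_n (i) a s_n (i)^{-1} = w_{n, i} \, \ta_{n, i} ( a ) \, w_{n, i}^{-1}$ for all $a \in M_{d (n)}$. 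Now form the $L^2$~direct sums over $i \in I_n$ (Remark~\ref{R_3913_DSElt}, Definition~\ref{D_3406_pSumDfn}): $w_n = \bigoplus_{i \in I_n} w_{n, i}$ and $\ta_n = \bigoplus_{i \in I_n} \ta_{n, i}$. Then $w_n$ is a positive invertible operator on $L^2 ( X_{S_n}, \mu_{S_n} )$ with $\| w_n \| = R_{2, S_n}$ and $\| w_n^{-1} \| = 1$, so that
\[
\| w_n - 1 \| = R_{2, S_n} - 1
\andeqn
\big\| w_n^{-1} - 1 \big\| = 1 - \frac{1}{R_{2, S_n}} \leq R_{2, S_n} - 1;
\]
moreover $\ta_n$ is a unital $*$-representation (a direct sum of such), and $\rh_n = \ps^{2, S_n} = \Ad ( w_n ) \circ \ta_n.$

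Given the first step, $\sum_{n = 1}^{\infty} \| w_n - 1 \| < \infty$ and $\sum_{n = 1}^{\infty} \big\| w_n^{-1} - 1 \big\| < \infty$, so I would apply Lemma~\ref{L_3805_SimTP} with $\rh_n^{(1)} = \rh_n$, with $\rh_n^{(2)} = \ta_n$, and with conjugating elements $w_n^{-1}$, obtaining $v \in \inv \big( L ( L^2 ( X_{I, s, f}, \mu_{I, s, f} ) ) \big)$ with $v A v^{-1} = A ( d, \ta )$, the $L^2$~UHF algebra of tensor product type built from the $\ta_n.$ Since each $\ta_n$ is a $*$-representation on a Hilbert space, each finite-stage algebra $\ta_{{\mathbb{N}}, {\mathbb{N}}_{\leq m}} ( M_{r_d (m)} )$ is a $*$-subalgebra of $L ( L^2 ( X_{I, s, f}, \mu_{I, s, f} ) )$ (tensor products and ampliations of $*$-representations on Hilbert spaces are $*$-representations), and hence $A ( d, \ta ) = {\overline{\bigcup_m \ta_{{\mathbb{N}}, {\mathbb{N}}_{\leq m}} ( M_{r_d (m)} )}}$ is a norm-closed $*$-subalgebra, that is, a \ca{} (in fact the UHF \ca{} with supernatural number $N_d$). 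This completes the argument.

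The step needing the most care is the replacement for Lemma~\ref{L_3708_Sim}. The obstruction to imitating that lemma directly is that the unitary part $u_{n, i}$ of the polar decomposition depends on $i$, so one cannot simultaneously diagonalize the matrices $s_n (i)$; but this causes no trouble here because, when $p = 2$, $u_{n, i}$ is an isometry and $\Ad ( u_{n, i} )$ is already a $*$-representation, so the ``unitary part'' need not be removed --- it is simply absorbed into the ``spatial'' representation $\ta_n.$ This is the one place where $p = 2$ is genuinely used, and it is why no diagonality hypothesis is needed.
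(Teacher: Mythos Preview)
Your second step (polar decomposition plus Lemma~\ref{L_3805_SimTP}) is correct and is essentially what the paper does at the end.  The gap is in the first step: you invoke Theorem~\ref{T_3330_ConseqAmen} for arbitrary systems of similarities, noting that its statement does not require diagonality.  But look at its proof.  It passes through Lemma~\ref{L_3330_LEst}, whose hypothesis explicitly demands a \emph{diagonal} system, and whose proof uses that $s(i) = \diag(\af_1,\ldots,\af_d)$ in order to apply Lemma~\ref{L_3330_Ineq}.  The tensor-product system $T$ produced by Theorem~\ref{T_3406_DIsLpUHF}(\ref{T_3406_DIsLpUHF_FinStg}) is diagonal only when every $S_l$ is.  So Theorem~\ref{T_3330_ConseqAmen} is established in the paper only in the diagonal case (the missing word ``diagonal'' in its statement is an oversight; note that its only other use, inside Theorem~\ref{T_3708_Third}, is for diagonal systems).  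Your chain therefore breaks at the very first link.

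The paper's proof of Theorem~\ref{T_3711_CStAmen} is precisely a device for repairing this.  For each $n$ it passes to a two-element subsystem $J_n = \{i_n, j_n\}$ with $s_n(i_n)=1$ and $\|s_n(j_n)\|\cdot\|s_n(j_n)^{-1}\|$ nearly equal to $R_{2,S_n}$; amenability transfers to the subsystem algebra via the dense-range map of Lemma~\ref{L_3907_MapToSub}.  The point of shrinking to two elements is that a \emph{single} nonidentity matrix can be made diagonal: first replace $s_n(j_n)$ by its positive part (Proposition~\ref{P_3906_NewIso}(\ref{P_3906_NewIso_Isometry})), then conjugate by a single unitary $v_n$ to diagonalize it (Proposition~\ref{P_3906_NewIso}(\ref{P_3906_NewIso_Conj}), which requires one $w_n$ independent of~$i$).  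Now Theorem~\ref{T_3708_Third} applies to this genuinely diagonal system, yielding $\sum_n(R_{2,T_n}-1)<\infty$ and hence $\sum_n(R_{2,S_n}-1)<\infty$.  Your observation that for $p=2$ the unitary part $u_{n,i}$ need not be uniform in~$i$ is correct and is exactly what drives the final similarity step; but it does not help with the amenability-to-bound step, because Lemma~\ref{L_3330_LEst} has no Hilbert-space shortcut in the paper.
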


Problem~30 in the ``Open Problems'' chapter of~\cite{Rnd}
asks whether an amenable closed subalgebra of the bounded operators
on a Hilbert space is similar to a \ca.
This problem has been open for some time,
and little seems to be known.
(See the discussion in the introduction.)
Theorem~\ref{T_3711_CStAmen}
shows that the answer is yes for a class of algebras which,
as far as we know,
is quite different from any other class considered in this context.

\begin{proof}[Proof of Theorem~\ref{T_3711_CStAmen}]
Let $R_{p, S}$
be as in Definition~\ref{D_3406_SySiRpn}.
By Proposition~\ref{P_3906_NewIso}(\ref{P_3906_NewIso_Scalar}),
for all $n \in \N$ and $i \in I_n,$
we can replace $s_n (i)$ by $\| s_n (i)^{-1} \| s_n (i).$
Therefore we may assume that $\| s_n (i)^{-1} \| = 1.$
(This change preserves the requirement $1 \in \ran (s_n).$)

We claim that
$\sum_{n = 1}^{\infty} (R_{2, S_n} - 1) < \infty.$
This is the main part of the proof.

To prove the claim,
for all $n \in \N$
choose $i_n, j_n \in I_n$
such that $s_n (i_n) = 1$ and
$\| s_n (j_n) \| \cdot \| s_n (j_n)^{-1} \| > R_{2, S_n} - 2^{-n}.$
Set $J_n = \{ i_n, j_n \}.$
We define systems $T_n^{(0)} = \big( J_n, t_n^{(0)}, g_n \big)$
of $d (n)$-similarities
as follows.
If $i_n = j_n,$
so that $J_n$ has only one element,
we set $t_n^{(0)} (i_n) = 1$ and $g_n (i_n) = 1.$
If $i_n \neq j_n,$
we set
\[
t_n^{(0)} = s_n |_{J_n},
\,\,\,\,\,\,
g_n (i_n) = \frac{f_n (i_n)}{f_n (i_n) + f_n (j_n)},
\andeqn
g_n (j_n) = \frac{f_n (j_n)}{f_n (i_n) + f_n (j_n)}.
\]
By construction,
we have $R_{2, T_n^{(0)}} > R_{2, S_n} - 2^{-n}.$
Set
\[
J = (J_1, J_2, \ldots),
\,\,\,\,\,\,
t^{(0)} = \big( t_1^{(0)}, t_2^{(0)}, \ldots \big),
\andeqn
g = (g_1, g_2, \ldots).
\]

Define a subsystem of $(d, \rh)$
as in Definition~\ref{D_3907_SubSys}
by taking
\[
Z_n = \{ 1, 2, \ldots, d (n) \} \times J_n = X_{d (n), T_n^{(0)}}
\]
for $n \in \N.$
Let $\ld_n$ and $\gm_n$ be as in Definition~\ref{D_3907_SubSys}.
Then $\ld_n = \mu_{d (n), T_n^{(0)}}.$
Let $B^{(0)}$ be
the $L^p$~UHF algebra of the subsystem $(Z_n)_{n \in \N}$
as in Definition~\ref{D_3907_SubSys},
which is equal to $D_{2, d, J, t^{(0)}, g}.$
Let $\kp \colon A \to B^{(0)}$ be the \hm{}
of Lemma~\ref{L_3907_MapToSub}.
Since $A$ is amenable and $\kp$ has dense range,
it follows from Proposition 2.3.1 of~\cite{Rnd} that $B^{(0)}$ is amenable.

For $n \in \N,$
define a function $z_n \colon J_n \to M_{d (n)}$
by $z_n (i_n) = 1$ and,
if $j_n \neq i_n,$
using polar decomposition to choose
a unitary $z_n (j_n)$ such that
\[
z_n (j_n) s_n (j_n) = [s_n (j_n)^* s_n (j_n)]^{1/2}.
\]
Define $t_n^{(1)} \colon J_n \to \inv (M_{d (n)})$
by $t_n^{(1)} (i) = z_n (i) s_n (i)$ for $i \in J_n.$
Then set $T_n^{(1)} = \big( J_n, t_n^{(1)}, g_n \big),$
which is a system of $d (n)$-similarities.
Define a sequence $t^{(1)}$ by
$t^{(1)} = \big( t_1^{(1)}, t_2^{(1)}, \ldots \big),$
and set $B^{(1)} = D_{2, d, J, t^{(1)}, g}.$
Proposition~\ref{P_3906_NewIso}(\ref{P_3906_NewIso_Isometry})
implies that $B^{(1)}$ is isometrically isomorphic to $B^{(0)},$
so that $B^{(1)}$ is amenable,
and that $R_{2, T_n^{(1)}} = R_{2, T_n^{(0)}}$ for all $n \in \N.$

For $n \in \N,$
further choose a unitary $v_n \in M_{d (n)}$
such that $v_n t_n^{(1)} (j_n) v_n^*$ is diagonal.
Define $t_n \colon J_n \to \inv (M_{d (n)})$
by $t_n (i) = v_n (i) t_n^{(1)} (i) v_n (i)^*$ for $i \in I_n.$
Then set $T_n = \big( J_n, t_n, g_n \big),$
which is a system of diagonal $d (n)$-similarities.
Define a sequence $t$ by $t = \big( t_1, t_2, \ldots \big),$
and set $B = D_{2, d, J, t, g}.$
Proposition~\ref{P_3906_NewIso}(\ref{P_3906_NewIso_Conj})
implies that $B$ is isometrically isomorphic to $B^{(1)},$
so that $B$ is amenable,
and that $R_{2, T_n} = R_{2, T_n^{(0)}}$ for all $n \in \N.$
From Theorem~\ref{T_3708_Third} we get
$\sum_{n = 1}^{\infty} (R_{2, T_n} - 1) < \infty.$
Therefore
\[
\sum_{n = 1}^{\infty} (R_{2, S_n} - 1)
 < \sum_{n = 1}^{\infty} \left( R_{2, T_n} + \frac{1}{2^n} - 1 \right)
 < \infty.
\]
The claim is proved.

For $n \in \N$ and $i \in I_n,$
use the polar decomposition of $s_n (i)^*$
to find a selfadjoint element $c_n (i) \in M_{d (n)}$
and a unitary $u_n (i) \in M_{d (n)}$
such that $s_n (i) = c_n (i) u_n (i).$
Then $(S_n, u_n, f_n)$ is a system of $d (n)$-similarities.
We have
\[
\| c_n (i) \| = \| s_n (i) \|
\andeqn
\| c_n (i)^{-1} \| = \| s_n (i)^{-1} \| = 1,
\]
so $1 \leq c_n (i) \leq \| s_n (i) \| \cdot 1.$
Therefore
$\| c_n (i) - 1 \| \leq \| s_n (i) \| - 1 \leq R_{2, S_n} - 1.$
Also
\[
\| c_n (i)^{-1} - 1 \|
 \leq \| c_n (i)^{-1} \| \cdot \| 1 - c_n (i) \|
 = \| 1 - c_n (i) \|
 \leq R_{2, S_n} - 1.
\]
Recalling that
$X_{T_n} = \{ 1, 2, \ldots, d (n) \} \times I_n,$
and applying Lemma~\ref{L_3903_ChM}
and Remark~\ref{R_3913_DSElt},
let $c_n \in \inv ( L ( L^2 (X_{T_n}, \mu_{T_n} ))$
be the direct sum over $i \in I_n$
of the elements $c_n (i).$
Thus $\| c_n - 1 \| \leq R_{2, S_n} - 1$
and $\| c_n^{-1} - 1 \| \leq R_{2, S_n} - 1.$

Set $u = (u_1, u_2, \ldots).$
Since $\sum_{n = 1}^{\infty} (R_{2, S_n} - 1) < \infty,$
we have $\sum_{n = 1}^{\infty} \| c_n - 1 \| < \infty$
and $\sum_{n = 1}^{\infty} \| c_n^{-1} - 1 \| < \infty.$
So Lemma~\ref{L_3805_SimTP}
provides an invertible element
$v \in L (L^2 (X_{I, s, f}, \mu_{I, s, f})$
such that $v A v^{-1} = D_{2, d, I, u, f}.$
Since $u_{n} (i)$ is unitary for all $n \in \N$ and $i \in I_n,$
it is immediate that $D_{2, d, I, u, f}$
is a C*-subalgebra of $L (L^2 (X_{I, s, f}, \mu_{I, s, f}).$
\end{proof}

\begin{qst}\label{Q_3121_AmenSp}
Let $p \in [1, \infty).$
Let $A$ be an amenable $L^p$~UHF algebra,
but not of the type to which Theorem~\ref{T_3708_Third} applies.
Does it follow that $A$ is isomorphic to a spatial $L^p$~UHF algebra?
Does it follow that $A$ is similar to a spatial $L^p$~UHF algebra?
What if we assume, say, that $A$ is an $L^p$~UHF algebra
of tensor product type?
What if we assume that $A$ is in fact symmetrically amenable?
\end{qst}

Amenability of spatial $L^p$~UHF algebras
is used in the proof of Corollary~5.18 of~\cite{PhLp2a}
to prove that spatial representations
of the Leavitt algebra~$L_d$
generate amenable Banach algebras.
(That is, $\OP{d}{p}$ is amenable for $p \in [1, \infty)$
and $d \in \{ 2, 3, \ldots \}.$)
It is clear from Theorem~\ref{T_3708_Third}
(and will be much more obvious from Theorem~\ref{T_3326_Uctbl} below)
that there are $L^p$~UHF algebras of tensor product type
which are not amenable.
However,
we do not know how to use known examples to construct
nonamenable versions of~$\OP{d}{p}.$

\begin{qst}\label{Q_3121_CtNAmen}
Let $p \in [1, \infty)$
and let $d \in \{ 2, 3, \ldots \}.$
Does there exist a \sfm{} $\XBM$
and a \rpn{} $\rh$ of the Leavitt algebra~$L_d$
on $L^p (X, \mu)$
such that $\| \rh (s_j) \| = 1$ and $\| \rh (t_j) \| = 1$
for $j = 1, 2, \ldots, d,$
but such that ${\overline{\rh (L_d)}}$ is not amenable?
\end{qst}

The situation is a bit different from what we have considered here.
If $\| \rh (s_j) \| = 1$ and $\| \rh (t_j) \| = 1$
for $j = 1, 2, \ldots, d,$
then the standard matrix units
in the analog for ${\overline{\rh (L_d)}}$
of the UHF core of ${\mathcal{O}}_d$
all have norm~$1.$
We do not know whether there are nonamenable
$L^p$~UHF algebras
in which all the standard matrix units have norm~$1.$

\section{Many nonisomorphic $L^p$~UHF algebras}\label{Sec_ManyNI}

\indent
In this section,
for fixed $p \in (1, \infty)$
and a fixed supernatural number~$N,$
we prove that there are uncountably many
mutually nonisomorphic $L^p$~UHF algebras
of infinite tensor product type
with the same supernatural number~$N.$
We rule out not just isometric isomorphism but
isomorphisms which convert the norm to an equivalent norm.
Our algebras are all obtained using diagonal similarities,
so are covered by Theorem~\ref{T_3708_Third}.
In particular, all except possibly one of them is not amenable.

We do not use any explicit invariant.
Rather, we give lower bounds on the norms
of nonzero \hm{s} from matrix algebras into the
algebras we consider,
which increase with the size of the matrix algebra.
We use these to prove the nonexistence of \ct{} \hm{s}
between certain pairs of our algebras.

Our construction uses a class of diagonal similarities
which is easy to deal with,
and which we now introduce.

\begin{ntn}\label{N_3126_Qld}
Let $p \in [1, \infty),$
let $d \in \N,$
and let $\gm \in [1, \infty).$
We define $K_{d, \gm} \subset \inv (M_d)$ to be the
compact set consisting of
all diagonal matrices in $M_d$ whose diagonal entries are
all in $[1, \gm].$
When we need a system $S_{d, \gm} = (I_{d, \gm}, s_{d, \gm}, f_{d, \gm})$
of $d$-similarities
such that ${\overline{\ran (s_{d, \gm}) }} = K_{d, \gm},$
we take $I_{d, \gm}$ to consist
of all diagonal matrices in $M_d$ whose diagonal entries are
all in $[1, \gm] \cap \Q$
and $s_{d, \gm}$ to be the identity map.
For each $d$ and~$\gm,$
we choose once and for all an arbitrary function
$f_{d, \gm} \colon I_{d, \gm} \to (0, 1]$
satisfying $\sum_{i \in I_{d, \gm}} f (i) = 1.$

Let $\YCN$ be a \sfm,
and let $A \subset L (L^p (Y, \nu))$ be a closed subalgebra.
In Definition~\ref{D_3406_MP},
we make the following abbreviations.
\begin{enumerate}
\item\label{N_3126_Qld_MP}
$\MP{d}{p, \gm} = \MP{d}{p, K_{d, \gm}}.$
\item\label{N_3126_Qld_Norm}
$\| \cdot \|_{p, \gm} = \| \cdot \|_{p, K_{d, \gm}}.$
\item\label{N_3126_Qld_kp}
$\kp_{d}^{p, \gm_2, \gm_1} = \kp^{p, K_{d, \gm_2}, K_{d, \gm_1}}$
and
$\kp_{d, A}^{p, \gm_2, \gm_1} = \kp_A^{p, K_{d, \gm_2}, K_{d, \gm_1}}$
for $\gm_1, \gm_2 \in [1, \infty).$
\end{enumerate}
In particular,
$\MP{d}{p, 1} = \MP{d}{p}.$
\end{ntn}

We adapt Lemma~\ref{L_3405_Basics}
and Lemma~\ref{L_3406_NormDg} to our current situation.

\begin{lem}\label{L_3126_QBasic}
Adopt Notation~\ref{N_3126_Qld}.
Also let $\YCN$ be a \sfm,
and let $A \subset L (L^p (Y, \nu))$ be a closed unital subalgebra.
Then:
\begin{enumerate}
\item\label{L_3126_QBasic_Norm}
$\| \kp_{d, A}^{p, \gm, 1} \| = \gm.$
\item\label{L_3126_QBasic_Inv}
For $\gm_2 \geq \gm_1 \geq 1,$
we have
$\big\| \kp_{d, A}^{p, \gm_1, \gm_2} \big\| = 1.$
\item\label{L_3126_QBasic_Nejk}
For $j, k = 1, 2, \ldots, d$ and $a \in A,$
in $\MP{d}{p, \gm} \otimes_p A$ we have
\[
\| e_{j, k} \otimes a \|_{p, \gm}
 = \begin{cases}
   \| a \|     & j = k
        \\
   \gm \| a \| & j \neq k.
\end{cases}
\]
\item\label{L_3126_QBasic_Sim}
For any $x \in M_d \otimes A,$
we have
\[
\| x \|_{p, \gm}
 = \sup_{v \in K_{d, \gm}} \| (v \otimes 1) x (v^{-1} \otimes 1) \|_p.
\]
\item\label{L_3126_QBasic_Diag}
Let $a_1, a_2, \ldots, a_d \in A.$
Then in $\MP{d}{p, \gm} \otimes_p A,$
we have
\[
\big\| e_{1, 1} \otimes a_1 + e_{2, 2} \otimes a_2 + \cdots
    + e_{d, d} \otimes a_d \big\|_{p, \gm}
 = \max \big( \| a_1 \|, \, \| a_2 \|, \, \ldots, \, \| a_d \| \big).
\]
\end{enumerate}
\end{lem}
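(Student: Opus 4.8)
The plan is to reduce all five statements to the general results already proved for diagonal systems of similarities, once the relevant numerical invariants of the particular systems $S_{d, \gm} = (I_{d, \gm}, s_{d, \gm}, f_{d, \gm})$ have been identified.

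First I would note that $S_{d, \gm}$ is a diagonal system of $d$-similarities with $\overline{\ran (s_{d, \gm})} = K_{d, \gm}$, so that Definition~\ref{D_3406_MP} and Notation~\ref{N_3126_Qld} are applicable. The key computation is that of the numbers $r_{j, k}$ appearing in Lemma~\ref{L_3406_NormDg}: writing a typical element of $I_{d, \gm}$ as $\diag (\af_1, \af_2, \ldots, \af_d)$ with each $\af_l \in [1, \gm] \cap \Q$, one has $r_{j, j} = 1$ trivially, while for $j \neq k$ (which forces $d \geq 2$) the entries in positions $j$ and $k$ may be chosen independently, and since $[1, \gm] \cap \Q$ is dense in $[1, \gm]$ we get $r_{j, k} = \sup \big( \big\{ \af_j \af_k^{-1} \colon \af_j, \af_k \in [1, \gm] \cap \Q \big\} \big) = \gm$. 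In particular Lemma~\ref{L_3406_NormDg}(\ref{L_3406_NormDg_Nrho}) yields $R_{p, S_{d, \gm}} = \gm$.

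Granting this, the five parts are essentially immediate citations. Part~(\ref{L_3126_QBasic_Norm}) is Lemma~\ref{L_3405_Basics}(\ref{L_3405_Basics_FrmBase}) applied with $S = S_{d, \gm}$ and the basic system, using $R_{p, S_{d, \gm}} = \gm$. Part~(\ref{L_3126_QBasic_Inv}) follows from Lemma~\ref{L_3405_Basics}(\ref{L_3405_Basics_Sub}), since $\gm_1 \leq \gm_2$ gives $K_{d, \gm_1} \subset K_{d, \gm_2}$, that is, $\overline{\ran (s_{d, \gm_1})} \subset \overline{\ran (s_{d, \gm_2})}$. Part~(\ref{L_3126_QBasic_Nejk}) is Lemma~\ref{L_3406_NormDg}(\ref{L_3406_NormDg_Normejk}) combined with the values $r_{j, j} = 1$ and $r_{j, k} = \gm$ for $j \neq k$ just obtained. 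Part~(\ref{L_3126_QBasic_Sim}) is the description of $\| \cdot \|_{p, S}$ as $\sup \big( \big\{ \| (v \otimes 1) x (v^{-1} \otimes 1) \|_p \colon v \in \overline{\ran (s)} \big\} \big)$ established inside the proof of Lemma~\ref{L_3405_Basics}, specialized to $\overline{\ran (s_{d, \gm})} = K_{d, \gm}$. Part~(\ref{L_3126_QBasic_Diag}) is Lemma~\ref{L_3406_NormDg}(\ref{L_3406_NormDg_Sum}).

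The only step that is not a verbatim quotation is the computation $r_{j, k} = \gm$ for $j \neq k$, and this is also the only place where the specific shape of $K_{d, \gm}$ --- that it consists of all diagonal matrices with entries in $[1, \gm]$, so that distinct diagonal entries can be varied independently --- is actually used. I expect this to be the main (and quite mild) obstacle; everything else is bookkeeping against the already established lemmas.
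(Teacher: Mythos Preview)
Your proposal is correct and follows essentially the same route as the paper: compute the $r_{j,k}$ for the diagonal system $S_{d,\gm}$ (getting $r_{j,j}=1$ and $r_{j,k}=\gm$ for $j\neq k$, hence $R_{p,S_{d,\gm}}=\gm$), and then read off each part from the corresponding item of Lemma~\ref{L_3405_Basics} or Lemma~\ref{L_3406_NormDg}. The only cosmetic difference is that for part~(\ref{L_3126_QBasic_Sim}) the paper cites Lemma~\ref{L_3405_Basics}(\ref{L_3405_Basics_NForm}) directly (a supremum over $i\in I$), whereas you cite the closure version displayed inside that proof; these agree by continuity and the density $\overline{\ran(s_{d,\gm})}=K_{d,\gm}$.
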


The estimates in (\ref{L_3126_QBasic_Norm})
and~(\ref{L_3126_QBasic_Inv})
mean that $\| x \| \leq \| \kp_{d}^{p, \gm, 1} (x) \| \leq \gm \| x \|$
for all $x \in \MP{d}{p}.$

The reason for our definition of $K_{d, \gm}$
is to ensure that
$\bt \geq \gm$ implies $K_{d, \gm} \subset K_{d, \bt}.$
If we used diagonal matrices with diagonal entries
in $\{ 1, \gm \},$
our (easy) proof that $\| x \|_{p, \bt} \geq \| x \|_{p, \gm}$
would break down,
and we do not know whether
this inequality would still be true.

\begin{proof}[Proof of Lemma~\ref{L_3126_QBasic}]
For $j, k \in \{ 1, 2, \ldots, d \},$
let $r_{j, k}$ be as in
Lemma~\ref{L_3406_NormDg}
with $K = K_{d, \gm}.$
Then $r_{j, j} = 1$ for $j = 1,2, \ldots, d$
by
Lemma~\ref{L_3406_NormDg}(\ref{L_3406_NormDg_Omjj}),
and it is easy to check that
$r_{j, k} = \gm$ for $j, k \in \{ 1, 2, \ldots, d \}$ with $j \neq k.$

Part~(\ref{L_3126_QBasic_Norm})
is now immediate from
Lemma~\ref{L_3406_NormDg}(\ref{L_3406_NormDg_Nrho}),
and part~(\ref{L_3126_QBasic_Inv})
is immediate from Lemma \ref{L_3405_Basics}(\ref{L_3405_Basics_Sub}).
Using the computation of $r_{j, k}$ above,
part~(\ref{L_3126_QBasic_Nejk})
follows from Lemma~\ref{L_3406_NormDg}(\ref{L_3406_NormDg_Normejk})
and~(\ref{L_3406_NormDg_Diag}).

Part~(\ref{L_3126_QBasic_Sim})
is Lemma \ref{L_3405_Basics}(\ref{L_3405_Basics_NForm}),
and
part~(\ref{L_3126_QBasic_Diag})
follows from Lemma \ref{L_3406_NormDg}(\ref{L_3406_NormDg_Sum}).
\end{proof}

\begin{ntn}\label{N_3325_InfTPN}
Let $d = (d (1), \, d (2), \, \ldots)$
be a sequence in $\{ 2, 3, \ldots \},$
and let $\gm = (\gm (1), \, \gm (2), \, \ldots)$
be a sequence in $[1, \infty).$
Using the choices of Notation~\ref{N_3126_Qld},
define
\[
I_{d, \gm} = (I_{d (1), \gm (1)}, I_{d (2), \gm (2)}, \ldots ),
\,\,\,\,\,\,
s_{d, \gm} = (s_{d (1), \gm (1)}, s_{d (2), \gm (2)}, \ldots ),
\]
and
\[
f_{d, \gm} = (f_{d (1), \gm (1)}, f_{d (2), \gm (2)}, \ldots ).
\]
Following Definition~\ref{D_3406_SfUHF},
we now set $B_d^{p, \gm} = D_{p, d, I_{d, \gm}, s_{d, \gm}, f_{d, \gm}}.$
When $d,$ $p,$ and $\gm$ are understood,
we just call this algebra~$B.$
For $n \in \Nz,$
we then further set
$B_n = D_{p, d, I_{d, \gm}, s_{d, \gm}, f_{d, \gm}}^{(0, n)}.$
Thus
\[
B_n = \MP{d (1)}{p, \gm (1)} \otimes_p \MP{d (2)}{p, \gm (2)} \otimes_p
       \cdots \otimes_p \MP{d (n)}{p, \gm (n)}.
\]
We can isometrically identify $B_0, B_1, \ldots$
with subalgebras of~$B$
in such a way that $B_0 \subset B_1 \subset \cdots$
and $B = {\overline{\bigcup_{n = 0}^{\infty} B_n}}.$
\end{ntn}

Thus,
$B_d^{p, \gm}$ is the $L^p$~spatial infinite tensor product
of the algebras $\MP{d (n)}{p, \gm (n)}.$

\begin{lem}\label{L_3326_DomRel}
Let $p \in [1, \infty),$
let $d = (d (1), \, d (2), \, \ldots)$
be a sequence in $\{ 2, 3, \ldots \},$
and let $\bt = (\bt (1), \, \bt (2), \, \ldots)$
and $\gm = (\gm (1), \, \gm (2), \, \ldots)$
be sequences
in $[1, \infty).$
Let $l \in \N,$
and suppose that
$\gm (j) \geq \bt (j)$ for $j = l + 1, \, l + 2, \, \ldots.$
Then the obvious map on the algebraic direct limits
extends to a \ct{} unital \hm{}
from $B_d^{p, \gm}$ to $B_d^{p, \bt}$
which has dense range.
\end{lem}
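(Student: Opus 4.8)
The plan is to construct the homomorphism one finite stage at a time and then pass to the completion. For $n \in \Nz$ write $B_n^{\gm}$ and $B_n^{\bt}$ for the $n$-th stage algebras appearing in Notation~\ref{N_3325_InfTPN} for $B_d^{p, \gm}$ and $B_d^{p, \bt}$ respectively; as algebras, both are $M_{d (1)} \otimes \cdots \otimes M_{d (n)}$, so there is a canonical algebraic isomorphism $\te_n \colon B_n^{\gm} \to B_n^{\bt}$ (the identity on the underlying matrix algebra), and it is a unital homomorphism. Because all connecting maps in both direct systems are $x \mapsto x \otimes 1$, the $\te_n$ are compatible, that is, $\te_{n + 1}|_{B_n^{\gm}} = \te_n$. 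Using that the $B_n^{\gm}$ are isometrically embedded in $B_d^{p, \gm}$ and the $B_n^{\bt}$ in $B_d^{p, \bt}$ (Notation~\ref{N_3325_InfTPN}), the common value of the $\te_n$ defines a unital homomorphism $\te$ from the dense subalgebra $\bigcup_{n = 0}^{\infty} B_n^{\gm}$ of $B_d^{p, \gm}$ onto the dense subalgebra $\bigcup_{n = 0}^{\infty} B_n^{\bt}$ of $B_d^{p, \bt}$. This is the ``obvious map on the algebraic direct limits'', and if I can show $\sup_n \| \te_n \| < \infty$, then $\te$ is bounded and extends by continuity to a continuous unital homomorphism $B_d^{p, \gm} \to B_d^{p, \bt}$, whose range contains $\bigcup_n B_n^{\bt}$ and hence is dense. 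So everything reduces to a uniform bound on $\| \te_n \|$.

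To bound $\| \te_n \|$ I would factor $\te_n$ through a chain in which only one tensor factor is rescaled at a time. For $0 \le k \le n$ set
\[
C_{n, k} = \MP{d (1)}{p, \bt (1)} \otimes_p \cdots \otimes_p \MP{d (k)}{p, \bt (k)}
   \otimes_p \MP{d (k + 1)}{p, \gm (k + 1)} \otimes_p \cdots \otimes_p \MP{d (n)}{p, \gm (n)},
\]
so that $C_{n, 0} = B_n^{\gm}$, $C_{n, n} = B_n^{\bt}$, and $\te_n = \te_{n, n} \circ \cdots \circ \te_{n, 1}$, where $\te_{n, k} \colon C_{n, k - 1} \to C_{n, k}$ is the algebraic identity. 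After permuting tensor factors to move the $k$-th one to the front (isometric by Lemma~1.11 of~\cite{PhLp2a}, and Theorem~2.16(5) of~\cite{PhLp1} at the level of the underlying $L^p$ spaces), and writing the product of the remaining factors as a finite-dimensional, hence closed, unital subalgebra $A$ of an $L^p$ operator algebra, the map $\te_{n, k}$ becomes $\kp_{d (k), A}^{p, \bt (k), \gm (k)}$ in the notation of Notation~\ref{N_3126_Qld}(\ref{N_3126_Qld_kp}). For $k > l$ we have $\bt (k) \le \gm (k)$, so Lemma~\ref{L_3126_QBasic}(\ref{L_3126_QBasic_Inv}) gives $\| \te_{n, k} \| = 1$. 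For $k \le l$, the composition law Lemma~\ref{L_3405_Basics}(\ref{L_3405_Basics_Comp}) lets me write $\kp_{d (k), A}^{p, \bt (k), \gm (k)} = \kp_{d (k), A}^{p, \bt (k), 1} \circ \kp_{d (k), A}^{p, 1, \gm (k)}$; the second factor has norm $1$ by Lemma~\ref{L_3126_QBasic}(\ref{L_3126_QBasic_Inv}) (since $\gm (k) \ge 1$) and the first has norm $\bt (k)$ by Lemma~\ref{L_3126_QBasic}(\ref{L_3126_QBasic_Norm}), so $\| \te_{n, k} \| \le \bt (k)$. Hence
\[
\| \te_n \| \le \prod_{k = 1}^{n} \| \te_{n, k} \| \le \prod_{k = 1}^{l} \bt (k),
\]
which is finite and independent of $n$, giving the required bound.

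The only delicate part is the second paragraph: one must keep careful track of which tensor factor is being rescaled, check that each intermediate algebra $C_{n, k}$ genuinely arises in the form $\MP{d (k)}{p, \cdot} \otimes_p A$ for a legitimate closed unital $L^p$ operator subalgebra $A$ so that Notation~\ref{N_3126_Qld} and Lemma~\ref{L_3126_QBasic} apply, and verify that all the tensor-factor permutations invoked are isometric. None of this is deep, but it is where essentially all the bookkeeping lives; the rest of the argument is formal.
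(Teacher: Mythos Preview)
Your proof is correct. The paper's argument reaches the same conclusion by a slightly different route: rather than bounding each finite-stage map $\te_n$ by factoring it into single-coordinate changes, the paper splits $B_d^{p,\gm}$ as $D_{p,d,I_{d,\gm},s_{d,\gm},f_{d,\gm}}^{(0,l)} \otimes_p D_{p,d,I_{d,\gm},s_{d,\gm},f_{d,\gm}}^{(l,\infty)}$ and handles the two pieces separately. For the tail (indices $> l$), it invokes the subsystem machinery of Lemma~\ref{L_3907_MapToSub}(\ref{3907_MapToSub_TP}) to obtain a \emph{contractive} map replacing $\gm$ by $\bt$ on that part; for the head (the first $l$ factors), it simply notes that $M_{r_d(l)}$ is finite dimensional, so any two algebra norms are equivalent and the identity extends continuously. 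Your approach is more elementary and explicit---it avoids the subsystem framework entirely and yields the concrete bound $\prod_{k=1}^{l} \bt(k)$ for the norm of the resulting homomorphism---at the cost of the bookkeeping with the intermediate algebras $C_{n,k}$ and the tensor-factor permutations. The paper's approach is shorter because it reuses machinery already developed, but it gives no explicit norm bound for the head part.
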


\begin{proof}
We follow Notation~\ref{N_3325_InfTPN}
and the notation of Definition~\ref{D_3406_SfUHF}.
We apply part~(\ref{3907_MapToSub_TP})
of the conclusion of Lemma~\ref{L_3907_MapToSub},
with $D = D_{p, d, I_{d, \gm}, s_{d, \gm}, f_{d, \gm}}^{(0, l)},$
to get a contractive unital \hm{}
\[
\ta \colon B_d^{p, \gm} \to
 D_{p, d, I_{d, \gm}, s_{d, \gm}, f_{d, \gm}}^{(0, l)}
   \otimes_p D_{p, d, I_{d, \bt}, s_{d, \bt}, f_{d, \bt}}^{(l, \infty)}
\]
which has dense range.
As algebras, we have
\[
D_{p, d, I_{d, \gm}, s_{d, \gm}, f_{d, \gm}}^{(0, l)}
 = D_{p, d, I_{d, \bt}, s_{d, \bt}, f_{d, \bt}}^{(0, l)}
 = M_{r_d (l)}.
\]
Since $M_{r_d (l)}$ is finite dimensional, the identity map
on
$M_{r_d (l)} \otimes_{\text{alg}}
  D_{p, d, I_{d, \bt}, s_{d, \bt}, f_{d, \bt}}^{(l, \infty)}$
extends to a \ct{} bijection
\[
D_{p, d, I_{d, \gm}, s_{d, \gm}, f_{d, \gm}}^{(0, l)}
   \otimes_p D_{p, d, I_{d, \bt}, s_{d, \bt}, f_{d, \bt}}^{(l, \infty)}
 \to D_{p, d, I_{d, \bt}, s_{d, \bt}, f_{d, \bt}}^{(0, l)}
   \otimes_p D_{p, d, I_{d, \bt}, s_{d, \bt}, f_{d, \bt}}^{(l, \infty)}
 = B_d^{p, \bt}.
\]
Compose this map with $\ta$ to complete the proof of the lemma.
\end{proof}

We now give some results on perturbations
of \hm{s} from direct sums of matrix algebras.
We adopt some notation from the beginning of Section~1 of~\cite{Jh0}.

\begin{dfn}\label{D_3326_JhNtn}
Let $A$ and $D$ be Banach algebras,
and let $T \colon A \to D$ be a bounded linear map.
We define a bounded bilinear map
$T^{\vee} \colon A \times A \to D$
by
\[
T^{\vee} (x, y) = T (x y) - T (x) T (y)
\]
for $x, y \in A.$
We further define $d (T)$ to be the distance from $T$ to the set
of bounded \hm{s} from $A$ to~$D,$
that is,
\[
d (T)
 = \inf \big( \big\{ \| T - \ph \| \colon
    {\mbox{$\ph \colon A \to D$ is a continuous homomorphism}} \big\}
    \big).
\]
\end{dfn}

Following~\cite{Jh0},
in the notation of Definition~\ref{D_3326_JhNtn}
it is easy to check that $d (T) = 0$ \ifo{}  $T$ is a \hm.

We recall that if $E,$ $F,$ and~$G$
are Banach spaces,
and $b \colon E \times F \to G$ is a bilinear map,
then $\| b \|$ is the least constant~$M$
such that $\| b (\xi, \et) \| \leq M \| \xi \| \cdot \| \et \|$
for all $\xi \in E$ and $\et \in F.$
We next recall the following estimate.

\begin{lem}[Proposition~1.1 of~\cite{Jh0}]\label{L_3326_JhProp}
Adopt the notation of Definition~\ref{D_3326_JhNtn}.
Then $\| T^{\vee} \| \leq \big( 1 + d (T) + 2 \| T \| \big) d (T).$
\end{lem}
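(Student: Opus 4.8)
The plan is to use the definition of $d(T)$ directly: approximate $T$ by a bounded homomorphism, and estimate the multiplicativity defect $T^{\vee}$ in terms of the (small) distance from $T$ to that homomorphism. First I would note that the zero map is always a bounded homomorphism $A \to D$, so the set appearing in Definition~\ref{D_3326_JhNtn} is nonempty and $d(T) \le \|T\| < \infty$; thus the asserted inequality is a genuine finite estimate.

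Next, fix $\ep > 0$ and choose a bounded homomorphism $\ph \colon A \to D$ with $\|T - \ph\| \le d(T) + \ep$. Set $S = T - \ph$, so that $\|S\| \le d(T) + \ep$ and $\|\ph\| \le \|T\| + \|S\|$. I would then record the algebraic identity
\[
T^{\vee}(x, y) = S(x y) - S(x)\, T(y) - \ph(x)\, S(y) \qquad (x, y \in A),
\]
which follows by substituting $T = \ph + S$ into the right-hand side and cancelling $\ph(x)\ph(y) = \ph(x y)$. Writing the two ``error'' terms asymmetrically --- one retaining a factor of $T$, the other a factor of $\ph$ --- is what produces the sharp constant: only one of them is charged the extra $\|S\|$ coming from the bound $\|\ph\| \le \|T\| + \|S\|$.

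The remaining step is routine. By submultiplicativity of the norms on $A$ and $D$,
\[
\|T^{\vee}(x, y)\| \le \big( \|S\| + \|S\|\,\|T\| + \|\ph\|\,\|S\| \big)\,\|x\|\,\|y\| \le \|S\|\big( 1 + 2\|T\| + \|S\| \big)\,\|x\|\,\|y\|,
\]
so that $\|T^{\vee}\| \le \|S\|(1 + 2\|T\| + \|S\|) \le (d(T) + \ep)\big(1 + 2\|T\| + d(T) + \ep\big)$; letting $\ep \to 0$ gives $\|T^{\vee}\| \le \big(1 + d(T) + 2\|T\|\big)\, d(T)$. I do not expect a genuine obstacle here; the only point needing care is that the infimum defining $d(T)$ need not be attained, which is why one works with an $\ep$-approximate homomorphism rather than an exact closest one, and the only mildly clever ingredient is the asymmetric decomposition of $T^{\vee}$ above, without which one would obtain a worse constant (e.g.\ with a coefficient $3 d(T)$ in place of $d(T)$).
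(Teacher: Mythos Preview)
Your proof is correct. The paper does not give its own proof of this lemma; it simply records the statement as Proposition~1.1 of~\cite{Jh0} and cites that reference. Your argument---approximating $T$ by a homomorphism $\ph$, writing $S = T - \ph$, and using the asymmetric decomposition $T^{\vee}(x,y) = S(xy) - S(x)T(y) - \ph(x)S(y)$ to absorb only one factor of $\|S\|$ into the $\|\ph\|$ term---is the standard proof and matches what one finds in Johnson's paper.
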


The following result,
but with $\dt$ depending on~$D,$
is Corollary 3.2 of~\cite{Jh0}.

\begin{lem}\label{L_3325_fdsj}
Let $A$ be a Banach algebra which is isomorphic,
as a complex algebra,
to a finite direct sum of full matrix algebras.
The for every $\ep > 0$ and $M \in [0, \infty)$
there is $\dt > 0$ such that,
whenever $D$ is a Banach algebra and $T \colon A \to D$
is a linear map with $\| T \| \leq M$
and $\| T^{\vee} \| < \dt,$
then there is a \hm{} $\ph \colon A \to D$
such that $\| \ph - T \| < \ep.$
\end{lem}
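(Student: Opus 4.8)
The plan is to obtain this uniform statement from the version proved in~\cite{Jh0} --- Corollary~3.2 there, in which $\dt$ is allowed to depend on~$D$ --- by a product trick that removes the dependence on~$D.$ The key observation is that, for any countable family $(D_n)_{n \in \N}$ of Banach algebras, each $D_n$ is simultaneously an isometric subalgebra of and a contractive algebra quotient of their $\ell^{\infty}$-direct sum~$D$; so applying the fixed-$D$ statement to the single algebra~$D$ already controls every $D_n$ at once.

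I would argue by contradiction. Suppose the uniform statement fails for some $\ep > 0$ and $M \in [0, \infty).$ Then for each $n \in \N$ there are a Banach algebra $D_n$ and a linear map $T_n \colon A \to D_n$ with $\| T_n \| \leq M$ and $\| T_n^{\vee} \| < \frac{1}{n},$ but such that $\| \ph - T_n \| \geq \ep$ for every \hm{} $\ph \colon A \to D_n.$ Let $D$ be the Banach algebra of all bounded sequences $(b_n)_{n \in \N}$ with $b_n \in D_n,$ with coordinatewise operations and the supremum norm. For $n \in \N,$ let $\iota_n \colon D_n \to D$ send $b$ to the sequence with $b$ in coordinate~$n$ and $0$ elsewhere, and let $\pi_n \colon D \to D_n$ be the projection onto coordinate~$n$; then $\iota_n$ is an isometric \hm, $\pi_n$ is a \hm{} with $\| \pi_n \| \leq 1,$ and $\pi_n \circ \iota_n = \id_{D_n}.$ Since $A$ is, as a complex algebra, a finite direct sum of full matrix algebras, Corollary~3.2 of~\cite{Jh0} applies to the single algebra~$D$: there is $\dt > 0$ --- depending on $D,$ hence on our chosen sequence, which is harmless here --- such that every linear map $S \colon A \to D$ with $\| S \| \leq M$ and $\| S^{\vee} \| < \dt$ is within~$\ep$ of a \hm{} from $A$ to~$D.$ Fix $n \in \N$ with $\frac{1}{n} < \dt$ and set $S = \iota_n \circ T_n \colon A \to D.$ Then $\| S \| \leq M,$ and since $\iota_n$ is multiplicative one has $S^{\vee} = \iota_n \circ T_n^{\vee},$ so $\| S^{\vee} \| = \| T_n^{\vee} \| < \dt.$ Hence there is a \hm{} $\ps \colon A \to D$ with $\| \ps - S \| < \ep.$ Then $\ph = \pi_n \circ \ps$ is a \hm{} from $A$ to~$D_n,$ and, using $T_n = \pi_n \circ \iota_n \circ T_n = \pi_n \circ S$ and $\| \pi_n \| \leq 1,$
\[
\| \ph - T_n \| = \| \pi_n \circ \ps - \pi_n \circ S \| \leq \| \ps - S \| < \ep,
\]
contradicting the choice of~$T_n.$ So a $\dt$ depending only on $\ep,$ $M,$ and~$A$ exists.

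This argument carries no estimates of its own: the substantive input is Johnson's fixed-$D$ perturbation theorem, and the only thing to check is that it is available for the $\ell^{\infty}$-direct sum~$D,$ which it is since that theorem holds for arbitrary Banach algebras. The main conceptual point --- and the one place to be careful --- is just the retract structure $\pi_n \circ \iota_n = \id_{D_n}$ with both maps homomorphisms. An alternative, computational route would instead use $A \cong \bigoplus_i M_{k_i},$ extract the approximate system of matrix units $\bigl( T(e^i_{j,k}) \bigr)$ (whose defining relations hold up to $O(\dt),$ being equal to $-T^{\vee}(e^i_{j,k}, e^{i'}_{j',k'})$), perturb it to a genuine system of matrix units, and let $\ph$ be the resulting \hm; there the real work would be verifying that the perturbation bounds --- Riesz idempotents via holomorphic functional calculus, Neumann series, compressions in a unitization --- depend only on the block sizes $k_i$ and on~$M,$ not on~$D.$ The product trick makes that analysis unnecessary.
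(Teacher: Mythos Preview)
Your proof is correct and follows essentially the same approach as the paper: a contradiction argument using the $\ell^{\infty}$-direct sum of the counterexample algebras $D_n$, applying Johnson's fixed-$D$ result (Corollary~3.2 of~\cite{Jh0}) once to that product, and projecting back. The only cosmetic difference is that the paper's map into $D$ is the tail $(0,\ldots,0,T_n(x),T_{n+1}(x),\ldots)$ rather than your single-coordinate embedding $\iota_n \circ T_n$; your version is if anything slightly cleaner.
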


Example~1.5 of~\cite{Jh0}
shows that it is not possible to take $\dt$ to be independent
of~$M,$
even for $A = \C$ (with nonunital \hm{s}).

\begin{proof}[Proof of Lemma~\ref{L_3325_fdsj}]
Let $A$ be as in the hypotheses,
let $\ep > 0,$
and let $M \in [0, \infty).$
Suppose that the conclusion fails.
Then for every $n \in \N$ there is
a Banach algebra $D_n$ and a linear map $T_n \colon A \to D_n$
such that $\| T_n \| \leq M$
and
$\| T^{\vee} \| < \frac{1}{n},$
but no \hm{} $\ph \colon A \to D_n$
such that $\| \ph - T \| < \ep.$

Let $D$ be the Banach algebra of all
bounded sequences $b = (b_n)_{n \in \N} \in \prod_{n = 1}^{\infty} D_n,$
with the pointwise operations
and the norm $\| b \| = \sup_{n \in \N} \| b_n \|.$
For $n \in \N,$ let $\pi_n \colon D \to D_n$
be given by $\pi_n (b) = b_n.$
Corollary 3.2 of~\cite{Jh0}
provides $\dt > 0$ such that whenever $T \colon A \to D$
is a linear map with $\| T \| \leq M$
and $\| T^{\vee} \| < \dt,$
then there is a \hm{} $\ph \colon A \to D$
such that $\| \ph - T \| < \ep.$
Choose $n \in \N$ such that $\frac{1}{n} < \dt.$
Define $T \colon A \to D$
by
\[
T (x)
 = (0, \, 0, \, \ldots, \, 0, \, T_n (x), \, T_{n + 1} (x), \, \ldots )
\]
for $x \in A.$
Then $\| T \| \leq M$
and $\| T^{\vee} \| < \frac{1}{n} < \dt.$
So there is a \hm{} $\ph \colon A \to D$
such that $\| \ph - T \| < \ep.$
The map $\pi_n \circ \ph \colon A \to D_n$
is a \hm{} such that $\| \pi_n \circ \ph - T_n \| < \ep,$
contradicting the assumption that no such \hm{} exists.
\end{proof}

The following simple lemma will be used several times.

\begin{lem}\label{L_3326_CloseUnit}
Let $A$ be a unital Banach algebra,
let $D$ be a Banach algebra,
and let $\ph, \ps \colon A \to D$
be \hm{s} such that $\| \ph - \ps \| < 1.$
If $\ph$ is nonzero then so is~$\ps,$
and if $D$ and $\ph$ are unital then so is~$\ps.$
\end{lem}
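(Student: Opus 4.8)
The plan is to reduce both assertions to the elementary fact that a nonzero idempotent in a Banach algebra has norm at least~$1$: if $e \in D$ satisfies $e^2 = e$ and $e \neq 0$, then $\| e \| = \| e^2 \| \leq \| e \|^2$, so $\| e \| \geq 1$; equivalently, any idempotent of norm strictly less than~$1$ must vanish. Alongside this I would record two observations valid for any \hm{} $\ph \colon A \to D$ out of a unital algebra~$A$: first, $\ph (1_A)$ is an idempotent in~$D$, because $\ph (1_A)^2 = \ph (1_A 1_A) = \ph (1_A)$; second, $\ph$ is nonzero \ifo{} $\ph (1_A) \neq 0$, since $\ph (1_A) = 0$ forces $\ph (a) = \ph (a \cdot 1_A) = \ph (a) \ph (1_A) = 0$ for all $a \in A.$

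For the first statement I would argue by contradiction. Suppose $\ph \neq 0$ but $\ps = 0.$ Then $\ph (1_A)$ is a nonzero idempotent, so $\| \ph (1_A) \| \geq 1$; on the other hand $\| \ph (1_A) \| = \| (\ph - \ps)(1_A) \| \leq \| \ph - \ps \| \cdot \| 1_A \| < 1$, using the normalization $\| 1_A \| = 1.$ This contradiction shows $\ps \neq 0.$

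For the second statement, assume $D$ and $\ph$ are unital, so $\ph (1_A) = 1_D$, and set $f = \ps (1_A)$, an idempotent in~$D.$ Then $1_D - f$ is again an idempotent, since $(1_D - f)^2 = 1_D - 2 f + f^2 = 1_D - f$, and $\| 1_D - f \| = \| \ph (1_A) - \ps (1_A) \| \leq \| \ph - \ps \| < 1.$ By the remark above, an idempotent of norm less than~$1$ is zero, so $1_D - f = 0$, i.e.\ $\ps (1_A) = 1_D$, and $\ps$ is unital.

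There is no real obstacle here; the only point requiring a moment's attention is the standing convention $\| 1_A \| = 1$ implicit in the term ``unital Banach algebra'' as used in this paper, which is what licenses the estimate $\| (\ph - \ps)(1_A) \| \leq \| \ph - \ps \|$ used in both parts.
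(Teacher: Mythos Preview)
Your proof is correct and follows essentially the same approach as the paper. The only minor difference is in the second statement: the paper observes that $\ps(1)$ is an idempotent within distance $1$ of the identity, hence invertible, and an invertible idempotent equals~$1$; you instead note that $1_D - \ps(1_A)$ is an idempotent of norm less than~$1$, hence zero. These are two phrasings of the same elementary fact, and your version has the small advantage of reusing exactly the observation you already set up for the first part.
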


\begin{proof}
For the first statement,
$\ph (1)$ is a nonzero idempotent in~$D,$
whence $\| \ph (1) \| \geq 1.$
Therefore $\ps (1) \neq 0.$
For the second,
$\ps (1)$ is an idempotent in $D$
with $\| \ps (1) - 1 \| < 1.$
Therefore $\ps (1)$ is an invertible idempotent,
so $\ps (1) = 1.$
\end{proof}

\begin{lem}\label{L_3325_HomEst}
Let $A$ be a Banach algebra which is isomorphic,
as a complex algebra,
to a finite direct sum of full matrix algebras.
Let $\ep > 0,$
and let $M \in [1, \infty).$
Then there is $\dt_{A, \ep, M} > 0$
such that whenever $D$ is a Banach algebra,
$C \subset D$ is a subalgebra,
$\ph \colon A \to D$
is a \hm{} such that $\| \ph \| \leq M,$
and $S \colon A \to C$ is a linear map
such that $\| S \| \leq M$ and $\| S - \ph \| < \dt_{A, \ep, M},$
then there is a \hm{} $\ps \colon A \to C$
such that $\| \ps - \ph \| < \ep.$
If $D,$ $C,$ and $\ph$ are unital,
then we may require that $\ps$ be unital.
\end{lem}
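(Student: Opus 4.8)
The plan is to verify that the perturbed map $S$ is approximately multiplicative and then invoke the finite-dimensional perturbation result Lemma~\ref{L_3325_fdsj} to replace $S$ by a genuine \hm{} $\ps \colon A \to C$ which is close to $S,$ hence close to~$\ph.$ There is essentially no new mathematical content: the substance sits in Lemmas~\ref{L_3326_JhProp} and~\ref{L_3325_fdsj}, and the real task is to organize the constants so that a single threshold $\dt_{A, \ep, M}$ works simultaneously for all $D,$ $C,$ $\ph,$ and~$S.$ At the outset I would note that we may assume $C$ is complete, replacing it by its closure $\overline{C}$ in $D$ if necessary; this is harmless, since in the applications of the lemma $C$ is a closed subalgebra, and in general the conclusion then holds with $\overline{C}$ in place of~$C.$

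First I would bound $S^{\vee}$ (notation of Definition~\ref{D_3326_JhNtn}). Writing $S = \ph + E$ with $\| E \| = \| S - \ph \| < \dt$ and using $\ph (x y) = \ph (x) \ph (y),$ one gets
\[
S^{\vee} (x, y) = E (x y) - \ph (x) E (y) - E (x) \ph (y) - E (x) E (y)
\]
for $x, y \in A,$ so that $\| S^{\vee} \| \le ( 1 + 2 M + \dt ) \dt$ (alternatively, apply Lemma~\ref{L_3326_JhProp} to $S$ regarded as a map into $D,$ noting that $d (S) \le \| S - \ph \| < \dt$ because $\ph$ is a \hm{} into~$D$). This bound is uniform in $D,$ $C,$ $\ph,$ and~$S,$ and tends to $0$ as $\dt \to 0.$

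Next I would fix the constant. Apply Lemma~\ref{L_3325_fdsj} to the given algebra $A,$ with the role of ``$\ep$'' played by $\tfrac{1}{2} \min ( \ep, 1 )$ and the role of ``$M$'' played by $M,$ obtaining a number $\dt' > 0.$ Then choose $\dt_{A, \ep, M} > 0$ small enough that $\dt_{A, \ep, M} \le \tfrac{1}{2} \min ( \ep, 1 )$ and $( 1 + 2 M + \dt_{A, \ep, M} ) \dt_{A, \ep, M} < \dt';$ this depends only on $A,$ $\ep,$ and~$M.$ Now suppose $D,$ $C,$ $\ph,$ and $S$ are as in the hypotheses with this $\dt_{A, \ep, M}.$ By the previous paragraph $\| S^{\vee} \| < \dt'$ while $\| S \| \le M,$ so Lemma~\ref{L_3325_fdsj}, applied with its Banach algebra taken to be $C,$ produces a \hm{} $\ps \colon A \to C$ with $\| \ps - S \| < \tfrac{1}{2} \min ( \ep, 1 ).$ Hence
\[
\| \ps - \ph \| \le \| \ps - S \| + \| S - \ph \|
   < \tfrac{1}{2} \min ( \ep, 1 ) + \dt_{A, \ep, M} \le \min ( \ep, 1 ) \le \ep ,
\]
which is the desired estimate. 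Finally, in the unital case $D,$ $C,$ and $\ph$ are unital (so $1_D \in C$ and $\ph (1_A) = 1_D$), and $\ps, \ph \colon A \to D$ are \hm{s} with $\| \ps - \ph \| < 1,$ so Lemma~\ref{L_3326_CloseUnit} shows that $\ps$ is unital as well.

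The main point requiring care is not a serious obstacle: it is to see that the threshold $\dt'$ delivered by Lemma~\ref{L_3325_fdsj} is independent of the target algebra — which is exactly why that lemma was formulated in its uniform form — and to carry this independence through the two reductions so that $\dt_{A, \ep, M}$ genuinely depends only on $A,$ $\ep,$ and~$M.$ The only other subtlety, the possible non-completeness of~$C,$ is handled by the passage to $\overline{C}$ noted at the start.
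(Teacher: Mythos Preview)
Your proof is correct and follows essentially the same approach as the paper: bound $\| S^{\vee} \|$ in terms of $\| S - \ph \|$ (the paper does this via Lemma~\ref{L_3326_JhProp}, you do it directly, but the resulting estimate is identical), then invoke Lemma~\ref{L_3325_fdsj} with target algebra~$C$ and use the triangle inequality. Your remark about the completeness of~$C$ is a point the paper's proof glosses over; your handling of it is reasonable, and in the paper's applications the relevant subalgebras are indeed closed.
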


\begin{proof}
\Wolog, $\ep < 1.$
Apply Lemma~\ref{L_3325_fdsj}
with $\frac{\ep}{2}$ in place of~$\ep$
and with $A$ and $M$ as given,
obtaining $\dt_0 > 0.$
Set
\[
\dt = \min \left( 1, \, \frac{\ep}{2}, \, \frac{\dt_0}{2 (1 + M)} \right).
\]
Let $\ph$ and $S$ be as in the hypotheses, with $\dt_{A, \ep, M} = \dt.$
Lemma~\ref{L_3326_JhProp} implies that
\[
\| S^{\vee} \|
 < \dt (1 + \dt + 2 M)
 \leq \dt (2 + 2 M)
 \leq \dt_0.
\]
Therefore there exists a \hm{} $\ps \colon A \to C$
such that $\| \ps - S \| < \frac{\ep}{2}.$
Since $\| S - \ph \| < \dt \leq \frac{\ep}{2},$
it follows that $\| \ph - \ps \| < \ep.$

It remains to prove,
under the conditions in the last sentence, that $\ps$ is unital.
Since $\ep < 1,$
this follows from Lemma~\ref{L_3326_CloseUnit}.
\end{proof}

\begin{lem}\label{L_3325_DLimPert}
Let $A$ be a Banach algebra which is isomorphic,
as a complex algebra,
to a finite direct sum of full matrix algebras.
Let $D$ be a Banach algebra,
and let $D_0 \subset D_1 \subset \cdots \subset D$
be an increasing sequence of subalgebras such that
$D = {\overline{\bigcup_{n = 0}^{\infty} D_n}}.$
Then for every \hm{} $\ph \colon A \to D$
and every $\ep > 0,$
there is $n \in \Nz$ and a \hm{} $\ps \colon A \to D$
such that $\| \ps - \ph \| < \ep$
and $\ps (A) \subset D_n.$
Moreover, if $D$ and $\ph$ are unital,
and $D_n$ is unital for $n \in \Nz,$
then $\ps$ can be chosen to be unital.
\end{lem}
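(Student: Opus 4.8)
The plan is to use that $A$ is finite dimensional, which reduces uniform approximation of a linear map out of $A$ to approximation of its values on a fixed finite basis, and then to promote an approximate homomorphism with small range to an honest homomorphism with the same small range by invoking Lemma~\ref{L_3325_HomEst}.

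First I would set $M = \max(1, \| \ph \|) + 1$ and let $\dt > 0$ be the minimum of $1$ and the constant $\dt_{A, \ep, M}$ furnished by Lemma~\ref{L_3325_HomEst} for this choice of $A$, $\ep$, and $M$. Since $A$ is isomorphic as a complex algebra to a finite direct sum of full matrix algebras, it is finite dimensional; fix a linear basis $a_1, a_2, \ldots, a_m$ of $A$. Because all norms on a finite dimensional space are equivalent, there is a constant $L \geq 1$, depending only on this basis, such that every linear map $T \colon A \to D$ satisfies $\| T \| \leq L \max_{1 \leq j \leq m} \| T (a_j) \|$. As $\bigcup_{k = 0}^{\infty} D_k$ is dense in $D$, for each $j \in \{ 1, 2, \ldots, m \}$ I can choose $n_j \in \Nz$ and $b_j \in D_{n_j}$ with $\| b_j - \ph (a_j) \| < \dt / L$, so that the linear map $S \colon A \to D$ determined by $S (a_j) = b_j$ satisfies $\| S - \ph \| < \dt$. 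Put $n = \max \{ n_1, n_2, \ldots, n_m \}$, so that $S (A) \subset D_n$; moreover $\| S \| \leq \| \ph \| + \dt \leq \| \ph \| + 1 \leq M$. Now apply Lemma~\ref{L_3325_HomEst} with $D$ as given, with $C = D_n$, with the homomorphism $\ph$, and with the linear map $S \colon A \to D_n$: the hypotheses $\| \ph \| \leq M$, $\| S \| \leq M$, and $\| S - \ph \| < \dt \leq \dt_{A, \ep, M}$ all hold, so the lemma produces a homomorphism $\ps \colon A \to D_n$ with $\| \ps - \ph \| < \ep$. Viewed as a map into $D$, this $\ps$ has all the required properties. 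In the unital case, when $D$ and $\ph$ are unital and each $D_k$ is unital, the algebra $C = D_n$ then contains $1_D$ and $\ph$ is unital, so the final clause of Lemma~\ref{L_3325_HomEst} lets us take $\ps$ unital as well.

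There is no real obstacle here; the only point needing a little care is to build enough slack into $M$ so that $S$ automatically satisfies $\| S \| \leq M$ once $\| S - \ph \|$ has been made smaller than $\dt \leq 1$. All the conceptual content sits in the finite dimensionality of $A$, which turns operator-norm approximation into coordinatewise approximation on a basis, together with the stability statement in Lemma~\ref{L_3325_HomEst}.
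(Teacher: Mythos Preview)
Your proof is correct and follows essentially the same approach as the paper's: approximate $\ph$ on a basis of the finite dimensional algebra $A$ by elements of some $D_n$, bound the operator norm of the resulting linear map $S$ via equivalence of norms, and then invoke Lemma~\ref{L_3325_HomEst} with $C = D_n$. The only cosmetic difference is that the paper makes the equivalence-of-norms constant explicit by writing $A$ as a quotient of $l^1(\{1,\ldots,N\})$ via a normalized basis, whereas you simply cite the existence of such a constant~$L$; both are entirely adequate.
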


\begin{proof}
Apply Lemma~\ref{L_3325_HomEst}
with $A$ as given,
with $\| \ph \| + 1$ in place of~$M,$
and with $\ep$ as given,
obtaining $\dt = \dt_{A, \ep, 1 + \| \ph \|} > 0.$
Set $N = \dim (A).$
Choose a basis $(x_k)_{k = 1, 2, \ldots, N}$ for $A$ consisting
of elements $x_k \in A$
such that $\| x_k \| = 1$ for $k = 1, 2, \ldots, N.$
Define a bijection $S \colon l^1 ( \{ 1, 2, \ldots, N \} ) \to A$
by identifying $l^1 ( \{ 1, 2, \ldots, N \} )$ with $\C^N$
and setting
\[
S (\af_1, \af_2, \ldots, \af_N) = \sum_{k = 1}^N \af_k x_k
\]
for $\af_1, \af_2, \ldots, \af_N \in \C.$
Set
$\dt_0 = \frac{1}{2} \| S^{-1} \|^{- 1} \min ( 1, \dt ).$

Choose $n \in \Nz$ such that there are
$b_1, b_2, \ldots, b_N \in D_n$
with $\| b_k - \ph (x_k) \| < \dt_0$
for $k = 1, 2, \ldots, N.$
Let $T \colon A \to D_n$
be the unique linear map such that $T (x_k) = b_k$
for $k = 1, 2, \ldots, N.$
We claim that
$\| T - \ph \| < 2 \| S^{-1} \| \dt_0.$
%
%
To see this,
let $x \in A.$
Choose $\af_1, \af_2, \ldots, \af_N \in \C$ such that
$x = \sum_{k = 1}^N \af_k x_k.$
Then $S^{-1} (x) = (\af_1, \af_2, \ldots, \af_N),$
so
$\| (\af_1, \af_2, \ldots, \af_N) \|_1 \leq \| S^{-1} \| \cdot \| x \|.$
Now
\[
\| T (x) - \ph (x) \|
  \leq \sum_{k = 1}^N | \af_k | \cdot  \| T (x_k) - \ph (x_k) \|
  \leq \sum_{k = 1}^N | \af_k | \dt_0
  \leq \| S^{-1} \| \cdot \dt_0 \cdot \| x \|.
\]
So $\| T - \ph \| \leq \| S^{-1} \| \dt_0 < 2 \| S^{-1} \| \dt_0.$

Since $2 \| S^{-1} \| \dt_0 \leq 1,$
the claim implies that $\| T \| < \| \ph \| + 1.$
Since $2 \| S^{-1} \| \dt_0 \leq \dt,$
we get $\| T - \ph \| < \dt,$
so the choice of~$\dt,$
with $D_n$ in place of~$C$ in Lemma~\ref{L_3325_HomEst},
provides a \hm{} $\ps \colon A \to D$
such that $\| \ps - \ph \| < \ep$
and $\ps (A) \subset D_n,$
which is unital under the conditions in the last sentence.
\end{proof}

The following lemma is the key step of our argument.
For any $M,$ $\gm_0,$ $d_0,$ $d,$ and~$p,$
if $\gm$ is sufficiently large
and there is a nonzero \hm{}
$\ph \colon \MP{d_0}{p, \gm_0} \to \MP{d}{p, \gm} \otimes_p A,$
then there is a nonzero \hm{}
from $\MP{d_0}{p, \gm_0}$
whose range is in $\C \cdot 1 \otimes_p A$
and whose norm is nearly the same as that of~$\ph.$

\begin{lem}\label{L_3325_BigHom}
Let $p \in [1, \infty),$
let $d_0, d \in \N,$
and let $\gm_0 \in [1, \infty).$
Let $M \in [1, \infty)$ and let $\ep > 0.$
Then there is $R \in [0, \infty)$
such that whenever $\gm \in [R, \infty),$
the following holds.
Let $\YCN$ be a \sfm,
and let $A \subset L (L^p (Y, \nu))$ be a closed subalgebra.
Let
$\ph \colon \MP{d_0}{p, \gm_0} \to
      \MP{d}{p, \gm} \otimes_p A$
be a nonzero \hm{} such that $\| \ph \| \leq M.$
Then there is a nonzero \hm{}
$\ps \colon \MP{d_0}{p, \gm_0} \to A$
such that $\| \ps \| \leq M + \ep.$
\end{lem}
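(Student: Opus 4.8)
The plan is to unfold $\ph$ into its $d\times d$ block of component maps into $A$, observe that for large $\gm$ the off‑diagonal components are forced to be extremely small, conclude that one \emph{diagonal} component is a substantial approximate homomorphism $\MP{d_0}{p,\gm_0}\to A$, and finally straighten it into an honest homomorphism with almost the same norm by Johnson's perturbation theorem (Lemma~\ref{L_3325_fdsj}). The case $d=1$ is trivial since then $\MP{d}{p,\gm}\otimes_p A=A$ and $\ps=\ph$ works, so assume $d\ge 2$.

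First I would set up the block decomposition. Since $\MP{d}{p,\gm}\otimes_p A=M_d\otimes_{\mathrm{alg}}A$ as an algebra (Lemma~\ref{L_3406_TensorB}), write $\ph(x)=\sum_{j,k=1}^d e_{j,k}\otimes\ph_{j,k}(x)$ with each $\ph_{j,k}\colon\MP{d_0}{p,\gm_0}\to A$ linear. Compressing by the rank‑one matrix units $e_{j,j}$ (which have $\|e_{j,j}\|_{p,\gm}=1$) and using Lemma~\ref{L_3126_QBasic}(\ref{L_3126_QBasic_Nejk}) to evaluate $\|e_{j,k}\otimes a\|_{p,\gm}$, one gets $\|\ph_{j,j}\|\le M$ for all $j$ and $\|\ph_{j,k}\|\le M/\gm$ for $j\ne k$. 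Comparing the $e_{j,j}$‑components of $\ph(xy)$ and $\ph(x)\ph(y)$ yields $\ph_{j,j}(xy)-\ph_{j,j}(x)\ph_{j,j}(y)=\sum_{b\ne j}\ph_{j,b}(x)\ph_{b,j}(y)$, hence $\|\ph_{j,j}^{\vee}\|\le (d-1)M^2/\gm^2$: each diagonal component is an approximate homomorphism into $A$ with error $\to 0$ as $\gm\to\infty$.

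The \emph{key step} is to produce a $j_0$ for which $\ph_{j_0,j_0}$ is not small. Put $e'_{j,k}=\ph_{j,k}(1)$, so $e=\ph(1)=\sum_{j,k}e_{j,k}\otimes e'_{j,k}$ is a nonzero idempotent, whence $\|e\|_{p,\gm}\ge 1$. Suppose, towards a contradiction, that $\|e'_{j,j}\|<\tfrac14$ for every $j$. Writing out the off‑diagonal part of $e^2=e$ gives, for $j\ne l$, the identity $(1-e'_{j,j})e'_{j,l}=e'_{j,l}e'_{l,l}+\sum_{b\ne j,l}e'_{j,b}e'_{b,l}$; taking norms and using $\|e'_{j,b}\|,\|e'_{b,l}\|\le M/\gm$ (all these indices being off‑diagonal) bootstraps the crude bound to $\|e'_{j,l}\|\le 2(d-2)M^2/\gm^2$. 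Therefore $\bigl\|\sum_{j\ne l}e_{j,l}\otimes e'_{j,l}\bigr\|_{p,\gm}\le\sum_{j\ne l}\gm\|e'_{j,l}\|\le 2d(d-1)(d-2)M^2/\gm$, while $\bigl\|\sum_{j}e_{j,j}\otimes e'_{j,j}\bigr\|_{p,\gm}=\max_j\|e'_{j,j}\|<\tfrac14$ by Lemma~\ref{L_3126_QBasic}(\ref{L_3126_QBasic_Diag}). Adding, $\|e\|_{p,\gm}<\tfrac14+2d(d-1)(d-2)M^2/\gm$, which is $<1$ as soon as $\gm$ exceeds a constant $R_0$ depending only on $d$ and $M$ — contradicting $\|e\|_{p,\gm}\ge 1$. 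Hence for $\gm\ge R_0$ there is $j_0$ with $\|\ph_{j_0,j_0}(1)\|\ge\tfrac14$.

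To conclude, apply Lemma~\ref{L_3325_fdsj} to the (algebraically) full matrix algebra $\MP{d_0}{p,\gm_0}$, with target $A$, constant $M$, and tolerance $\ep'=\min(\ep,\tfrac18)$, obtaining $\dt'>0$; then pick $R_1$ so that $\gm\ge R_1$ forces $(d-1)M^2/\gm^2<\dt'$. For $\gm\ge R:=\max(R_0,R_1)$, the linear map $T=\ph_{j_0,j_0}$ satisfies $\|T\|\le M$ and $\|T^{\vee}\|<\dt'$, so there is a homomorphism $\ps\colon\MP{d_0}{p,\gm_0}\to A$ with $\|\ps-\ph_{j_0,j_0}\|<\ep'$. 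Then $\|\ps\|\le M+\ep'\le M+\ep$, and $\|\ps(1)\|\ge\tfrac14-\ep'\ge\tfrac18>0$, so $\ps$ is nonzero, as required. I expect the main obstacle to be the third paragraph: the naive estimate $\|\ph_{j,k}\|\le M/\gm$ is by itself useless, since a sum of $d^2$ off‑diagonal blocks each of norm $\gm\cdot(M/\gm)=M$ need not be small; it is precisely the bootstrap through the idempotent relation $e^2=e$, upgrading the off‑diagonal blocks to $O(1/\gm^2)$, that makes their contribution to $\|\ph(1)\|$ vanish and thereby forces a diagonal block to carry the norm.
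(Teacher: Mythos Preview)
Your proof is correct, and it differs from the paper's in an interesting way. Both arguments decompose $\ph$ into blocks $\ph_{j,k}$, note that off-diagonal blocks are $O(M/\gm)$, and finish with Johnson's perturbation theorem. The difference is in how ``nonzero'' is propagated. The paper keeps all $d$ diagonal components together as a single map $T\colon \MP{d_0}{p,\gm_0}\to D\cong\bigoplus_{l=1}^d A$, but compares $T$ with $\ph$ in the \emph{lowered} norm $\|\cdot\|_{p,1}$ on $M_d\otimes A$ (via the contractive identity maps $\kp_{d,A}^{p,1,\gm}$ and $\kp_{d_0}^{p,\gm_0,1}$). In that norm the off-diagonal part of $\ph$ costs only $\|a_{l,m}\|$ rather than $\gm\|a_{l,m}\|$, so $T$ is $O(d^2M\gm_0/\gm)$-close to the \emph{nonzero homomorphism} $\kp\circ\ph\circ\kp$; Lemma~\ref{L_3325_HomEst} then yields a nonzero homomorphism $\ps_0$ into $D$, and projecting to any coordinate where $\ps_0(1)\neq 0$ finishes. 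Your route instead stays in $A$: you single out one diagonal component $\ph_{j_0,j_0}$ and must separately argue that it is not negligible, which forces the bootstrap through the idempotent relation $e^2=e$ (upgrading off-diagonal entries of $\ph(1)$ from $O(1/\gm)$ to $O(1/\gm^2)$). The paper's change-of-norm device is slicker and avoids this extra step entirely; your approach is more hands-on and gives the pleasant quantitative byproduct $\|\ph_{j,j}^{\vee}\|=O(1/\gm^2)$, though that sharper rate is not needed here.
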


\begin{proof}
We use Notation~\ref{N_3126_Qld} throughout.
\Wolog, $\ep < 1.$
Let $\dt > 0$
be the constant of Lemma~\ref{L_3325_HomEst}
obtained using $\ep$ and $M$ as given and with $\MP{d_0}{p}$
in place of~$A.$
Set $R = 2 d^2 M \gm_0 / \dt.$
Now let $\gm,$ $A,$ and $\ph$ be as in the statement of the lemma.
Define
\[
C = \MP{d}{p, \gm} \otimes_p A
\andeqn
C_0 = \MP{d}{p} \otimes_p A = \MP{d}{p, 1} \otimes_p A,
\]
with norms $\| \cdot \|_{p, \gm}$
and $\| \cdot \|_{p, 1}.$
We will also need the maps,
defined as in Notation~\ref{N_3126_Qld},
\[
\kp_{d, A}^{p, 1, \gm} \colon C \to C_0,
\,\,\,\,\,\,
\kp_{d_0}^{p, 1, \gm_0} \colon \MP{d_0}{p, \gm_0} \to \MP{d_0}{p},
\andeqn
\kp_{d_0}^{p, \gm_0, 1}
   \colon
    \MP{d_0}{p} \to \MP{d_0}{p, \gm_0}.
\]

Define $T \colon \MP{d_0}{p, \gm_0} \to
      \MP{d}{p, \gm} \otimes_p A$
by
\[
T (x) = \sum_{l = 1}^{d}
       (e_{l, l} \otimes 1) \ph (x) (e_{l, l} \otimes 1)
\]
for $x \in \MP{d_0}{p, \gm_0}.$
For $l = 1, 2, \ldots, d,$
there is a linear map
$T_l \colon \MP{d_0}{p, \gm_0} \to A$
such that
\[
(e_{l, l} \otimes 1) \ph (x) (e_{l, l} \otimes 1)
  = e_{l, l} \otimes T_l (x)
\]
for all $x \in \MP{d_0}.$
Since $\| e_{l, l} \|_{p, \gm} = 1,$
we have $\| T_l \| \leq \| \ph \|$ for $l = 1, 2, \ldots, d,$
so Lemma~\ref{L_3126_QBasic}(\ref{L_3126_QBasic_Diag})
implies that $\| T \| \leq \| \ph \|.$

We now claim that
$\big\| \kp_{d, A}^{p, 1, \gm} \circ T \circ \kp_{d_0}^{p, \gm_0, 1}
    - \kp_{d, A}^{p, 1, \gm} \circ \ph \circ \kp_{d_0}^{p, \gm_0, 1}
             \big\| < \dt.$
To prove the claim, let $x \in \MP{d_0}{p}$ satisfy $\| x \|_p \leq 1.$
There are elements $a_{l, m} \in A$
for $l, m = 1, 2, \ldots, d$
such that
\[
\big( \ph \circ \kp_{d_0}^{p, \gm_0, 1} \big) (x)
 = \sum_{l, m = 1}^{d} e_{l, m} \otimes a_{l, m}.
\]
For $l \neq m,$
using
Lemma~\ref{L_3126_QBasic}(\ref{L_3126_QBasic_Nejk})
at the first step,
$\| e_{l, l} \otimes 1 \|_{p, \gm}
  = \| e_{m, m} \otimes 1 \|_{p, \gm} = 1$
at the third step,
and Lemma~\ref{L_3126_QBasic}(\ref{L_3126_QBasic_Norm})
at the fifth step,
we get
\begin{align*}
\gm \| a_{l, m} \|
 & = \| e_{l, m} \otimes a_{l, m} \|_{p, \gm}
   = \| (e_{l, l} \otimes 1) \ph (x) (e_{m, m} \otimes 1) \|_{p, \gm}
     \\
 & \leq \| \ph (x) \|_{p, \gm}
   \leq \| \ph \| \cdot \| x \|_{p, \gm_0}
   \leq \| \ph \| \gm_0 \| x \|_{p}
   \leq M \gm_0.
\end{align*}
So
\[
\| a_{l, m} \| \leq M \gm_0 \gm^{-1}
\andeqn
\| e_{l, m} \otimes a_{l, m} \|_{p, 1}
  = \| a_{l, m} \|
  \leq M \gm_0 \gm^{-1}.
\]
Now
\begin{align*}
\big\| \big( \kp_{d, A}^{p, 1, \gm} \circ T \circ
       \kp_{d_0}^{p, \gm_0, 1} \big) (x)
  - \big( \kp_{d, A}^{p, 1, \gm} \circ \ph \circ
           \kp_{d_0}^{p, \gm_0, 1} \big) (x) \big\|_{p, 1}
& \leq 
       \sum_{l = 1}^{d} \sum_{m \neq l}
          \| e_{l, m} \otimes a_{l, m} \|_{p, 1}
        \\
& \leq d^2 M \gm_0 \gm^{-1}.
\end{align*}
Thus
\[
\big\| \kp_{d, A}^{p, 1, \gm} \circ T \circ \kp_{d_0}^{p, \gm_0, 1}
    - \kp_{d, A}^{p, 1, \gm} \circ \ph \circ \kp_{d_0}^{p, \gm_0, 1}
                \big\|
  \leq \frac{d^2 M \gm_0}{\gm}
  \leq \frac{d^2 M \gm_0}{R}
  = \frac{\dt}{2}
  < \dt.
\]
The claim is proved.

Let $D \subset C_0 = \MP{d}{p} \otimes_p A$ be the subalgebra
consisting of all diagonal matrices in $M_d (A).$
We algebraically identify $D$ with $\bigoplus_{l = 1}^{d} A$
via the map
$(a_1, a_2, \ldots, a_{d})
 \mapsto \sum_{l = 1}^{d} e_{l, l} \otimes a_l$
from $\bigoplus_{l = 1}^{d} A$ to~$C_0.$
Equip $\bigoplus_{l = 1}^{d} A$ with the norm
\[
\| (a_1, a_2, \ldots, a_{d}) \|
 = \max \big( \| a_1 \|, \, \| a_2 \|, \, \ldots, \, \| a_{d} \| \big)
\]
for $a_1, a_2, \ldots, a_{d} \in A.$
By Lemma~\ref{L_3126_QBasic}(\ref{L_3126_QBasic_Diag}),
the identification of $\bigoplus_{l = 1}^{d} A$ with $D$
is then isometric.
By construction,
the range of $\kp_{d, A}^{p, 1, \gm} \circ T$ is contained in~$D.$
The claim above and the choice of~$\dt$
provide a \hm{} $\ps_0 \colon \MP{d_0}{p} \to D$
such that
$\big\| \ps_0 - \kp_{d, A}^{p, 1, \gm} \circ \ph
          \circ \kp_{d_0}^{p, \gm_0, 1} \big\|
  < \ep.$
Since $\kp_{d, A}^{p, 1, \gm} \circ \ph \circ \kp_{d_0}^{p, \gm_0, 1}$
is a
nonzero \hm{} and $\ep < 1,$
it follows from Lemma~\ref{L_3326_CloseUnit} that $\ps_0$ is nonzero.

Set $\ps_1 = \ps_0 \circ \kp_{d_0}^{p, 1, \gm_0}.$
Using
$\kp_{d_0}^{p, \gm_0, 1} \circ \kp_{d_0}^{p, 1, \gm_0}
  = \id_{\MP{d_0}{p, \gm_0}}$
at the first step,
and
$\big\| \kp_{d_0}^{p, 1, \gm_0} \big\| = 1$
(from Lemma~\ref{L_3126_QBasic}(\ref{L_3126_QBasic_Inv}))
at the second step,
we get
\[
\big\| \ps_1 - \kp_{d, A}^{p, 1, \gm} \circ \ph \big\|
 \leq \big\| \ps_0
     - \kp_{d, A}^{p, 1, \gm} \circ \ph \kp_{d_0}^{p, \gm_0, 1} \big\|
       \cdot \big\| \kp_{d_0}^{p, 1, \gm_0} \big\|
  < \ep.
\]
So, using $\big\| \kp_{d, A}^{p, 1, \gm} \big\| = 1$
(from Lemma~\ref{L_3126_QBasic}(\ref{L_3126_QBasic_Inv})),
we get
\[
\| \ps_1 \|
 < \big\| \kp_{d, A}^{p, 1, \gm} \circ \ph \big\| + \ep
 \leq \| \ph \| + \ep.
\]
For $l = 1, 2, \ldots, d$
and $a = \sum_{k = 1}^{d} e_{k, k} \otimes a_k \in D,$
define $\pi_l (a) = a_l.$
The formula defines a contractive \hm{} $\pi_l \colon D \to A.$
Choose $l$ such that $(\pi_l \circ \ps_1) (1) \neq 0.$
Then $\ps = \pi_l \circ \ps_1 \colon \MP{d_0}{p} \to A$
is a nonzero \hm{} such that $\| \ps \| < \| \ph \| + \ep.$
This completes the proof.
\end{proof}

Recall that for a sequence $d = (d (1), \, d (2), \, \ldots)$
in $\{ 2, 3, \ldots \}$ and $n \in \Nz,$
we defined $r_d (n) = d (1) d (2) \cdots d (n).$

\begin{lem}\label{L_3325_OneStep}
Let $p \in [1, \infty),$
let $d = (d (1), \, d (2), \, \ldots)$
be a sequence in $\{ 2, 3, \ldots \},$
and let $\af \in [1, \infty).$
Then for every $M \in [1, \infty)$ and $l \in \N,$
there is a nondecreasing sequence $\bt = (\bt (1), \, \bt (2), \, \ldots)$
in $[1, \infty)$
such that, whenever $\gm = (\gm (1), \, \gm (2), \, \ldots)$
is a nondecreasing sequence in $[1, \infty)$
such that $\gm (j) \geq \bt (j)$ for $j = l, \, l + 1, \, \ldots,$
whenever $B_d^{p, \gm}$ is as in Notation~\ref{N_3325_InfTPN},
and whenever $\ph \colon \MP{r_d (l)}{p, \af} \to B_d^{p, \gm}$
is a nonzero \hm,
then $\| \ph \| > M.$
\end{lem}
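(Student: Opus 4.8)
\emph{Plan of proof.}
I would argue by contradiction: assume there is a nonzero \hm{} $\ph \colon \MP{r_d (l)}{p, \af} \to B_d^{p, \gm}$ with $\| \ph \| \le M$ (it is automatically bounded, the domain being \fd), and peel the tensor factors of $B_d^{p, \gm}$ off one at a time. Lemma~\ref{L_3325_BigHom} is exactly the tool for one peeling step: if $\gm (k)$ is large enough it converts a nonzero \hm{} into $\MP{d (k)}{p, \gm (k)} \otimes_p A$ into a nonzero \hm{} into~$A$ whose norm has grown by at most a prescribed amount. After finitely many peelings I would be left with a nonzero \hm{} from $M_{r_d (l)}$ into a full matrix algebra of strictly smaller dimension, which is impossible.

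The choice of $\bt$ must be made before $\gm$ is seen, so the constants controlling the peeling have to be fixed in advance. I would set $M_i = M + 1 - 2^{-i-1}$ and $\dt_i = 2^{-i-2}$ for $i \in \Nz,$ so that $M_0 = M + \tfrac12,$ $M_i + \dt_i = M_{i+1},$ and $M_i < M + 1$ for all~$i.$ For $k \ge l,$ let $R_k \in [0, \infty)$ be the constant produced by Lemma~\ref{L_3325_BigHom} on input $p,$ $d_0 = r_d (l),$ $\gm_0 = \af,$ $d = d (k),$ and with $M$ and $\ep$ in that lemma replaced by $M_{k - l}$ and~$\dt_{k - l}.$ I would then take $\bt (k) = 1$ for $1 \le k < l$ and $\bt (k) = \max ( 1, R_l, \ldots, R_k )$ for $k \ge l;$ this is a nondecreasing sequence in $[1, \infty)$ depending only on $p, d, \af, M, l,$ not on $\gm$ or on any truncation length.

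Now let $\gm$ be as in the statement, so $\gm (j) \ge \bt (j)$ for $j \ge l.$ First I would apply Lemma~\ref{L_3325_DLimPert}, with tolerance~$\tfrac12,$ to the chain $B_0 \subset B_1 \subset \cdots$ with $B_d^{p, \gm} = {\overline{\bigcup_n B_n}}$ from Notation~\ref{N_3325_InfTPN}, obtaining $n \in \Nz$ and a \hm{} $\ps_0 \colon \MP{r_d (l)}{p, \af} \to B_n$ with $\| \ps_0 - \ph \| < \tfrac12,$ hence $\| \ps_0 \| \le M_0$ and, by Lemma~\ref{L_3326_CloseUnit}, $\ps_0 \neq 0.$ Since $M_{r_d (l)}$ is simple, $\ps_0$ is injective; as $B_n$ is algebraically $M_{r_d (n)}$ (Theorem~\ref{T_3406_DIsLpUHF}\,(\ref{T_3406_DIsLpUHF_FinStg})), comparing dimensions gives $r_d (l) \le r_d (n),$ so $n \ge l.$ Then I would show by induction on $i = 0, 1, \ldots, n - l$ that there is a nonzero \hm{}
\[
\ps_i \colon \MP{r_d (l)}{p, \af} \to
  \MP{d (1)}{p, \gm (1)} \otimes_p \cdots \otimes_p \MP{d (l - 1)}{p, \gm (l - 1)}
  \otimes_p \MP{d (l + i)}{p, \gm (l + i)} \otimes_p \cdots \otimes_p \MP{d (n)}{p, \gm (n)}
\]
with $\| \ps_i \| \le M_i$ (for $l = 1$ the leading block $\MP{d (1)}{p, \gm (1)} \otimes_p \cdots \otimes_p \MP{d (l-1)}{p, \gm (l-1)}$ is the empty tensor product~$\C$). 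The case $i = 0$ is the previous sentence. For the step from $i$ to $i+1,$ I would permute tensor factors --- isometrically, by Lemma~1.11 of~\cite{PhLp2a} --- to move $\MP{d (l+i)}{p, \gm (l+i)}$ to the front, so that the codomain becomes $\MP{d (l+i)}{p, \gm (l+i)} \otimes_p A_i,$ with $A_i$ the $\otimes_p$ of the remaining factors, a \fd{} closed subalgebra of $L (L^p (Y_i, \nu_i))$ for an atomic \sfm{} $(Y_i, \cC_i, \nu_i);$ since $\gm (l+i) \ge \bt (l+i) \ge R_{l+i},$ Lemma~\ref{L_3325_BigHom} (with $M_i,$ $\dt_i$ in the roles of $M,$ $\ep,$ legitimate because $\| \ps_i \| \le M_i$) then yields a nonzero \hm{} $\ps_{i+1} \colon \MP{r_d (l)}{p, \af} \to A_i$ with $\| \ps_{i+1} \| \le M_i + \dt_i = M_{i+1}.$ At $i = n - l$ this produces a nonzero, hence injective, algebra \hm{} $M_{r_d (l)} \to M_{r_d (l-1)}$ (with $M_{r_d (0)} = \C$), forcing $r_d (l) \le r_d (l-1);$ but $r_d (l) = d (l)\, r_d (l-1) \ge 2\, r_d (l-1),$ a contradiction. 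Hence $\| \ph \| > M.$

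I do not expect a serious obstacle here: all the analytic content sits in Lemma~\ref{L_3325_BigHom} (one-factor peeling) and Lemma~\ref{L_3325_DLimPert} (reduction to a finite stage), and the present argument merely assembles them. The only point to get right is that $\bt$ is chosen independently of $\gm$ and of~$n,$ which is why the tolerances $\dt_i$ and running bounds $M_i$ are pinned down in advance; this causes no circularity because every $M_i$ lies below $M + 1$ and is determined by $M$ alone.
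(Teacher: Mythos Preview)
Your proposal is correct and follows essentially the same strategy as the paper: reduce to a finite stage $B_n$ via Lemma~\ref{L_3325_DLimPert}, then iterate Lemma~\ref{L_3325_BigHom} to strip off the tensor factors indexed $l$ through~$n$, reaching a nonzero homomorphism $M_{r_d(l)} \to M_{r_d(l-1)}$, which is impossible. The only cosmetic difference is the direction of peeling: the paper removes factors from the right (using the decomposition $B_k = B_{k-1} \otimes_p \MP{d(k)}{p,\gm(k)}$, so the remaining codomain is always an initial segment $B_{k-1}$), whereas you remove them from the left starting at position~$l$, which requires an explicit tensor-factor permutation at each step; both orderings work, and your bookkeeping of the running bounds $M_i$ and tolerances $\dt_i$ is clean.
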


\begin{proof}
For $m = l, l + 1, \ldots,$
apply Lemma~\ref{L_3325_BigHom}
with $r_d (l)$ in place of~$d_0,$
with $d (m)$ in place of~$d,$
with $\af$ in place of~$\gm_0,$
with $2^{- (m - l + 1)}$ in place of~$\ep,$
and with $M + 1 + 2^{- (m - l + 1)}$ in place of~$M.$
Let $\bt_0 (m)$ be the resulting value of~$R.$
For $m = 1, 2, \ldots, l - 1$
set $\bt (m) = 1,$
and for $m = l, l + 1, \ldots$
set
\[
\bt (m)
 = \max \big( \bt_0 (l), \, \bt_0 (l + 1), \, \ldots, \, \bt_0 (m) \big).
\]

Now let $\gm$ be a nondecreasing sequence in $[1, \infty)$
such that $\gm (j) \geq \bt (j)$ for $j = l, \, l + 1, \ldots.$
Suppose that there is a nonzero \hm{}
$\ph \colon \MP{r_d (l)}{p, \af} \to B_d^{p, \gm}$
such that $\| \ph \| \leq M.$
Let $B = B_d^{p, \gm}, B_0, B_1, \ldots$
be as in Notation~\ref{N_3325_InfTPN}.
Lemma~\ref{L_3325_DLimPert}
provides $n \in \Nz$ and a \hm{}
$\ps \colon \MP{r_d (l)}{p, \af} \to B_n$
such that $\| \ps - \ph \| < 1.$
Then $\ps \neq 0$ by Lemma~\ref{L_3326_CloseUnit}.
Therefore $n \geq l.$
Also, $\| \ps \| \leq M + 1 \leq M + 1 + 2^{- (n - l + 1)}.$

Set $\ps_n = \ps.$
Using choice of $\bt_0 (n),$
the inequality
$\gm (n) \geq \bt (n) \geq \bt_0 (n),$
and the tensor product decomposition
$B_n = B_{n - 1} \otimes_p \MP{d (n)}{p, \gm (n)},$
we get a nonzero \hm{}
$\ps_{n - 1} \colon \MP{r_d (l)}{p, \af} \to B_{n - 1}$
such that
\[
\| \ps_{n - 1} \|
 \leq \big( M + 1 + 2^{- (n - l + 1)} \big) + 2^{- (n - l + 1)}
 = M + 1 + 2^{- (n - l)}.
\]
Similarly,
there is now a nonzero \hm{}
$\ps_{n - 2} \colon \MP{r_d (l)}{p, \af} \to B_{n - 2}$
such that
\[
\| \ps_{n - 2} \|
 \leq M + 1 + 2^{- (n - l - 1)}.
\]
Proceed inductively.
We eventually find a nonzero \hm{}
$\ps_{l - 1} \colon \MP{r_d (l)}{p, \af} \to B_{l - 1}$
such that
\[
\| \ps_{l - 1} \|
 \leq M + 1 + 1.
\]
Since $B_{l - 1} \cong M_{r_d (l - 1)}$
and $r_d (l - 1) < r_d (l),$
this is a contradiction.
\end{proof}

\begin{lem}\label{L_3325_NoHom}
Let $p \in [1, \infty),$
let $d = (d (1), \, d (2), \, \ldots)$
be a sequence in $\{ 2, 3, \ldots \},$
and let $\af = (\af (1), \, \af (2), \, \ldots)$
be a nondecreasing sequence in $[1, \infty).$
Then there is a nondecreasing sequence
$\bt = (\bt (1), \, \bt (2), \, \ldots)$ in $[1, \infty)$
such that,
whenever $l \in \N$ and $\gm = (\gm (1), \, \gm (2), \, \ldots)$
is a nondecreasing sequence in $[1, \infty)$
with $\gm (j) \geq \bt (j)$ for $j = l, \, l + 1, \ldots,$
and $B_d^{p, \af}$ and $B_d^{p, \gm}$
are as in Notation~\ref{N_3325_InfTPN},
then there is no nonzero \ct{} \hm{} from
$B_d^{p, \af}$ to $B_d^{p, \gm}.$
\end{lem}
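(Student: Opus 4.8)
The plan is to restrict a hypothetical homomorphism to the finite‑dimensional subalgebras of $B_d^{p,\af}$---each of which is a full matrix algebra carried in a possibly large perturbed norm---and then to apply Lemma~\ref{L_3325_OneStep}. The only genuine difficulty is that the sequence produced by Lemma~\ref{L_3325_OneStep} depends on the norm bound~$M$, while here a single~$\bt$ must defeat homomorphisms of every norm; this is handled by a diagonal construction together with the freedom to pass to an arbitrarily high matrix stage once the norm of the homomorphism is known.

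First I would construct~$\bt$. Fix $n\in\N$, and let $C_n=\MP{d(1)}{p,\af(1)}\otimes_p\cdots\otimes_p\MP{d(n)}{p,\af(n)}$ be the $n$th finite stage of $B_d^{p,\af}$ (in the notation of Notation~\ref{N_3325_InfTPN}), a unital subalgebra containing $1$. By Theorem~\ref{T_3406_DIsLpUHF}(\ref{T_3406_DIsLpUHF_FinStg}) (with $m=0$) there is an isometric isomorphism $C_n\cong\MP{r_d(n)}{p,T_n}$ for a diagonal system $T_n$ of $r_d(n)$-similarities depending only on $p$, $d$, $\af$, and~$n$; put $M_n=R_{p,T_n}<\infty$. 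Apply Lemma~\ref{L_3325_OneStep} with the scalar parameter equal to~$1$, with $l:=n$, and with $M:=nM_n$, obtaining a nondecreasing sequence $\bt^{(n)}=(\bt^{(n)}(1),\bt^{(n)}(2),\ldots)$ in $[1,\infty)$ such that whenever $\gm$ is a nondecreasing sequence in $[1,\infty)$ with $\gm(j)\geq\bt^{(n)}(j)$ for all $j\geq n$, every nonzero homomorphism $\MP{r_d(n)}{p}\to B_d^{p,\gm}$ has norm strictly greater than $nM_n$. Then set $\bt(j)=\max\{\bt^{(n)}(j):1\leq n\leq j\}$ for $j\in\N$; this is a finite maximum, so $\bt$ is a well-defined $[1,\infty)$-valued sequence, and it is nondecreasing since for $j'\geq j$ the maximum defining $\bt(j')$ ranges over a larger index set of pointwise larger terms.

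Now let $l\in\N$ and let $\gm$ be a nondecreasing sequence in $[1,\infty)$ with $\gm(j)\geq\bt(j)$ for $j\geq l$, and suppose for contradiction that $\Phi\colon B_d^{p,\af}\to B_d^{p,\gm}$ is a nonzero continuous homomorphism. Put $M=\|\Phi\|$ and choose $n\in\N$ with $n\geq l$ and $n\geq M$. The restriction $\Phi|_{C_n}$ is a homomorphism with $\|\Phi|_{C_n}\|\leq M\leq n$, and it is nonzero, since $\Phi(1)\neq0$ (otherwise $\Phi(a)=\Phi(1)\Phi(a)=0$ for all $a$) and $1\in C_n$. Let $\iota=\kp^{p,T_n,T_0}_{\C}\colon\MP{r_d(n)}{p}\to\MP{r_d(n)}{p,T_n}$ be the map of Notation~\ref{N_3406_Kp}, where $T_0$ is the basic system of $r_d(n)$-similarities (Definition~\ref{D_3406_SysSim}); by Lemma~\ref{L_3405_Basics}(\ref{L_3405_Basics_FrmBase}) it is a bijective unital algebra homomorphism with $\|\iota\|=R_{p,T_n}=M_n$. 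Composing $\iota$ with the isometric identification $\MP{r_d(n)}{p,T_n}\cong C_n$ and then with $\Phi|_{C_n}$ yields a homomorphism $\ph_n\colon\MP{r_d(n)}{p}\to B_d^{p,\gm}$ with $\ph_n(1)=\Phi(1)\neq0$ and $\|\ph_n\|\leq\|\Phi|_{C_n}\|\cdot\|\iota\|\leq nM_n$. Finally, since $n\geq l$ we have $\gm(j)\geq\bt(j)$ for all $j\geq n$, and for such $j$ the term $\bt^{(n)}(j)$ occurs in the maximum defining $\bt(j)$, so $\gm(j)\geq\bt^{(n)}(j)$ for all $j\geq n$; the defining property of $\bt^{(n)}$ then forces $\|\ph_n\|>nM_n$, contradicting $\|\ph_n\|\leq nM_n$. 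Hence there is no nonzero continuous homomorphism from $B_d^{p,\af}$ to $B_d^{p,\gm}$. The heart of the argument, and the step I expect to require the most care, is the construction of~$\bt$: Lemma~\ref{L_3325_OneStep} only controls homomorphisms of one given norm at a time, and the diagonal definition $\bt(j)=\max_{1\leq n\leq j}\bt^{(n)}(j)$ is precisely what allows a single fixed~$\bt$ to rule out homomorphisms of every norm, once one exploits the liberty to restrict $\Phi$ to a sufficiently large matrix subalgebra.
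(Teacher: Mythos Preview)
Your proof is correct and follows essentially the same architecture as the paper's: a diagonal construction of~$\bt$ from the sequences supplied by Lemma~\ref{L_3325_OneStep}, followed by restriction of a hypothetical homomorphism to a sufficiently large finite matrix stage to obtain a contradiction. The only difference is bookkeeping: the paper invokes Lemma~\ref{L_3325_OneStep} with scalar parameter $\et=\prod_{k=1}^m\af(k)$ and $M=m$, exhibiting a \emph{contractive} map $\MP{r_d(m)}{p,\et}\to C_m$ (via the containment $\ran(s)\subset K_{r_d(m),\et}$), so the composed map has norm at most $\|\ph\|<m$; you instead invoke it with scalar parameter~$1$ and $M=nM_n$, absorbing the norm $M_n=R_{p,T_n}$ of the change-of-norm map~$\iota$ into the bound rather than into the domain algebra. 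Since in fact $M_n=\prod_{k=1}^n\af(k)$, the two routings are nearly identical.
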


\begin{proof}
For each $m \in \N,$
apply Lemma~\ref{L_3325_OneStep}
with $d$ as given,
with $m$ in place of~$l,$
with $\prod_{k = 1}^{m} \af (k)$ in place of~$\af,$
and with $M = m.$
Call the resulting sequence
$\bt_m = \big( \bt_m (1), \, \bt_m (2), \, \ldots \big).$
Define
\[
\bt (m) = \max \big( \bt_m (1), \bt_m (2), \ldots, \bt_m (m) \big)
\]
for $m \in \N.$
Clearly $\bt = (\bt (1), \, \bt (2), \, \ldots)$
is a nondecreasing sequence in $[1, \infty).$

Now let $l \in \N$
and let $\gm = (\gm (1), \, \gm (2), \, \ldots)$
be a nondecreasing sequence in $[1, \infty)$
such that $\gm (j) \geq \bt (j)$ for $j = l, \, l + 1, \, \ldots.$
Suppose that $\ph \colon B_d^{p, \af} \to B_d^{p, \gm}$
is a nonzero \ct{} \hm.
Choose $m \in \N$ such that $m > \max (l, \| \ph \|).$
Set $\et = \prod_{k = 1}^{m} \af (k).$
Let
\[
\ps \colon M_{r_d (m)}
  \to M_{d (1)} \otimes M_{d (2)}
           \otimes \cdots \otimes M_{d (m)}
\]
be an isomorphism which sends standard matrix units
to tensor products of standard matrix units.
In particular,
the image of the diagonal subalgebra of $M_{r_d (m)}$
is the tensor product of the diagonal subalgebras
of the algebras $M_{d (k)}$
for $k = 1, 2, \ldots, m.$

We claim that $\ps$ is a contractive \hm{}
\[
\ps \colon \MP{r_d (m)}{p, \et}
  \to \MP{d (1)}{p, \af (1)} \otimes_p \MP{d (2)}{p, \af (2)}
           \otimes_p \cdots \otimes_p \MP{d (m)}{p, \af (m)}.
\]
To see this,
apply Lemma~\ref{L_3406_TPSys} repeatedly to form the tensor product
$S = (I, s, f)$ of the systems $S_{ d (k), \af (k) }$
of Notation~\ref{N_3126_Qld}
for $k = 1, 2, \ldots, m.$
Also let $S_{r_d (m), \et}$ be as in Notation~\ref{N_3126_Qld}.
For $i \in I = \prod_{k = 1}^m I_{d (k), \af (k)},$
the matrix $s (i)$ is diagonal and satisfies
\begin{align*}
\| s (i) \|
 & = \big\| s_{d (1), \af (1)} (i_1) \otimes s_{d (2), \af (2)} (i_2)
      \otimes \cdots \otimes s_{d (m), \af (m)} (i_m) \big\|
    \\
 & = \prod_{k = 1}^{m} \| s_{d (k), \af (k)} (i_k) \|
   \leq \prod_{k = 1}^{m} \af (k)
   = \et.
\end{align*}
Moreover, all its diagonal entries are real and at least~$1.$
Therefore
\[
\ran (s) \subset K_{r_d (m), \et}
         = {\overline{\ran (s_{r_d (m), \et})}},
\]
so $\ps$ is contractive
by Lemma \ref{L_3405_Basics}(\ref{L_3405_Basics_Sub}).

As in Notation~\ref{N_3325_InfTPN},
we have an isometric inclusion
\[
\MP{d (1)}{p, \af (1)} \otimes_p \MP{d (2)}{p, \af (2)}
           \otimes_p \cdots \otimes_p \MP{d (m)}{p, \af (m)}
       \to B_d^{p, \af}.
\]
Call it~$\io.$
Since $\| \ps \| \leq 1,$
the map
\[
\sm = \ph \circ \io \circ \ps \colon
 \MP{r_d (m)}{p, \et} \to B_d^{p, \gm}
\]
is a nonzero \hm{} such that
$\| \sm \| \leq \| \ph \|.$
Since $m \geq l,$
for $k = m, m + 1, \ldots$
we have $\gm (k) \geq \bt (k) \geq \bt_m (k).$
The choice of $\bt_m$
therefore implies that $\| \sm \| \geq m.$
Since $m > \| \ph \|,$
this is a contradiction.
\end{proof}

\begin{thm}\label{T_3326_Uctbl}
Let $p \in [1, \infty),$
and let $d = (d (1), \, d (2), \, \ldots)$
be a sequence in $\{ 2, 3, \ldots \}.$
Let $\Om$ be the set of countable ordinals.
Then there exists a family $(\gm_{\kp})_{\kp \in \Om}$
of nondecreasing sequences in $[1, \infty)$
such that whenever $\kp, \ld \in \Om$
satisfy $\kp < \ld,$
then there is no nonzero \ct{} \hm{} from
$B_d^{p, \gm_{\kp}}$ to $B_d^{p, \gm_{\ld}},$
but there is a unital \ct{} \hm{}
from $B_d^{p, \gm_{\ld}}$ to $B_d^{p, \gm_{\kp}}$
with dense range.
\end{thm}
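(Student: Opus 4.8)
The plan is to build the family $(\gm_\kp)_{\kp \in \Om}$ by transfinite recursion, extracting all the analytic content from Lemma~\ref{L_3326_DomRel} (for the dense-range homomorphisms) and Lemma~\ref{L_3325_NoHom} (for the nonexistence of homomorphisms). First I would fix, for each nondecreasing sequence $\af$ in $[1,\infty)$, a nondecreasing sequence $\Phi(\af)$ in $[1,\infty)$ with $\Phi(\af)(j) \geq \af(j)$ for all $j$ and with the property furnished by Lemma~\ref{L_3325_NoHom}: whenever a nondecreasing $\gm$ in $[1,\infty)$ satisfies $\gm(j) \geq \Phi(\af)(j)$ for all $j$ past some index, there is no nonzero \ct{} \hm{} from $B_d^{p,\af}$ to $B_d^{p,\gm}$. (Such a $\Phi(\af)$ exists: take the $\bt$ produced by Lemma~\ref{L_3325_NoHom} and replace it by its pointwise maximum with $\af$; enlarging $\bt$ only shrinks the family of $\gm$ to which the conclusion is applied, so it remains valid.) Write $\et \succeq \xi$ to mean $\et(j) \geq \xi(j)$ for all sufficiently large $j$. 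The recursion will be arranged so that the invariant $\gm_\nu \succeq \Phi(\gm_\mu)$ holds for all $\mu < \nu$ in $\Om$; granting this, the theorem follows at once. Indeed, for $\kp < \ld$ we have $\gm_\ld \succeq \Phi(\gm_\kp)$, so Lemma~\ref{L_3325_NoHom} (with $\af = \gm_\kp$, $\gm = \gm_\ld$) rules out nonzero \ct{} \hm{s} from $B_d^{p,\gm_\kp}$ to $B_d^{p,\gm_\ld}$; and since $\Phi(\gm_\kp) \geq \gm_\kp$ pointwise we also have $\gm_\ld \succeq \gm_\kp$, so Lemma~\ref{L_3326_DomRel} (applied with its $\gm$ equal to $\gm_\ld$ and its $\bt$ equal to $\gm_\kp$, and $l$ large enough) yields a \ct{} unital \hm{} with dense range from $B_d^{p,\gm_\ld}$ to $B_d^{p,\gm_\kp}$.

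For the recursion I would set $\gm_0 = (1,1,1,\ldots)$. At a successor ordinal, given $\gm_\kp$, put $\gm_{\kp+1}(j) = \Phi(\gm_\kp)(j) + 1$ for all $j$; this is a nondecreasing sequence in $[1,\infty)$, it dominates $\Phi(\gm_\kp)$ pointwise, and because $\Phi(\gm_\kp) \geq \gm_\kp$ pointwise it also dominates $\gm_\kp$ pointwise, so the invariant at $\kp$ propagates to $\kp+1$ (for $\mu < \kp$ one has $\gm_{\kp+1} \geq \gm_\kp \succeq \Phi(\gm_\mu)$, and the case $\mu = \kp$ is immediate). At a countable limit ordinal $\ld$, choose a strictly increasing sequence $\kp_0 < \kp_1 < \cdots$ in $\ld$ with supremum $\ld$ --- possible precisely because $\ld$ is a countable ordinal --- and define
\[
\gm_\ld(j) = 1 + \max\bigl\{ \gm_{\kp_i + 1}(j) \colon 0 \le i \le j \bigr\}
\]
(each $\kp_i + 1 < \ld$, so $\gm_{\kp_i+1}$ is already defined). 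Since each $\gm_{\kp_i+1}$ is nondecreasing, so is $\gm_\ld$, and $\gm_\ld(j) > \gm_{\kp_i+1}(j)$ once $j \geq i$; hence $\gm_\ld \succeq \gm_{\kp_i+1}$ for every $i$. Given $\mu < \ld$, choose $i$ with $\mu < \kp_i + 1$; the already-established invariant gives $\gm_{\kp_i+1} \succeq \Phi(\gm_\mu)$, whence $\gm_\ld \succeq \Phi(\gm_\mu)$. This verifies the invariant at $\ld$ and completes the construction.

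The substance of the argument, beyond citing the two lemmas, is exactly this transfinite recursion: one needs a family of sequences in $[1,\infty)$ that is eventually increasing along all countable ordinals while also jumping above each threshold $\Phi(\gm_\mu)$. I expect the one point requiring care --- and the only place where genuinely set-theoretic reasoning enters --- to be the diagonalization at limit stages, which works because every ordinal below the first uncountable one is countable, hence has a cofinal $\om$-sequence; the definition of $\gm_\ld$ must be chosen (as above) to be monotone in $j$ and to eventually dominate all the earlier $\gm_{\kp_i+1}$ simultaneously. No nontrivial estimates appear here; they have all been absorbed into Lemmas~\ref{L_3325_NoHom} and~\ref{L_3326_DomRel}.
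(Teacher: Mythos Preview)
Your proof is correct and follows essentially the same approach as the paper: a transfinite recursion over the countable ordinals, with Lemma~\ref{L_3325_NoHom} supplying the obstruction and Lemma~\ref{L_3326_DomRel} supplying the dense-range maps. The only differences are organizational---you maintain an explicit numeric invariant $\gm_\nu \succeq \Phi(\gm_\mu)$ and diagonalize over a cofinal $\om$-sequence at limits, whereas the paper enumerates \emph{all} predecessors at a limit and, at successors, propagates the ``no homomorphism'' conclusion to earlier $\mu$ by composing with the dense-range map $B_d^{p,\gm_{\ld+1}} \to B_d^{p,\gm_\ld}$ (using simplicity to see the composite is nonzero).
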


\begin{proof}
We construct $(\gm_{\kp})_{\kp \in \Om}$
by transfinite induction on~$\kp.$
We start by taking $\gm_0 = (1, 1, \ldots).$

Suppose now $\kp \in \Om$
and we have the sequences $\gm_{\mu}$
for $\mu < \kp.$
First suppose that $\kp$ is a successor ordinal,
that is, there is $\ld \in \Om$ such that $\kp = \ld + 1.$
Apply Lemma~\ref{L_3325_NoHom}
with $p$ and $d$ as given
and with $\gm_{\ld}$ in place of~$\af,$
obtaining a nondecreasing sequence
$\bt = (\bt (1), \, \bt (2), \, \ldots)$ in $[1, \infty).$
Set $\gm_{\kp} (n) = \max (\bt (n), \, \gm_{\ld} (n) )$
for $n \in \N.$
Then there is no nonzero \ct{} \hm{} from
$B_d^{p, \gm_{\ld}}$ to $B_d^{p, \gm_{\kp}}.$
However,
Lemma~\ref{L_3326_DomRel} provides a \ct{} unital \hm{}
$\ps \colon B_d^{p, \gm_{\kp}} \to B_d^{p, \gm_{\ld}}$
with dense range.
For any ordinal $\mu < \ld,$
the induction hypothesis provides a \ct{} unital \hm{}
from $B_d^{p, \gm_{\ld}}$ to $B_d^{p, \gm_{\mu}}$
with dense range,
giving a \ct{} unital \hm{}
from $B_d^{p, \gm_{\kp}}$ to $B_d^{p, \gm_{\mu}},$
again with dense range.
Suppose now $\mu \in \Om$
satisfies $\mu < \ld$ and that there is
a nonzero \ct{} \hm{}
$\ph \colon B_d^{p, \gm_{\mu}} \to B_d^{p, \gm_{\kp}}.$
Then $\ps \circ \ph$ is a nonzero \ct{} \hm{}
from $B_d^{p, \gm_{\mu}}$ to $B_d^{p, \gm_{\ld}}.$
We have contradicted the induction hypothesis,
and the successor ordinal case of the induction step is complete.

Now suppose that $\kp$ is a limit ordinal.
Let $(\ld_n)_{n \in \N}$ be an enumeration
of $\{ \ld \in \Om \colon \ld < \kp \}$
(in arbitrary order).
For $n \in \N,$
apply Lemma~\ref{L_3325_NoHom}
with $p$ and $d$ as given
and with $\gm_{\ld_n}$ in place of~$\af,$
obtaining a nondecreasing sequence
$\bt_n = (\bt_n (1), \, \bt_n (2), \, \ldots)$ in $[1, \infty).$
Now recursively define
$\gm_{\kp} (1) = \max ( \bt_1 (1), \, \gm_{\ld_1} (1) )$ and
\[
\gm_{\kp} (n)
 = \max \big( \gm_{\kp} (n - 1), \,
              \bt_n (1), \, \bt_n (2), \, \ldots, \, \bt_n (n), \,
              \gm_{\ld_1} (n), \, \gm_{\ld_2} (n), \, \ldots, \,
              \gm_{\ld_n} (n) \big)
\]
for $n \geq 2.$
We verify that $\gm_{\kp}$ satisfies the required conditions.
So let $\ld \in \Om$ satisfy $\ld < \kp.$
Choose $n \in \N$ such that $\ld_n = \ld.$
Then $\gm_{\kp} (k) \geq \gm_{\ld} (k)$
for $k = n, \, n + 1, \, \ldots,$
so Lemma~\ref{L_3326_DomRel} provides a \ct{} unital \hm{}
$B_d^{p, \gm_{\kp}} \to B_d^{p, \gm_{\ld}}$
with dense range.
Also,
$\gm_{\kp} (k) \geq \bt_n (k)$
for $k = n, \, n + 1, \, \ldots,$
so the choice of $\bt_n$ using Lemma~\ref{L_3325_NoHom}
ensures that there is no nonzero \ct{} \hm{} from
$B_d^{p, \gm_{\kp}}$ to $B_d^{p, \gm_{\ld}}.$
This completes the proof.
\end{proof}

\begin{pbm}\label{Pb_3121_Dist}
Let $p \in [1, \infty).$
For a given supernatural number~$N,$
give invariants which classify up to isomorphism
some reasonable class of
nonspatial $L^p$~UHF algebras of tensor product type,
such as those constructed using diagonal similarities.
\end{pbm}

\end{document}